\let\ams@starttoc\@starttoc
\let\@starttoc\ams@starttoc
\patchcmd{\@starttoc}{\makeatletter}{\makeatletter\parskip\z@}{}{}
\newcommand{\RR}{\mathbf{R}}
\newcommand{\NN}{\mathbf{N}}
\newcommand{\cA}{\mathcal{A}}
\newcommand{\cB}{\mathcal{B}}
\newcommand{\cE}{\mathcal{E}}
\newcommand{\cF}{\mathcal{F}}
\newcommand{\cG}{\mathcal{G}}
\newcommand{\cH}{\mathcal{H}}
\newcommand{\cI}{\mathcal{I}}
\newcommand{\cL}{\mathcal{L}}
\newcommand{\cM}{\mathcal{M}}
\newcommand{\cQ}{\mathcal{Q}}
\newcommand{\cR}{\mathcal{R}}
\newcommand{\cS}{\mathcal{S}}
\newcommand{\cU}{\mathcal{U}}
\newcommand{\sB}{\mathscr{B}}
\newcommand{\sE}{\mathscr{E}}
\newcommand{\sL}{\mathscr{L}}
\newcommand{\sM}{\mathscr{M}}
\DeclareMathOperator{\lspan}{span}
\DeclareMathOperator{\ind}{ind}
\DeclareMathOperator{\nul}{nul}
\DeclareMathOperator{\support}{spt}
\DeclareMathOperator{\supp}{spt}
\DeclareMathOperator{\area}{area}
\DeclareMathOperator{\graph}{graph}
\DeclareMathOperator{\dist}{dist}
\DeclareMathOperator{\loc}{loc}
\DeclareMathOperator{\divg}{div}
\DeclareMathOperator{\genus}{genus}
\DeclareMathOperator{\riem}{Rm}
\DeclareMathOperator{\ricc}{Ric}
\DeclareMathOperator{\inj}{inj}
\DeclareMathOperator{\sff}{\mathrm{I\!I}}
\newcommand{\restr}{\mathbin{\vrule height 1.6ex depth 0pt width
0.13ex\vrule height 0.13ex depth 0pt width 1.3ex}}
\newcommand{\eps}{\varepsilon}
\newcommand{\weaklyto}{\rightharpoonup}
\newcommand{\energyunit}{h_0}
\newcommand{\expansioncoeff}{A_0}
\newcommand{\ve}[1]{\mathbf{#1}}
\theoremstyle{plain} \newtheorem{defi}{Definition}
\theoremstyle{plain} \newtheorem{rema}[defi]{Remark}
\theoremstyle{plain} \newtheorem{theo}[defi]{Theorem}
\theoremstyle{plain} \newtheorem{prop}[defi]{Proposition}
\theoremstyle{plain} \newtheorem{coro}[defi]{Corollary}
\theoremstyle{plain} \newtheorem{lemm}[defi]{Lemma}
\theoremstyle{plain} \newtheorem{exam}[defi]{Example}
\theoremstyle{plain} \newtheorem{conj}[defi]{Conjecture}
\theoremstyle{plain} \newtheorem*{theo*}{Theorem}
\theoremstyle{plain} 
\theoremstyle{plain} 
\theoremstyle{definition} \newtheorem*{clai}{Claim}
\numberwithin{defi}{section} 
\numberwithin{equation}{section} 
\title[Allen--Cahn on $3$-manifolds]{Minimal surfaces and the Allen--Cahn equation on 3-manifolds: index, multiplicity, and curvature estimates}
\author{Otis Chodosh}
\address{Department of Mathematics\\Princeton
University\\Princeton, NJ 08544}
\address{School of Mathematics\\Institute for Advanced Study\\Princeton, NJ 08540}
\email{ochodosh@math.princeton.edu}
\author{Christos Mantoulidis}
\address{Department of Mathematics\\ Massachusetts Institute of Technology \\Cambridge, MA 02139}
\email{c.mantoulidis@mit.edu}
\date{\today}
\begin{document}

\begin{abstract}
The Allen--Cahn equation is a semilinear PDE which is deeply linked to the theory of minimal hypersurfaces via a singular limit. We prove \emph{curvature estimates} and strong \emph{sheet separation estimates} for stable solutions (building on recent work of Wang--Wei \cite{WangWei}) of the Allen--Cahn equation on a $3$-manifold. Using these, we are able to show for generic metrics on a $3$-manifold, minimal surfaces arising from Allen--Cahn solutions with bounded energy and bounded Morse index are two-sided and occur with multiplicity one and the expected Morse index. This confirms, in the Allen--Cahn setting, a strong form of the \emph{multiplicity one conjecture} and the \emph{index lower bound conjecture} of Marques--Neves \cite{marques:ICM, neves:ICM} in $3$-dimensions regarding min-max constructions of minimal surfaces. 

Allen--Cahn min-max constructions were recently carried out by Guaraco \cite{Guaraco} and Gaspar--Guaraco \cite{GasparGuaraco}. Our resolution of the multiplicity one and the index lower bound conjectures shows that these constructions can be applied to give a new proof of \emph{Yau's conjecture on infinitely many minimal surfaces} in a $3$-manifold with a generic metric (recently proven by Irie--Marques--Neves \cite{IrieMarquesNeves}) with \emph{new} geometric conclusions. Namely, we prove that a $3$-manifold with a generic metric contains, for every $p = 1, 2, 3, \ldots$, a two-sided embedded minimal surface with Morse index $p$ and area $\sim p^{\frac 13}$, as conjectured by Marques--Neves. 
\end{abstract}

\maketitle

\vspace{-1cm}

\tableofcontents

\section{Introduction}

Minimal surfaces---critical points of the area functional with respect to local deformations---are fundamental objects in Riemannian geometry due to their intrinsic interest and richness, as well as deep and surprising applications to the study of other geometric problems. Because many manifolds do not contain \emph{any} area-minimizing hypersurfaces, one is quickly led to the study of surfaces that are only critical points of the area functional. Such surfaces are naturally constructed by min-max (i.e., mountain-pass) type methods. To this end, Almgren and Pitts \cite{Pitts} have developed a far-reaching theory of existence and regularity (cf.\ \cite{SchoenSimon}) of min-max (unstable) minimal hypersurfaces. In particular, their work implies that any closed Riemannian manifold $(M^{n},g)$ contains at least one minimal hypersurface $\Sigma^{n-1}$ (in sufficiently high dimensions, $\Sigma$ may have a thin singular set). This result motivates a well-known question of Yau: ``do all $3$-manifolds contain infinitely many immersed minimal surfaces?'' \cite{Yau:problems}.

Recently, there have been several amazing applications of Almgren--Pitts theory to geometric problems, including the proof of the Willmore conjecture by Marques--Neves \cite{MarquesNeves:Willmore} and the resolution of Yau's conjecture for generic metrics in dimensions 3 through 7 by Irie--Marques--Neves \cite{IrieMarquesNeves}. In spite of this, certain basic questions concerning the Almgren--Pitts construction remain unresolved: including whether or not the limiting minimal surfaces can arise with multiplicity (for a generic metric) as well as whether or not one-sided minimal surfaces can arise as limits of an ``oriented'' min-max sequence (see, however, \cite{KMN:catenoid,MarquesNeves:multiplicity}).
\footnote{
Added in proof: There has been dramatic progress in Almgren--Pitts theory since we first posted this article. In particular, we note that A. Song \cite{Song:full-yau} has proved the full Yau conjecture in dimensions 3 through 7, and X. Zhou \cite{Zhou:multiplicity-one} proved the multiplicity one conjecture in the Almgren--Pitts setting, also in dimensions 3 through 7.
}

Guaraco \cite{Guaraco} has proposed an alternative to Almgren--Pitts theory, later extended by Gaspar--Guaraco \cite{GasparGuaraco}, which is based on study of a semilinear PDE known as the Allen--Cahn equation
\begin{equation} \label{eq:ac.pde}
\varepsilon^{2} \Delta_{g} u = W'(u) 
\end{equation}
and its singular limit as $\eps\searrow 0$. There is a well known expectation that, in $\eps\searrow 0$ limit, solutions to \eqref{eq:ac.pde} produce minimal surfaces whose regularity reflects the solutions' variational properties. In particular:
\begin{enumerate}
	\item It is known that the Allen--Cahn functional $\Gamma$-converges to the perimeter functional \cite{Modica,Sternberg}, so minimizing solutions to \eqref{eq:ac.pde} converge as $\eps \searrow 0$ to minimizing hypersurfaces (and are thus regular away from a codimension $7$ singular set). 
	\item Under weaker assumptions on the sequence of solutions, one obtains different results. In general, solutions to \eqref{eq:ac.pde} on a Riemannian manifold $(M^n, g)$ have a naturally associated $(n-1)$-varifold obtained by ``smearing out'' their level sets of $u$, weighted by the gradient,
	\[ V[u](\varphi) \triangleq \energyunit^{-1} \int \varphi(x, T_x \{ u = u(x) \}) \, \varepsilon |\nabla u(x)|^2 \,  d\mu_g(x), \; \varphi \in C^0_c(\operatorname{Gr}_{n-1}(M)). \]
	Here, $h_{0} > 0$ is a constant that is canonically associated with $W$ (see Section \ref{subsec:heteroclinic.solution}). A deep result of Hutchinson--Tonegawa \cite[Theorem 1]{HutchinsonTonegawa00} ensures that $V$ limits to a varifold with a.e.\ integer density as $\eps\searrow0$. If, in addition, one assumes that the solutions are stable, Tonegawa--Wickramasekera \cite{TonegawaWickramasekera12} have shown that the limiting varifold is stable and satisfies the conditions of Wickramasekera's deep regularity theory \cite{Wickramasekera14}; thus the limiting varifold is a smooth stable minimal hypersurface (outside of a codimension $7$ singular set). In two dimensions, this was shown by Tonegawa \cite{Tonegawa05}.
\end{enumerate}

Guaraco's approach has certain advantages when compared with Almgren--Pitts theory:
\begin{enumerate}
	\item A key difficulty in the work of Almgren--Pitts is a lack of a Palais--Smale condition, which is usually fundamental in mountain pass constructions. On the other hand, the Allen--Cahn equation does satisfy the usual Palais--Smale condition for each $\eps>0$ (see \cite[Proposition 4.4]{Guaraco}), so this aspect of the theory is much simpler. 
	
	We note, however, that the bulk of the regularity theory in Guaraco's work is applied \emph{after} taking the limit $\eps\searrow 0$ and thus relies on the deep works of Wickaramsekera \cite{Wickramasekera14} and Tonegawa--Wickramasekera \cite{TonegawaWickramasekera12}. This places a more serious burden on regularity theory than Almgren--Pitts.
	
	\item In Almgren--Pitts theory, there is no ``canonical'' approximation of the limiting min-max surface by nearby elements of a sweepout. On the other hand, Allen--Cahn provides a canonical approximation built out of the function $u$ (which satisfies a PDE). It is thus natural to suspect that this might be useful when studying the geometric properties of the limiting surface. 
	
	For example, Hiesmayr \cite{Hiesmayr} and Gaspar \cite{Gaspar} have shown that index upper bounds for Allen--Cahn solutions directly pass to the limiting surface (we note that the Almgren--Pitts version of this result has been proven by Marques--Neves \cite{MarquesNeves:multiplicity}). Moreover, the second-named author has recently shown \cite{Mantoulidis} that $1$-parameter Allen--Cahn min-max on a surface produces a smooth immersed curve with at most one point of self-intersection; in general, Almgren--Pitts on a surface will only produce a geodesic net (cf.\ \cite{Aiex:ellipsoids}).
\end{enumerate}

Our main contributions in this work are as follows:
\begin{enumerate}
	\item We show (see Theorem \ref{theo:curv.est} below) that the individual level sets of stable solutions to the Allen--Cahn equation on a $3$-manifold with energy bounds satisfy a priori curvature estimates (similar to stable minimal surfaces). Using this, we are can avoid the regularity theory of Wickramasekera and Tonegawa--Wickramasekera entirely, making the whole theory considerably more self-contained. 
	\item More fundamentally, our curvature estimates (and strong sheet separation estimates, which we will discuss below) allow us to study geometric properties of the limiting minimal surface using the ``canonical'' PDE approximations that exist \emph{prior} to taking the $\eps \searrow 0$ limit. In particular, we will prove the multiplicity one conjecture of Marques--Neves \cite{MarquesNeves:multiplicity} in the Allen--Cahn setting (see Theorem \ref{theo:mult.intro-version} below) for min-max sequences on $3$-manifolds. In fact, we prove a strengthened version of the conjecture by ruling out (generically) stable components and one-sided surfaces. 
\end{enumerate}

As an application of our multiplicity one results we are able to give a new proof of Yau's conjecture on infinitely many minimal surfaces in a $3$-manifold, when the metric is bumpy (see Corollary \ref{coro:yau-intro} below). This has been recently proven using Almgren--Pitts theory\footnote{We note that after the first version of this work was posted, Gaspar--Guaraco \cite{GasparGuaraco:weyl} gave a new proof of  Yau's conjecture for generic metrics (in the spirit of Irie--Marques--Neves \cite{IrieMarquesNeves}) by proving a Weyl law for their Allen--Cahn $p$-widths.} by Irie--Marques--Neves \cite{IrieMarquesNeves}, for a slightly different class of metrics; their proof works in $(M^{n},g)$ for $3\leq n\leq 7$ and proves, in addition, that the minimal surfaces are dense. Our proof establishes several new geometric properties of the surfaces; in particular, we show that they are two-sided and that their area and Morse index behaves as one would expect, based on the theory of $p$-widths \cite{Gromov:waist,Guth:minimax,MarquesNeves:posRic,GasparGuaraco}. 

We wish to emphasize two things:
\begin{enumerate}
	\item Our results work at the level of sequences of critical points of the Allen--Cahn energy functional with uniform energy and Morse index bounds. At no point do we use any min-max characterization of the limiting surface; min-max is merely used as a tool to construct nontrivial sequences of critical points with energy and index bounds.
	\item Our results highlight the philosophy that the solutions to Allen--Cahn provide a ``canonical'' approximation of the min-max surfaces. 
\end{enumerate}

\subsection{Notation}

In all that follows, $(M^n, g)$ is a smooth Riemannian manifold.

\begin{defi}
	A function $W \in C^{\infty}(\RR)$ is a \emph{double-well potential} if:
	\begin{enumerate}
		\item $W$ is non-negative and vanishes precisely at $\pm 1$;
		\item $W$ satisfies $W'(0) = 0$, $t W'(t) < 0$ for $|t| \in (0,1)$, and $W''(0) \not = 0$;
		\item $W''(\pm 1) =2 $;
		\item $W(t) = W(-t)$.
	\end{enumerate}
\end{defi}
The standard double-well potential is $W(t) = \frac 1 4 (1-t^{2})^{2}$, in which case \eqref{eq:ac.pde}  becomes $\varepsilon^{2} \Delta_{g} u = u^{3}-u$. 

The Allen--Cahn equation, \eqref{eq:ac.pde}, is the Euler--Lagrange equation for the energy functional
\[
E_{\eps}[u] = \int_{M} \left( \frac \eps 2 |\nabla u|^{2} + \frac{W(u)}{\eps} \right) \, d\mu_{g}.
\]
Depending on what we wish to emphasize, we will go back and forth between saying that a function $u$ is a solution of \eqref{eq:ac.pde} on $M$ (or in a domain $U \subset M$) or a critical point of $E_\eps$ (resp. of $E_\eps \restr U$). The second variation of $E_{\eps}$ is easily computed (for $\zeta,\psi \in C^{\infty}_{c}(M)$) to be
\begin{equation}\label{eq:second.var.AC}
\delta^{2}E_{\eps}[u]\{\zeta,\psi\}  =\int_{M} \left( \eps \langle\nabla \zeta,\nabla \psi \rangle + \frac{W''(u)}{\eps} \zeta\psi\right) \, d\mu_{g}.
\end{equation}
We are thus led to the notion of stability and Morse index (with respect to Dirichlet eigenvalues). 
\begin{defi} \label{def:ac.stable} \label{def:ac.morse.index}
	For $(M^{n},g)$ a complete Riemannian manifold and $U \subset M \setminus \partial M$ open, we say that a critical point of $E_{\eps} \restr U$ is \emph{stable} on $U$ if $\delta^{2}E_{\eps}[u]\{\zeta,\zeta\} \geq 0$ for all $\zeta \in C^{\infty}_{c}(U)$. More generally, we say $u$ has Morse index $k$, denoted $\ind(u) = k$, if 
	\[
	\max \{ \dim V : \delta^{2}E_{\eps}[u]\{\zeta,\zeta\} < 0 \text{ for all } \zeta \in V\setminus\{0\}\} = k,
	\]
	where the maximum is taken over all subspaces $V \subset C^{\infty}_{c}(U)$. Sometimes we will write $\ind(u;U)=k$ to emphasize the underlying set. Note that $\ind(u; U) = 0$ if and only if $u$ is stable on $U$.
\end{defi}

When $u$ is a solution of \eqref{eq:ac.pde} and $\nabla u(x) \neq 0$, we will write:
\begin{enumerate}
	\item $\nu(x) = \tfrac{\nabla u(x)}{|\nabla u(x)|}$ for the unit normal of the level set of $u$ through $x$;
	\item $\sff(x)$ for the second fundamental form of the level set of $u$ through $x$;
	\item $\cA(x)$ for the ``Allen--Cahn'' or ``enhanced'' second fundamental form of the level set:
	\[ \cA = \frac{\nabla^2 u - \nabla^2 u(\cdot, \nu) \otimes \nu^\flat}{|\nabla u|} \left( = \nabla \left( \frac{\nabla u}{|\nabla u |} \right)(x) \right). \]
\end{enumerate}
One may check that
\[
|\cA(x)|^{2} = |\sff(x)|^{2} + |\nabla_{T} \log|\nabla u(x)||^{2},
\]
where $\nabla_{T}$ represents the gradient in the directions orthogonal to $\nabla u$; in other words, $|\cA|$ strictly dominates the second fundamental form of the level sets.

Finally, we will often use Fermi coordinates centered on a hypersurface. To avoid confusion about which hypersurface the coordinates are associated to, we will define a function
\[ Z_\Sigma(y,z) \triangleq \exp_y(z \nu_\Sigma(y)), \; y \in \Sigma, \; z \in \RR, \]
where $\nu_\Sigma$ will denote a distinguished normal vector to $\Sigma$. In this paper, $\nu_\Sigma$ is generally taken to be the upward pointing unit normal. Note that the pullback of the metric $g$ along $Z_{\Sigma}$ has the form $g_{z} + dz^{2}$, which is the setting that most of our analysis will take place below.

\subsection{Main results}

\subsubsection{Curvature estimates for stable solutions of \eqref{eq:ac.pde} on $3$-manifolds} We start this section by discussing the concept of stability applied to minimal surfaces, since that guides some aspects of our work in the Allen--Cahn setting.

We recall that a two-sided minimal surface $\Sigma^{2} \subset (M^{3},g)$ with normal vector $\nu$ is said to be \emph{stable} if it satisfies
\begin{equation}\label{eq:stable.min.surf}
\int_{\Sigma} \left( |\nabla_\Sigma \zeta|^{2} - (|\sff_\Sigma|^{2} + \ricc_g(\nu,\nu))\zeta^{2}\right) d\mu_g \geq 0
\end{equation}
for $\zeta \in C^{\infty}_{c}(\Sigma)$. Here, we briefly recall the well-known curvature estimates of Schoen \cite{Sch83} for stable minimal surfaces. If $\Sigma^{2}\subset (M^{3},g)$ is a complete, two-sided stable minimal surface, then the second fundamental form of $\Sigma$, $\sff_{\Sigma}$, satisfies
\begin{equation}\label{eq:curv.est.Schoen}
|\sff_{\Sigma}|(x) d(x,\partial\Sigma) \leq C = C(M,g).
\end{equation}
Observe that \eqref{eq:curv.est.Schoen} readily implies a stable Bernstein theorem: ``a complete two-sided stable minimal surfaces $\Sigma$ in $\RR^{3}$ without boundary must be a flat plane.'' On the other hand, the stable Bernstein theorem (proven in \cite{Fischer-Colbrie-Schoen,doCarmoPeng,Pogorelov}) implies \eqref{eq:curv.est.Schoen} by a well known blow-up argument: if \eqref{eq:curv.est.Schoen} failed for a sequence of stable minimal surfaces $\Sigma_{j}$, then by choosing a point of (nearly) maximal curvature and rescaling appropriately (cf.\ \cite{White:PCMI}), we can produce $\tilde\Sigma_{j}$ a sequence of minimal surfaces in manifolds $(M_{j}^{3},g_{j})$ that are converging on compact sets to $\RR^{3}$ with the flat metric, and so that $d_{g_{j}}(0,\partial\Sigma_{j}) \to \infty$, $|\sff_{\Sigma_{j}}|$ uniformly bounded on compact sets, and $|\sff_{\Sigma_{j}}|(0) = 1$. The second fundamental form bounds yield local $C^{2}$ bounds for the surfaces $\Sigma_{j}$, which may then be upgraded to $C^{k}$ bounds for all $k$. Thus, passing to a subsequence, the surfaces $\Sigma_{j}$ converge smoothly to a complete stable minimal surface $\Sigma_{\infty}$ without boundary in $\RR^{3}$. Because the convergence occurs in $C^{2}$, the we see that $|\sff_{\Sigma_{\infty}}|(0) = 1$, so $\Sigma_{\infty}$ is non-flat. This contradicts the stable Bernstein theorem. 

As such, before discussing curvature estimates for stable solution to Allen--Cahn, we must discuss the stable Bernstein theorem for complete solutions on $\RR^{3}$. In general, it is not known if there are stable solutions to Allen--Cahn $\Delta u = W'(u)$ on $\RR^{3}$ with non-flat level sets. However, under the additional assumption of quadratic energy growth, i.e.,
\[
(E_{1} \restr B_R(0))[u] \leq \Lambda R^{2},
\]
then it follows from the work of Ambrosio--Cabre \cite{AmbrosioCabre00} (see also \cite{FarinaMariValdinoci13}) that $u$ has flat level sets. We note that the corresponding stable Bernstein theorem on $\RR^{2}$ is known to hold without any energy growth assumption; see the works of Ghoussoub--Gui \cite{GhoussoubGui98} and Ambrosio--Cabre \cite{AmbrosioCabre00}.

As such, one may expect that the blow-up argument described above may be used to prove curvature estimates. However, there is a fundamental difficulty present in the Allen--Cahn setting: if $u_{i}$ are stable solutions of \eqref{eq:ac.pde} on $(M^{3},g)$, then if their curvature (we will make this precise below) is diverging, then if we rescale by a factor $\lambda_{i}\to\infty$ in a blow-up argument this changes $\eps_{i}$ to $\lambda_{i}\eps_{i}$. If $\lambda_{i}\eps_{i}$ converges to a non-zero constant, then standard elliptic regularity implies the rescaled functions limit smoothly to an entire stable solution of Allen--Cahn on $\RR^{3}$. The smooth convergence guarantees that this solution will have non-flat level sets. If the original functions $u_{i}$ had uniformly bounded energy, we can show that the limit has quadratic area growth, which contradicts the aforementioned Bernstein theorem. However, if $\lambda_{i}\eps_{i}$ still converges to zero, we must argue differently. In this case, we have a sequence of solutions to Allen--Cahn whose level sets are uniformly bounded in a $C^{2}$-sense. This can be used to show that the level sets converge to a plane (possibly with multiplicity) in the $C^{1,\alpha}$-sense. If the level sets behaved precisely like minimal surfaces, we could upgrade this $C^{1,\alpha}$-convergence using elliptic regularity, to conclude that the limit was not flat. However, in this situation, the level sets themselves do not satisfy a good PDE, so this becomes a significant obstacle.

Recently, a fundamental step in understanding this issue has been undertaken by Wang--Wei \cite{WangWei}. They have developed a technique for gaining geometric control of solutions to Allen--Cahn whose level sets are converging with Lipschitz bounds. Using this (and the $2$-dimensional stable Bernstein theorem) they have proven curvature estimates for individual level sets of stable solutions on two-dimensional surfaces. Moreover, they have shown that if one cannot upgrade $C^{2}$ bounds to $C^{2,\alpha}$ convergence, then by appropriately rescaling the height functions of the nodal sets, one obtains a nontrivial solution to the a system of PDE's known as the Toda system (see \cite[Remark 14.1]{WangWei}). Finally, their proof of curvature estimates in $2$-dimensions points to the crucial observation that it is necessary to use stability to upgrade the regularity of the convergence of the level sets. 

This brings us to our first main result here, which is an extension of the Wang--Wei curvature estimates to $3$ dimensions. Our $3$-dimensional curvature estimates can be roughly stated as follows (see Theorem \ref{theo:curvature.estimate} for a slightly more refined statement and the proof)
\begin{theo}\label{theo:curv.est}
	For a complete Riemannian metric on $\overline{B_{2}}(0) \subset \RR^{3}$ and a stable solution $u$ to \eqref{eq:ac.pde} with $E_{\eps}(u) \leq E_{0}$, the enhanced second fundamental form of $u$ satisfies
	\[
	\sup_{B_{1}(0) \cap \{|u| < 1-\beta\}} |\cA|(x)\leq C = C(g,E_{0},W,\beta)
	\]
	as long as $\eps >0$ is sufficiently small. 
\end{theo}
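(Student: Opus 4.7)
The plan is to argue by contradiction via a blow-up/point-picking argument, in the spirit of the classical reduction from curvature estimates to Bernstein-type theorems. Suppose the estimate fails: there exist $E_0,\beta>0$, a sequence $\eps_i\searrow 0$, smooth metrics $g_i$ on $\overline{B_2}(0)\subset\RR^3$ (which after passage to a subsequence converge smoothly to a limit $g_\infty$), and stable solutions $u_i$ of \eqref{eq:ac.pde} with $E_{\eps_i}(u_i)\le E_0$, yet with $\sup_{B_1\cap\{|u_i|<1-\beta\}}|\cA_{u_i}|\to\infty$. By a standard point-picking lemma, choose $x_i\in B_1\cap\{|u_i|<1-\beta\}$ and radii $r_i>0$ so that $\lambda_i:=|\cA_{u_i}|(x_i)\to\infty$ nearly maximizes $|\cA_{u_i}|$ on a ball of radius $r_i$ with $\lambda_i r_i\to\infty$, and define $\tilde u_i(y)=u_i(\exp_{x_i}(\lambda_i^{-1}y))$. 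Then $\tilde u_i$ is a stable solution of Allen--Cahn with parameter $\tilde\eps_i:=\lambda_i\eps_i$ in the rescaled metric $\tilde g_i\to g_{\RR^3}$, satisfies $|\cA_{\tilde u_i}|(0)=1$, $|\tilde u_i(0)|<1-\beta$, and $|\cA_{\tilde u_i}|\le 2$ on $B_{R_i}(0)$ with $R_i\to\infty$, and---crucially---inherits quadratic energy growth $E_{\tilde\eps_i}(\tilde u_i;B_R)\le CE_0 R^2$ for $R\le R_i$ from the bound on $u_i$.

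The argument now splits according to the behavior of $\tilde\eps_i$. In the \emph{easy case}, $\tilde\eps_i$ does not decay to $0$; passing to a subsequence, either $\tilde\eps_i\to\tilde\eps_\infty\in(0,\infty)$, or $\tilde\eps_i\to\infty$ and a further rescaling reduces to $\tilde\eps_\infty=1$. Since $\tilde\eps_i$ is then bounded away from $0$, standard Schauder estimates upgrade the $C^0_{\loc}$ bound to $C^k_{\loc}$ bounds, and the $\tilde u_i$ subconverge smoothly to a stable entire solution $u_\infty$ of $\tilde\eps_\infty^2\Delta u=W'(u)$ on $\RR^3$, with quadratic energy growth and non-planar level sets (as $|\cA_{u_\infty}|(0)=1$). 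This contradicts the Bernstein theorem of Ambrosio--Cabre and Farina--Mari--Valdinoci for stable solutions in $\RR^3$ with quadratic energy growth.

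The \emph{main obstacle} is the residual regime $\tilde\eps_i\to 0$, where the transition width collapses while the level sets retain uniformly bounded second fundamental form. Here I would follow and extend Wang--Wei's two-dimensional technique: on any fixed ball the set $\{|\tilde u_i|<1-\beta\}$ decomposes into finitely many thin \emph{sheets} of bounded geometry. Using the $C^2$ bounds together with Hutchinson--Tonegawa, the associated varifolds $V[\tilde u_i]$ subconverge to a stationary integer-density varifold supported on a finite union of parallel planes through $0$, and the sheets of $\tilde u_i$ are locally graphs $h_i^{(k)}$ over the limiting plane $\Sigma_0=\{x_3=0\}$, converging in $C^{1,\alpha}_{\loc}$. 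The key step is to derive a \emph{Toda-type system} for the renormalized height differences $H_i^{(k)}=(h_i^{(k)}-\bar h_i)/\delta_i$, where $\delta_i$ is the natural scale of the sheet separation, and to translate Allen--Cahn stability into a stability inequality for the limiting Toda system on $\Sigma_0\cong\RR^2$.

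The final ingredient is a Bernstein-type theorem: a stable solution of the classical Toda system on $\RR^2$ with appropriate growth must be trivial, which combined with the non-degeneracy $|\cA_{\tilde u_i}|(0)=1$ surviving to the limit (forcing the Toda limit to be nontrivial) yields the required contradiction. Ruling out this second case is the genuinely new input needed in $3$ dimensions; the analogous role in Wang--Wei's two-dimensional argument is played by the planar stable Bernstein theorem for minimal surfaces. Establishing enough quantitative control to pass Allen--Cahn stability down to Toda stability---especially handling the multi-sheet bookkeeping in the presence of nontrivial ambient geometry and the coupling of the ODE in the transverse direction with the PDE on $\Sigma_0$---is where I expect the bulk of the real work to lie.
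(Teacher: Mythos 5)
Your blow-up skeleton---contradiction, point-picking, the dichotomy on $\tilde\eps_i=\lambda_i\eps_i$, the disposal of the case where $\tilde\eps_i$ stays away from zero via smooth convergence and the Ambrosio--Cabr\'e Bernstein theorem with quadratic energy growth, and the reduction of the residual case $\tilde\eps_i\to 0$ to parallel planes with $C^{1,\alpha}$-graphical sheets---matches the paper's proof of Theorem \ref{theo:curvature.estimate} (note only that to conclude the limit varifold consists of planes you must also pass the \emph{stability}, not just stationarity and the curvature bound, to the limit). The genuine gap is in the residual case, which you yourself flag as where ``the bulk of the real work'' lies. Your plan there is to renormalize the height differences, extract a limiting Toda system on $\RR^2$, and invoke a Liouville/Bernstein theorem for entire stable Toda solutions. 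There is no such theorem available to quote, and the paper neither states nor uses one as a black box: its role is played by the quantitative sheet-separation lower bounds for stable solutions, Propositions \ref{prop:bootstrapped.stable.estimates} and \ref{prop:ultimate.stable.estimates}, which are proved directly by combining the Toda-type stability inequality \eqref{eq:toda.stability.estimate} with Moser's Harnack inequality and a logarithmic cutoff (this is where $n=3$ enters), iterated over scales. That quantitative iteration is exactly the new input in three dimensions, and deferring it to an unproved Toda Liouville theorem leaves the proof without its core step.

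Moreover, your proposed contradiction runs in the wrong direction. You assert that $|\cA_{\tilde u_i}|(0)=1$ ``survives to the limit'' and forces the renormalized Toda limit to be nontrivial, but with only $C^{1,\alpha}$ convergence of the sheets this is precisely what cannot be asserted: the enhanced curvature at a point could a priori come from the mean curvature of a single sheet, from the ambient geometry, or from oscillations below your chosen renormalization scale $\delta_i$; and in the multiplicity-one case (a single sheet over the limiting plane) there are no height differences at all, so your Toda limit is vacuous while the normalization $|\cA|(0)=1$ still stands. The paper's logic is the reverse: stability yields the separation estimates, which make the interaction terms negligible; the Jacobi--Toda error analysis (\eqref{eq:phi.improved.c2a.estimate.full} together with Lemma \ref{lemm:h.phi.comparison.improved}) gives H\"older control of the mean curvatures of the nodal sheets; and the explicit estimate \eqref{eq:enhanced.sff.estimate}, combined with the Riccati equation controlling the Christoffel term, then forces $|\widetilde{\cA}_i|\to 0$ near the sheets, contradicting $|\widetilde{\cA}_i(0)|=1$. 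To salvage your direction you would have to show that every other source of curvature vanishes and that the nondegeneracy concentrates exactly at the Toda scale---which amounts to redoing this quantitative analysis anyway.
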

We emphasize that Wang--Wei's $2$-dimensional estimates \cite[Theorem 3.7]{WangWei} do not require the energy bound (see also \cite[Theorem 4.13]{Mantoulidis} for the Riemannian modifications of this result). Note that we cannot expect to prove estimates with a constant that tends to $0$ as $\eps\searrow 0$ (which was the case in \cite{WangWei}) since---unlike geodesics---minimal surfaces do not necessarily have vanishing second fundamental form. 

We note that due to our curvature estimates, it is not hard to see that stable (and more generally, uniformly bounded index) solutions to the Allen--Cahn equation (with uniformly bounded energy) in a $3$-manifold limit to a $C^{1,\alpha}$ surface that has vanishing (weak) mean curvature. Standard arguments thus show that the surface is smooth. Thus, our estimates show that it is possible to completely avoid the regularity results of Wickramasekera and Wickramasekera--Tonegawa \cite{Wickramasekera14,TonegawaWickramasekera12} in the setting of Allen--Cahn min-max on a $3$-manifold (cf.\ \cite{Guaraco}).

\begin{rema}
	We briefly remark on the possibility of extending curvature estimates to higher dimensions:
	\begin{enumerate}
		\item For $n \geq 8$, curvature estimates fail for stable (and even minimizing) solutions to the Allen--Cahn equation. See: \cite{PacardWei:stable,LiuWangWei}.
		\item For $4\leq n \leq 7$, the Allen--Cahn stable Bernstein result is not known (even with an energy growth condition). 
	\end{enumerate}
	Even if the stable Bernstein theorem were to be  established in dimensions $4\leq n \leq 7$, we note that our proof currently uses the dimension restriction $n=3$ in one other place: we use a logarithmic cutoff function in the proof of our sheet separation estimates (Propositions \ref{prop:bootstrapped.stable.estimates} and \ref{prop:ultimate.stable.estimates}). \footnote{Added in proof: Wang--Wei have recently found \cite{WangWei2} the appropriate higher dimensional replacement for the log-cutoff argument used here. We note that the stable Bernstein problem for Allen--Cahn remains open in dimensions $4\leq n\leq 7$.} 
	
	On the other hand, we remark that the curvature estimate for minimizing solutions can be proven using the  ``multiplicity one'' nature of minimizers \cite[Theorem 2]{HutchinsonTonegawa00}, together with \cite[Section 15]{WangWei} (or Remark \ref{rema:major.goal}).
	
	We note that the case of complete minimizers is closely related to the well known ``De Giorgi conjecture.'' See  \cite{GhoussoubGui98,AmbrosioCabre00,Savin:DGconj,delPinoKowalczykWei:DG-counterexample,Wang:Allard}. 
\end{rema}

\subsubsection{Strong sheet separation estimates for stable solutions} A key ingredient in the proof of our curvature estimates is showing that distinct sheets of the nodal set of a stable solution to the Allen--Cahn equation remain sufficiently far apart. This aspect was already present in the work of Wang--Wei. For our applications to the case of uniformly bounded Morse index (and thus min-max theory), we must go beyond the sheet separation estimates proven in \cite{WangWei}. We prove in Proposition \ref{prop:ultimate.stable.estimates} that distinct sheets of nodal sets of a stable solution to the Allen--Cahn equation must be separated by a sufficiently large distance so that the location of the nodal sets becomes ``mean curvature dominated.'' 

In particular, as a consequence of these estimates, we show in Theorem \ref{theo:bounded.index} that if a sequence of stable solutions to the Allen--Cahn equation converge with multiplicity to a closed two-sided minimal surface $\Sigma$, then there is a positive Jacobi field along $\Sigma$ (which implies that $\Sigma$ is stable). It is interesting to compare this to the examples constructed by del Pino--Kowalczyk--Wei--Yang of minimal surfaces in $3$-manifolds with positive Ricci curvature that are the limit with multiplicity of solutions to the Allen--Cahn equation \cite{delPinoKowalczykWeiYang:interface}. Note that such a minimal surface cannot admit a positive Jacobi field, so the point here is that the Allen--Cahn solutions are not stable. (In fact, our Theorem \ref{theo:bounded.index} implies that they have diverging Morse index.) Note that the separation $D$ between the sheets of the examples constructed in \cite{delPinoKowalczykWeiYang:interface} satisfy, as $\eps \searrow 0$,
\[
D \sim \sqrt{2} \eps |\log\eps|  - \frac{1}{\sqrt{2}}\eps  \log | \log \eps| ,
\]
while we prove in Proposition \ref{prop:ultimate.stable.estimates} that stability implies that the separation satisfies
\[
D - \left( \sqrt{2} \eps |\log \eps| - \frac{1}{\sqrt{2}} \eps \log|\log \eps| \right) \to -\infty.
\]
We emphasize that the improved separation estimates here are not contained in the work of Wang--Wei \cite{WangWei} and are fundamental for the subsequent applications of our results.

\subsubsection{The multiplicity one-conjecture for limits of the Allen--Cahn equation in $3$-manifolds} 

In their recent work \cite{MarquesNeves:multiplicity}, Marques--Neves make the following conjecture: 
\begin{conj}[Multiplicity one conjecture]
	For generic metrics on $(M^{n},g)$, $3\leq n\leq 7$, two-sided unstable components of closed minimal hypersurfaces obtained by min-max methods must have multiplicity one. 
\end{conj}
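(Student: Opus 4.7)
The plan is to prove the Allen--Cahn analogue of this conjecture, leveraging Theorem~\ref{theo:curv.est} together with the strong sheet separation estimates of Proposition~\ref{prop:ultimate.stable.estimates}. Working with a sequence $u_i$ of critical points of $E_{\eps_i}$ with $\eps_i \searrow 0$, $E_{\eps_i}[u_i] \leq E_0$, and $\ind(u_i) \leq K$, I will show that any two-sided connected component of the varifold limit that appears with multiplicity $m \geq 2$ must be stable. The core mechanism is to extract a positive Jacobi field from the sheet structure of the approximants and to use it to certify stability of the limit.

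The first step is to set up the local graphical picture. A standard concentration argument shows that the Morse index of $u_i$ concentrates at at most finitely many points $\{p_1,\ldots,p_N\} \subset M$ with $N \leq K$; on compact subsets of the complement, each $u_i$ is stable for $i$ large. The enhanced second fundamental form estimate of Theorem~\ref{theo:curv.est} then gives uniform $C^2$ control of the level sets of $u_i$ on such subsets, which, together with the Hutchinson--Tonegawa varifold convergence and removability of the concentration points, upgrades to smooth graphical convergence of the nodal set $\{u_i = 0\}$ to a minimal hypersurface $\Sigma$ with integer multiplicity. On compact subsets of a two-sided component $\Sigma_0 \subset \Sigma$ away from the $p_j$, the nodal set of $u_i$ decomposes as graphs $h_i^{(1)} < \cdots < h_i^{(m)}$ over $\Sigma_0$, with $u_i$ alternating sign between adjacent sheets.

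Suppose $m \geq 2$. Proposition~\ref{prop:ultimate.stable.estimates} asserts that for adjacent sheets, $D_i^{(k)} := h_i^{(k+1)} - h_i^{(k)}$ is strictly subcritical:
\[
D_i^{(k)} - \Bigl( \sqrt{2}\,\eps_i |\log \eps_i| - \tfrac{1}{\sqrt{2}}\eps_i \log|\log \eps_i| \Bigr) \to -\infty.
\]
In a Fermi tubular neighborhood of $\Sigma_0$, the heights $h_i^{(k)}$ satisfy, to leading order, an approximate Toda-type system whose structure in the subcritical separation regime, upon appropriate rescaling, degenerates to the Jacobi equation on $\Sigma_0$. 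Normalizing $D_i^{(k)}/\|D_i^{(k)}\|_{L^\infty}$ and passing to a limit produces a nonnegative smooth function $\varphi$ on $\Sigma_0 \setminus \{p_j\}$ satisfying the Jacobi equation, with $\sup \varphi = 1$. Standard removable-singularity theory for solutions of linear elliptic equations upgrades $\varphi$ to a global nonnegative Jacobi field on $\Sigma_0$, and the strong maximum principle then yields $\varphi > 0$ everywhere.

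Since $\Sigma_0$ is two-sided and admits a positive Jacobi field, $\varphi$ is a first eigenfunction of the Jacobi operator with eigenvalue zero, so by the characterization of the lowest eigenvalue, the second variation \eqref{eq:stable.min.surf} is nonnegative and $\Sigma_0$ is stable. This contradicts the hypothesis that $\Sigma_0$ is unstable, proving the conjecture in the Allen--Cahn setting. The main obstacle I anticipate is step three, producing the Jacobi field rigorously: one must show that the renormalized sheet separations actually converge (and do not merely stay bounded or oscillate) and that the subcritical separation controls the exponentially growing Toda-type interactions in the limit. This is precisely where the quantitative improvement over the sheet separation estimates of Wang--Wei is essential, as the critical-scale bound alone would leave a residual nonlinear interaction obstructing the passage to the linear Jacobi equation.
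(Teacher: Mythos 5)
Your outline follows the paper's strategy (Theorem \ref{theo:bounded.index}): index concentration at finitely many points, curvature estimates giving graphical convergence of the sheets, the strong separation estimates of Proposition \ref{prop:ultimate.stable.estimates} and Corollary \ref{coro:ultimate.stable.estimates} making the sheet dynamics mean-curvature dominated, and a renormalized limit of the sheet separation solving the Jacobi equation, hence stability of a two-sided multiplicity $\geq 2$ component, contradicting instability for a bumpy metric. Away from the concentration points this is essentially the paper's argument: one writes the difference of the graphical mean curvature equations for the top and bottom sheets, normalizes by the sup of the separation, and passes to a weak $W^{1,2}_{\loc}$ limit; the fact that $H_{\Gamma_{i,\ell}} = o(\eps_i|\log\eps_i|)$ while $\inf f_i \gtrsim \eps_i|\log\eps_i|$ kills the inhomogeneity, so your worry about residual Toda interactions is not where the difficulty lies.

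The genuine gap is at the concentration points $p_j$, exactly the step you dispatch with ``standard removable-singularity theory.'' The Harnack constant in \eqref{eq:bounded.index.harnack} and the convergence rates are only uniform on $\Sigma_0\setminus B_r(p_j)$ for each \emph{fixed} $r$, with constants a priori depending on $r$. If you normalize by $\|D_i\|_{L^\infty}$ over the punctured region, the sup may concentrate near the punctures and the limit $\varphi$ could be identically zero away from them; if you normalize on a fixed compact set away from the punctures, $\varphi$ need not be bounded near $p_j$, and removable singularity for bounded solutions does not apply (nor is a log-type Bôcher singularity ruled out for free). What is needed is a Harnack-type bound $\sup_{\Sigma\setminus B_r(p_j)} f_i \leq c''\inf_{\Sigma\setminus B_\rho(p_j)} f_i$ with $c''$ independent of both $i$ and $r$; this is Proposition \ref{prop:bounded.index.nontrivial.limit}, and its proof is not soft. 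The paper obtains it by a sliding/moving-planes argument: one builds Allen--Cahn barriers with prescribed Dirichlet data centered on the minimal disks of White's foliation (Proposition \ref{prop:white.foliation}), via the fixed-point construction of Theorem \ref{theo:dirichlet.data.construction} (all of Section \ref{sec:dirichlet.data}, delicate because the sheets are only $C^{2,\theta}$ with $O(\eps^{-1})$ control on third derivatives), and slides them to trap the top and bottom sheets across the puncture, showing their heights there are controlled by their values on $\partial B_\rho(p_j)$ up to $O(\eps_i|\log\eps_i|)$, which the separation lower bound \eqref{eq:bounded.index.estimate.height} then absorbs. Without this (or some substitute barrier/maximum-principle input across the punctures), your passage from a Jacobi field on $\Sigma_0\setminus\{p_j\}$ to a nontrivial positive Jacobi field on $\Sigma_0$ does not go through, and this is precisely the point where the bounded-index case differs from the stable case.
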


In \cite{MarquesNeves:multiplicity}, Marques--Neves confirm this in the case of a one parameter Almgren--Pitts sweepout. The one parameter case had been previously considered for metrics of positive Ricci curvature by Marques--Neves \cite{MarquesNeves:rigidity.min.max} and subsequently by Zhou \cite{Zhou:posRic}. See also \cite[Corollary E]{Guaraco} and \cite[Theorem 1]{GasparGuaraco} for results comparing the Allen--Cahn setting to Almgren--Pitts setting which establish multiplicity one for hypersurfaces obtained by a one parameter Allen--Cahn min-max method in certain settings. We also note that Ketover--Liokumovich--Song \cite{Song,KetoverLiokumovich,KetoverLiokumovichSong} have proven multiplicity (and index) estimates for one parameter families in the Simon--Smith \cite{SimonSmith} variant of Almgren--Pitts in $3$-manifolds.\footnote{Added in proof: As noted before, the full multiplicity one conjecture for Almgren--Pitts (in dimensions $3$ through $7$) has now been proven by X. Zhou \cite{Zhou:multiplicity-one}.}

We recall the following standard definition:
\begin{defi} \label{def:bumpy.metric}
	We say that a metric $g$ on a Riemannian manifold $M^{n}$ is \emph{bumpy} if there is no immersed closed minimal hypersurface $\Sigma^{n-1}$ with a non-trivial Jacobi field. 
\end{defi}

By work of White \cite{White:bumpy.old,White:bumpy.new}, bumpy metrics are generic in the sense of Baire category. Here, ``generic'' will always mean in the Baire category sense. 

We are able to prove a strong version of the multiplicity one conjecture (when $n=3$) for minimal surfaces obtained by Allen--Cahn min-max methods with an \emph{arbitrary} number of parameters. Such a method was set up by Gaspar--Guaraco \cite{GasparGuaraco}. 

Indeed, we prove that for \emph{any} metric $g$ on a closed $3$-manifold, the unstable components of such a surface are multiplicity one. Moreover, for a generic metric, we show that \emph{each} component of the surface occurs with multiplicity one (not just the unstable components). Finally, we are able to show for generic metrics on a $n$-manifold, $3\leq n\leq 7$, the minimal surfaces constructed by Allen--Cahn min-max methods are two-sided. For a one-parameter Almgren--Pitts sweepoints in a $n$-manifold $3\leq n\leq 7$ with positive Ricci curvature, this was proven by Ketover--Marques--Neves \cite{KMN:catenoid}. More precisely, our main results here are as follows (see Theorem \ref{theo:bounded.index} and Corollary \ref{coro:mult.one.conj} for the full statements).

\begin{theo}[Multiplicity and two-sidedness of minimal surfaces constructed via Allen--Cahn min-max]\label{theo:mult.intro-version}
	Let $\Sigma^{2}\subset (M^{3},g)$ denote a smooth embedded minimal surface constructed as the $\eps\searrow 0$ limit of solutions to the Allen--Cahn equation on a $3$-manifold with uniformly bounded index and energy. If $\Sigma$ occurs with multiplicity or is one-sided, then it carries a positive Jacobi field (on its two-sided double cover, in the second case). 
	
	Note that positive Jacobi fields do not occur when $g$ is bumpy or when $g$ has positive Ricci curvature. Thus, in either of these cases,  each component of $\Sigma$ is two-sided and occurs with multiplicity one.
\end{theo}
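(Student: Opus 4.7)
The plan is to extract a positive Jacobi field on $\Sigma$ from (normalized) consecutive gaps between the nodal sheets of the solutions $u_i$, using the sheet separation estimates to identify the limiting linear PDE as the Jacobi equation. First, by Theorem \ref{theo:curv.est} together with the uniform energy bound, a subsequence of the $u_i$ converges in the varifold sense to a stationary integer-rectifiable limit supported on a smooth embedded minimal surface $\Sigma$. The bounded Morse index $\ind(u_i) \leq p$ yields a finite ``bad'' set $\Lambda \subset M$ with $|\Lambda| \leq p$ such that, after passing to a further subsequence, the $u_i$ are eventually stable on any fixed compact set of $M \setminus \Lambda$. Thus the curvature estimates and Proposition \ref{prop:ultimate.stable.estimates} apply uniformly on compact subsets of $M \setminus \Lambda$.

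Fix a two-sided component $\Sigma_0$ of $\Sigma$ occurring with multiplicity $m \geq 2$ and work in Fermi coordinates $Z_{\Sigma_0}(y,z)$ on a tubular neighborhood. The sheet separation estimate and the $|\cA|$-bound force the nodal set $\{u_i = 0\}$ to decompose, for all large $i$, into exactly $m$ ordered $C^{2,\alpha}$ normal graphs of height functions $h_i^{(1)} < \ldots < h_i^{(m)}$ on $\Sigma_0 \setminus B_\rho(\Lambda)$, with consecutive gaps $d_i^{(k)} := h_i^{(k+1)} - h_i^{(k)} \searrow 0$. Pulling the Allen--Cahn equation back through $Z_{\Sigma_0}$ and matching against the one-dimensional heteroclinic profile (this is the Toda-system expansion underlying Proposition \ref{prop:ultimate.stable.estimates}), the rescaled gaps $\tilde d_i^{(k)} := d_i^{(k)} / \|d_i^{(k)}\|_{L^\infty(K)}$ satisfy a linear elliptic equation whose coefficients converge to those of the Jacobi operator
\[
L_{\Sigma_0} \varphi = \Delta_{\Sigma_0} \varphi + (|\sff_{\Sigma_0}|^2 + \ricc_g(\nu_{\Sigma_0}, \nu_{\Sigma_0})) \varphi.
\]
The uniform $C^{2,\alpha}$ bounds on the $h_i^{(k)}$ pass to Schauder bounds on $\tilde d_i^{(k)}$, so a subsequence converges smoothly to a bounded $\varphi \geq 0$ with $L_{\Sigma_0}\varphi = 0$ and $\|\varphi\|_{L^\infty} = 1$ on $\Sigma_0 \setminus \Lambda$. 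Boundedness plus finiteness of $\Lambda$ permits removable-singularity for linear second-order elliptic operators to extend $\varphi$ smoothly across $\Lambda$, after which the strong maximum principle upgrades $\varphi \geq 0$ to $\varphi > 0$, yielding the positive Jacobi field. If instead $\Sigma_0$ is one-sided, pass to the connected orientation double cover $\pi: \tilde\Sigma_0 \to \Sigma_0$; the lifts $\tilde u_i$ satisfy $\tilde u_i \circ \tau = -\tilde u_i$ for the deck involution $\tau$, so nodal sheets come in $\tau$-paired branches, and applying the preceding argument to the gap between a sheet and its $\tau$-image produces a positive Jacobi field on $\tilde\Sigma_0$.

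The ``generic'' and positive-Ricci consequences are immediate: a bumpy metric admits no nontrivial Jacobi field on any immersed closed minimal hypersurface by Definition \ref{def:bumpy.metric}, and on a closed two-sided minimal surface in a positive-Ricci $3$-manifold, integrating $L_{\Sigma_0}\varphi = 0$ against $\varphi$ and using $\ricc_g > 0$ and $\int |\sff_{\Sigma_0}|^2 \varphi^2 \geq 0$ forces $\varphi \equiv 0$, contradicting $\varphi > 0$. The main obstacle, and the reason the sharp refinement of the sheet separation estimates in Proposition \ref{prop:ultimate.stable.estimates} (i.e.\ $D - \sqrt 2 \eps |\log \eps| + \tfrac{1}{\sqrt 2}\eps \log|\log\eps| \to -\infty$) is essential, lies in the derivation of the linearization in the second paragraph: one needs the Fermi-coordinate expansion of \eqref{eq:ac.pde} to enough subleading orders so that the $\eps_i$-dependent error terms in the equation for $\tilde d_i^{(k)}$ vanish uniformly on compact subsets of $\Sigma_0 \setminus \Lambda$, and so that the limit is the \emph{geometric} Jacobi operator $L_{\Sigma_0}$ rather than a genuinely nonlinear Toda-type equation whose kernel need not produce a Jacobi field.
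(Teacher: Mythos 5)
Your outline follows the same basic mechanism as the paper away from the index concentration set: decompose the nodal set into ordered graphs, use the strong separation estimates (Proposition \ref{prop:ultimate.stable.estimates} and Corollary \ref{coro:ultimate.stable.estimates}) to see that the mean curvature/interaction terms are $o(\eps|\log\eps|)$ while the gaps are $\gtrsim \eps|\log\eps|$, and pass the normalized gap to a nonnegative solution of the Jacobi equation. However, there is a genuine gap exactly where the paper has to work hardest. You normalize $\tilde d_i^{(k)}$ by its sup on a \emph{fixed} compact set $K\subset\Sigma_0\setminus\Lambda$ and then assert that the limit $\varphi$ is \emph{bounded} on $\Sigma_0\setminus\Lambda$, so that removable singularities apply across $\Lambda$. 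But the Harnack constant you have is only uniform on each fixed compact subset of $\Sigma_0\setminus\Lambda$ and degenerates as you approach the index concentration points; nothing in your argument prevents $\varphi$ from blowing up at a point of $\Lambda$ (in two dimensions the limit could perfectly well be a Green's-function-type solution of the Jacobi equation with a $\log(1/r)$ pole, which does \emph{not} extend to a Jacobi field). Controlling the top and bottom sheets \emph{uniformly across the punctures} is the content of Proposition \ref{prop:bounded.index.nontrivial.limit}, and the paper proves it by a sliding-plane argument: local minimal disk foliations from White's theorem (Proposition \ref{prop:white.foliation}) are turned into Allen--Cahn barriers with prescribed Dirichlet data via the fixed-point construction of Theorem \ref{theo:dirichlet.data.construction} (all of Section \ref{sec:dirichlet.data}), yielding $\sup_{\Sigma\setminus B_r(P_\star)} f_i \leq c''\inf_{\Sigma\setminus B_\rho(P_\star)} f_i$ with $c''$ independent of $i$ and $r$. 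Your proposal locates the ``main obstacle'' in the Fermi-coordinate linearization instead; that part is comparatively routine (the paper does it via the difference of graph mean curvatures and Lemma \ref{lemm:mean.curvature.laplacian}), and omitting the barrier step leaves the conclusion unproved whenever the index is nonzero, which is the case of interest for min-max.

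A second, smaller error: in the one-sided case you claim the lift satisfies $\tilde u_i\circ\tau=-\tilde u_i$. Since $\tilde u_i=u_i\circ\pi$ and $\pi\circ\tau=\pi$, the lift is $\tau$-\emph{invariant}, not anti-invariant. The paper uses precisely this invariance: if the double cover were approached with odd multiplicity, the lifted solution would have to tend to $-1$ on one side and $+1$ on the other of $\check\Sigma$, while $\tau$ exchanges the two sides, contradicting $\check u\circ\tau=\check u$; hence the multiplicity on the cover is even, in particular $\geq 2$, and the two-sided argument (including the barrier step above) applies there. Your ``$\tau$-paired branches'' picture can be repaired along these lines, but as written the mechanism is wrong.
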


\begin{rema}
	We re-emphasize that our theorem applies generally to sequences of Allen--Cahn solutions with uniformly bounded energy and Morse index. Thus, unlike the proofs in the Almgren--Pitts setting, we do not need to make use of any min-max characterization of the limiting surface to rule out multiplicity.
\end{rema}

Our proof here is modeled on the study of bounded index minimal hypersurfaces in a Riemannian manifold. Indeed, Sharp has shown that minimal hypersurfaces in $(M^{n},g)$ for $3\leq n\leq 7$ with uniformly bounded area and index are smoothly compact away from finitely many points where the index can concentrate \cite{Sharp} (see also White's proof \cite{White:curvature} of the Choi--Schoen compactness theorem \cite{ChoiSchoen}). A crucial point there is to prove that higher multiplicity of the limiting  surface produces a positive Jacobi field (even across the points of index concentration (where the convergence of the hypersurfaces need not occur smoothly). This can be handled via an elegant argument of White, based on the construction of a local foliation by minimal surfaces to use as a barrier for the limiting surfaces (cf.\ \cite{White:compactness.new}). 

In the minimal surface setting, the existence of the foliation is a simple consequence of the implicit function theorem. However, in the Allen--Cahn setting, the singular limit $\eps\searrow 0$ limit complicates this argument. Instead, we construct barriers by a more involved fixed point method in Theorem \ref{theo:dirichlet.data.construction}. Once that theorem is proven, we show how the barriers can be used to bound the Jacobi fields along the points of index concentration in the process of the proof of Theorem \ref{theo:bounded.index} by carrying out a new sliding plane type argument for the Allen--Cahn equation on Riemannian manifolds. Our proof of Theorem \ref{theo:dirichlet.data.construction} is modeled on the work of Pacard \cite{Pacard12} (with appropriate extension to the case of Dirichlet boundary conditions), but there is a significant technical obstruction here: we do not know that the level sets of the solution Allen--Cahn converge smoothly, but only in $C^{2,\alpha}$. To apply the fixed point argument, we need some control on higher derivatives. By an observation of Wang--Wei \cite[Lemma 8.1]{WangWei}, we control one higher derivative of the level sets, but only by a constant that is $O(\eps^{-1})$ (see \eqref{eq:dirichlet.data.sigma.c3alpha}). This complicates the proof of Theorem \ref{theo:dirichlet.data.construction}. 

\subsubsection{Index lower bounds} 

Lower semicontinuity of the Morse index along the singular limit $\eps\searrow 0$ of a sequence of solutions to the Allen--Cahn equation is proven by Hiesmayr \cite{Hiesmayr} (for two-sided surfaces) and Gaspar \cite{Gaspar} without assuming two-sidedness (see also \cite{Le:2ndvar}). On the other hand, upper semicontinuity of the index does not hold in general (cf.\ Example \ref{exam:upper.semi.fails.index}). Here, we establish upper semicontinuity of the index, in all dimensions, under the a priori assumption that the limiting surface is multiplicity one.\footnote{We note that Marques--Neves had previously announced the analogous index uppper-semicontinuity result for multiplicity one Almgren--Pitts limits and that their proof \cite{MarquesNeves:uper-semi-index} appeared shortly after the first version of this paper.} In particular we prove (see Theorem \ref{theo:index.lower.bounds} for the full statement)
\begin{theo}[Upper semicontinuity of the index in the multiplicity one case] \label{theo:index.semicontinuity}
	Suppose that a smooth embedded minimal hypersurface $\Sigma^{n-1}\subset (M^{n},g)$ is the multiplicity one limit as $\eps\searrow 0$ of a sequence of solutions $u$ to the Allen--Cahn equation. Then for $\eps>0$ sufficiently small,
	\[
	\nul(\Sigma) + \ind(\Sigma) \geq \nul(u) + \ind(u). 
	\]
\end{theo}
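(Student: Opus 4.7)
The plan is to lift a subspace witnessing $\nul(u)+\ind(u)$ to a subspace on $\Sigma$ witnessing at least as many non-positive eigenvalues of the Jacobi operator, via a fiber-averaging projection aligned with the ``ground-state mode'' $\partial_z u$ in Fermi coordinates. Let $N := \nul(u)+\ind(u)$ and let $V_\eps \subset W^{1,2}(M)$ be the $N$-dimensional span of eigenfunctions of $L_\eps := -\eps\Delta + \eps^{-1}W''(u)$ with eigenvalue $\leq 0$, so $\delta^2 E_\eps[u] \leq 0$ on $V_\eps$. By Theorem~\ref{theo:curv.est} together with the multiplicity one assumption, for all small $\eps$ there is a tubular neighborhood $N_\delta(\Sigma)$ with Fermi coordinates $(y,z)$ on which $u$ is smooth and strictly monotone in $z$, transitioning once from near $-1$ to near $+1$; in particular $\partial_z u > 0$ on $N_\delta(\Sigma)$ and concentrates at $\{u=0\}$. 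Outside $N_{\delta/2}(\Sigma)$, $W''(u) \geq c_0 > 0$, so Agmon-type estimates let us reduce (up to errors $O(e^{-c/\eps})$) to the case where elements of $V_\eps$ are supported in $N_\delta(\Sigma)$.

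Define the projection $\Pi_\eps \colon V_\eps \to C^\infty(\Sigma)$ by
\[ \Pi_\eps(\zeta)(y) := \frac{\int_{-\delta}^{\delta} \zeta(y,z)\,\partial_z u(y,z)\,dz}{\int_{-\delta}^{\delta} (\partial_z u(y,z))^2\,dz}, \]
whose denominator converges uniformly in $y$ to $h_0$. Decompose $\zeta = \phi(y)\,\partial_z u + \zeta^\perp$ with $\phi := \Pi_\eps(\zeta)$ and $\int \zeta^\perp\,\partial_z u\,dz = 0$ in each fiber. Differentiating \eqref{eq:ac.pde} in $z$ and using the Riccati identity $\partial_z H_z = -|\sff_z|^2 - \ricc(\nu,\nu)$ in Fermi coordinates yields the key approximate identity
\[ L_\eps(\partial_z u) = \eps\bigl(|\sff_z|^2 + \ricc(\nu,\nu)\bigr)\partial_z u + \text{(lower order)}. \]
Substituting this into $\delta^2 E_\eps[u](\zeta,\zeta)$, integrating by parts in $z$, and using the curvature and sheet separation estimates to approximate $\sff_z$ by $\sff_\Sigma$ in the relevant fiber integrals gives
\[ \delta^2 E_\eps[u](\zeta,\zeta) \;\geq\; h_0\,Q_\Sigma(\phi) + \frac{c_1}{\eps}\,\|\zeta^\perp\|_{L^2}^2 - o(1)\bigl(\|\phi\|_{H^1(\Sigma)}^2 + \tfrac1\eps\|\zeta^\perp\|_{L^2}^2\bigr), \]
where $Q_\Sigma(\phi) = \int_\Sigma |\nabla_\Sigma \phi|^2 - (|\sff_\Sigma|^2 + \ricc(\nu_\Sigma,\nu_\Sigma))\phi^2$ and $c_1 > 0$ comes from the well-known spectral gap of the one-dimensional operator $-\partial_t^2 + W''(H)$ (with $H$ the heteroclinic) on the $L^2(\RR)$-orthogonal complement of $H'$, transported to the fiber by $t = z/\eps$.

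Now dimension-count. If $\Pi_\eps|_{V_\eps}$ has a nontrivial kernel, any $0 \neq \zeta$ in it has $\phi=0$ hence $\zeta = \zeta^\perp$, and the displayed lower bound forces $\delta^2 E_\eps[u](\zeta,\zeta) \geq (c_1-o(1))\eps^{-1}\|\zeta\|^2 > 0$, contradicting $\zeta \in V_\eps$. So $\Pi_\eps$ is injective on $V_\eps$ and $W_\eps := \Pi_\eps(V_\eps)$ has dimension $N$ inside $C^\infty(\Sigma)$. For each $\phi \in W_\eps$ with preimage $\zeta$, the bound $0 \geq \delta^2 E_\eps[u](\zeta,\zeta) \geq h_0 Q_\Sigma(\phi) - o(1)\|\phi\|_{H^1(\Sigma)}^2$ combined with the trivial estimate $\|\phi\|_{H^1}^2 \lesssim Q_\Sigma(\phi) + C\|\phi\|_{L^2}^2$ gives $Q_\Sigma(\phi) \leq o(1)\|\phi\|_{L^2}^2$ uniformly on $W_\eps$. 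The min-max characterization of $\lambda_N(L_\Sigma)$ then yields $\lambda_N(L_\Sigma) \leq 0$, i.e.\ $\nul(\Sigma) + \ind(\Sigma) \geq N$, as required.

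The main technical obstacle will be proving the approximate identity for $L_\eps(\partial_z u)$ with an $L^2$ error small enough to absorb into the $o(1)$ term, which forces us to control how $|\sff_z|^2$ and the induced metric on $z$-slices deviate from their $\Sigma$-values over the entire support of the fiber integrals. This requires combining Theorem~\ref{theo:curv.est} with the sheet separation estimates (Proposition~\ref{prop:ultimate.stable.estimates}) and exploiting the best available $C^{2,\alpha}$-type control of level sets of $u$ (of size $O(\eps^{-1})$ on third derivatives, cf.~\cite[Lemma 8.1]{WangWei}). The multiplicity one hypothesis is essential throughout: it is what makes $\partial_z u$ a single, well-defined positive scalar ground-state in the fiber direction, which is precisely the object that powers both the projection $\Pi_\eps$ and the fiberwise spectral gap estimate.
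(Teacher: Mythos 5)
Your architecture runs parallel to the paper's proof of Theorem \ref{theo:index.lower.bounds} (fiberwise decomposition along a one-dimensional mode, spectral gap on the orthogonal complement, transfer of the quadratic form to the Jacobi form, dimension count), but your central device is genuinely different: you project onto the exact derivative $\partial_z u$ in Fermi coordinates over $\Sigma$, whereas the paper projects onto $\overline{\mathbb{H}}'(\eps^{-1}(z-h(y)))$ in Fermi coordinates over the nodal set $\Gamma$, and is then forced to determine the precise $O(\eps^2)$ structure of $\phi=u-U[\mathbf h]$ (Lemma \ref{lemm:mult.one.imp.hHeqn}, Proposition \ref{prop:mult.one.improved.phi.behav}, the function $\mathbb{J}$ of \eqref{eq:ODE.for.J}), because in Lemma \ref{lemm:index.Hprime.comp} the term $\eps^{-1}W'''(U)\phi f^2(\overline{\mathbb{H}}')^2$ is of exactly the same order $\eps^2$ as the Jacobi term and its coefficient must be computed, not merely bounded. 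Your choice would in principle avoid that: differentiating the equation in $z$ is exact, and the full potential $|\sff_\Sigma|^2+\ricc_g(\nu,\nu)$ emerges from $\partial_z H_z$, so no $W'''$-term appears. That is an attractive potential simplification.

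However, as written there is a genuine gap precisely where the content of the theorem lies. The exact identity is $\eps^2\Delta_g(\partial_z u)-W''(u)\partial_z u=\eps^2\bigl(|\sff_z|^2+\ricc_g(\partial_z,\partial_z)\bigr)\partial_z u+\eps^2\bigl(2\langle\sff_z,\nabla^2_{g_z}u\rangle+\langle\nabla_{g_z}H_z,\nabla_{g_z}u\rangle\bigr)$, and the commutator terms you dismiss as ``lower order'' contain tangential derivatives of $u$, which in $\Sigma$-Fermi coordinates are of size $\eps^{-1}|\nabla_\Sigma f|$ and $\eps^{-1}|\nabla^2_\Sigma f|+\eps^{-2}|\nabla_\Sigma f|^2$, where $f$ is the height of $\{u=0\}$ over $\Sigma$. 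The multiplicity one hypothesis gives only qualitative $C^{2,\theta}$ convergence $f\to0$ (Lemma \ref{lemm:multiplicity.one.convergence}) with no rate ($\Sigma$ may be degenerate), so these terms are not pointwise lower order; after pairing with $\phi^2\partial_z u$ and with $\zeta^\perp$ they become admissible only through structural cancellations that your proposal never identifies: the parity identity $\int\mathbb{H}'\mathbb{H}''\,dt=0$, the minimality of $\Sigma$ (so that $d\mu_{g_z}=(1+O(z^2))\,d\mu_{g_0}$), and, for the cross terms, tangential integration by parts combined with the differentiated fiber-orthogonality relation so as to avoid the uncontrolled quantity $\|\nabla^2_\Sigma\phi\|$ (cf.\ the paper's Lemma \ref{lemm:index.mixed.terms}), plus the fiberwise gap argument of Lemma \ref{lemm:index.perp.comp}. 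Your displayed lower bound on $\delta^2E_\eps$ is therefore asserted rather than derived, and the paper's explicit warning that the coarse estimates ``do not seem to suffice'' for this theorem is exactly a warning about this kind of hand-waving. Separately, the inputs you cite are the wrong ones: Theorem \ref{theo:curvature.estimate} and Proposition \ref{prop:ultimate.stable.estimates} concern stable solutions in dimension three, while the statement assumes no stability and holds in every dimension; the relevant facts here are Lemmas \ref{lemm:multiplicity.one.lower.gradient.bound}--\ref{lemm:multiplicity.one.convergence} and \eqref{eq:mult.one.initial.bds}. (Minor slips: the normalizing denominator is $\sim\eps^{-1}\energyunit$, not $\energyunit$; $\partial_z u>0$ on the whole tube is not justified, only on $\{|u|\le 1-\beta\}$, though it is not actually needed; and the sign in your identity for $L_\eps(\partial_z u)$ is off.)
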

To prove this upper semicontinuity, we need to delve deeper into the equation that controls the level sets of $u$ and obtain a more accurate approximation. What was done for Theorem \ref{theo:curv.est}---while well suited to understanding the phenomenon of multiplicity---does not suffice for Theorem \ref{theo:index.semicontinuity}.

\subsubsection{Applications related to Yau's conjecture on infinitely many minimal surfaces}

A well known conjecture of Yau posits that any closed $3$-manifold admits infinitely many immersed minimal surfaces \cite{Yau:problems}. By considering the $p$-widths introduced by Gromov \cite{Gromov:waist} (see also \cite{Guth:minimax}), Marques--Neves proved \cite{MarquesNeves:posRic} that a closed Riemannian manifold $(M^{n},g)$ (for $3\leq n\leq 7$) with positive Ricci curvature admits infinitely many minimal surfaces. Moreover, by an ingenious application of the Weyl law for the $p$-widths proven by Liokumovich--Marques--Neves \cite{LMN:Weyl}, Irie--Marques--Neves \cite{IrieMarquesNeves} (see also the recent work of Gaspar--Guaraco \cite{GasparGuaraco:weyl} that appeared after the first version of this paper was posted) have recently shown that the set of metrics on a closed Riemannian manifold $(M^{n},g)$ (with $3\leq n \leq 7$) with the property that the set of minimal surfaces is dense in the manifold is generic (see also \cite{MarquesNevesSong}).

We note that the arguments in each of \cite{MarquesNeves:posRic,IrieMarquesNeves,GasparGuaraco:weyl} to prove the existence of infinitely many minimal surfaces are \emph{necessarily} \emph{indirect}, as they do not rule out the $p$-widths being achieved with higher multiplicity.  Having overcome this obstacle, we may give a ``direct'' proof (for $n=3$) of Yau's conjecture for bumpy metrics\footnote{We note that \cite{IrieMarquesNeves,GasparGuaraco:weyl} prove Yau's conjecture for a different (also generic) set of metrics.} with some new geometric conclusions (see Corollaries \ref{coro:mult.one.conj}, \ref{coro:Yau.conj} for proofs).

\begin{coro}[Yau's conjecture for bumpy metrics and geometric properties of the minimal surfaces]
	\label{coro:yau-intro}
	Let $(M^{3},g)$ denote a closed $3$-manifold with a bumpy metric. Then, there is $C=C(M,g,W)>0$ and a smooth embedded minimal surfaces $\Sigma_{p}$ for each positive integer $p>0$ so that
	\begin{itemize}
		\item each component of $\Sigma_{p}$ is two-sided,
		\item the area of $\Sigma_{p}$ satisfies $C^{-1} p^{\frac 1 3}\leq \area_{g}(\Sigma_{p}) \leq C p^{\frac 1 3}$, 
		\item the index of $\Sigma_{p}$ is satisfies $\ind(\Sigma_{p}) = p$, 
		and
		\item the genus of $\Sigma_{p}$ satisfies $\genus(\Sigma_p) \geq \frac p 6 - C p^{\frac 1 3}$.
	\end{itemize}
	In particular, thanks to the index estimate, all of the $\Sigma_{p}$ are geometrically distinct. 
\end{coro}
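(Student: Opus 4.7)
The plan is to combine the Allen--Cahn min-max scheme of Gaspar--Guaraco at the level of the Allen--Cahn $p$-widths $\omega_p^{\mathrm{AC}}(\eps)$ with the three main results of this paper. Specifically, I would extract $\Sigma_p$ as the $\eps \searrow 0$ limit of critical points $u_\eps$ sitting at the width level, and then deduce the four claimed properties (two-sidedness, area of order $p^{1/3}$, index $= p$, genus $\gtrsim p/6$) by chaining Theorem \ref{theo:mult.intro-version}, Theorem \ref{theo:index.semicontinuity}, the Weyl law for $p$-widths, and a known linear index-to-genus estimate.

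First, for each $p \geq 1$ and each small $\eps > 0$, the Gaspar--Guaraco min-max theorem yields a critical point $u_\eps$ of $E_\eps$ with $E_\eps(u_\eps) = \omega_p^{\mathrm{AC}}(\eps)$ and $\ind(u_\eps) \leq p \leq \ind(u_\eps) + \nul(u_\eps)$; the lower bound on $\ind+\nul$ is the standard Palais--Smale/Ljusternik--Schnirelman output on a Hilbert space, valid because $E_\eps$ satisfies the Palais--Smale condition. Since $\omega_p^{\mathrm{AC}}(\eps)$ is uniformly bounded in $\eps$, I have the uniform energy bound required to invoke the curvature estimate of Theorem \ref{theo:curv.est} and the bounded-index compactness of Theorem \ref{theo:bounded.index}; these together produce, along a subsequence $\eps_i \searrow 0$, a smooth closed embedded minimal surface $\Sigma_p$ and an integer multiplicity $m_p \geq 1$ with $2 h_0 \, m_p \, \area_g(\Sigma_p) = \lim_i E_{\eps_i}(u_{\eps_i})$.

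I then read off the four conclusions. For two-sidedness and multiplicity, the bumpy hypothesis rules out positive Jacobi fields on any two-sided double cover of any component of $\Sigma_p$, so Theorem \ref{theo:mult.intro-version} forces $m_p = 1$ and every component to be two-sided. For the area bound, I combine the convergence $\lim_{\eps \searrow 0} \omega_p^{\mathrm{AC}}(\eps) = 2 h_0 \, \omega_p$ (available from Gaspar--Guaraco) with the Liokumovich--Marques--Neves Weyl law, which yields the two-sided bound $C^{-1} p^{1/3} \leq \omega_p \leq C p^{1/3}$ in dimension $3$. For the index count, lower semicontinuity of the Morse index along Allen--Cahn limits (Hiesmayr, Gaspar) supplies $\ind(\Sigma_p) \leq \liminf_i \ind(u_{\eps_i}) \leq p$, and for the reverse inequality I apply Theorem \ref{theo:index.semicontinuity}, now valid because $m_p = 1$, to get $\ind(\Sigma_p) + \nul(\Sigma_p) \geq \ind(u_{\eps_i}) + \nul(u_{\eps_i}) \geq p$ for $i$ large; bumpiness kills $\nul(\Sigma_p)$, producing $\ind(\Sigma_p) = p$.

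The genus bound follows by feeding two-sidedness, $\ind(\Sigma_p) = p$, and $\area_g(\Sigma_p) \leq C p^{1/3}$ into the linear index-to-genus estimate of Chodosh--Maximo for closed embedded two-sided minimal surfaces in a closed $3$-manifold, of the form $\ind(\Sigma) \leq 6\, \genus(\Sigma) + C\, \area_g(\Sigma)$, producing $\genus(\Sigma_p) \geq p/6 - C p^{1/3}$; geometric distinctness is then automatic from the strictly increasing Morse index. I expect the delicate step to be the index lower bound: the use of Theorem \ref{theo:index.semicontinuity} to transfer $\ind(u_{\eps_i}) + \nul(u_{\eps_i}) \geq p$ to the limit presupposes $m_p = 1$, which is first established without reference to the index bound---fortunately Theorem \ref{theo:mult.intro-version} only needs the bumpy hypothesis, breaking the apparent circularity.
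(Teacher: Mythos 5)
Your argument is correct and follows essentially the same route as the paper's proof (Corollary \ref{coro:Yau.conj}): Gaspar--Guaraco min-max supplies critical points with $\ind \leq p \leq \ind + \nul$ and energy $\sim p^{1/3}$; Theorem \ref{theo:bounded.index}/Corollary \ref{coro:mult.one.conj} plus bumpiness give multiplicity one and two-sidedness; Hiesmayr--Gaspar and Theorem \ref{theo:index.lower.bounds} pin down $\ind(\Sigma_p)=p$; and a linear index-to-genus inequality yields the genus bound. Your observation that the multiplicity-one step needs no index information beyond the uniform bound, so there is no circularity with the index semicontinuity, is exactly the paper's logic.

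One point needs correcting, though it does not break the proof: the inequality you invoke in the genus step, of the form $\ind(\Sigma') \leq C\,\area_g(\Sigma') + 6\,\genus(\Sigma')$ for a closed two-sided minimal surface, is the Ejiri--Micallef estimate \cite[Theorem 4.3]{EjiriMicallef} (with $r(g)\leq 6g$ the dimension of the space of conformal structures), applied to each connected component and summed; this is what the paper uses. The Chodosh--Maximo result you cite goes in the \emph{opposite} direction (index bounded from below by topology) and cannot serve here, so as literally stated your genus step rests on a misattributed theorem, even though the inequality you wrote down is true and known. Two further minor remarks: the two-sided bound $C^{-1}p^{1/3} \leq E_{\eps}[u_{p,\eps}] \leq C p^{1/3}$ is stated directly in \cite[Theorems 3, 4]{GasparGuaraco}, so the detour through the convergence of Allen--Cahn widths to Almgren--Pitts widths and the Liokumovich--Marques--Neves Weyl law is unnecessary (though harmless); and with this paper's normalization $\energyunit = \int_{-1}^{1}\sqrt{2W}$ the limiting energy identity reads $\energyunit\, m_p\, \area_g(\Sigma_p)$ rather than $2\energyunit\, m_p\, \area_g(\Sigma_p)$, a constant that is irrelevant since only the order in $p$ enters the conclusion.
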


We emphasize that each of the bullet points in the preceding corollary do not follow from the work of Irie--Marques--Neves \cite{IrieMarquesNeves}. Some of these properties were conjectured by Marques and Neves in \cite[p.\ 24]{marques:ICM}, \cite[p.\ 17]{neves:ICM},  \cite[Conjecture 6.2]{MarquesNeves:spaceOfCycles}. In particular, they conjectured that a generic Riemannian manifold contains an embedded two-sided minimal surface of each positive Morse index.

\begin{rema}[Yau's conjecture for $3$-manifolds with positive Ricci curvature]
	We note that because the multiplicity-one property also holds even for non-bumpy metrics of positive Ricci curvature, we may also give a ``direct'' proof of Yau's conjecture for a $3$-manifold with positive Ricci curvature (this was proven by Marques--Neves \cite{MarquesNeves:posRic} in dimensions $3\leq n\leq 7$ using Almgren--Pitts theory). We obtain, exactly as in Corollary \ref{coro:Yau.conj}, the new conclusions that the surfaces $\Sigma_{p}$ are two-sided, have $\area(\Sigma_{p})\sim p^{\frac 13}$, $\ind(\Sigma_{p}) \leq p$ and $\nul(\Sigma_{p})+\ind(\Sigma_{p})\geq p$. 
	
	Moreover, approximating the metric by a sequence of bumpy metrics and passing to the limit (the limit occurs smoothly and with multiplicity one due to the positivity of the Ricci curvature, cf.\ \cite{Sharp}), we find that there is a sequence $\Sigma_{p}'$ (we do not know if this is the same sequence as $\Sigma_{p}$) with these properties and additionally satisfies the genus bound (note that $\Sigma_{p}$ is connected by Frankel's theorem) for possibly a larger constant $C$
	\[
	\genus(\Sigma_{p}') \geq \frac{p}{6} - Cp^{\frac 13}.
	\]
	It is interesting to observe that when $(M^{3},g)$ is the round $3$-sphere, combining our bound $\ind(\Sigma_{p}') \leq p$ with work of Savo \cite{Savo} implies that 
	\[
	\genus(\Sigma_{p}') \leq 2 p - 8
	\]
	as long as $p$ is sufficiently large to guarantee that $\genus(\Sigma_{p}') \geq 1$. Similar conclusions can be derived in certain other $3$-manifolds embedded in Euclidean spaces by \cite{AmbrozioCarlottoSharp:index.genus}.
	
	There has been significant activity concerning the index of the minimal surfaces constructed in \cite{MarquesNeves:posRic}, but before the present work, all that was known was that: for a bumpy metric of positive Ricci curvature, there are closed embedded minimal surfaces of arbitrarily large Morse index \cite{LiZhou,CKM,Carlotto:arb-large}, albeit without information on their area.
\end{rema}

\begin{rema}[Connected components in Corollary {\ref{coro:yau-intro}}]
	Unless $(M, g)$ has the Frankel property (e.g., when it has positive Ricci curvature), the minimal surfaces $\Sigma_p$ obtained in Corollary \ref{coro:yau-intro} may be disconnected. In this case, every connected component $\Sigma_p'$ of $\Sigma_p$ must satisfy:
	\begin{itemize}
		\item $\Sigma_p'$ is two-sided and has $\area_g(\Sigma_p')\leq C p^{\frac 1 3}$,
	\end{itemize}
	and, by a counting argument, there will exist at least one component $\Sigma_p'$ of $\Sigma_p$ such that
	\begin{itemize}
		\item $\genus(\Sigma_p') \geq C^{-1} \ind(\Sigma_p') \geq C^{-1} p^{\frac 2 3}$.
	\end{itemize}
	See Corollary \ref{coro:Yau.conj.components}.
	
	It is not clear that the component $\Sigma_{p}'$ will have unbounded area. In a follow up paper \cite{ChodoshMantoulidis:unbounded-area} we prove the following dichotomy; either
	\begin{enumerate}
		\item $(M,g)$ contains a sequence of connected closed embedded stable minimal surfaces with unbounded area, or
		\item some connected component $\Sigma_{p}''$ of the surfaces $\Sigma_{p}$ obtained in Corollary \ref{coro:yau-intro} has $\area_{g}(\Sigma_{p}'')\geq Cp^{\frac 13}$. 
	\end{enumerate}
	We note that by \cite{CKM,Carlotto:arb-large}, when $(M^{3},g)$ is a bumpy metric with positive scalar curvature the prior condition cannot hold, so the latter alternative holds and, moreover, $\ind(\Sigma_{p}'')\to\infty$. It would be interesting to determine if one can find a connected component $\Sigma_{p}''$ with arbitrarily large area and $\ind(\Sigma_{p}'')\geq c p$ for some $c\in (0,1)$.
\end{rema}

\subsection{One-dimensional heteroclinic solution, $\mathbb{H}$} \label{subsec:heteroclinic.solution}

Recall that the one-dimensional Allen-Cahn equation with $\varepsilon=1$ is $u'' = W'(u)$, for a function $u = u(t)$ of one variable. It's not hard to see that this ODE admits a unique bounded solution with the properties
\[ u(0) = 0, \; \lim_{t \to -\infty} u(t) = -1, \; \lim_{t \to \infty} u(t) = 1. \]
We call this the one-dimensional heteroclinic solution, and denote it as $\mathbb{H} : \RR \to (-1, 1)$. It's also standard to see that the heteroclinic solution satisfies:
\begin{align} 
\mathbb{H}(\pm t)  & = \pm 1 \mp \expansioncoeff \exp(-\sqrt{2} t) + O(\exp(-2\sqrt{2} t)), \label{eq:heteroclinic.expansion.i} \\
\mathbb{H}'(\pm t) & = \sqrt{2} \expansioncoeff \exp(-\sqrt{2} t) + O(\exp(-2\sqrt{2} t)), \label{eq:heteroclinic.expansion.ii} \\
\mathbb{H}''(\pm t) & = - 2 \expansioncoeff \exp(-\sqrt{2} t) + O(\exp(-2\sqrt{2} t)), \label{eq:heteroclinic.expansion.iii}
\end{align}
as $t \to \infty$, for some fixed $A_0 > 0$ that depends on $W$. Moreover,
\[ \int_{-\infty}^\infty (\mathbb{H}'(t))^2 \, dt = \energyunit, \]
where $\energyunit > 0$ also depends on $W$; it is explicitly given by
\[ \energyunit = \int_{-1}^1 \sqrt{2W(t)} \, dt. \]
Finally, we also define
\begin{equation} \label{eq:heteroclinic.eps}
\mathbb{H}_\eps(t) \triangleq \mathbb{H}(\eps^{-1} t), \; t \in \RR,
\end{equation}
which is clearly a solution of $\eps^2 \mathbb{H}_\eps'' = W'(\mathbb{H}_\eps)$.

\subsection{Organization of the paper}

In Section \ref{sec:jacobi.toda.reduction} we make precise \emph{the dependence of the regularity} of the nodal set $\{ u = 0 \}$ of bounded energy and bounded curvature solutions of \eqref{eq:ac.pde} \emph{on the distance} between its different sheets. The dependence is essentially modeled by a Toda system; see, e.g., \eqref{eq:jacobi.toda} and Remark \ref{rema:major.goal}. Restricting to $n=3$ dimensions, in Section \ref{sec:stable.solutions} we use the stability of Allen--Cahn solutions to bootstrap the distance estimates from Section \ref{sec:jacobi.toda.reduction} until they become sharp. In Section \ref{sec:bounded.index} we study solutions of \eqref{eq:ac.pde} with bounded energy and Morse index in $n=3$ dimensions. We use our strong sheet separation estimates from Section \ref{sec:stable.solutions} to construct, in the presence of multiplicity, positive Jacobi fields on the limiting minimal surface away from finitely many points. Then, a ``sliding plane'' argument (modulo a barrier construction deferred to Section \ref{sec:dirichlet.data}) allows us to extend the Jacobi field to the entire limiting surface.

In Section \ref{sec:multiplicity.one} we return to the arbitrary dimensional setting and prove the Morse index is lower semicontinuous for smooth multiplicity one limits. 
In Section \ref{sec:applications} we apply all our tools to prove a strong form of Marques' and Neves' multiplicity one conjecture, and Yau's conjecture for generic metrics.
In Section \ref{sec:dirichlet.data} we construct curved sliding plane barriers for \eqref{eq:ac.pde} that resemble multiplicity-one heteroclinic solutions with prescribed Dirichlet data centered on nondegenerate minimal submanifolds-with-boundary $\Sigma^{n-1} \subset (M^n, g)$, $n \geq 3$.

In Appendix \ref{app:mean.curvature.graphs}, we recall several expressions related to the mean curvature and second fundamental form of graphical hypersurfaces in a Riemannian manifold. In Appendix \ref{app:WW-results} we recall several auxiliary results from \cite{WangWei}. In Appendix \ref{app:proof.lem.comp.improved}, we prove Lemma \ref{lemm:h.phi.comparison.improved} relating regularity of the ``centering'' functions $h_{\ell}$ to that of the function $\phi$ with improved error estimates. In Appendix \ref{app:proof.stab.inproved}, we derive the Toda-system stability inequality with improved error estimates \eqref{eq:toda.stability.estimate.sharper}. In Appendix \ref{app:interpolation.lemma} we recall an interpolation inequality for H\"older norms.

\subsection{Acknowledgments}

O.C. was supported in part by the Oswald Veblen fund and NSF Grant no.\ 1638352. He would like to thank Simon Brendle and Michael Eichmair for their continued support and encouragement, as well as Costante Bellettini, Guido De Philippis, Daniel Ketover, and Neshan Wickramasekera for their interest and for enjoyable discussions. C.M. would like to thank Rick Schoen, Rafe Mazzeo, and Yevgeniy Liokumovich for helpful conversations on topics addressed by this paper. Both authors would like to thank Fernando Cod\'a Marques and Andr\'e Neves very much for their interest and encouragement. They are also grateful to Davi Maximo for pointing out a mistake in the original version of Corollary \ref{coro:yau-intro}. This work originated during the authors' visit to the Erwin Schr\"odinger International Institute for Mathematics and Physics (ESI) during the ``Advances in General Relativity Workshop'' during the summer of 2017, which they would like to acknowledge for its support. Finally, the authors would like to thank the referee for their careful reading of the manuscript and many helpful suggestions.

\section{From phase transitions to Jacobi-Toda systems}


\label{sec:jacobi.toda.reduction}

\subsection{Approximation by superimposed heteroclinics} \label{subsec:jacobi.toda.setup} \label{subsec:approximate.solutions}

In this section we follow Wang-Wei's  \cite{WangWei} investigation of local properties of solutions to the Allen-Cahn equation,
\[ \eps^2 \Delta_g u = W'(u), \]
whose nodal set $\{ u = 0 \}$ can be (locally) decomposed as a union of graphs over a fixed hypersurface (to be denoted $\Sigma$), whose height functions (to be denoted $f_1, \ldots, f_Q$) are bounded in $C^2$ and small in $C^1$. The ultimate goal is to deduce, in a quantitative sense, that the height functions approximately satisfy a Jacobi-Toda system.

The reason we rework the setup is twofold:
\begin{enumerate}
	\item First, most of the analysis in \cite{WangWei} was performed in $\RR^n$, while here we include the details necessary to handle the Riemannian setting (cf.\ \cite[Section 16]{WangWei}).
	\item Secondly (and more fundamentally), we combine the argument from \cite{WangWei} with a further bootstrap argument based on improved error estimates. This allows us to prove much sharper separation estimates than were obtained in \cite{WangWei}. Indeed, we will show that the behavior of the transition layers is dominated by mean curvature, rather than interaction between the layers. This will be crucial for our subsequent applications in Section \ref{sec:bounded.index}. 
\end{enumerate}

Let's set things up. Suppose that $D^{n-1}$ is an $(n-1)$-dimensional disk, over which we take a topological cylinder $\Omega \triangleq D \times [-1,1]$, whose coordinates we label $X = (y, z) \in D \times [-1,1]$. Consider a smooth metric $g$ on $\Omega$, which we assume to be in Fermi coordinate form with respect to $\Sigma$; in $(y, z)$ coordinates:
\[ g = g_z + dz^2. \]
For convenience, we denote $\Sigma \triangleq D \times \{0\} \subset \Omega$.
Let us require that
\begin{equation} \label{eq:sheets.sff.bound}
	\sum_{\ell = 0}^3 |\nabla_\Sigma^\ell \sff_\Sigma| \leq \eta.
\end{equation}
We additionally assume that $\Sigma$ is covered by $C^{4}$-coordinate charts so that the induced metric on $\Sigma$, $g_{0}$ is $C^{3}$-close to the Euclidean metric in the charts, i.e.,
\begin{equation}\label{eq:sheets.metric.bound}
\sum_{\ell=0}^{3} |\partial^{(\ell)}_{y} ((g_{0})_{ij} - \delta_{ij})| \leq \eta.
\end{equation}

We make no assumptions on the mean curvature of $\Sigma$ beyond what follows automatically from \eqref{eq:sheets.sff.bound}. Notice that, as a consequence of \eqref{eq:sheets.sff.bound}-\eqref{eq:sheets.metric.bound}, Fermi coordinates with respect to $\Sigma$ are a $C^4$ diffeomorphism.

In all that follows, we denote for $y_0 \in \Sigma \setminus \partial \Sigma$ and $0 < r < \dist_{g_0}(y_0, \partial \Sigma)$,
\[ B^{n-1}_r(y_0) \triangleq \{ y \in \Sigma : \dist_{g_0}(y, y_0) < r \}, \]
where $\dist_{g_0}$ is the intrinsic distance on $\Sigma$. We assume, without loss of generality, that $\Sigma = \overline{B}^{n-1}_2(0)$. 

\begin{rema}\label{rema:scale-vs-WW}
We have chosen to work at the original scale, rather than rescaling by $\eps$ as in \cite{WangWei}. This does not affect our subsequent analysis, but certain expressions will change by appropriate multiples of $\eps$.
\end{rema}

Let $u : \Omega \to (-1,1)$ be a critical point of $E_\varepsilon \restr \Omega$, with
\begin{align}
	\varepsilon & \leq \varepsilon_0, \label{eq:sheets.eps.bound} \\
	(E_{\varepsilon} \restr \Omega)[u_i] & \leq E_0, \label{eq:sheets.energy.bound} \\
	\varepsilon |\nabla u| & \geq c_0^{-1} > 0 \text{ on } \Omega \cap \{ |u| \leq 1-\beta \}, \label{eq:sheets.lower.density.bound} \\
	|\cA| & \leq c_0 \text{ on } \Omega \cap \{ |u| \leq 1-\beta \}; \label{eq:sheets.enhanced.sff.bound} \\
\intertext{By \eqref{eq:sheets.lower.density.bound}, \eqref{eq:sheets.enhanced.sff.bound}, and elliptic regularity, we automatically also get}
	\eps |\nabla \cA| + \eps^2 |\nabla^2 \cA| & \leq c_0 \text{ on } \Omega \cap \{ |u| \leq 1-\beta \} \label{eq:sheets.enhanced.sff.grad.bound}
\end{align}
for a possibly larger $c_0 > 0$. See \cite[Lemma 8.1]{WangWei}. With regard to the nodal set of $u$, we require
\begin{align}
	\{ u = 0 \} \cap \Omega & = \bigcup_{\ell=1}^Q \Gamma_\ell, \label{eq:sheets.graph.decomposition} \\
\intertext{where $\Gamma_\ell = \graph_\Sigma f_\ell$ denote normal graphs over $\Sigma$ ordered so that $f_{1} < f_{2}< \dots < f_{Q}$, and the graphing functions $f_\ell : \Sigma \to \RR$ are assumed to satisfy}
	|f_\ell| + |\nabla_\Sigma f_\ell| & \leq \eta, \label{eq:sheets.graph.apriori.C1.bounds} \\
\intertext{and (this alternatively follows automatically from \eqref{eq:sheets.sff.bound} and \eqref{eq:sheets.enhanced.sff.bound})}
	|\nabla_\Sigma^2 f_\ell| & \leq c_0. \label{eq:sheets.graph.apriori.C2.bounds}
\end{align}
Finally, after possibly sending $z\mapsto -z$, we can assume that for $z \approx -1$, $u(y,z) \approx -1$. The constants that appear above are to be considered independent of $\eps \leq \eps_{0}$ and fixed so that
\begin{equation} \label{eq:sheets.constants}
	c_0 \gg 1, \; 0 < \varepsilon_0, \beta, \eta \ll 1, \; Q \in \{1, 2, \ldots\}.
\end{equation}

Denote, for $\ell \in \{1, \ldots, Q\}$, $y_0 \in \Sigma$, $r > 0$:
\begin{enumerate}
	\item $\Pi : \Omega \to \Sigma$ to be the closest point projection onto $\Sigma$ with respect to $g$.
	\item $C_r(y_0) \triangleq \{ X \in \Omega : \Pi(X) \in B_r^{n-1}(y_0) \}$.
	\item $\Gamma_\ell(r) \triangleq \Gamma_\ell \cap C_r(0)$.
	\item $Z_{\Gamma_\ell} : \Gamma_\ell(3/2) \times [-1,1] \to \Omega$ to be the normal exponential map with respect to $\Gamma_\ell$. 
	\item $\Pi_\ell : \Omega \to \Gamma_\ell$ to be the closest point projection onto $\Gamma_\ell$ with respect to $g$.
	\item $d_\ell : \Omega \to \RR$ to be the signed distance from $\Gamma_\ell$ (with respect to $g$), which is positive above it and negative below it.
	\item $D_\ell \triangleq \min \{ |d_{\ell-1}|, |d_{\ell+1}| \}$.
\end{enumerate}

Let us agree once and for all regarding Sections \ref{sec:jacobi.toda.reduction}-\ref{sec:stable.solutions}, that each $\Gamma_\ell$ is endowed with the same coordinates $(y^1, \ldots, y^{n-1})$ as $\Sigma$ via the diffeomorphism $\Pi|_{\Gamma_\ell} : \Gamma_\ell \xrightarrow{\approx} \Sigma$.

Set $\Omega' \triangleq B_1^{n-1}(0) \times [-2\eta,2\eta] \subset \Omega$. Consider arbitrary $C^2$ functions
\[ h_\ell : \Gamma_\ell \cap C_1(0) \to (-\tfrac{\eta}{2}, \tfrac{\eta}{2}), \; \ell \in \{1, \ldots, Q \}. \]
Let $\ve{h} = (h_1, \ldots, h_n)$, From $\ve{h}$, we construct an approximate critical point $U(\ve{h})$ of $E_\varepsilon \restr \Omega'$,
\begin{equation} \label{eq:approximate.critical.point}
	U[\ve{h}] \triangleq \frac{(-1)^{Q+1}-1}{2} + \sum_{\ell=1}^Q \overline{\mathbb{H}}_{\eps,\ell}.
\end{equation}
Here, each $\overline{\mathbb{H}}_{\eps,\ell}$ is given by
\begin{multline} \label{eq:approximate.critical.point.model}
	((Z_{\Gamma_\ell})^* \overline{\mathbb{H}}_{\eps,\ell})(y, z) \triangleq \overline{\mathbb{H}}{}^{3 |\log \eps|} \big((-1)^{\ell-1} \eps^{-1} (z-h_\ell(y))\big) \\
	\iff \overline{\mathbb{H}}_{\eps,\ell} = \overline{\mathbb{H}}^{3|\log \eps|}((-1)^{\ell-1} \eps^{-1}(d_\ell - h_\ell \circ \Pi_\ell)),
\end{multline}
with $\overline{\mathbb{H}}{}^\Lambda : \RR \to [-1,1]$ (here, $\Lambda = 3 |\log \eps|$) being
\begin{equation} \label{eq:HLambda-cutoff-def}\overline{\mathbb{H}}{}^\Lambda(t) \triangleq \chi(\Lambda^{-1} t) \mathbb{H}(t) \pm (1-\chi(\Lambda^{-1} t)), \end{equation}
($\pm$ depending on $t > 0$ or $t < 0$). Here, $\chi (t) = 1$ for $t \in (-1,1)$ and $\support \chi \subset (-2,2)$ is a fixed cutoff function. These functions, $\overline{\mathbb{H}}{}^{3|\log \eps|}$, are truncations of $\mathbb{H}$ that coincide with it on $(-3 |\log \varepsilon|, 3 |\log \varepsilon|)$, with $\pm 1$ outside $(-6 |\log \varepsilon|, 6 |\log \varepsilon|)$, and such that
\begin{equation} \label{eq:approximate.heteroclinic.behavior}
	|(\overline{\mathbb{H}}{}^{3|\log \varepsilon|})'' - W'(\overline{\mathbb{H}}{}^{3 |\log \varepsilon|})|_{C^2(\RR)} = O(\varepsilon^3).
\end{equation}
See \cite[Section 9.1]{WangWei} for more details.

\begin{rema}
	The components of $\ve{h}$ represent the vertical offset  of the heteroclinic solutions we're superimposing relative to the nodal set of $u$.
\end{rema}

One can show (see \cite[Subsection 9.1]{WangWei}) that there exists $\ve{h}$ such that for every $\ell \in \{1, \ldots, Q \}$, $y \in \Gamma_\ell$, we have the orthogonality relation:
\begin{equation}\label{eq:h.defn.orth}
\int_{-\eta}^{\eta} ((Z_{\Gamma_\ell})^* (u - U[\ve{h}]))(y, z) \partial_{z} ((Z_{\Gamma_\ell})^* \overline{\mathbb{H}}_{\eps,\ell})(y,z) \, dz = 0. 
\end{equation}
Moreover (see \cite[Remark 9.2]{WangWei}):
\[ \sum_{j=0}^3 \varepsilon^{j-1} \Vert \nabla^j \ve{h} \Vert_{C^0(B_1^{n-1}(0))} = o(1) \text{ as } \varepsilon \to 0. \]
It will prove useful to introduce the notation
\begin{equation} \label{eq:discrepancy.function}
	\phi \triangleq u - U[\mathbf{h}],
\end{equation}
seeing as to how we can conveniently bound $\ve{h}$ in terms of $\phi$, as Lemma \ref{lemm:h.phi.comparison} below shows. 

\begin{lemm}[{\cite[Lemma 9.6]{WangWei}}] \label{lemm:h.phi.comparison}
	For $\ell \in \{1, \ldots, Q\}$, $y \in \Gamma_\ell(\tfrac{9}{10})$,
	\begin{align*}
		\varepsilon^{-1} |h_\ell(y)|
			& \leq c \left( |\phi |_{\Gamma_\ell}(y)| + \exp( -\sqrt{2} \varepsilon^{-1} D_\ell(y)) \right), \\
		|\nabla_{\Gamma_\ell} h_\ell(y)| 
			& \leq c \Big( \varepsilon |\nabla_{\Gamma_\ell} (\phi|_{\Gamma_\ell})(y)| + o(1) \exp (-\sqrt{2}\varepsilon^{-1} D_\ell(y)) \Big), \\
		\varepsilon |\nabla^2_{\Gamma_\ell} h_\ell(y)|
			& \leq c \Big( \varepsilon^2 |\nabla^2_{\Gamma_\ell} (\phi|_{\Gamma_\ell})(y)| + \eps^2 |\nabla_{\Gamma_\ell} (\phi|_{\Gamma_\ell})(y)|^2 + o(1) \exp (-\sqrt{2}\varepsilon^{-1} D_\ell(y)) \Big), \\
		\varepsilon^{1+\theta} [\nabla^2_{\Gamma_\ell}  h_\ell]_{\theta}
			& \leq c' \Big( \varepsilon^{2+\theta} [\nabla^2_{\Gamma_\ell} (\phi|_{\Gamma_\ell})]_{\theta}  + \eps^{2+\theta} \Vert \nabla_{\Gamma_\ell} (\phi|_{\Gamma_\ell}) \Vert_{C^0} [ \nabla_{\Gamma_\ell} (\phi|_{\Gamma_\ell}) ]_\theta + \Vert \exp (-\sqrt{2}\varepsilon^{-1} D_\ell ) \Vert_{C^0} \Big),
	\end{align*}
	where $c = c(n,c_0,E_0,\eta,\beta)$,  $c' = c'(n,c_0,E_0,\eta,\beta,\theta)$, and $o(1)$ is taken as $\eps \to 0$ with the remaining parameters held fixed. In the last inequality, the H\"older seminorms and the $C^k$ norms are taken over all $y' \in \Gamma_\ell \cap C_\eps(\Pi(y))$.
\end{lemm}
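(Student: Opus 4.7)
The lemma is a quantitative Lyapunov--Schmidt estimate expressing $h_\ell$ (and its derivatives) as a linear functional of $\phi$ plus exponentially small interaction corrections from the neighboring sheets. The plan is to differentiate the defining orthogonality relation \eqref{eq:h.defn.orth} up to order two (and then take a H\"older increment) in the tangential direction along $\Gamma_\ell$, and on each occasion to read off $\nabla^k h_\ell$ from the invertible leading term $\int (\partial_z \overline{\mathbb{H}}_{\eps,\ell})^2\,dz \sim \eps^{-1} \energyunit$.

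For the zeroth-order bound, work in the Fermi coordinates $Z_{\Gamma_\ell}$, where the orthogonality reads $\int_{-\eta}^{\eta} \phi(y,z)\, \partial_z \overline{\mathbb{H}}_{\eps,\ell}(y,z)\,dz = 0$. Using \eqref{eq:sheets.lower.density.bound}--\eqref{eq:sheets.enhanced.sff.bound} together with $u|_{\Gamma_\ell} = 0$, the restriction $u(y,\cdot)$ is a small $C^2$-perturbation of $\overline{\mathbb{H}}^{3|\log\eps|}((-1)^{\ell-1}\eps^{-1} z)$ on the integration interval. For $k\neq \ell$, in the same coordinate region the contribution of $\overline{\mathbb{H}}_{\eps,k}$ is $\pm 1 + O(\exp(-\sqrt 2 \eps^{-1} D_\ell))$ by \eqref{eq:heteroclinic.expansion.i}. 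Substituting these two expansions into the orthogonality relation and solving the resulting scalar equation for $h_\ell(y)$ against $\int \mathbb{H}'(s)^2\,ds = \energyunit$ yields the first estimate.

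For the first and second derivative bounds, differentiate the identity along $\Gamma_\ell$. Schematically,
\[ \int_{-\eta}^{\eta} \nabla_{\Gamma_\ell}\!\big(\phi\circ Z_{\Gamma_\ell}\big)\,\partial_z \overline{\mathbb{H}}_{\eps,\ell}\,dz \;-\; \nabla_{\Gamma_\ell} h_\ell \cdot \int_{-\eta}^{\eta} \phi\, \partial_z^2 \overline{\mathbb{H}}_{\eps,\ell}\,dz \;+\; (\text{error}) \;=\; 0, \]
with error terms controlled by $\sff_\Sigma$ via \eqref{eq:sheets.sff.bound}, by the Fermi-metric corrections via \eqref{eq:sheets.metric.bound}, by the cutoff-induced $O(\eps^3)$ error via \eqref{eq:approximate.heteroclinic.behavior}, and by the cross-sheet contribution via the heteroclinic expansions. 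To isolate the leading $\eps^{-1} \energyunit$ factor of $\nabla h_\ell$ one uses $\partial_z \overline{\mathbb{H}}_{\eps,\ell} = -\eps \partial_{h_\ell} \overline{\mathbb{H}}_{\eps,\ell}$ plus bounded correction terms, and rewrites the second integral after integration by parts in $z$ so that the leading term becomes $\eps^{-1}\energyunit$. Dividing yields the $|\nabla h_\ell|$ estimate. Iterating this procedure with a second tangential derivative produces an additional quadratic term $|\nabla (\phi|_{\Gamma_\ell})|^2$ coming from the second variation of $\overline{\mathbb{H}}_{\eps,\ell}$ in $h_\ell$, which accounts for the square term in the third inequality. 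The H\"older estimate is obtained by subtracting the twice-differentiated identity at two points $y, y'\in \Gamma_\ell \cap C_\eps(\Pi(y))$, dividing by $d_{\Gamma_\ell}(y,y')^\theta$, and applying the product rule to distribute the H\"older increment.

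The bookkeeping in the tangential differentiation is what I expect to be the main obstacle: the Fermi metric is not cylindrical off $\Gamma_\ell$, so $\nabla_{\Gamma_\ell}$ generates both extrinsic curvature and metric-derivative terms acting on $\overline{\mathbb{H}}_{\eps,\ell}$; the $\overline{\mathbb{H}}^{3|\log\eps|}$ cutoff generates boundary contributions at the endpoints of the $z$-integral; and the cross-sheet contributions from $k\neq \ell$ must be carefully shown to produce errors no worse than $\exp(-\sqrt 2 \eps^{-1} D_\ell(y))$ (in $C^0$), $o(1) \exp(-\sqrt 2 \eps^{-1} D_\ell(y))$ (in $C^1$, $C^2$), and $\Vert \exp(-\sqrt 2 \eps^{-1} D_\ell)\Vert_{C^0}$ (in the $C^{2,\theta}$ seminorm), which requires \eqref{eq:sheets.graph.apriori.C1.bounds} and \eqref{eq:sheets.graph.apriori.C2.bounds} to control how $D_\ell$ varies between nearby points.
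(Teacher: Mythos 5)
There is a genuine gap: you are differentiating the wrong identity. The bounds in this lemma are pointwise in $\phi|_{\Gamma_\ell}$ and its tangential derivatives at $y$, and they come from the nodal relation $u=0$ on $\Gamma_\ell$, not from the orthogonality relation \eqref{eq:h.defn.orth}. Since $\phi = u - U[\mathbf{h}]$ and $u(y,0)=0$ in Fermi coordinates over $\Gamma_\ell$, one has
\[
\phi|_{\Gamma_\ell}(y) \;=\; -\,\overline{\mathbb{H}}\big((-1)^{\ell-1}\eps^{-1}h_\ell(y)\big)\;-\;\sum_{m\neq\ell}\Big(\overline{\mathbb{H}}_{\eps,m}(y,0)\pm(-1)^{m-1}\Big)
\]
(with signs depending on $m<\ell$ or $m>\ell$). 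Because $\overline{\mathbb{H}}(s)\simeq s$ near $0$ and the $m\neq\ell$ tail is $O(\exp(-\sqrt{2}\,\eps^{-1}D_\ell(y)))$ by \eqref{eq:heteroclinic.expansion.i} and Lemma \ref{lemm:WW.tilting.comparison.d}, the first inequality is immediate, and the $C^1$, $C^2$, $C^{2,\theta}$ bounds follow by differentiating this pointwise identity tangentially, using \eqref{eq:sheets.enhanced.sff.bound}--\eqref{eq:sheets.enhanced.sff.grad.bound} and Lemma \ref{lemm:WW.tilting.comparison.d} to control $\nabla_{\Gamma_\ell}d_m$, $\nabla^2_{\Gamma_\ell}d_m$ and their H\"older seminorms; the quadratic term $\eps^2|\nabla_{\Gamma_\ell}(\phi|_{\Gamma_\ell})|^2$ arises from $\overline{\mathbb{H}}''(\eps^{-1}h_\ell)\,\nabla_{\Gamma_\ell}h_\ell\otimes\nabla_{\Gamma_\ell}h_\ell$ once the already-proved gradient bound is inserted. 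This is precisely how the sharpened version, Lemma \ref{lemm:h.phi.comparison.improved}, is proved in Appendix \ref{app:proof.lem.comp.improved}.

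Your route through \eqref{eq:h.defn.orth} breaks down at the division step. If $\phi$ is regarded as a fixed function (which is what the right-hand sides of the lemma refer to), then differentiating \eqref{eq:h.defn.orth} in $y$ produces, as the coefficient of $\nabla_{\Gamma_\ell}h_\ell$, the quantity $\eps^{-2}\int \phi\,\overline{\mathbb{H}}''(\eps^{-1}(z-h_\ell))\,dz$, which is $O(\eps^{-1}\Vert\phi\Vert_{C^0})=o(\eps^{-1})$ and in particular not comparable to $\eps^{-1}\energyunit$; integrating by parts does not help, because $\int \phi\,\partial_z^2\overline{\mathbb{H}}_{\eps,\ell}\,dz$ has no $\energyunit$-sized leading term --- the factor $\energyunit$ only appears when the kernel is paired with itself, and $\phi$ is small. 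So you cannot solve for $\nabla_{\Gamma_\ell}h_\ell$ this way; the differentiated orthogonality relation only yields smallness of fiber integrals such as $\int\nabla_y\phi\,\partial_z\overline{\mathbb{H}}_{\eps,\ell}\,dz$ (which is how it is actually used, e.g.\ in the proof of Lemma \ref{lemm:mult.one.imp.hHeqn}). If instead you expand the $h$-dependence hidden inside $U[\mathbf{h}]$, you do recover an $\eps^{-1}\energyunit\,\nabla_{\Gamma_\ell}h_\ell$ term, but the resulting right-hand side is a fiber-weighted integral of $\nabla u$ (equivalently, of $\nabla\phi$ plus the very term being solved for), not the pointwise quantities $|\phi|_{\Gamma_\ell}(y)|$ and $|\nabla_{\Gamma_\ell}(\phi|_{\Gamma_\ell})(y)|$ in the statement; converting to those requires the nodal identity anyway. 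The same objection applies to your zeroth-order step, which as written would bound $\eps^{-1}|h_\ell(y)|$ by a weighted average of $\phi(y,\cdot)$ over the fiber rather than by $|\phi|_{\Gamma_\ell}(y)|$.
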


Wang--Wei deduce (see \cite[(10.2)]{WangWei}) the following Jacobi-Toda-like system; for $y \in \Gamma_\ell(\tfrac{9}{10})$,
\begin{align}
	& \varepsilon (\Delta_{\Gamma_\ell} h_\ell(y) - H_{\Gamma_\ell}(y)) \label{eq:jacobi.toda} \\ 
	& \qquad = \frac{4(\expansioncoeff)^{2}}{\energyunit} \left( \exp(-\sqrt{2}\varepsilon^{-1} |d_{\ell-1}(y)|) - \exp(-\sqrt{2}\varepsilon^{-1} |d_{\ell+1}(y)|) \right) \nonumber \\
	& \qquad  + O \Big( \varepsilon^{-1} |h_\ell(y)| + \varepsilon^{-1} \Vert (h_{\ell-1} \circ \Pi_{\ell-1} \circ Z_{\Gamma_\ell})(y, \cdot )\Vert_{C^0} + \varepsilon^{\tfrac{1}{3}} \Big)  \exp(-\sqrt{2} \varepsilon^{-1} |d_{\ell-1}(y)|) \nonumber \\
	& \qquad  + O \Big( \varepsilon^{-1} |h_\ell(y)| + \varepsilon^{-1} \Vert (h_{\ell+1} \circ \Pi_{\ell+1} \circ Z_{\Gamma_\ell})(y, \cdot )\Vert_{C^0} + \varepsilon^{\tfrac{1}{3}} \Big)  \exp(-\sqrt{2} \varepsilon^{-1} |d_{\ell+1}(y)|) \nonumber \\
	& \qquad  + O(\exp(-(\tfrac{3}{2}\sqrt{2}) \varepsilon^{-1} |d_{\ell-1}(y)|))  + O(\exp(-(\tfrac{3}{2}\sqrt{2}) \varepsilon^{-1} |d_{\ell+1}(y)|)) \nonumber \\
	& \qquad  + O(\exp(-\sqrt{2} \varepsilon^{-1} |d_{\ell-2}(y)|))  + O(\exp(-\sqrt{2} \varepsilon^{-1} |d_{\ell+2}(y)|)) \nonumber \\
	& \qquad  + \sum_{m\neq \ell} \varepsilon^{-1} |d_m(y)| \exp(-\sqrt{2}\varepsilon^{-1}|d_m(y)|) \Big[ \varepsilon \Vert \Delta_{\Gamma_m} h_m - H_{\Gamma_m} \Vert_{C^0} + \Vert \nabla_{\Gamma_m} h_m \Vert^2_{C^0} \Big] \nonumber \\
	& \qquad  + \sup_{|t| < 6 \varepsilon |\log \varepsilon|} \Big[ \varepsilon^4 |(\nabla^2_{\Gamma_{\ell,t}} (\phi|_{\Gamma_{\ell,t}}))(Z_{\Gamma_\ell}(y,t))|^2  + \varepsilon^2 |(\nabla_{\Gamma_{\ell,t}} (\phi|_{\Gamma_{\ell,t}}))(Z_{\Gamma_\ell}(y,t))|^2 + |\phi(Z_{\Gamma_\ell}(y, t))|^2 \Big] \nonumber \\
	& \qquad + O(\varepsilon^2). \nonumber
\end{align}
The $C^0$ norms appearing in the second and third term of the right hand side is taken over $|t| < 6\varepsilon|\log \varepsilon|$, and the $C^0$ norms appearing in the third term from the end is taken over $\Gamma_m \cap C_{\eps^{4/3}}(\Pi(y))$.

\begin{rema}
	$\Gamma_{\ell,t}$ denote $t$-level sets in Fermi coordinates $(y,t)$ relative to $\Gamma_\ell$, i.e., $\Gamma_{\ell,t} = \{ d_\ell = t \}$.
\end{rema}

\begin{rema}
	Notice the sign difference in the mean curvature terms between \eqref{eq:jacobi.toda} and \cite[(10.2)]{WangWei}. For us, the mean curvature is the divergence of the upper pointing unit normal. For instance, the ambient Laplace-Beltrami operator expands as
	\[ \Delta_g = \Delta_{\Gamma_{\ell,z}} + \partial_z^2 + H_{\Gamma_{\ell,z}} \partial_z. \]
	For this reason, all instances of the mean curvature in this work have to have the opposite sign relative to \cite{WangWei}.
\end{rema}

It will also be convenient to introduce the notation
\begin{equation} \label{eq:sup.exp.distance}
	A_\ell(r) \triangleq \sup \Big\{ \exp(-\sqrt{2} \varepsilon^{-1} D_\ell(y)) : 
	y \in \Gamma_\ell(r) \Big\}.
\end{equation}
We record \cite[(12.4)]{WangWei}, which will help estimate terms involving $h$, $\phi$, and the mean curvature:
\begin{equation} \label{eq:phi.c2a.estimate.full}
	\Vert \phi \Vert_{C^{2,\theta}_\eps(\cM_\ell(r))} + \eps \Vert \Delta_{\Gamma_\ell} h_\ell - H_{\Gamma_\ell} \Vert_{C^{0,\theta}_\eps(\Gamma_\ell(r))}  \leq c' \varepsilon^2 + c' \sum_{m=1}^Q A_m(r+K\varepsilon |\log \varepsilon|),
\end{equation}
where we're using the weighted H\"older space notation from \eqref{eq:dirichlet.data.ckalpha.eps} (see Section \ref{sec:dirichlet.data}), and
\begin{equation*}
\cM_\ell(r) \triangleq \{ X \in C_r(0) : |d_\ell(X)| < 1, 
- d_{\ell-1}(X) < d_\ell(X) < - d_{\ell+1}(X) \}.
\end{equation*}
Likewise, we record \cite[(13.6)]{WangWei}:
\begin{equation} \label{eq:phi.improved.c2a.estimate.full}
	\varepsilon \Vert ((Z_{\Gamma_\ell})_* \partial_{y_i}) \phi \Vert_{C^{1,\theta}_\eps(\cM_\ell(r))} \leq c' \varepsilon^2 + c' \sum_{m=1}^Q A_m(r + 2K \varepsilon |\log \varepsilon|)^{1+\kappa}  + c' \varepsilon^\kappa \sum_{m=1}^Q A_m(r+2K \varepsilon |\log \varepsilon|),
\end{equation}
with $\kappa > 0$.

The expressions above, \eqref{eq:phi.c2a.estimate.full}-\eqref{eq:phi.improved.c2a.estimate.full}, are true for all $\ell \in \{1, \ldots, Q\}$, $r \leq 8/10$, $\theta \in (0,1)$, $\varepsilon \leq \varepsilon'$, where $c'$, $\varepsilon'$, $K$, $\kappa$, depend on $n$, $c_0$, $E_0$, $\eta$, $\beta$, $\theta$.

\begin{rema} \label{rema:major.goal}
	In the remainder of Sections \ref{sec:jacobi.toda.reduction}-\ref{sec:stable.solutions}, we'll be actively interested in estimating the vertical distances $D_\ell$ from below. This is because Lemma \ref{lemm:h.phi.comparison}, \eqref{eq:sup.exp.distance},  \eqref{eq:phi.c2a.estimate.full}, and interior Schauder estimates together imply that, with $r$, $\theta$ as above:
	\begin{equation} \label{eq:rema.major.goal}
		\min_{\ell \in \{ 1, \ldots, Q \}} \inf_{\Gamma_\ell(r)} D_\ell \geq \tfrac{1+\theta}{2} \sqrt{2} \eps |\log \eps| \implies \Gamma_{\ell}(r') \text{ is uniformly } C^{2,\theta}
	\end{equation}
	for all $\ell \in \{ 1, \ldots, Q \}$, $r' \leq \sigma r$, $\sigma \in (0, 1)$, $\eps \leq \eps' = \eps'(n, c_0, E_0, \eta, \beta, \theta, \sigma)$.
\end{rema}

\subsection{Bootstrapping regularity via sheet distance lower bounds} \label{subsec:bootstrapping.sheet.distance.bounds}

We recall the following lemma from \cite{WangWei}. (See \cite[Appendix C]{Mantoulidis} for necessary modifications for the Riemannian setting.)

\begin{lemm}[{\cite[Section 14]{WangWei}}] \label{lemm:stationary.estimates}
	If $\ell \in \{1, \ldots, Q\}$, $y \in \Gamma_\ell(\tfrac{8.5}{10})$, and  $\varepsilon \leq \varepsilon_1$, then
	\[ D_\ell(y) \geq \tfrac{1}{2} \sqrt{2} \varepsilon |\log \varepsilon| - c_1 \varepsilon, \]
	where $\varepsilon_1 = \varepsilon_1(n, c_0, E_0, \eta, \beta)$, $c_1 = c_1(n, c_0, E_0, \eta, \beta)$.
\end{lemm}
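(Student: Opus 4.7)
The plan is to argue by contradiction, aiming to bound $M \triangleq \max_{\ell} A_\ell(\tfrac{9}{10})$ from above by $O(\eps)$, since by \eqref{eq:sup.exp.distance} this is equivalent to $D_\ell(y) \geq \tfrac{1}{2}\sqrt{2}\eps|\log\eps| - c_1\eps$.

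The first step is to simplify the Jacobi–Toda system \eqref{eq:jacobi.toda} by absorbing all error terms into terms that are either $o(M)$ or $O(\eps)$. The $\phi$-derivative error terms in \eqref{eq:jacobi.toda} are handled by the weighted Schauder estimate \eqref{eq:phi.c2a.estimate.full}, which gives $\eps^{2}|\nabla^{2}\phi|^{2} + \eps^{2}|\nabla\phi|^{2} + |\phi|^{2} = O(\eps^{4}) + O(M^{2}) = o(M)+O(\eps)$ once $\eps$ is small enough. The $h$-dependent corrections are then handled by Lemma \ref{lemm:h.phi.comparison}, which bounds $\eps^{-1}|h_{m}|$ and the related quantities by $|\phi|$ plus exponentials of $D_{m}$, and the sum $\sum_{m\neq\ell} \eps^{-1}|d_{m}|e^{-\sqrt{2}\eps^{-1}|d_{m}|}[\eps\|\Delta h_{m}-H\| + \|\nabla h_{m}\|^{2}]$ is $O(|\log\eps|)\cdot M \cdot (O(\eps)+o(1))= o(M)$ using \eqref{eq:phi.c2a.estimate.full} again. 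The LHS of \eqref{eq:jacobi.toda} is $O(\eps)$ by \eqref{eq:sheets.sff.bound}, \eqref{eq:sheets.metric.bound}, and \eqref{eq:sheets.graph.apriori.C2.bounds}. What survives is the clean inequality
\[
\Big|\,e^{-\sqrt{2}\eps^{-1}|d_{\ell-1}(y)|} \;-\; e^{-\sqrt{2}\eps^{-1}|d_{\ell+1}(y)|}\,\Big| \;\leq\; o(M) + O(\eps),
\]
valid uniformly for $\ell \in \{1,\ldots,Q\}$ and $y \in \Gamma_{\ell}(\tfrac{9}{10})$.

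Next, I would pick $(\ell_{0}, y_{0})$ maximizing $e^{-\sqrt{2}\eps^{-1}D_{\ell}(y)} = M$ over $y \in \Gamma_{\ell}(\tfrac{9}{10})$, $\ell \in \{1,\ldots,Q\}$. Without loss of generality, the closest sheet to $\Gamma_{\ell_{0}}$ at $y_{0}$ is $\Gamma_{\ell_{0}+1}$ (if $\ell_{0} = Q$, swap signs; if $Q=1$ the conclusion is vacuous). Then $|d_{\ell_{0}+1}(y_{0})| \leq |d_{\ell_{0}-1}(y_{0})|$, and by the maximality of $M$,
\[
e^{-\sqrt{2}\eps^{-1}|d_{\ell_{0}-1}(y_{0})|} \leq M = e^{-\sqrt{2}\eps^{-1}|d_{\ell_{0}+1}(y_{0})|},
\]
so the reduced Jacobi–Toda inequality at $(\ell_{0}, y_{0})$ becomes
\[
M \leq e^{-\sqrt{2}\eps^{-1}|d_{\ell_{0}-1}(y_{0})|} + o(M) + O(\eps).
\]
For $\ell_{0} \in \{1, Q\}$ the outer sheet is absent ($|d_{0}| = |d_{Q+1}| = \infty$ by convention), and this directly forces $M \leq O(\eps)$. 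For interior $\ell_{0}$, the argument is iterated: the bound above implies that $e^{-\sqrt{2}\eps^{-1}|d_{\ell_{0}-1}(y_{0})|}$ is at least $\tfrac{1}{2}M$, and so the point $(\ell_{0}{-}1, \Pi_{\ell_{0}-1}(y_{0}))$ also nearly realizes $M$. Applying the same reduction there shows that $e^{-\sqrt{2}\eps^{-1}|d_{\ell_{0}-2}(\cdot)|}$ must also be comparable to $M$, and so on. Since $Q$ is bounded (by the energy bound \eqref{eq:sheets.energy.bound} and the separation bound $|d_{\ell-1} - d_{\ell}| \geq c\eps$ from the uniform nondegeneracy \eqref{eq:sheets.lower.density.bound}), this induction terminates at a boundary sheet after finitely many steps, yielding $M \leq C\eps$.

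The main obstacle is handling the cancellation in the signed difference $e^{-\sqrt{2}\eps^{-1}|d_{\ell-1}|} - e^{-\sqrt{2}\eps^{-1}|d_{\ell+1}|}$: merely bounding this difference by $o(M) + O(\eps)$ does not a priori bound either exponential individually, which is why the choice of maximizer together with the finite iteration through the sheet stack is essential. A secondary technical issue is ensuring each of the many error terms in \eqref{eq:jacobi.toda} is genuinely subdominant to $M$, for which one needs the interplay between Lemma \ref{lemm:h.phi.comparison}, \eqref{eq:phi.c2a.estimate.full}, and the self-referential nature of the bound $\|\Delta h_{\ell} - H_{\Gamma_{\ell}}\| \lesssim \eps + M/\eps$; standard bootstrapping then closes the argument.
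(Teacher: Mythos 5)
The paper does not actually reprove this lemma---it is quoted from Wang--Wei, Section 14 (with Riemannian modifications referenced to \cite[Appendix C]{Mantoulidis})---so your argument has to stand on its own. Your mechanism is the right one: the curvature bound makes the left-hand side of \eqref{eq:jacobi.toda} small, and the cancellation in the signed difference of exponentials is defeated by passing to a maximizer of $A_\ell$ and propagating through the stack of sheets to an outermost one, where one interaction term is absent. But your reduction to the ``clean inequality'' with errors $o(M)+O(\eps)$ has a genuine gap. You claim the left-hand side of \eqref{eq:jacobi.toda} is $O(\eps)$ by \eqref{eq:sheets.sff.bound}, \eqref{eq:sheets.metric.bound}, \eqref{eq:sheets.graph.apriori.C2.bounds}; this is unjustified, because $h_\ell$ is the \emph{centering} function defined by the orthogonality relation \eqref{eq:h.defn.orth}, not the graphing function $f_\ell$, so those assumptions say nothing about $\Delta_{\Gamma_\ell}h_\ell$. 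With only the tools you invoke (Lemma \ref{lemm:h.phi.comparison} and \eqref{eq:phi.c2a.estimate.full}), the best available bound is $\eps|\Delta_{\Gamma_\ell}h_\ell|\le C\eps^2+C\sum_m A_m$ with a constant $C$ that is in no way small compared with $4(\expansioncoeff)^2/\energyunit$; at your maximizer this error is of size $C\,Q\,M$, comparable to or larger than the main term, so the ``clean inequality'' becomes vacuous. To make the $\sum_m A_m$-proportional errors carry small coefficients you must use the improved tangential estimate \eqref{eq:phi.improved.c2a.estimate.full} (Wang--Wei's Section 13), which provides the crucial structure $\eps^{\kappa}\sum_m A_m+\sum_m A_m^{1+\kappa}$; note that you may \emph{not} use Lemma \ref{lemm:h.phi.comparison.improved} or \eqref{eq:bootstrapped.error.i}, since those are derived from the very lemma you are proving.

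Second, even with that input the surviving errors are superlinear in $M$ (terms of size $M^{1+\kappa}$, $M^{3/2}$, $M^2$, $|\log M|\,M^2$, coming from $O(e^{-\tfrac32\sqrt2\eps^{-1}|d_{\ell\pm1}|})$, $O(e^{-\sqrt2\eps^{-1}|d_{\ell\pm2}|})$, the $\eps^{-1}|h|$ coefficients, and the $\sum_{m\neq\ell}$ term), and your assertion that ``$O(M^2)=o(M)+O(\eps)$ once $\eps$ is small enough'' presupposes $M=o(1)$, i.e.\ $\min_\ell D_\ell/\eps\to\infty$. That is not a priori known: \eqref{eq:sheets.lower.density.bound} only forces $D_\ell\ge c\eps$, hence $M\le e^{-\sqrt2 c}$, a fixed constant which need not beat the constants $c'$, $Q$ appearing in the errors. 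You need a preliminary step establishing this qualitative separation (for instance a blow-up at scale $\eps$ ruling out entire solutions with finitely many parallel planar interfaces at bounded mutual distance, or the corresponding preliminary result in Wang--Wei) before the absorption closes; a soft blow-up alone is delicate here, since $1$-dimensional periodic solutions are legitimate entire solutions. Finally, two smaller fixable points: the error bounds at radius $r$ involve suprema of $A_m$ at radius $r+K\eps|\log\eps|$, so $M$ must be taken over a strictly larger region than the one where you apply the inequality (otherwise the maximizer may sit at the edge and see larger exterior values), and the sheet-to-sheet transfer of distances in your iteration needs the tilting comparison of Lemma \ref{lemm:WW.tilting.comparison.d}, which you use implicitly but never cite.
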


As a corollary of Lemma \ref{lemm:stationary.estimates}, we can bootstrap the proof of Lemma \ref{lemm:h.phi.comparison} and obtain the following \emph{improved} estimates:

\begin{lemm} \label{lemm:h.phi.comparison.improved}
	For $\ell \in \{1, \ldots, Q\}$, $y \in \Gamma_\ell(\tfrac{8}{10})$,
	\begin{align*}
		\varepsilon^{-1} |h_\ell(y)|
			& \leq c \left( |\phi|_{\Gamma_\ell}(y)| + \exp( -\sqrt{2} \varepsilon^{-1} D_\ell(y)) \right), \\
		|\nabla_{\Gamma_\ell} h_\ell(y)| 
			& \leq c \left( \varepsilon |\nabla_{\Gamma_\ell} (\phi|_{\Gamma_\ell})(y)| +  \eps^\kappa \exp (-\sqrt{2}\varepsilon^{-1} D_\ell(y)) \right), \\
		\varepsilon |\nabla^2_{\Gamma_\ell} h_\ell(y)|
			& \leq c \Big( \varepsilon^2 |\nabla^2_{\Gamma_\ell} (\phi|_{\Gamma_\ell})(y)| + \eps^2 |\nabla_{\Gamma_\ell} (\phi|_{\Gamma_\ell})(y)|^2  + \eps^\kappa \exp (-\sqrt{2}\varepsilon^{-1} D_\ell(y)) \Big), \\
		\varepsilon^{1+\theta} [\nabla^2_{\Gamma_\ell}  h_\ell]_{\theta}
			& \leq c' \Big( \varepsilon^{2+\theta} [\nabla^2_{\Gamma_\ell} (\phi|_{\Gamma_\ell})]_{\theta} + \eps^{2+\theta} \Vert \nabla_{\Gamma_\ell} (\phi|_{\Gamma_\ell}) \Vert [ \nabla_{\Gamma_\ell} (\phi|_{\Gamma_\ell}) ]_\theta + \eps^{\kappa'} \Vert \exp (-\sqrt{2}\varepsilon^{-1} D_\ell ) \Vert_{C^0} \Big),
	\end{align*}
	where $c = c(n,c_0,E_0,\eta,\beta)$, $c' = c'(n,c_0,E_0,\eta,\beta,\theta)$, $\kappa = \kappa(n,c_0,E_0,\eta,\beta)$, $\kappa' = \kappa'(n,c_0,E_0,\eta,\beta,\theta)$. The norms and seminorms in the last inequality are taken over all $y' \in \Gamma_\ell$ with $\Pi(y') \in B_\eps^{n-1}(\Pi(y))$.
\end{lemm}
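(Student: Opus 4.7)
The plan is to revisit the proof of Lemma \ref{lemm:h.phi.comparison} (which is Wang--Wei \cite[Lemma 9.6]{WangWei} transported to the Riemannian setting) and upgrade each $o(1)$ that appears to an explicit power $\varepsilon^\kappa$ by invoking the quantitative sheet separation of Lemma \ref{lemm:stationary.estimates}. The key input is that on $\Gamma_\ell(\tfrac{8.5}{10})$ we have $D_\ell \geq \tfrac{1}{2}\sqrt{2}\,\varepsilon|\log\varepsilon| - c_1 \varepsilon$, hence
\[
    \exp(-\sqrt{2}\,\varepsilon^{-1} D_\ell) \leq C\, \varepsilon
\]
with $C=C(n,c_0,E_0,\eta,\beta)$. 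This converts the crude a priori bound $|h_m|+|\nabla h_m| = o(1)$ used in the original proof to a bound that is polynomial in $\varepsilon$, and through it we gain a genuine power improvement in every term in which an interaction with neighboring sheets appears.

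More concretely, the first inequality in Lemma \ref{lemm:h.phi.comparison} requires no change: no $o(1)$ factor appears, so the identical argument yields the first line of Lemma \ref{lemm:h.phi.comparison.improved}. Combining this first inequality with \eqref{eq:phi.c2a.estimate.full} and Lemma \ref{lemm:stationary.estimates} gives $\varepsilon^{-1}|h_m| = O(\varepsilon^\kappa)$ on $\Gamma_m(\tfrac{8.3}{10})$ for a quantitative $\kappa>0$. This refined estimate on $h_m$ (and the induced refined estimates on $\nabla h_m$, $\nabla^2 h_m$ obtained by interior derivative estimates on the $\varepsilon$-scale, together with \eqref{eq:phi.improved.c2a.estimate.full}) is then fed back into the formulas obtained by differentiating the orthogonality relation \eqref{eq:h.defn.orth} once and twice in the tangential direction. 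In the Wang--Wei derivation of the $C^0$- and $C^1$-bounds on $\nabla h_\ell$ and $\nabla^2 h_\ell$, every error term involving neighboring sheets appears multiplied by $\exp(-\sqrt{2}\varepsilon^{-1}D_\ell(y))$, paired with a factor that in the original proof was only bounded a priori by $o(1)$; with the refined bounds in hand, each such factor is an explicit $O(\varepsilon^\kappa)$, which yields the stated improvement. Choosing the power $\kappa$ at each step as the minimum of the powers available from the preceding step yields the final exponent.

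The last inequality, the $C^{0,\theta}$-estimate on $\nabla^2_{\Gamma_\ell} h_\ell$, is the most delicate piece and constitutes the main technical obstacle. Following the scheme of \cite[Lemma 9.6]{WangWei}, it is proved by applying interior Schauder theory, on the $\varepsilon$-scale using the weighted norms of Section \ref{sec:dirichlet.data}, to the elliptic equation satisfied by $h_\ell$ coming from the twice-differentiated orthogonality relation. The subtlety is that $D_\ell$ itself is only as regular as $h_{\ell\pm 1}$, so $\exp(-\sqrt{2}\varepsilon^{-1}D_\ell)$ has potentially large H\"older seminorm on the natural scale; however, the improvement propagates because Lemma \ref{lemm:stationary.estimates} ensures that $\varepsilon^{-1}D_\ell$ is pointwise bounded below by $\tfrac{1}{2}\sqrt{2}|\log\varepsilon|-c_1$, so any H\"older growth of $\exp(-\sqrt{2}\varepsilon^{-1}D_\ell)$ on the $\varepsilon$-scale is overwhelmed by the pointwise prefactor $O(\varepsilon)$. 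Carrying out this bookkeeping (and collecting the H\"older dependences from the two previous steps) produces the power $\varepsilon^{\kappa'}$ in the last inequality, with $\kappa'=\kappa'(n,c_0,E_0,\eta,\beta,\theta)$, completing the proof. The technical details, which consist largely of the same computation as in \cite[Section 9]{WangWei} with the new pointwise bound on $\exp(-\sqrt{2}\varepsilon^{-1}D_\ell)$ substituted throughout, are relegated to Appendix \ref{app:proof.lem.comp.improved}.
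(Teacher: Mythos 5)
Your overall strategy---rerun the proof of Lemma \ref{lemm:h.phi.comparison} and use Lemma \ref{lemm:stationary.estimates} (via $\exp(-\sqrt{2}\eps^{-1}D_m)\leq C\eps$ on the relevant region) to upgrade every $o(1)$ factor multiplying $\exp(-\sqrt{2}\eps^{-1}D_\ell(y))$ to an explicit $\eps^\kappa$---is exactly the paper's approach, and your treatment of the first inequality and of the refined bounds on $h_m$, $\nabla h_m$ is fine. However, you have misidentified the identity from which the comparison estimates are derived, and this is a genuine gap. The pointwise bounds on $h_\ell$, $\nabla_{\Gamma_\ell}h_\ell$, $\nabla^2_{\Gamma_\ell}h_\ell$ do not come from differentiating the orthogonality relation \eqref{eq:h.defn.orth}; they come from differentiating the \emph{nodal-set identity}: since $u$ vanishes on $\Gamma_\ell$, in Fermi coordinates over $\Gamma_\ell$ one has $\phi(y,0)=-U[\mathbf{h}](y,0)$, i.e.\ $\phi(y,0)=-\overline{\mathbb{H}}((-1)^{\ell-1}\eps^{-1}h_\ell(y))-\sum_{m\neq\ell}(\overline{\mathbb{H}}_{\eps,m}(y,0)\pm(-1)^{m-1})$, and one differentiates this once and twice tangentially. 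The crucial point is that the coefficient of $\nabla_{\Gamma_\ell}h_\ell$ produced this way is $\eps^{-1}\overline{\mathbb{H}}'((-1)^{\ell-1}\eps^{-1}h_\ell(y))$, which is bounded below since $\eps^{-1}h_\ell$ is small. If instead you differentiate \eqref{eq:h.defn.orth} in $y$, the coefficient of $\nabla_{\Gamma_\ell}h_\ell$ is $\eps^{-2}\int\phi\,\overline{\mathbb{H}}''((-1)^{\ell-1}\eps^{-1}(z-h_\ell(y)))\,dz=O(\eps^{-1}\Vert\phi\Vert_{C^0})$, which is small and can even vanish: the resulting identity is degenerate and yields no pointwise control of $\nabla h_\ell$ by $\nabla(\phi|_{\Gamma_\ell})$. (The orthogonality relation is used elsewhere---to kill projection terms in deriving \eqref{eq:jacobi.toda} and in Lemma \ref{lemm:mult.one.imp.hHeqn}---not to prove this comparison.) Relatedly, the error factor in the gradient step is $\sup_{m}\big(|\nabla_{\Gamma_\ell}d_m|+|\nabla_{\Gamma_\ell}(h_m\circ\Pi_m)|\big)$; the first piece is controlled by the tilting estimates of Lemma \ref{lemm:WW.tilting.comparison.d}, the second by Lemma \ref{lemm:h.phi.comparison} plus Lemma \ref{lemm:stationary.estimates}, and your write-up only accounts for the second mechanism.

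Your sketch of the H\"older estimate also misses the actual difficulty and substitutes a different (and unsupported) argument. There is no elliptic equation for $h_\ell$ obtained from the orthogonality relation to which one could apply Schauder theory at this stage (the available equation \eqref{eq:jacobi.toda} involves $H_{\Gamma_\ell}$, whose H\"older norm is not yet controlled, so such an argument would be circular). In the paper the $[\nabla^2_{\Gamma_\ell}h_\ell]_\theta$ bound is again obtained by differentiating the nodal identity twice; the delicate point is that $[\sff_{\Gamma_\ell}]_\theta$ is \emph{not} known to be bounded, so one cannot bound $[\nabla^2_{\Gamma_\ell}d_m]_\theta$ by a constant. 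Instead one uses \eqref{eq:sheets.enhanced.sff.bound}--\eqref{eq:sheets.enhanced.sff.grad.bound} together with the evolution equations \eqref{eq:mean.curv.ddt.sff} and \eqref{eq:mean.curv.ddt.gradsff} to get the weaker bound $\eps^\theta[\nabla^2 d_m]_\theta\leq c$, which is then compensated by the extra factor of $\eps$ (equivalently the exponential smallness $\eps^{\kappa'}$) sitting in front of that term. Your remark that the H\"older growth of $\exp(-\sqrt{2}\eps^{-1}D_\ell)$ is ``overwhelmed by the pointwise prefactor'' gestures at the same kind of trade-off but is attached to the wrong term and does not by itself close the estimate.
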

\begin{proof}
See Appendix \ref{app:proof.lem.comp.improved}.
\end{proof}

We now indicate how the enhanced second fundamental form tensor is affected by these estimates. 

Fix $\ell \in \{1, \ldots, Q\}$. We see from \eqref{eq:phi.improved.c2a.estimate.full} that
\begin{align}
	\varepsilon \Vert \nabla \phi -  \langle \nabla \phi, \nabla d_\ell \rangle \nabla d_\ell  \Vert_{C^0(\cM_\ell(r))} & \leq c' \varepsilon^2 + c' \sum_{m=1}^Q A_m(r + 2K \varepsilon |\log \varepsilon|)^{1+\kappa} \label{eq:grad.phi.estimate} \\
\intertext{for some $\kappa = \kappa(n, c_0, E_0, \eta, \beta) > 0$. Likewise, from \eqref{eq:approximate.critical.point}, \eqref{eq:approximate.critical.point.model}, Lemmas \ref{lemm:stationary.estimates}-\ref{lemm:h.phi.comparison.improved}, and \eqref{eq:phi.improved.c2a.estimate.full}:}
	\varepsilon \Vert \nabla U[\mathbf{h}] - \langle \nabla U[\mathbf{h}], \nabla d_\ell \rangle \nabla d_\ell  \Vert_{C^0(\cM_\ell(r))} & \leq c' \varepsilon^2 + c' \sum_{m=1}^Q A_m(r + 2K \varepsilon |\log \varepsilon|)^{1+\kappa}. \label{eq:grad.gluedU.estimate} \\
\intertext{Combining \eqref{eq:discrepancy.function}, \eqref{eq:grad.phi.estimate}, and \eqref{eq:grad.gluedU.estimate}, we get:}
	\varepsilon \Vert \nabla u - \langle \nabla u, \nabla d_\ell \rangle \nabla d_\ell \Vert_{C^0(\cM_\ell(r))} & \leq c' \varepsilon^2 + c' \sum_{m=1}^Q A_m(r + 2K \varepsilon |\log \varepsilon|)^{1+\kappa}. \label{eq:grad.u.estimate} \\
\intertext{Combining \eqref{eq:sheets.lower.density.bound} and  \eqref{eq:grad.u.estimate}, we get:}	
	\Vert \nu - (-1)^{\ell-1} \nabla d_\ell  \Vert_{C^0(\cM_\ell(r) \cap \{ |u| \leq 1-\beta \})} & \leq c' \varepsilon^2 + c' \sum_{m=1}^Q A_m(r + 2K \varepsilon |\log \varepsilon|)^{1+\kappa}, \label{eq:normal.direction.estimate}	
\end{align}
where $\nu = |\nabla u|^{-1} \nabla u$ denotes the normal to the level set of $u$ through each point. (The level set is smooth on $\{ |u| \leq 1-\beta \}$ in view of \eqref{eq:sheets.lower.density.bound}.) 

For the remainder of this section we choose to work in Fermi coordinates $(y, t)$ relative to $\Gamma_\ell$; note that $t = d_\ell$. It is not hard to see that the only nontrivial Christoffel symbols in this coordinate system are $\Gamma_{ij}^t$, $\Gamma_{jt}^i$, $\Gamma_{tj}^i$, and $\Gamma_{ij}^k$.  Set
\begin{equation} \label{eq:christoffel.symbol.sup}
\widehat{\Gamma}_\ell(r) \triangleq \sup_{\cM_\ell(r) \cap \{ |u| \leq 1-\beta \}} |\Gamma_{ij}^t| + |\Gamma_{jt}^i| + |\Gamma_{tj}^i| + |\Gamma_{ij}^k|.
\end{equation}
By arguing as above, and relying on \eqref{eq:phi.improved.c2a.estimate.full}, we find that:
\begin{align}
	& \varepsilon^2 \Vert \nabla^2 u - \nabla^2 u(\partial_t, \partial_t) \, dt^2 \Vert_{C^0(\cM_\ell(r) \cap \{|u| \leq 1-\beta\})} \label{eq:hessu.estimate.i} \\
	& \qquad \leq \varepsilon^2 \sum_{i=1}^{n-1} \Vert \nabla (((Z_{\Gamma_\ell})_* \partial_{y_i}) u) \Vert_{C^0(\cM_\ell(r) \cap \{ |u| \leq 1-\beta \})}  + \varepsilon^2 \widehat{\Gamma}_\ell(r) \Vert \nabla u \Vert_{C^0(\cM_\ell(r) \cap \{ |u| \leq 1-\beta \})} \nonumber \\
	& \qquad \leq c' \varepsilon^2 + c' \sum_{m=1}^Q A_m(r + 2K \varepsilon |\log \varepsilon|)^{1+\kappa} + c' \varepsilon \widehat{\Gamma}_\ell(r). \nonumber \\
\intertext{Using \eqref{eq:normal.direction.estimate} (note that $\partial_t = \nabla d_\ell$),}
	& \varepsilon^2 \Vert \nabla^2 u(\partial_t, \partial_t) \, dt \otimes (dt - \langle dt, \nu \rangle \nu^\flat) \Vert_{C^0(\cM_\ell(r) \cap \{ |u| \leq 1-\beta \})} \label{eq:hessu.estimate.ii} \\
	& \qquad \leq c' \Vert \nu - \partial_t \Vert_{C^0(\cM_\ell(r) \cap \{|u| \leq 1-\beta \})} \leq c' \varepsilon^2 + c' \sum_{m=1}^Q A_m(r + 2K \varepsilon |\log \varepsilon|)^{1+\kappa}, \nonumber \\
\intertext{where $\nu^\flat$ denotes $\nu$'s dual 1-form. Finally,   \eqref{eq:sheets.lower.density.bound}, \eqref{eq:hessu.estimate.i}, and  \eqref{eq:hessu.estimate.ii} give:}
	& \Vert \cA \Vert_{C^0(\cM_\ell(r) \cap \{ |u| \leq 1-\beta \})} \leq c' \varepsilon + c' \varepsilon^{-1} \sum_{m=1}^Q A_m(r + 2K \varepsilon |\log \varepsilon|)^{1+\kappa} + c' \widehat{\Gamma}_\ell(r). \label{eq:enhanced.sff.estimate} \\
\intertext{Now, we turn to estimating $H_{\Gamma_{\ell}}$. From Lemma \ref{lemm:h.phi.comparison.improved} and \eqref{eq:phi.improved.c2a.estimate.full} we have, for $y \in \Gamma_\ell(\tfrac{8}{10})$,}
	& \varepsilon |\Delta_{\Gamma_\ell} h_\ell(y)| \label{eq:bootstrapped.i} \\
	& \qquad \leq \varepsilon^2 |\nabla^2_{\Gamma_\ell} (\phi|_{\Gamma_\ell})(y)| +  \eps^\kappa \exp(-\sqrt{2} \varepsilon^{-1} D_\ell(y)) \nonumber \\
	& \qquad \leq c' \varepsilon^2 + c' \varepsilon^{\kappa} \sum_{m=1}^Q A_m(|y| + 2K \varepsilon|\log \varepsilon|) + c' \sum_{m=1}^Q  A_m(|y| + 2K \varepsilon |\log \varepsilon|)^{1+\kappa}. \nonumber
\end{align}

We're going to estimate the terms in \eqref{eq:jacobi.toda} from above by a function of $\varepsilon$ and the quantities in \eqref{eq:sup.exp.distance}. Fix $\ell \in \{1, \ldots, Q\}$, $y \in \Gamma_\ell(\tfrac{7}{10})$.

From Lemma \ref{lemm:stationary.estimates}, Lemma \ref{lemm:h.phi.comparison.improved}, and \eqref{eq:phi.c2a.estimate.full}, we have
\begin{align} 
& (\varepsilon^{-1} |h_\ell| + \varepsilon^{-1} |h_{\ell-1} \circ \Pi_{\ell-1} \circ Z_{\Gamma_\ell}| + \varepsilon^{\tfrac{1}{3}}) \exp(-\sqrt{2} \varepsilon^{-1} |d_{\ell-1}(y)|) \nonumber \\
& + (\varepsilon^{-1} |h_\ell| + \varepsilon^{-1} |h_{\ell+1} \circ \Pi_{\ell+1} \circ Z_{\Gamma_\ell}|  + \varepsilon^{\tfrac{1}{3}}) \exp(-\sqrt{2} \varepsilon^{-1} |d_{\ell+1}(y)|) \nonumber \\
& + \exp(-(\tfrac{3}{2}\sqrt{2}) \varepsilon^{-1} |d_{\ell-1}(y)|) + \exp(-(\tfrac{3}{2}\sqrt{2}) \varepsilon^{-1} |d_{\ell+1}(y)|) \nonumber \\
& + \exp(-\sqrt{2} \varepsilon^{-1} |d_{\ell-2}(y)|)) + \exp(-\sqrt{2} \varepsilon^{-1} |d_{\ell+2}(y)|) \nonumber \\
& \qquad \leq c' \varepsilon^2 + c'  \varepsilon^{\kappa} \sum_{m=1}^Q A_m(|y|+K\varepsilon |\log \varepsilon|) + c' \sum_{m=1}^Q  A_m(|y|+K \varepsilon |\log \varepsilon|)^{1+\kappa}. \label{eq:bootstrapped.ii}
\end{align}
By Lemma \ref{lemm:h.phi.comparison}, \eqref{eq:phi.c2a.estimate.full}, \eqref{eq:phi.improved.c2a.estimate.full}, every $m \neq \ell$ satisfies
\begin{multline} \label{eq:bootstrapped.iii}
\varepsilon^{-1} |d_m(y)| \exp(-\sqrt{2}\varepsilon^{-1}|d_m(y)|)  \Big[ \varepsilon \Vert \Delta_{\Gamma_m} h_m - H_{\Gamma_m}  \Vert_{C^0} + \Vert \nabla_{\Gamma_m} h_m \Vert^2_{C^0} \Big] \\
\leq  c' A_m(|y| + 2K \varepsilon |\log \varepsilon|)^{1-\rho}\sum_{m'\not = m} A_{m}(|y|+2K\varepsilon |\log \varepsilon|)
+ c' \varepsilon^2 A_m(|y| + 2K \varepsilon |\log \varepsilon|)^{1-\rho}
\end{multline}
for small $\rho > 0$, $\varepsilon \leq \varepsilon'$. The $C^0$ norms are taken over $\Gamma_m \cap C_{\eps^{4/3}}(\Pi(y))$. By Lemma \ref{lemm:stationary.estimates} and \eqref{eq:phi.c2a.estimate.full},
\begin{multline} \label{eq:bootstrapped.iv}
	\sup_{|t| < 6 \eps |\log \eps|} \Big[ \varepsilon^4 |(\nabla^2_{\Gamma_{m,t}} (\phi|_{\Gamma_{m,t}}))(Z_{\Gamma_m}(y,t))|^2 + \varepsilon^2 |(\nabla_{\Gamma_{m,t}} (\phi|_{\Gamma_{\ell,t}})(Z_{\Gamma_m}(y,z))|^2 + |\phi(Z_{\Gamma_m}(y, z))|^2 \Big] \\
	\leq c' \varepsilon^2 + c' \sum_{m'=1}^Q A_{m'}(|y| + K \varepsilon |\log \varepsilon|)^2.
\end{multline}
Combined, \eqref{eq:jacobi.toda} and \eqref{eq:bootstrapped.i}-\eqref{eq:bootstrapped.iv} give
\begin{align} 
	-\varepsilon H_{\Gamma_\ell}(y) 
	& = \frac{4(\expansioncoeff)^{2}}{\energyunit} \Big( \exp(-\sqrt{2} \varepsilon^{-1} |d_{\ell-1}(y)|) - \exp( -\sqrt{2} \varepsilon^{-1} |d_{\ell+1}(y)|) \Big) + \cR_{\ell} \label{eq:bootstrapped.v} \\
\intertext{for all $y \in \Gamma_\ell(\tfrac{7}{10})$, where}
	|\cR_\ell(y)| & \leq c' \varepsilon^2 + c'  \eps^\kappa \sum_{m=1}^Q A_m(|y| + 2K \varepsilon |\log \varepsilon|) + c' \sum_{m=1}^Q A_m(|y| + 2K \varepsilon |\log \varepsilon|)^{1+\kappa} \label{eq:bootstrapped.error.i}.
\end{align}

\begin{lemm} \label{lemm:mean.curvature.laplacian}
	Let $f : B_1^{n-1}(0) \to \RR$ be as in \eqref{eq:sheets.graph.apriori.C1.bounds}-\eqref{eq:sheets.graph.apriori.C2.bounds}. If $G[f]$ is the normal graph of $f$ over $\Gamma_{\ell}$, i.e., $G[f] = \{ Z_{\Gamma_\ell}(y, f(y)) : y \in B_1^{n-1}(0) \}$, then
	\[ H_{G[f]} - H_{\Gamma_\ell} = - (\cL + |\sff_{\Gamma_\ell}|^2 + \ricc_g(\nu_{\Gamma_\ell},\nu_{\Gamma_\ell})|_{\Gamma_\ell})f + \cQ(f), \]
	where $\cL$ is the linear uniformly elliptic operator
	\begin{equation} \label{eq:mean.curvature.laplacian.operator}
		\cL \varphi = \cL_{\Gamma_\ell,G[f]} \varphi \triangleq a(y)^{-1} \divg_{\Gamma_\ell} \left( a(y) \langle (Z_{\Gamma_\ell})_* \nu_{\Gamma_\ell}, \nu_{G[f]} \rangle \nabla_{G[f]} \varphi \right),
	\end{equation}
	with
	\begin{equation} \label{eq:mean.curvature.laplacian.operator.a}
		a(y) = a_{\Gamma_\ell,G[f]}(y) \triangleq \frac{\sqrt{g_{\Gamma_\ell}}}{\sqrt{g_{f(y)}}}.
	\end{equation}
	Here $(Z_{\Gamma_\ell})_* \nu_{\Gamma_\ell}$, $\nu_{G[f]}$ are upward pointing unit normal in Fermi coordinates and the upward pointing unit normal to $G[f]$, both evaluated at $Z_{\Gamma_\ell}(y, f(y))$. Note that the elliptic symbol coefficients are uniformly bounded away from $0$ and $\infty$ depending on \eqref{eq:sheets.graph.apriori.C1.bounds}. The (nonlinear) error term $\cQ(f)$ satisfies
	\begin{equation*}
		|\cQ(f)| \leq c' (|f|^2 + |\nabla_{\Gamma_\ell} f|^2).
	\end{equation*}
\end{lemm}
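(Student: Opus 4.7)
The plan is to perform a direct calculation, using the divergence-form expression for the mean curvature of a normal graph in Fermi coordinates (this is precisely the content of Appendix \ref{app:mean.curvature.graphs}, which I would reference).

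First, I would pull everything back to Fermi coordinates $(y,z)$ centered on $\Gamma_\ell$, where $g = g_z + dz^2$. Here $G[f]$ is parametrized by $y \mapsto (y, f(y))$, its induced area form is $\sqrt{\det(g_{f(y)} + df \otimes df)}\, dy$, and a direct computation of the upward unit normal yields
\[
\nu_{G[f]} = \frac{\partial_z - g_{f(y)}^{ij}\partial_j f \, \partial_i}{\sqrt{1 + |\nabla^{g_{f(y)}} f|^2_{g_{f(y)}}}}, \qquad \langle (Z_{\Gamma_\ell})_* \nu_{\Gamma_\ell}, \nu_{G[f]} \rangle = \frac{1}{\sqrt{1 + |\nabla^{g_{f(y)}} f|^2_{g_{f(y)}}}}.
\]
In particular, the coefficient $a(y)$ in \eqref{eq:mean.curvature.laplacian.operator.a} and this inner product are uniformly bounded away from $0$ and $\infty$ by \eqref{eq:sheets.graph.apriori.C1.bounds}, so the ellipticity claim is immediate once $\cL$ is identified.

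Second, I would derive the mean curvature of $G[f]$ in divergence form by equating the first variation of area computed two different ways (as $\int -H_{G[f]} X \, d\mu_{G[f]}$, with $X$ the normal component of the deformation $\varphi \partial_z$, versus by direct differentiation of $A(f) = \int a(y)^{-1} \sqrt{1 + |\nabla^{g_{f(y)}} f|^2_{g_{f(y)}}}\, d\mu_{\Gamma_\ell}$). This yields an expression of the schematic form
\[
H_{G[f]} = H_{\Sigma_{f(y)}} \langle (Z_{\Gamma_\ell})_* \nu_{\Gamma_\ell}, \nu_{G[f]}\rangle + a(y)^{-1} \divg_{\Gamma_\ell}\bigl( a(y) \langle (Z_{\Gamma_\ell})_* \nu_{\Gamma_\ell}, \nu_{G[f]}\rangle \nabla_{G[f]} f \bigr) + O(|f|^2 + |\nabla f|^2),
\]
where $\Sigma_z = \{z' = z\}$ has mean curvature $H_{\Sigma_z}$ and $\nabla_{G[f]} f$ is interpreted via the identification $\Pi|_{G[f]} : G[f] \xrightarrow{\approx} \Gamma_\ell$. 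At $f \equiv 0$ this collapses to $H_{\Gamma_\ell}$, which gives the zero-th order cancellation.

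Third, I would extract the linear part in $f$. The Riccati equation $\partial_z H_{\Sigma_z}|_{z=0} = -(|\sff_{\Gamma_\ell}|^2 + \ricc_g(\nu_{\Gamma_\ell}, \nu_{\Gamma_\ell}))$ turns $H_{\Sigma_{f(y)}} - H_{\Sigma_0}$ into $-(|\sff_{\Gamma_\ell}|^2 + \ricc_g(\nu_{\Gamma_\ell}, \nu_{\Gamma_\ell}))f$ modulo $O(|f|^2)$, producing the potential part of the stated operator. The divergence-form term, with the graph-dependent coefficients $a(y)$ and $\langle (Z_{\Gamma_\ell})_*\nu_{\Gamma_\ell}, \nu_{G[f]}\rangle$ left intact (i.e.\ not further linearized in $f$), is precisely $-\cL f$ in the notation of \eqref{eq:mean.curvature.laplacian.operator}; the sign here is the one forced by the Fermi-coordinate convention noted after \eqref{eq:jacobi.toda}. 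Collecting everything else into $\cQ(f)$ and using \eqref{eq:sheets.sff.bound}, \eqref{eq:sheets.metric.bound}, and \eqref{eq:sheets.graph.apriori.C1.bounds}--\eqref{eq:sheets.graph.apriori.C2.bounds} to dominate the higher-order Taylor terms of $H_{\Sigma_z}(y)$ in $z$ and the graph-coefficient remainders, gives the bound $|\cQ(f)| \leq c'(|f|^2 + |\nabla_{\Gamma_\ell} f|^2)$.

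The main technical point — rather than any deep obstacle — is the algebraic bookkeeping in the third step: deciding which factors to linearize and which to leave intact, so that the linear part has exactly the quasilinear divergence structure claimed (with graph-dependent, not $\Gamma_\ell$-dependent, coefficients) while the nonlinear remainder is genuinely quadratic in $f$ and $\nabla f$. Once this bookkeeping is carried out carefully, the lemma follows.
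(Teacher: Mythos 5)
Your proposal is correct and follows essentially the same route as the paper: the paper's proof of this lemma is a one-line reference to Appendix \ref{app:mean.curvature.graphs}, where \eqref{eq:mean.curv.graphical} is obtained by exactly your two-ways first-variation computation in Fermi coordinates (using $d\mu_{g_f} = g(\nu,\partial_z)\, d\mu_{G[f]}$ to identify $H_{G[f]}$ in divergence form), and the remainder $\cQ(f)$ is bounded precisely as you describe, by adding and subtracting $H_f$ and invoking the Riccati equation \eqref{eq:mean.curv.ddt.h} to produce the potential $|\sff_{\Gamma_\ell}|^2 + \ricc_g(\nu_{\Gamma_\ell},\nu_{\Gamma_\ell})$ modulo $O(|f|^2)$. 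The only blemish is the sign in your schematic expansion: with the paper's convention (mean curvature equals the divergence of the upward unit normal) the divergence term enters $H_{G[f]}$ as $-a(y)^{-1}\divg_{\Gamma_\ell}\bigl(a(y)\langle (Z_{\Gamma_\ell})_*\nu_{\Gamma_\ell},\nu_{G[f]}\rangle \nabla_{G[f]} f\bigr)$, i.e.\ as $-\cL f$ directly, which is what you in any case assert when you resolve the convention in your third step.
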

\begin{proof}
	This is a restatement of Lemma \ref{lemm:mean.curv.graphical.quad.error.new} from Appendix \ref{app:mean.curvature.graphs}.
\end{proof}

Notice that, by \eqref{eq:sheets.graph.apriori.C1.bounds}-\eqref{eq:sheets.graph.apriori.C2.bounds}, $\Gamma_{\ell+1}$ can be viewed as a normal graph of some function $f_{\ell,\ell+1}$ over $\Gamma_{\ell}$ that satisfies the conditions of Lemma \ref{lemm:mean.curvature.laplacian}. Let
\[ y' \triangleq Z_{\Gamma_\ell}(y,f_{\ell,\ell+1}(y)) \in \Gamma_{\ell+1}. \]
Applying \eqref{eq:bootstrapped.v} to $y$ at $\Gamma_\ell$ and to $y'$ at $\Gamma_{\ell+1}$,  subtracting, and invoking Lemma \ref{lemm:mean.curvature.laplacian}, we see that:
\begin{align}
	& \varepsilon (\cL + |\sff_{\Gamma_\ell}|^2 + \ricc_g(\nu,\nu)|_{\Gamma_\ell} + \cQ) f_{\ell,\ell+1}(y) \label{eq:bootstrapped.vi} \\
	& \qquad = \varepsilon(H_{\Gamma_\ell}(y) - H_{\Gamma_{\ell+1}}(y')) \nonumber \\
	& \qquad = \frac{4(\expansioncoeff)^{2}}{\energyunit} \Big( \exp(-\sqrt{2} \varepsilon^{-1} f_{\ell,\ell+1}(y))  - \exp(-\sqrt{2} \varepsilon^{-1} |d_{\ell+2}(y')|) \nonumber \\
	& \qquad \qquad \qquad - \exp(-\sqrt{2} \varepsilon^{-1} |d_{\ell-1}(y)|) + \exp(-\sqrt{2} \varepsilon^{-1} |d_{\ell+1}(y)|) \Big) \nonumber \\
	& \qquad \qquad - \cR_{\ell}(y) + \cR_{\ell+1}(y'). \nonumber
\end{align}
Here, $\cL$ is the second order linear operator defined in \eqref{eq:mean.curvature.laplacian.operator}, and which depends on $\Gamma_\ell$, $\Gamma_{\ell+1}$.  Note that (see Lemma \ref{lemm:WW.tilting.comparison.d}):
\begin{equation} \label{eq:distance.exchange.error}
\exp(-\sqrt{2} \varepsilon^{-1} |d_{\ell+1}(y)|) = \exp(-\sqrt{2} \varepsilon^{-1} f_{\ell,\ell+1}(y)) 
+ O(\varepsilon^{\tfrac{1}{3}}) \exp(-\sqrt{2} \varepsilon^{-1} D_{\ell}(y)).
\end{equation}
Absorbing the last term above into $\cR_\ell$ in view of \eqref{eq:bootstrapped.error.i}, we conclude:
\begin{align}
	& \varepsilon (\cL + |\sff_{\Gamma_\ell}|^2 + \ricc_g(\nu,\nu)|_{\Gamma_\ell} + \cQ) f_{\ell,\ell+1}(y) \label{eq:bootstrapped.vii} \\
	&  = \frac{4(\expansioncoeff)^{2}}{\energyunit} \Big( 2 \exp(-\sqrt{2} \varepsilon^{-1} f_{\ell,\ell+1}(y))   - \exp(-\sqrt{2} \varepsilon^{-1} |d_{\ell+2}(y')|)  - \exp(-\sqrt{2} \varepsilon^{-1} |d_{\ell-1}(y)|) \Big) \nonumber \\
	&  \qquad - \cR_{\ell}(y) + \cR_{\ell+1}(y'). \nonumber
\end{align}
Finally, dropping the negative terms gives:
\begin{equation} \label{eq:bootstrapped.viii} 
	\varepsilon (\cL + |\sff_{\Gamma_\ell}|^2 + \ricc_g(\nu, \nu)|_{\Gamma_\ell} + \cQ) f_{\ell,\ell+1}(y) 
	\leq \frac{8(\expansioncoeff)^{2}}{\energyunit} \exp(-\sqrt{2} \varepsilon^{-1} f_{\ell,\ell+1}(y)) + c' |\cR_\ell(y)| + |\cR_{\ell+1}(y')|; 
\end{equation}
the error terms $\cR_\ell$, $\cR_{\ell+1}$, are still as in \eqref{eq:bootstrapped.error.i}.

\section{Stable phase transitions ($n=3$)}


\label{sec:stable.solutions}

In this section, we use the Allen--Cahn stability inequality and bootstrap the distance estimates from the previous section until they become sufficiently sharp. Specifically, we combine three things: (i) an $L^2$ estimate on the height function of $\{ u = 0 \}$ (following an observation of Wang--Wei \cite[(19.7)]{WangWei}), (ii) a subtle application of Moser's Harnack inequality, and (iii) the nonexistence of nontrivial entire stable critical points of the Toda system on $\mathbf{R}^2$ (cf. the stable Bernstein problem for minimal surfaces in $\mathbf{R}^3$).

\subsection{Strong sheet distance lower bounds} \label{subsec:distance.estimates}

We continue to adopt the conventions and notation laid out in Section \ref{sec:jacobi.toda.reduction}. In particular, we emphasize that we continue to assume \eqref{eq:sheets.sff.bound}-\eqref{eq:sheets.enhanced.sff.bound} as well as assuming that $u$ is a stable critical point of $E_{\eps}\restr\Omega$ (cf.\ Definition \ref{def:ac.morse.index}). 

In \cite[(19.7)]{WangWei}, Wang--Wei derive the following stability inequality (in a slightly different setting) from the usual Allen--Cahn stability inequality.
\begin{equation} \label{eq:toda.stability.estimate}
	\int_{\Gamma_\ell(7/10)} \zeta^2 \Big[ \exp(-\sqrt{2} \varepsilon^{-1} |d_{\ell-1}|) + \exp(-\sqrt{2} \varepsilon^{-1} |d_{\ell+1}|) \Big] \leq c' \int_{\Gamma_\ell(7/10)} \varepsilon^2 |\nabla_{\Gamma_\ell} \zeta|^2 + c'   \eps^{1+\kappa}\int_{\Gamma_\ell(7/10)} \zeta^2
\end{equation}
for all $\ell \in \{1, \ldots, Q\}$, $\zeta \in C^\infty_c(\Gamma_\ell(\tfrac{7}{10}))$, $\varepsilon \leq \varepsilon'$, where $\varepsilon'$, $c'$, $\kappa$ depend on $c_0$, $E_0$, $\eta$, $\beta$. In fact, by a careful inspection of Wang--Wei's derivation of \eqref{eq:toda.stability.estimate} from \cite[Section 19]{WangWei}, we see that the following stronger inequality is true here:
\begin{equation} \label{eq:toda.stability.estimate.sharper}
	\int_{\Gamma_\ell(7/10)} \zeta^2 \Big[ \exp(-\sqrt{2} \varepsilon^{-1} |d_{\ell-1}|) + \exp(-\sqrt{2} \varepsilon^{-1} |d_{\ell+1}|) \Big] \leq c' \int_{\Gamma_\ell(7/10)} \varepsilon^2 |\nabla_{\Gamma_\ell} \zeta|^2 + |\cE_{\zeta}| \int_{\Gamma_\ell(7/10)} \zeta^2 
\end{equation}
with
\begin{equation} \label{eq:toda.stability.estimate.sharper.coefficient}
	|\cE_{\zeta}| \leq c' \varepsilon^2 +  c' \sum_{m=1}^Q \sup\left\{\exp(-\sqrt{2} (1+\kappa) \eps^{-1}D_{m}(y')) : y' \in \Gamma_m \cap \Pi_\ell^{-1}(B_{2K\eps |\log \eps|}^2(\support h)) \right\};
\end{equation}
here, $c', \kappa$ are independent of $\zeta$. We prove  \eqref{eq:toda.stability.estimate.sharper} in Appendix \ref{app:proof.stab.inproved} in a general $n$-dimensional setting, $n \geq 3$. (Below, we use it for $n=3$.) Note that, by Lemma \ref{lemm:stationary.estimates}, this recovers \eqref{eq:toda.stability.estimate}.

Our first main result is the following sheet-distance estimate. (cf. Remark \ref{rema:major.goal}.) 

\begin{prop}[Stable sheet distances, I]  \label{prop:bootstrapped.stable.estimates}
	If $u$ is a stable critical point of $E_\varepsilon \restr \Omega$, $\varepsilon \leq \varepsilon_3$, and $\nu \in (0, \tfrac{1}{2})$, then 
	\[ D_\ell \geq (1-\nu) \sqrt{2} \varepsilon |\log \varepsilon| \text{ on } \Gamma_\ell(\tfrac{1}{3}) \]
	for all $\ell \in \{ 1, \ldots, Q \}$, where $\varepsilon_3 =  \varepsilon_3(c_0,E_0,\eta,\beta,\nu)$.
\end{prop}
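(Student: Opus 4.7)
The plan is to argue by contradiction and blow-up, combining the three ingredients announced at the start of this section: the sharpened $L^{2}$-stability inequality \eqref{eq:toda.stability.estimate.sharper}, Moser's Harnack inequality, and the Bernstein-type nonexistence of nontrivial entire stable solutions to the Toda system on $\RR^{2}$ (which plays, for transition-layer separations, the role of the stable Bernstein theorem for minimal surfaces in $\RR^{3}$). Suppose the proposition fails for some $\nu \in (0, \tfrac{1}{2})$: there exist stable $u_j$ with $\eps_j \searrow 0$ satisfying \eqref{eq:sheets.sff.bound}--\eqref{eq:sheets.enhanced.sff.bound} and points $y_j \in \Gamma_\ell^{(j)}(\tfrac{1}{3})$ with $D_\ell^{(j)}(y_j) < (1-\nu)\sqrt{2}\eps_j|\log\eps_j|$. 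Center at $y_j$, rescale by $\eps_j$, and introduce the normalized heights
\[ \hat{f}^{(j)}_{\ell, \ell \pm 1}(\hat y) \triangleq \eps_j^{-1} f^{(j)}_{\ell, \ell \pm 1}(y_j + \eps_j \hat y) - \sqrt{2}|\log\eps_j|. \]
Baseline Lemma \ref{lemm:stationary.estimates} gives $\hat{f}^{(j)}_{\ell, \ell \pm 1} \geq -\tfrac{1}{\sqrt{2}}|\log\eps_j| - c_1$, while the contradiction hypothesis forces $\hat{f}^{(j)}_{\ell, \ell+1}(0) < -\sqrt{2}\nu|\log\eps_j| \to -\infty$.

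First, I apply \eqref{eq:toda.stability.estimate.sharper} to each $u_j$ with a logarithmic cutoff $\zeta$---available precisely because $n-1 = 2$, where such cutoffs satisfy $\int \eps^{2}|\nabla_{\Gamma_\ell} \zeta|^{2} \lesssim \eps^{2}/|\log\eps|$---and use the baseline bound $|\cE_\zeta| \leq c' \eps^{1+\kappa}$ from \eqref{eq:toda.stability.estimate.sharper.coefficient} combined with Lemma \ref{lemm:stationary.estimates}. After rescaling, this yields a uniform $L^{2}_{\mathrm{loc}}(\RR^{2})$ bound on each $e^{-\sqrt{2}\hat{f}^{(j)}_{\ell, \ell \pm 1}}$. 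Second, the Toda-type inequality \eqref{eq:bootstrapped.vii}--\eqref{eq:bootstrapped.error.i} rescales to an asymptotic Toda system on $\RR^{2}$ of the schematic form $\Delta \hat{f}^{(j)}_{\ell, \ell+1} \sim C\bigl(2 e^{-\sqrt{2}\hat{f}^{(j)}_{\ell, \ell+1}} - e^{-\sqrt{2}\hat{f}^{(j)}_{\ell-1, \ell}} - e^{-\sqrt{2}\hat{f}^{(j)}_{\ell+1, \ell+2}}\bigr) + o(1)$, and Moser's weak Harnack inequality upgrades the $L^{2}_{\mathrm{loc}}$ bound from the first step to a pointwise $L^{\infty}_{\mathrm{loc}}(\RR^{2})$ bound on each $e^{-\sqrt{2}\hat{f}^{(j)}_{\ell, \ell \pm 1}}$, equivalently a locally uniform lower bound on the $\hat{f}^{(j)}_{\ell, \ell \pm 1}$.

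Third, with these uniform pointwise bounds, the interior Schauder theory for \eqref{eq:phi.c2a.estimate.full}--\eqref{eq:phi.improved.c2a.estimate.full} produces $C^{2,\theta}_{\mathrm{loc}}$ compactness for the $\hat{f}^{(j)}_{\ell, \ell \pm 1}$, so a subsequence converges smoothly on $\RR^{2}$ to limits $\hat{f}^{(\infty)}_{\ell, \ell \pm 1}$ solving the (possibly coupled) Toda system; stability passes through the limit in \eqref{eq:toda.stability.estimate.sharper}. The Bernstein-type classification of stable entire Toda solutions on $\RR^{2}$ then forces $\hat{f}^{(\infty)}_{\ell, \ell+1} \equiv +\infty$, i.e.\ infinite separation. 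But this contradicts the pointwise bound from the second step at $\hat y = 0$, which is finite, combined with the contradiction hypothesis $\hat{f}^{(j)}_{\ell, \ell+1}(0) \to -\infty$.

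The main obstacle is the application of Moser's Harnack inequality in the second step: the rescaled separations $\hat{f}^{(j)}_{\ell, \ell \pm 1}$ are only $C^{2,\theta}$ in the $\eps$-scaled norms of \eqref{eq:phi.c2a.estimate.full}--\eqref{eq:phi.improved.c2a.estimate.full}, so the Harnack constants must be shown to remain uniform in $j$; moreover, the coupling among several sheets in the full Toda system requires a simultaneous treatment, rather than a scalar reduction. A secondary technical point is that the Fermi-coordinate metric on each $\Gamma_\ell^{(j)}$ must be shown to converge (locally) to the Euclidean metric on $\RR^{2}$ at the rescaled level, in order for the Laplacian in the limiting Toda equation to be the flat one.
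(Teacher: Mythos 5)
There is a genuine gap, and it lies in the choice of blow-up scale and normalization on which your entire architecture rests. The quantity to normalize by is $\alpha \triangleq \max_\ell A_\ell(r)$ (so the contradiction hypothesis reads $\alpha > \eps^{2(1-\nu)}$), and the correct spatial scale is $\eps\alpha^{-1/2}$: the paper sets $\widetilde f(\widetilde y) = \eps^{-1} f_{\ell_0,\ell_0+1}(y + \eps\alpha^{-1/2}\widetilde y) - \tfrac{1}{\sqrt2}|\log\alpha|$, so that $\psi = e^{-\sqrt2\widetilde f} = \alpha^{-1} e^{-\sqrt2\eps^{-1} f_{\ell_0,\ell_0+1}} \leq 1$ a priori on the relevant region. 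You instead rescale by $\eps_j$ and subtract $\sqrt2|\log\eps_j|$, i.e. $e^{-\sqrt2\hat f^{(j)}} = \eps_j^{-2} e^{-\sqrt2\eps_j^{-1} f}$, and with this normalization your first step fails: plugging a logarithmic cutoff supported at scale $R\eps_j$ into \eqref{eq:toda.stability.estimate.sharper} and changing variables, the left-hand side carries a factor $\eps_j^{4}$ (area element $\eps_j^{2}$ times your normalization $\eps_j^{2}$), while the right-hand side is of size $\eps_j^{2}(\log R)^{-1} + \eps_j^{3+\kappa}R^{2}$, so the resulting bound on $\int e^{-\sqrt2\hat f^{(j)}}$ is of order $\eps_j^{-2}(\log R)^{-1}$ and is not uniform in $j$ (note also that stability only controls the exponential to the \emph{first} power — an $L^{1}$-type bound, cf. \eqref{eq:bootstrapped.claim.L1.bound} — not the $L^{2}_{\loc}$ bound you assert). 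Your second step then fails as well: in the differential inequality $\widetilde\cL\psi \geq -C\psi^{2} - c'\psi$, with your normalization $\psi = e^{-\sqrt2\hat f^{(j)}}$ can be of size $\eps_j^{-1}$ on the region (and $\eps_j^{-2\nu}$ at the origin, by your own contradiction hypothesis), so the effective zeroth-order coefficient $C\psi$ is unbounded and Moser's weak maximum principle has no uniform constant. With the normalization $\psi\leq 1$ these obstructions disappear, and Moser converts the $L^{1}$ bound into $\psi(0) < \tfrac12$ directly — which is the paper's contradiction.

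Your third step is also structurally off. If your steps one and two did hold, the locally uniform $L^{\infty}$ bound on $e^{-\sqrt2\hat f^{(j)}}$ would already contradict $\hat f^{(j)}_{\ell,\ell+1}(0)\to-\infty$, so no blow-up limit or Liouville theorem would be needed; conversely, the classification of entire stable Toda solutions on $\RR^{2}$ that you invoke is neither established nor cited in a usable form, and the paper deliberately avoids passing to any such limit. Its proof is a quantitative, scale-by-scale decay iteration (the claim \eqref{eq:bootstrapped.claim}, iterated from $r=\tfrac12$ down to $\tfrac13$), and this first yields the proposition only for $\nu > \max\{\tfrac13, \tfrac{1-\kappa}{2}\}$, because of the coefficient estimates \eqref{eq:bootstrapped.claim.osc.bound}--\eqref{eq:bootstrapped.claim.quad.error.bound}; removing that restriction requires a further bootstrap using the sharper stability coefficient \eqref{eq:toda.stability.estimate.sharper.coefficient} together with an inhomogeneous Harnack inequality applied to the equation \eqref{eq:bootstrapped.vii} for $f_{\ell,\ell+1}$, which upgrades the oscillation bound to \eqref{eq:bootstrapped.x}. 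Your proposal never engages with the source of the restriction on $\nu$ or how to remove it, which is exactly where the substance of the proposition lies.
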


\begin{proof}
		Take $\nu \in (0,\tfrac{1}{2})$ and assume, for contradiction, that
		\begin{equation} \label{eq:bootstrapped.contradiction.assumption}
			A_{\ell_0}(r) \geq A_{\ell_0}(\tfrac{1}{3}) > \varepsilon^{2(1-\nu)} \text{ for all } r \in [\tfrac{1}{3}, \tfrac{1}{2}] \text{ and some } \ell_0 \in \{1, \ldots, Q\}.
		\end{equation}
		We will aim to prove
		\begin{equation} \label{eq:bootstrapped.claim}
			\max_{\ell \in \{1, \ldots, Q\}} A_{\ell}(r-K_\nu\varepsilon^\nu) < \tfrac{1}{2} \max_{\ell \in \{1, \ldots, Q\}} A_{\ell}(r) \text{ for all } r \in [\tfrac{1}{3}, \tfrac{1}{2}]
		\end{equation}
		where $K_\nu = K_\nu(c_0, E_0, \eta, \beta, \nu) > 0$; this will in turn prove our claim by a simple iteration. (We denote the dependence of $K_\nu$ on $\nu$ explicitly to disambiguate with the previous constant $K$. Let's assume $K_\nu > 2K$.)
		
		Let $r \in [\tfrac{1}{3}, \tfrac{1}{2}]$, $\alpha \triangleq \max \{ A_{\ell}(r) : \ell \in \{1, \ldots, Q\} \}$. Since
		\begin{equation} \label{eq:bootstrapped.claim.i}
			\alpha > \varepsilon^{2(1-\nu)} 
		\end{equation}
		by \eqref{eq:bootstrapped.contradiction.assumption}, it follows that to prove  \eqref{eq:bootstrapped.claim} it will suffice to prove
		\begin{equation} \label{eq:bootstrapped.claim.ii}
			A_{\ell}(r-\varepsilon K_\nu\alpha^{-\tfrac{1}{2}}) <  \tfrac{1}{2} \alpha \text{ for all } \ell \in \{1, \ldots, Q\}.
		\end{equation}
		Suppose, by way of contradiction, that \eqref{eq:bootstrapped.claim.ii} is violated at some $\ell_0 \in \{1, \ldots, Q\}$ and $y \in \Gamma_{\ell_0}(r - \eps K_\nu \alpha^{-\tfrac{1}{2}})$. From now on let's work in the coordinate chart induced on $\Gamma_{\ell_0}$ by  $\Pi|_{\Gamma_{\ell_0}} \approx \Sigma$. For $\widetilde{y} \in B_{K_\nu/2}^2(0)$, define:
		\begin{equation} \label{eq:bootstrapped.claim.ftilde}
			\widetilde{f}(\widetilde{y}) \triangleq \varepsilon^{-1} f_{\ell_0,\ell_0+1}(y + \varepsilon \alpha^{-\tfrac{1}{2}} \widetilde{y}) - \frac{1}{\sqrt{2}} |\log \alpha|.
		\end{equation}
		If $\widetilde{\cL}$ denotes the translation and rescaling of $\cL$ that respects the stretched coordinate, $\widetilde{y}$, then from \eqref{eq:bootstrapped.viii} we find
		\begin{align*}
			\widetilde{\cL} \widetilde{f}(\widetilde{y}) & = \varepsilon \alpha^{-1} \cL f_{\ell_0,\ell_0+1}(y+\varepsilon\alpha^{-\tfrac{1}{2}}\widetilde{y}) \\
			&  \leq  \frac{8(\expansioncoeff)^{2}\alpha^{-1}}{\energyunit} \exp(-\sqrt{2}\varepsilon^{-1}f_{\ell_0,\ell_0+1}(y+\varepsilon \alpha^{-\tfrac{1}{2}}\widetilde{y})) \\
			&  \qquad + \alpha^{-1} c' |\cR_{\ell_0}(y+\varepsilon \alpha^{-\tfrac{1}{2}}\widetilde{y})| + \alpha^{-1} |\cR_{\ell_0+1}((y+\varepsilon \alpha^{-\tfrac{1}{2}}\widetilde{y})')| \\
			&  \qquad - \varepsilon\alpha^{-1} (|\sff_{\Gamma_{\ell_0}}|^2+\ricc_g(\nu,\nu)|_{\Gamma_{\ell_0}} + \cQ) f_{\ell_0,\ell_0+1}(y+\varepsilon \alpha^{-\tfrac{1}{2}} \widetilde{y} ).
		\end{align*}
		Recalling \eqref{eq:bootstrapped.claim.ftilde}, the computation above readily implies that
		\begin{align} \label{eq:bootstrapped.claim.iii}
			\widetilde{\cL} \widetilde{f}(\widetilde{y}) &  \leq \frac{8(\expansioncoeff)^{2}}{\energyunit} \exp(-\sqrt{2} \widetilde{f}(\widetilde{y}))  \\
			&  \qquad + \alpha^{-1} c' |\cR_{\ell_0}(y+\varepsilon \alpha^{-\tfrac{1}{2}}\widetilde{y})|  + \alpha^{-1} |\cR_{\ell_0+1}((y+\varepsilon \alpha^{-\tfrac{1}{2}}\widetilde{y})')| \nonumber \\
			&  \qquad - \varepsilon \alpha^{-1} (|\sff_{\Gamma_{\ell_0}}|^2+\ricc_g(\nu,\nu)|_{\Gamma_{\ell_0}} + \cQ) f_{\ell_0,\ell_0+1}(y + \varepsilon \alpha^{-\tfrac{1}{2}} \widetilde{y}). \nonumber
		\end{align}
		From \eqref{eq:bootstrapped.error.i} and  \eqref{eq:bootstrapped.claim.i} 
		we have: 
		\begin{equation} \label{eq:bootstrapped.claim.iv}
			\alpha^{-1} c' |\cR_{\ell_0}| + \alpha^{-1} |\cR_{\ell_0+1}| \leq c' (\varepsilon^2 \alpha^{-1} + \eps^\kappa + \alpha^\kappa) 
			\leq c'(\alpha^{\tfrac{\nu}{1-\nu}} + \alpha^{\tfrac{\kappa}{2(1-\nu)}}) \leq c'.
		\end{equation}
		Now define the auxiliary function $\psi \triangleq \exp(-\sqrt{2} \widetilde{f}) > 0$. From the chain rule, \eqref{eq:bootstrapped.claim.iii}, and \eqref{eq:bootstrapped.claim.iv}, we have
		\begin{align} \label{eq:psi.differential.inequality}
			\widetilde{\cL} \psi & = - \sqrt{2} (\widetilde{\cL} \widetilde{f}) \psi + 2 |\widetilde{\nabla} \widetilde{f}|^2 \psi \nonumber \\
			& \geq - \frac{8\sqrt{2} (\expansioncoeff)^{2}}{\energyunit} \psi^2 - c' \psi  + \sqrt{2} \varepsilon \alpha^{-1} (|\sff_{\Gamma_{\ell_0}}|^2 + \ricc_g(\nu, \nu)|_{\Gamma_{\ell_0}}) f_{\ell_0,\ell_0+1} \psi \nonumber \\ 
			& \qquad + \alpha^{-1} (\sqrt{2} \varepsilon  \cQ (f_{\ell_0,\ell_0+1}) + |\nabla_{\Gamma_{\ell_0}} f_{\ell_0,\ell_0+1}|^2) \psi \nonumber \\
			& \geq - \frac{8\sqrt{2} (\expansioncoeff)^{2}}{\energyunit} \psi^2 - c' \psi  + \sqrt{2} \varepsilon \alpha^{-1} (|\sff_{\Gamma_{\ell_0}}|^2 + \ricc_g(\nu, \nu)|_{\Gamma_{\ell_0}}) f_{\ell_0,\ell_0+1} \psi \nonumber \\ 
			& \qquad - \alpha^{-1} \left[ \sqrt{2} \varepsilon  \cQ (f_{\ell_0,\ell_0+1}) + |\nabla_{\Gamma_{\ell_0}} f_{\ell_0,\ell_0+1}|^2 \right]_- \psi
		\end{align}
		on $B_{K_\nu}^2(0)$. Here, $[\cdot]_-$ denotes the negative part of a real number (and is a nonnegative quantity). Using a logarithmic cutoff function in \eqref{eq:toda.stability.estimate}, which is $1$ on $B_{\eps \alpha^{-1/2}\sqrt{K_\nu}}^2(0)$ and $0$ outside $B_{\eps \alpha^{-1/2} K_\nu/2}^2(0)$, we get
		\begin{equation} \label{eq:bootstrapped.claim.L1.bound}
			\int_{B_{\sqrt{K_\nu}}^2(0)} \psi \leq c' (\log K_\nu)^{-1} + c'  \alpha^{\tfrac{\kappa+2\nu-1}{2(1-\nu)}} K_\nu^2
		\end{equation}
		in the scale of $\psi$. By Moser's weak maximum principle on $B_1$ for \eqref{eq:psi.differential.inequality} (see, e.g.\ \cite[Theorem 4.1]{HanLin}), the $L^1$ bound in  \eqref{eq:bootstrapped.claim.L1.bound} implies the $L^\infty$ bound
		\begin{equation} \label{eq:bootstrapped.claim.sup.bound}
			\psi(0) \leq C_\star  \int_{B_1^2(0)} \psi \leq C_\star \left( (\log K_\nu)^{-1} + \alpha^{\tfrac{\kappa+2\nu-1}{2(1-\nu)}} K_\nu^2 \right),
		\end{equation}
		for a constant $C_\star$ that depends on the constants in \eqref{eq:sheets.constants} and the $L^\infty$ norm of the coefficients in the  differential inequality \eqref{eq:psi.differential.inequality} on $B_1^2(0)$. We're assuming that \eqref{eq:bootstrapped.claim.ii} fails at $y$, so together with \eqref{eq:sheets.sff.bound}, \eqref{eq:sheets.graph.apriori.C1.bounds}, \eqref{eq:sheets.graph.apriori.C2.bounds}, and Lemma \ref{lemm:mean.curvature.laplacian}, we have
		\begin{align}
			& \sup_{\widetilde{y} \in B_1^2(0)} | \varepsilon \alpha^{-1} (|\sff_{\Gamma_{\ell_0}}|^2 + \ricc_g(\nu, \nu)|_{\Gamma_{\ell_0}}) f_{\ell_0,\ell_0+1}(y + \varepsilon \alpha^{-\tfrac{1}{2}} \widetilde{y})| \nonumber \\
			& \qquad \leq c' \varepsilon \alpha^{-1} (|f_{\ell_0,\ell_0+1}(y)| + \operatorname{osc} \{ f_{\ell_0,\ell_0+1} : \Gamma_{\ell_0} \cap C_{\varepsilon \alpha^{-1/2}}(\Pi(y)) \})  \leq c' \varepsilon^2 \alpha^{-\tfrac{3}{2}} \leq c' \alpha^{\tfrac{3\nu-1}{2(1-\nu)}}. \label{eq:bootstrapped.claim.osc.bound}
		\end{align}
		Likewise, using Lemma \ref{lemm:mean.curvature.laplacian}, we can estimate
		\[ \sqrt{2} \varepsilon |\cQ (f_{\ell_0,\ell_0+1})| \leq c' \varepsilon (|f_{\ell_0,\ell_0+1}|^2 + |\nabla_{\Gamma_{\ell_0}} f_{\ell_0,\ell_0+1}|^2). \]
		By absorbing the gradient term and estimating $f_{\ell_0,\ell_0+1}$ with the same argument as in \eqref{eq:bootstrapped.claim.osc.bound}, we also estimate
		\begin{equation} \label{eq:bootstrapped.claim.quad.error.bound}
			\alpha^{-1} \left[ 2 |\nabla_{\Gamma_{\ell_0}} f_{\ell_0,\ell_0+1}|^2 + \sqrt{2} \varepsilon \cQ f_{\ell_0,\ell_0+1}(y + \varepsilon \alpha^{-\tfrac{1}{2}} \widetilde{y}) \right]_- \leq c' \eps \alpha^{-1} f_{\ell_0,\ell_0+1}^2 \leq  c' \eps^3 \alpha^{-2} \leq c' \alpha^{\tfrac{4\nu-1}{2(1-\nu)}}. 
		\end{equation}
		Thus, ignoring the unimportant dependencies on \eqref{eq:sheets.constants}, we have
		\begin{equation} \label{eq:bootstrapped.claim.constant.dependence}
			C_\star =  C_\star(1 + \alpha^{\tfrac{\kappa+2\nu-1}{2(1-\nu)}} +    \alpha^{\tfrac{3\nu-1}{2(1-\nu)}} + \alpha^{\tfrac{4\nu-1}{2(1-\nu)}}),
		\end{equation}
		which, as long as $\nu > \max \{ \tfrac 1 3, \tfrac{1-\kappa}{2} \}$, can be taken to be uniformly bounded independently of $\alpha$---though certainly depending on the constants in \eqref{eq:sheets.constants}---since $\alpha \leq 1$ by definition.
		
		Since $C_\star$ is uniformly bounded per \eqref{eq:bootstrapped.claim.constant.dependence}, it follows from \eqref{eq:bootstrapped.claim.sup.bound} that by choosing suitably large $K_\nu = K_\nu(c_0, E_0, \eta, \beta, \nu) > 0$, $\psi(0)$ will become less than $\tfrac{1}{2}$ for small $\alpha$, contradicting our assumption that \eqref{eq:bootstrapped.claim.ii} is violated. Specifically, recalling \eqref{eq:bootstrapped.claim.sup.bound}, we may simply pick $K_\nu$ large enough that $C_\star (\log K_\nu)^{-1} < \tfrac14$, in which case $\psi(0) < \tfrac12$ as long as $\alpha$ is small enough that $C_\star \alpha^{\tfrac{\kappa+2\nu-1}{2(1-\nu)}} K_\nu^2 < \tfrac14$.

		This concludes the proof of Proposition \ref{prop:bootstrapped.stable.estimates} for
		\[ \nu \in (\nu_0, \tfrac{1}{2}), \text{ where } \nu_0 =  \min \{ \tfrac{1}{3}, \tfrac{1-\kappa}{2} \}. \]
		The next step is to show that $\nu_0$ can be taken to be arbitrarily small, at the expense of possibly having to rescale our domain a finite number of times.
		
		Retracing the proof above, it's not hard to see that what one needs to improve are:
		\begin{enumerate}
			\item the exponent of $\alpha$ in \eqref{eq:bootstrapped.claim.L1.bound}, \eqref{eq:bootstrapped.claim.sup.bound}, and
			\item the oscillation bounds in \eqref{eq:bootstrapped.claim.osc.bound}, \eqref{eq:bootstrapped.claim.quad.error.bound}.
		\end{enumerate}
	
		For the prior, we may use  \eqref{eq:toda.stability.estimate.sharper}-\eqref{eq:toda.stability.estimate.sharper.coefficient} instead of \eqref{eq:toda.stability.estimate}; we get
		\[ \psi(0) \leq C_\star \left( (\log K_\nu)^{-1} + (\alpha^{\tfrac{\nu}{1-\nu}} + \alpha^\kappa) K_\nu^2 \right), \]
		a sufficient bound.
		
		For the latter, we need to use a Harnack-type inequality on the elliptic \emph{equation} satisfied by $f_{\ell_0,\ell_0+1}$, \eqref{eq:bootstrapped.vii}. Recalling \eqref{eq:bootstrapped.error.i}, and using the fact that we now know Proposition \ref{prop:bootstrapped.stable.estimates} to hold for $\nu' \in (\nu_0, \tfrac{1}{2})$, we see that the right hand side of \eqref{eq:bootstrapped.vii} can be bounded in $L^\infty$ by
		\[ c' \varepsilon^2 + c' \sum_{m=1}^Q A_m(|y| + 2K \varepsilon |\log \varepsilon|) \leq c' \varepsilon^{2(1-\nu')} \]
		for some $\nu' \in (\nu_0, \tfrac{1}{2})$. Diving \eqref{eq:bootstrapped.vii} through by $\varepsilon$, we thus get a uniformly elliptic equation
		\begin{equation} \label{eq:bootstrapped.ix}
			(\cL + |\sff_{\Gamma_\ell}|^2 + \ricc_g(\nu,\nu)|_{\Gamma_\ell} + \cQ) f_{\ell,\ell+1}(y) = O(\varepsilon^{1-2\nu'}).
		\end{equation}
		Now we apply the inhomogeneous Harnack-type inequality found in \cite[Theorems 8.17, 8.18]{GilbargTrudinger01} to \eqref{eq:bootstrapped.vii}, multiplied through by the $a(y)$ in \eqref{eq:mean.curvature.laplacian.operator.a}, with some $q > 2$, $R = \varepsilon \alpha^{-\tfrac{1}{2}}$, and $g = O(\varepsilon^{1-2\nu'})$ (in the $L^\infty$ sense) and we get
		\begin{equation*}
			\sup \left\{ f_{\ell_0,\ell_0+1} : \Gamma_{\ell_0} \cap C_{\varepsilon \alpha^{-1/2}}(\Pi(y)) \right\} \leq c'\Big( f_{\ell_0,\ell_0+1}(y) + \varepsilon^{2} \alpha^{-1} \cdot \varepsilon^{1-2\nu'} \Big) = c' \Big( f_{\ell_0,\ell_0+1}(y) + \eps^{3-2\nu'} \alpha^{-1} \Big).
		\end{equation*}
		Recall that we are assuming, by contradiction, that \eqref{eq:bootstrapped.claim.ii} is violated at our $y$, implying that $f_{\ell_0,\ell_0+1}(y)$ is an error term relative to the last term of the right hand side. Together with \eqref{eq:bootstrapped.claim.i}, this gives
		\begin{align}
			\sup \left\{ \varepsilon \alpha^{-1} f_{\ell_0,\ell_0+1} : \Gamma_{\ell_0} \cap C_{\varepsilon \alpha^{-1/2}}(\Pi(y)) \right\} & \leq c'  \varepsilon^{4-2\nu'} \alpha^{-2} \leq c'  \alpha^{\tfrac{2-\nu'}{1-\nu} - 2} = c' \alpha^{\tfrac{2\nu-\nu'}{1-\nu}}. \label{eq:bootstrapped.x} 
		\end{align}
		This is $\leq c' \alpha^\delta$ for some $\delta > 0$ as long as $\nu > \nu_0' \triangleq \tfrac{1}{2} \nu'$. This gives the improved oscillation bound that we sought in place of \eqref{eq:bootstrapped.claim.osc.bound}, and Proposition \ref{prop:bootstrapped.stable.estimates} follows in full by iteratively pushing $\nu$, $\nu_0'$ down to zero. 
\end{proof}

\begin{prop}[Stable sheet distances, II]  \label{prop:ultimate.stable.estimates}
	If $u$ is as in Proposition \ref{prop:bootstrapped.stable.estimates}, then
	\[ \lim_{\eps \to 0} \frac{\exp(-\sqrt{2} \eps^{-1} D_\ell)}{\eps^2 |\log \eps|} = 0 \text{ on } \Gamma_\ell(\tfrac{1}{6}) \]
	for all $\ell \in \{ 1, \ldots, Q \}$, uniformly in terms of $c_0$, $E_0$, $\eta$, $\beta$.
\end{prop}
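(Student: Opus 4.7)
I will prove the sharpened distance bound by contradiction, refining the rescaling argument from Proposition \ref{prop:bootstrapped.stable.estimates}. Suppose, along some sequence $\eps_i \to 0$ and some $c_* > 0$, there exist $y_i \in \Gamma_{\ell_i}(1/6)$ with
\[ \alpha_i \triangleq \exp(-\sqrt{2}\eps_i^{-1} D_{\ell_i}(y_i)) \geq c_* \eps_i^2 |\log\eps_i|. \]
After relabelling sheets and choosing $y_i$ to be near-maximizing, I may assume $D_{\ell_i}(y_i) = f_{\ell_i,\ell_i+1}(y_i)$ and that $\alpha_i$ is comparable to $\max_m A_m(1/6)$. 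Introduce the Toda-type rescaling
\[ \tilde f_i(\tilde y) \triangleq \eps_i^{-1}f_{\ell_i,\ell_i+1}(y_i + \eps_i \alpha_i^{-1/2}\tilde y) - \tfrac{1}{\sqrt{2}}|\log \alpha_i|, \qquad \psi_i \triangleq \exp(-\sqrt{2}\tilde f_i), \]
so $\tilde f_i(0) = 0$ and $\psi_i(0) = 1$. Note $\tilde f_i$ is defined on $|\tilde y| \leq R_i = c\alpha_i^{1/2}/\eps_i \geq c'\sqrt{|\log\eps_i|} \to \infty$, though now only logarithmically fast. As in the derivation of \eqref{eq:psi.differential.inequality}, $\psi_i$ satisfies a uniformly elliptic differential inequality of the form $\widetilde{\cL}\psi_i \geq -C\psi_i^2 - C'\psi_i + (\text{lower order})$ on $B_1^2(0)$.

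Rescaling the Allen--Cahn stability inequality \eqref{eq:toda.stability.estimate.sharper}--\eqref{eq:toda.stability.estimate.sharper.coefficient} to $\tilde y$ coordinates, and using Proposition \ref{prop:bootstrapped.stable.estimates} with $\nu$ chosen so that $(1-\nu)(1+\kappa) > 1$ (which makes the second term in \eqref{eq:toda.stability.estimate.sharper.coefficient} negligible relative to the first, leaving $|\cE_\zeta| \leq c'\eps^2$), I obtain
\[ \int \zeta^2 \psi_i\, d\tilde y \leq C \int |\widetilde\nabla\zeta|^2\, d\tilde y + \frac{C}{c_* |\log\eps_i|} \int \zeta^2\, d\tilde y, \qquad \zeta \in C^\infty_c(B_{R_i}(0)). \]
Testing with the standard logarithmic cutoff $\zeta$ equal to $1$ on $B_{\sqrt{K_i}}(0)$ and supported in $B_{K_i/2}(0)$, with $K_i \triangleq |\log\eps_i|^{1/4}$ (so both $K_i \to \infty$ and $K_i \leq R_i^{1/2}$ eventually), gives $\int |\widetilde\nabla \zeta|^2 \leq C/\log K_i$ and $\int \zeta^2 \leq CK_i^2$, forcing
\[ \int_{B_{\sqrt{K_i}}(0)} \psi_i\, d\tilde y \to 0. \]

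Finally, Moser's weak maximum principle applied to the differential inequality for $\psi_i$ on $B_1^2(0) \subset B_{\sqrt{K_i}}(0)$ yields $\psi_i(0) \leq C_\star \int_{B_1^2(0)} \psi_i\, d\tilde y \to 0$, contradicting $\psi_i(0) = 1$. The main technical obstacle is the uniform control of the Moser constant $C_\star$: in the borderline regime $\alpha_i \sim \eps_i^2|\log\eps_i|$, the key coefficients in the rescaled inequality---namely $\sup_{B_1^2(0)} \psi_i$ (entering via $-\psi_i^2$) and $\alpha_i^{-1}|\nabla_y f_{\ell_i,\ell_i+1}|^2$ (via \eqref{eq:psi.differential.inequality})---no longer enjoy the polynomial-in-$\alpha_i$ decay that powered the proof of Proposition \ref{prop:bootstrapped.stable.estimates}. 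These must instead be controlled by combining the uniform $C^{2,\theta}$ estimates on $\Gamma_{\ell_i}$ from Remark \ref{rema:major.goal} (available via Proposition \ref{prop:bootstrapped.stable.estimates} with $\nu$ arbitrarily small) with Schauder estimates on the Jacobi--Toda equation \eqref{eq:bootstrapped.ix}; a slightly polylogarithmic growth of $C_\star$ in $|\log\eps_i|$ can be tolerated because $\int_{B_1^2(0)}\psi_i\, d\tilde y$ decays at a polynomial rate in $|\log\eps_i|^{-1}$, preserving the contradiction.
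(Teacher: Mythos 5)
Your blueprint (the rescaled height $\tilde f$, $\psi=\exp(-\sqrt{2}\tilde f)$, the sharpened stability inequality \eqref{eq:toda.stability.estimate.sharper} with a logarithmic cutoff, and Moser's estimate) is the same per-step machinery the paper uses, but you try to close the argument with a one-shot contradiction at a single near-maximizing point, and this is where it breaks. In the paper, the Moser constant $C_\star$ is controlled only because $\alpha$ is defined as the \emph{maximum of the $A_m$ over a whole ball of radius $r$ that contains the fattened neighborhood of the bad point}: this is what gives $\psi \leq 1$ on the rescaled ball (taming the $-\psi^2$ term) and what lets the inhomogeneity of \eqref{eq:bootstrapped.vii} be bounded by $c'\eps^{-1}(\eps^2+\alpha)$ in the Harnack step \eqref{eq:ultimate.viii}, so that $\eps\alpha^{-1}\sup f \leq c'(\eps^2\alpha^{-1}|\log\alpha| + \eps^4\alpha^{-2} + \eps^2\alpha^{-1}) \leq c'$ in the borderline regime. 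Your $y_i$ only near-maximizes over $\Gamma_{\ell_i}(1/6)$, while the rescaled ball $B_{K_i}$ (physical radius $\sim K_i|\log\eps_i|^{-1/2}$, further fattened by $2K\eps_i|\log\eps_i|$ inside the error terms) sticks out past radius $1/6$, where the only available bound is Proposition \ref{prop:bootstrapped.stable.estimates}'s $A_m \leq \eps^{2(1-\nu)}$. With that input the quantities entering $C_\star$ are of size $\alpha_i^{-1}\eps^{2(1-\nu)} \sim \eps_i^{-2\nu}|\log\eps_i|^{-1}$ (for $\sup\psi_i$) and $\eps_i^{4-2\nu}\alpha_i^{-2} \sim \eps_i^{-2\nu}|\log\eps_i|^{-2}$ (for the Harnack route through \eqref{eq:bootstrapped.ix}), both blowing up polynomially in $\eps_i^{-1}$; and the $C^{2,\theta}$ control of Remark \ref{rema:major.goal} only yields $\operatorname{osc} f_{\ell_i,\ell_i+1} \lesssim \eps_i\alpha_i^{-1/2}$ on the unit rescaled ball, i.e.\ $\eps_i\alpha_i^{-1}\operatorname{osc} f \sim \eps_i^2\alpha_i^{-3/2} \sim \eps_i^{-1}|\log\eps_i|^{-3/2} \to \infty$, so neither of your proposed fixes controls $C_\star$.

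Your fallback---tolerating polylogarithmic growth of $C_\star$ because $\int_{B_1}\psi_i$ ``decays at a polynomial rate in $|\log\eps_i|^{-1}$''---rests on a miscalculation: the logarithmic cutoff contributes $\sim (\log K_i)^{-1}$, and since $K_i$ must satisfy $K_i^2 \ll |\log\eps_i|$ (and $K_i \lesssim R_i \sim |\log\eps_i|^{1/2}$), the best decay available is $O(1/\log|\log\eps_i|)$, so not even a $\log|\log\eps_i|$-sized growth of $C_\star$ is tolerable. This is precisely why the paper does not argue at a single point: it proves a per-step halving $\max_\ell A_\ell(r - \eps K_0\alpha^{-1/2}) < \tfrac12\alpha$ under the standing hypothesis that $A_\ell$ stays above the threshold on a whole interval of radii, iterates it across a fixed annulus, and does so in \emph{two} stages (first threshold $\eps^2|\log\eps|^2$, then $\mu\eps^2|\log\eps|$), because in the final stage the step size is only $\sim|\log\eps|^{-1/2}$, so the accumulated gain $e^{-c\sqrt{|\log\eps|}}$ beats the $\log|\log\eps|$-sized gap left by stage one but could never beat the $\nu|\log\eps|$-sized gap left by Proposition \ref{prop:bootstrapped.stable.estimates}, which is all your one-shot argument has at its disposal. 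Repairing your approach would essentially force you to reinstate both the ball-maximum normalization (or a point-picking device strong enough to dominate the fattened neighborhood) and the intermediate-threshold iteration, i.e.\ the paper's proof.
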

\begin{proof}
	The proof follows along the same lines as the bootstrap portion of the proof of Proposition \ref{prop:bootstrapped.stable.estimates}. However, the modifications are somewhat delicate so we give the argument here. 
	
	We first prove a weaker bound. We argue by contradiction, assuming that there exists $\ell \in \{1, \ldots, Q\}$ such that
	\begin{equation} \label{eq:ultimate.i}
		A_{\ell}(r) \geq A_{\ell}(1/5) > \varepsilon^2 |\log \varepsilon|^2 \text{ for all } r \in [\tfrac{1}{5}, \tfrac{1}{4}].
	\end{equation}
	Let $r \in [\tfrac{1}{5}, \tfrac{1}{4}]$, and then let $\alpha \triangleq \max \{ A_{\ell}(r) : \ell \in \{1, \ldots, Q\} \}$. Then
	\begin{equation} \label{eq:ultimate.ii}
		\alpha > \varepsilon^2 |\log \varepsilon|^2.
	\end{equation}
	We claim that
	\begin{equation} \label{eq:ultimate.iii}
		\max_{\ell \in \{1, \ldots, Q\}} A_{\ell}(r - \varepsilon K_0 \alpha^{-\tfrac{1}{2}}) < \tfrac{1}{2} \alpha
	\end{equation}
	for a constant $K_0 = K_0(c_0, E_0, \eta, \beta) > 0$. 
	
	Suppose that \eqref{eq:ultimate.iii} fails for $\ell_0 \in \{1, \ldots, Q\}$ and $y \in \Gamma_{\ell_0}(\varepsilon K_0 \alpha^{-\tfrac{1}{2}})$. Define
	\begin{equation} \label{eq:ultimate.iv}
		\widetilde{f}(\widetilde{y}) \triangleq \varepsilon^{-1} f_{\ell_0,\ell_0+1}(y + \varepsilon \alpha^{-\tfrac{1}{2}}\widetilde{y}) - \tfrac{1}{\sqrt{2}} |\log \alpha|,
	\end{equation}
	for $\widetilde{y} \in B_{K_0/2}^2(0)$. Proceeding as in \eqref{eq:bootstrapped.claim.iii}, we find that
	\begin{align}
		& \widetilde{\cL} \widetilde{f}(\widetilde{y}) \label{eq:ultimate.v}   \leq \frac{8\expansioncoeff}{\energyunit} \exp(-\sqrt{2} \widetilde{f}(\widetilde{y}))  + \alpha^{-1} c' |\cR_{\ell_0}(y+\varepsilon \alpha^{-\tfrac{1}{2}} \widetilde{y})| + \alpha^{-1} |\cR_{\ell_0+1}((y+\varepsilon \alpha^{-\tfrac{1}{2}}\widetilde{y})')| \nonumber \\
		& \qquad \qquad - \varepsilon \alpha^{-1} (|\sff_{\Gamma_{\ell_0}}|^2 + \ricc_g(\nu, \nu)|_{\Gamma_{\ell_0}} + \cQ) f_{\ell_0,\ell_0+1}(y + \varepsilon \alpha^{-\tfrac{1}{2}} \widetilde{y}). \nonumber
	\end{align}
	We also still have an estimate of the form
	\begin{equation} \label{eq:ultimate.vi}
		\alpha^{-1} c' |\mathcal{R}_{\ell_0}| + \alpha^{-1} |\cR_{\ell_0+1}| \leq c',
	\end{equation}
	and the function $\psi \triangleq \exp(-\sqrt{2} \widetilde{f})$	still satisfies a differential inequality of the form
	\begin{equation} \label{eq:ultimate.vii}
		\widetilde{\cL} \psi \geq - \frac{8\sqrt{2} (\expansioncoeff)^{2}}{\energyunit} \psi^2 - c' \psi 
		+ \sqrt{2} \varepsilon \alpha^{-1}(|\sff_{\Gamma_{\ell_0}}|^2 + \ricc_g(\nu, \nu)|_{\Gamma_{\ell_0}} + \cQ) f_{\ell_0,\ell_0+1}(y+\varepsilon \alpha^{-\tfrac{1}{2}} \widetilde{y}) \psi.
	\end{equation}
	Applying the same inhomogeneous Harnack-type inequality that led to \eqref{eq:bootstrapped.x} before, we obtain
	\begin{align*}
		\sup \left\{ f_{\ell_0,\ell_0+1} : \Gamma_{\ell_0} \cap C_{\varepsilon \alpha^{-1/2}}(\Pi(y)) \right\} & \leq c' \Big( f_{\ell_0,\ell_0+1}(y) + \varepsilon^{2} \alpha^{-1}  \cdot \varepsilon^{-1}(\varepsilon^2 + \alpha) \Big) 
		 \leq c' \Big( \varepsilon |\log \alpha| + \varepsilon^3 \alpha^{-1} + \eps \Big).
	\end{align*}
	Thus, we have the following $L^\infty$ estimate on the coefficient in front of $\psi$ in the last term of \eqref{eq:ultimate.vii} on the domain $B_{\varepsilon \alpha^{-1/2}}^2(y)$:
	\begin{equation} \label{eq:ultimate.viii}
		\sup \left\{ \varepsilon \alpha^{-1} f_{\ell_0,\ell_0+1} : \Gamma_{\ell_0} \cap C_{\varepsilon \alpha^{-1/2}}(\Pi(y)) \right\}
		\leq c' \Big( \varepsilon^2 \alpha^{-1} |\log \alpha| +  \varepsilon^4 \alpha^{-2} + \varepsilon^{2} \alpha^{-1} \Big) \leq c',
	\end{equation}
	where we've used the simple fact that
	\begin{equation} \label{eq:ultimate.aux.fact}
		\eqref{eq:ultimate.ii} \iff \alpha > \varepsilon^2 |\log \varepsilon|^2 \implies \varepsilon^2 \alpha^{-1} |\log \alpha| = o(1).
	\end{equation}
	Thus, \eqref{eq:ultimate.vii} implies the uniformly elliptic partial differential inequality
	\begin{equation} \label{eq:ultimate.ix}
		\widetilde{\cL} \psi \geq - \frac{8\sqrt{2} (\expansioncoeff)^{2}}{\energyunit} \psi^2 - c' \psi.
	\end{equation}
	From Moser's weak maximum principle (see, e.g., \cite[Theorem 4.1]{HanLin}) applied to \eqref{eq:ultimate.ix} on $B_1^2(0)$, combined with \eqref{eq:toda.stability.estimate.sharper.coefficient}, we get the $L^\infty$ bound
	\[ \psi(0) \leq c' \int_{B_1^2(0)} \psi \leq c'\Big( (\log K_0)^{-1} + (\eps^2 \alpha^{-1} + \alpha^\kappa) K_0^2 \Big) \leq c' \Big( (\log K_0)^{-1} + (o(|\log \alpha|^{-1}) + \alpha^\kappa) K_0^2 \Big), \]
	violating the assumption that  \eqref{eq:ultimate.iii} fails, provided we take $K_0$ large and $\alpha$ small.
	
	Thus, \eqref{eq:ultimate.iii} holds true with a fixed $K_0$. Notice, then, that
	\[ \varepsilon K_0 \alpha^{-\tfrac{1}{2}} \leq K_0 |\log \varepsilon|^{-1}, \]
	A backward iteration of \eqref{eq:ultimate.iii} from $r = \tfrac{1}{4}$ to $r = \tfrac{1}{5}$, followed by an application of Proposition \ref{prop:bootstrapped.stable.estimates} at radius $r = 1/4$ with $\nu < \tfrac{\log 2}{20K_0}$
	yields
	\begin{align*}
		\log A_{\ell_0}(\tfrac{1}{5}) & \leq \log A_{\ell_0}(\tfrac{1}{4}) - \tfrac{\log 2}{20K_0} |\log \varepsilon| \leq 2(\nu-1) |\log \varepsilon| - \tfrac{\log 2}{20K_0} |\log \varepsilon|  < -2 |\log \varepsilon| = \log \varepsilon^2,
	\end{align*}
	violating \eqref{eq:ultimate.i}.
	
	We now prove the main claim. We argue by contradiction again assuming that there exists $\ell \in \{1, \ldots, Q\}$ such that
	\begin{equation} \label{eq:ultimate.x}
		A_{\ell}(r) \geq A_{\ell}(\tfrac{1}{5}) > \mu \varepsilon^2 |\log \varepsilon| \text{ for all } r \in  [\tfrac{1}{6}, \tfrac{1}{5}],
	\end{equation}
 	for some $\mu > 0$. Let $r \in [\tfrac{1}{6}, \tfrac{1}{5}]$, $\alpha \triangleq \max \{ A_{\ell}(r) : \ell \in \{ 1, \ldots, Q \}\}$. Then
	\begin{equation} \label{eq:ultimate.xi}
		\alpha >   \mu \varepsilon^2 |\log \varepsilon|.
	\end{equation}
	We claim that
	\begin{equation} \label{eq:ultimate.xii}
		A_{\ell}(r - \varepsilon K_0' \alpha^{-\tfrac{1}{2}}) < \tfrac{1}{2} \alpha \text{ for every } \ell \in \{1, \ldots, Q\},
	\end{equation}
	for a constant $K_0' = K_0'(c_0, E_0, \eta, \beta) > 0$. This indeed follows from the same argument as above, modulo the fact that one needs to replace \eqref{eq:ultimate.aux.fact} with
	\[ \eqref{eq:ultimate.xi} \iff \alpha >  \mu \varepsilon^2 |\log \varepsilon| \implies \varepsilon^2 \alpha^{-1} |\log \alpha| \leq  \mu^{-1} (2 + o(1)). \]
	Notice, now, that
	\[ \varepsilon K_0' \alpha^{-\tfrac{1}{2}} \leq  \mu^{-\tfrac{1}{2}} K_0' |\log \varepsilon|^{-\tfrac{1}{2}}, \]
	so that a backward iteration of \eqref{eq:ultimate.xii} from $r = 1/5$ to $r = 1/6$, together with the weaker assertion verified above, yields
	\[ \log A_{\ell_0}(\tfrac{1}{6})  \leq \log A_{\ell_0}(\tfrac{1}{5}) - \mu^{\tfrac{1}{2}} \tfrac{\log 2}{20K_0'} |\log \varepsilon|^{\tfrac{1}{2}} \leq -2 |\log \varepsilon| + 2 \log |\log \varepsilon| - \mu^{\tfrac{1}{2}}  \tfrac{\log 2}{20K_0'} |\log \varepsilon|^{\tfrac{1}{2}}. \]
	However,
	\[ \lim_{\eps \to 0} \left( \log |\log \varepsilon| - \mu^{\tfrac{1}{2}} \tfrac{\log 2}{20 K_0'} |\log \varepsilon|^{\tfrac{1}{2}} \right) = -\infty, \]
	so, for sufficiently small $\eps$ (depending on $K_0$, $\mu$), this quantity is $< \log \mu$. Thus, for small $\eps$,
	\[ \log A_{\ell_0}(\tfrac{1}{6}) \leq \log \mu - 2 |\log \eps| + \log |\log \eps| = \log (\mu \eps^2 |\log \eps|), \]
	which violates \eqref{eq:ultimate.x}. The result follows.
\end{proof}

In fact, Proposition  \ref{prop:ultimate.stable.estimates} and \eqref{eq:bootstrapped.v}-\eqref{eq:bootstrapped.error.i} establish the following:

\begin{coro} \label{coro:ultimate.stable.estimates}
	If $u$ is as in Proposition  \ref{prop:ultimate.stable.estimates}, then
	\[ \frac{H_{\Gamma_\ell}}{\varepsilon |\log \varepsilon|} \to 0 \text{ uniformly on } \Gamma_\ell(\tfrac{1}{6}) \]
	as $\varepsilon \to 0$, for all $\ell \in \{1, \ldots, Q\}$.
\end{coro}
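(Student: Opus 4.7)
The plan is to derive the Corollary as a direct consequence of the mean curvature identity \eqref{eq:bootstrapped.v}, the error estimate \eqref{eq:bootstrapped.error.i}, and the sharp sheet distance bound established in Proposition \ref{prop:ultimate.stable.estimates}. Formally, I would re-apply Proposition \ref{prop:ultimate.stable.estimates} on a radius slightly larger than $\tfrac{1}{6}$ (which is possible by re-running its proof with $\tfrac{1}{5}$ replaced by a marginally smaller radius, or simply running the whole analysis of Section \ref{sec:stable.solutions} on a slightly dilated cylinder) so that for every $y \in \Gamma_\ell(\tfrac{1}{6})$ the larger radius $|y| + 2K\eps|\log\eps|$ appearing in \eqref{eq:bootstrapped.error.i} still lies inside the domain of validity; this is a routine adjustment.

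Now fix $\ell \in \{1, \ldots, Q\}$ and $y \in \Gamma_\ell(\tfrac{1}{6})$. Since $D_\ell(y) = \min\{|d_{\ell-1}(y)|,|d_{\ell+1}(y)|\}$, Proposition \ref{prop:ultimate.stable.estimates} gives
\[
\exp(-\sqrt{2}\,\eps^{-1}|d_{\ell-1}(y)|) + \exp(-\sqrt{2}\,\eps^{-1}|d_{\ell+1}(y)|) \leq 2 A_\ell(\tfrac{1}{6}) = o(\eps^2 |\log \eps|),
\]
so the leading Toda terms on the right-hand side of \eqref{eq:bootstrapped.v} contribute $o(\eps^2|\log \eps|)$.

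For the error term $\cR_\ell$, I would verify that each piece in \eqref{eq:bootstrapped.error.i} is $o(\eps^2|\log\eps|)$: the pure power $\eps^2$ is trivially $o(\eps^2|\log\eps|)$; the cross term $\eps^\kappa A_m(|y|+2K\eps|\log\eps|)$ is bounded by $\eps^\kappa \cdot o(\eps^2|\log\eps|) = o(\eps^{2+\kappa}|\log\eps|)$ via Proposition \ref{prop:ultimate.stable.estimates}; and $A_m^{1+\kappa} \leq A_m \cdot A_m^{\kappa} = o(\eps^2 |\log \eps|)$ since $A_m \to 0$ uniformly. Summing,
\[
|\cR_\ell(y)| = o(\eps^2 |\log \eps|),
\]
uniformly in $y \in \Gamma_\ell(\tfrac{1}{6})$.

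Combining these bounds with \eqref{eq:bootstrapped.v} yields $\eps |H_{\Gamma_\ell}(y)| = o(\eps^2|\log\eps|)$, hence $|H_{\Gamma_\ell}(y)|/(\eps|\log\eps|) \to 0$ uniformly on $\Gamma_\ell(\tfrac{1}{6})$, which is the claim. There is no real obstacle in this corollary, since all the hard work has been done in Propositions \ref{prop:bootstrapped.stable.estimates}-\ref{prop:ultimate.stable.estimates}; the only minor point requiring care is the mild radius mismatch noted above, which costs nothing more than shrinking the constants in the statement.
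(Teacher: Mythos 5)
Your proposal is correct and is essentially the paper's own argument: the paper derives the corollary by combining Proposition \ref{prop:ultimate.stable.estimates} with \eqref{eq:bootstrapped.v}--\eqref{eq:bootstrapped.error.i}, exactly as you do. The radius issue you flag is genuinely minor (the error terms at radius $|y|+2K\eps|\log\eps|$ can also be absorbed using the intermediate bound $A_m \leq \eps^2|\log\eps|^2$ valid on the larger radius), so no further work is needed.
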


This estimate is key for our geometric applications, since it says that the mean curvature of the zero sets $u$ dominates the effect of interactions between the sheets. This will allow us to treat the sheets (essentially) like disjoint minimal surfaces.

\subsection{Curvature estimates} \label{subsec:stable.curvature.estimates}

In what follows, we let $B^n_r(0)$ be a smooth $n$-ball equipped with a Riemannian metric $g$ so that $B_{r}^{n}(0)$ is a geodesic $r$-ball centered at $0$ (with respect to $g$).

\begin{theo} \label{theo:curvature.estimate}
	Suppose $\inj_g \geq 3$ and $|\riem_g| + |\nabla_g \riem_g| \leq 1$ on $B^3_1(0)$. If $\varepsilon \leq \varepsilon_1$, $u \in C^\infty(B^3_1(0); (-1,1))$ is a stable critical point of $E_\varepsilon \restr B^3_1(0)$, and $(E_\varepsilon \restr B^3_1(0))[u] \leq E_0$, then 
	\[ |\cA(x)| \leq c_1 \text{ for all } x \in B^3_{1/2}(0) \cap \{ |u| \leq 1-\beta \}, \]
	where $\varepsilon_1 = \varepsilon_1(n, E_0, \beta, W)$, $c_1 = c_1(n, E_0, \beta, W)$.
\end{theo}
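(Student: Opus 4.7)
The plan is to argue by contradiction via a blow-up/rescaling scheme modeled on Schoen's proof of the curvature estimate for stable minimal surfaces. Suppose the estimate fails: one obtains sequences $\eps_i\to 0$, metrics $g_i$ on $B_1^3(0)$ satisfying the curvature and injectivity hypotheses, stable critical points $u_i$ of $E_{\eps_i}$ with energy $\leq E_0$, and points $y_i \in \overline{B_{1/2}^3(0)} \cap \{|u_i| \leq 1-\beta\}$ with $|\cA_{u_i}(y_i)|\to\infty$. A standard point-picking argument (cf.\ \cite{White:PCMI}) produces new points $\tilde y_i \in \overline{B_{3/4}^3(0)} \cap \{|u_i| \leq 1-\beta\}$ and scales $r_i \to 0$ such that, writing $\lambda_i := |\cA_{u_i}(\tilde y_i)|$, we have $\lambda_i\to\infty$, $\lambda_i r_i \to \infty$, and $|\cA_{u_i}| \leq 2\lambda_i$ on $B_{r_i}^3(\tilde y_i) \cap \{|u_i| \leq 1-\beta\}$. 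Standard interior estimates for Allen--Cahn give $|\cA_{u_i}| \lesssim \eps_i^{-1}$, so after extraction $\tilde\eps_i := \lambda_i \eps_i \to \eps_\infty \in [0,\infty)$. Rescale by $\tilde g_i = \lambda_i^2 (\exp_{\tilde y_i})^* g_i$ and $\tilde u_i(x) = u_i(\exp_{\tilde y_i}(\lambda_i^{-1} x))$; then $\tilde u_i$ is a stable critical point of $E_{\tilde\eps_i}$ with respect to $\tilde g_i$ on $B_{\lambda_i r_i}^3(0)$, the metrics $\tilde g_i$ converge locally smoothly to the Euclidean metric on $\RR^3$, $|\cA_{\tilde u_i}| \leq 2$ on the nodal region, and $|\cA_{\tilde u_i}(0)| = 1$. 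By the interior monotonicity formula for the Allen--Cahn energy, the rescaled energy satisfies the quadratic growth bound $(E_{\tilde\eps_i} \restr B_R^3(0))[\tilde u_i] \leq C R^2 E_0$ for all $R \leq \lambda_i r_i/2$.

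The argument splits on whether $\eps_\infty > 0$ or $\eps_\infty = 0$. If $\eps_\infty > 0$, the curvature bound and standard elliptic regularity give smooth subsequential convergence of $\tilde u_i$ to an entire stable solution $u_\infty$ of the Allen--Cahn equation on $\RR^3$ with parameter $\eps_\infty$ and quadratic energy growth. The theorem of Ambrosio--Cabr\'e \cite{AmbrosioCabre00} (see also \cite{FarinaMariValdinoci13}) then forces $u_\infty$ to be one-dimensional, so $\cA_{u_\infty} \equiv 0$, contradicting $|\cA_{u_\infty}(0)| = 1$.

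If $\eps_\infty = 0$, the rescaled Allen--Cahn parameter still degenerates while the curvature remains bounded, so the argument requires the sheet-separation machinery of Section~\ref{sec:stable.solutions}. Using $|\cA_{\tilde u_i}| \leq 2$ together with the standard lower bound $\tilde\eps_i |\nabla \tilde u_i| \gtrsim 1$ on $\{|\tilde u_i| \leq 1-\beta\}$, the nodal sets $\{\tilde u_i = 0\}$ are locally uniformly $C^{1,1}$ and converge by Arzel\`a--Ascoli (possibly with multiplicity) in $C^{1,\alpha}_{\loc}$ to a $C^{1,1}$ hypersurface $\Sigma_\infty \subset \RR^3$. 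Localizing and taking $\Sigma$ to be (a compact piece of) the tangent plane to $\Sigma_\infty$ at $0$, for $i$ large the rescaled data satisfy the hypotheses \eqref{eq:sheets.sff.bound}--\eqref{eq:sheets.enhanced.sff.bound} of Section~\ref{sec:jacobi.toda.reduction} uniformly. Invoking Proposition~\ref{prop:ultimate.stable.estimates} and Corollary~\ref{coro:ultimate.stable.estimates}, the distinct sheets of $\{\tilde u_i = 0\}$ are separated by much more than $\sqrt{2} \tilde\eps_i |\log \tilde\eps_i|$, and the mean curvature of each individual sheet is $o(\tilde\eps_i |\log \tilde\eps_i|) \to 0$. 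A standard quasilinear Schauder bootstrap then upgrades the convergence of each sheet to $C^{2,\alpha}_{\loc}$, so $\Sigma_\infty$ is a smooth stable minimal surface (possibly with multiplicity) in $\RR^3$. By the classical stable Bernstein theorem in $\RR^3$ \cite{Fischer-Colbrie-Schoen,doCarmoPeng,Pogorelov}, each component of $\Sigma_\infty$ is a flat plane; by the upgraded convergence, $|\cA_{\tilde u_i}(0)| \to 0$, contradicting $|\cA_{\tilde u_i}(0)| = 1$.

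The main obstacle is the case $\eps_\infty = 0$, and specifically upgrading the $C^{1,\alpha}$ convergence of the nodal set to $C^2$ so that $\cA$ passes to the limit. This is exactly where the strong sheet-separation estimates of Section~\ref{sec:stable.solutions} (built on Wang--Wei \cite{WangWei}) are indispensable: even when several sheets collapse onto a single limiting plane, the separation is strong enough that each sheet's mean curvature dominates the sheet-to-sheet interaction terms, so each sheet behaves like an almost-minimal surface and converges smoothly to a minimal limit.
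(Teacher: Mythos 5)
Your blow-up scheme is essentially the paper's: contradiction plus point-picking, the dichotomy on $\lambda_i\eps_i$, the Ambrosio--Cabr\'e classification in the nondegenerate case, and the Section \ref{sec:stable.solutions} separation estimates together with the stable Bernstein theorem when $\lambda_i\eps_i \to 0$. Bypassing the Hutchinson--Tonegawa varifold step by extracting a $C^{1,\alpha}$ limit of the nodal sets directly from the curvature and gradient bounds, and only identifying the planes after the Schauder upgrade, is a harmless reorganization (consonant with Remark \ref{rema:HT.simplified}). The problem is the very last inference in your degenerate case: ``by the upgraded convergence, $|\cA_{\tilde u_i}(0)| \to 0$.'' The quantity $\cA_{\tilde u_i}(0)$ is \emph{not} the second fundamental form of the nodal sheet: the base point $0$ lies only within $O(\tilde\eps_i)$ of $\{\tilde u_i = 0\}$, and $\cA$ is the enhanced second fundamental form of the level set $\{\tilde u_i = \tilde u_i(0)\}$, which in addition carries the term $|\nabla_T \log |\nabla \tilde u_i||$. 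Hence $C^{2,\alpha}$ convergence of the zero-level sheets to planes does not by itself yield $|\cA_{\tilde u_i}(0)| \to 0$; the soft alternative of rescaling by $\tilde\eps_i$ and comparing with the heteroclinic only gives $\tilde\eps_i\, |\cA_{\tilde u_i}(0)| \to 0$, which is vacuous. This transfer is precisely where the difficulty of the theorem sits (the level sets do not satisfy a clean PDE), and it is not supplied by anything you invoked.

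What closes it in the paper is the quantitative estimate \eqref{eq:enhanced.sff.estimate}, which rests on the improved \emph{tangential} derivative bounds \eqref{eq:phi.improved.c2a.estimate.full} for $\phi = u - U[\mathbf{h}]$ (the extra factor of $\eps$ gained in horizontal directions is essential) and bounds $\Vert \cA \Vert_{C^0}$ on $\cM_\ell(r) \cap \{|u| \le 1-\beta\}$ by $c'\eps + c'\eps^{-1}\sum_m A_m^{1+\kappa} + c'\widehat\Gamma_\ell(r)$, with $\widehat\Gamma_\ell$ the Christoffel-symbol supremum \eqref{eq:christoffel.symbol.sup} in Fermi coordinates about the sheet $\Gamma_\ell$. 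One then uses the separation estimates (Proposition \ref{prop:bootstrapped.stable.estimates} suffices) to kill the interaction term, and the Riccati equation \eqref{eq:mean.curv.ddt.sff} together with $\sff_{\Gamma_\ell} \to 0$ (your Schauder step, via Lemma \ref{lemm:h.phi.comparison.improved} and the H\"older bound on the mean curvature) to show $\widehat\Gamma_\ell \to 0$; only then does $|\cA_{\tilde u_i}(0)| \to 0$ follow, contradicting the normalization. You should insert this step explicitly. A smaller omission: you assert that the limit surface is stable (needed for the Bernstein step) without saying how stability passes to the limit; the paper does this by testing the second variation of $E_{\tilde\eps_i}$ with $\zeta = \psi |\nabla \tilde u_i|$.
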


\begin{rema}
We emphasize that, in one dimension lower, Wang--Wei \cite{WangWei} have proven that stable critical points of $E_{\eps}$ satisfy curvature bounds even without the assumption of uniformly bounded energy. 
\end{rema}

\begin{rema}
	It's not immediately obvious that the enhanced second fundamental form $\cA$ is well-defined on $B^3_{3/4}(0) \cap \{ |u| \leq 1-\beta \}$. This can be seen, for instance, by applying the following proposition with $n=3$. Its ``nonexistence'' condition, when $n=3$, is guaranteed in view of the work of Ambrosio--Cabr\'e \cite{AmbrosioCabre00} (see also the work of Farina--Mari--Valdinoci \cite{FarinaMariValdinoci13}).
\end{rema}

\begin{prop} \label{prop:lower.density.estimate}
	Let $u : B^n_1(0) \to (-1,1)$ be a stable critical point of $E_\varepsilon \restr B^n_1(0)$ with $(E_\varepsilon \restr B^n_1(0))[u] \leq E_0$. If $\varepsilon \leq \varepsilon_0$ and $\RR^n$ with the standard metric does not carry any nontrivial (i.e., heteroclinic or $\pm 1$) entire stable solutions with Euclidean energy growth then
	\[ \varepsilon |\nabla u_i| \geq c_0^{-1} > 0 \text{ for all } x \in B^n_{3/4}(0) \cap \{ |u| \leq 1-\beta \}, \]
	where $\varepsilon_0$, $c_0$ depend on $E_0$, $\beta$, $W$.
\end{prop}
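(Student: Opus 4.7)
The plan is to argue by contradiction and perform a blow-up at the scale $\eps_i$ so as to reduce to a rigidity statement about entire stable Allen--Cahn solutions on $\RR^n$. Assume the conclusion fails. Then, by taking $\eps_0 = 1/i$ and $c_0 = i$, we can find sequences of stable critical points $u_i$ with $(E_{\eps_i} \restr B^n_1(0))[u_i] \leq E_0$, points $x_i \in B^n_{3/4}(0) \cap \{|u_i| \leq 1-\beta\}$, and parameters $\eps_i \to 0$ such that $\eps_i |\nabla u_i|(x_i) \to 0$. Passing to subsequences, we may also assume $x_i \to x_\infty \in \closure{B^n_{3/4}(0)}$ and $u_i(x_i) \to s_\infty \in [-1+\beta, 1-\beta]$.

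Next, I would rescale. Introduce Fermi/normal coordinates at $x_i$ and set $\tilde u_i(y) \triangleq u_i(\exp_{x_i}(\eps_i y))$ on the ball $B_{R_i}(0)$ with $R_i = (1/4)\eps_i^{-1} \to \infty$, equipped with the rescaled metric $\tilde g_i$, which converges in $C^\infty_{\text{loc}}$ to the Euclidean metric on $\RR^n$ (using the curvature bounds in Theorem \ref{theo:curvature.estimate}, although such bounds are implicitly available from the hypotheses needed to apply this proposition). The Allen--Cahn equation becomes $\Delta_{\tilde g_i} \tilde u_i = W'(\tilde u_i)$, i.e., Allen--Cahn with $\eps = 1$. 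Since $|\tilde u_i| \leq 1$, standard interior Schauder estimates give uniform $C^{k}_{\text{loc}}$ bounds for every $k$, so a subsequence converges in $C^k_{\text{loc}}(\RR^n)$ to a smooth entire solution $\tilde u_\infty$ of $\Delta \tilde u_\infty = W'(\tilde u_\infty)$ on $\RR^n$ with the standard metric, with $\tilde u_\infty(0) = s_\infty$ and $\nabla \tilde u_\infty(0) = \lim_i \eps_i \nabla u_i(x_i) = 0$.

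Two further facts must be established. First, stability passes to the limit: given $\zeta \in C^\infty_c(\RR^n)$, one pulls back via the rescaling to produce test functions $\zeta_i \in C^\infty_c(B^n_1(0))$, and the Allen--Cahn stability inequality \eqref{eq:second.var.AC} for $u_i$ rescales (the factor $\eps_i^{n-1}$ cancels) to $\int (|\nabla \zeta|^2 + W''(\tilde u_i) \zeta^2) \, dy \geq 0$; letting $i \to \infty$ and using locally uniform convergence of $\tilde u_i$ gives stability of $\tilde u_\infty$. Second, the monotonicity formula of Hutchinson--Tonegawa (or its Riemannian analogue with correction terms controlled by the curvature bound of $g$) applied to $u_i$ yields, for all $r \leq 1/4$, a bound of the form $E_{\eps_i}[u_i; B_r(x_i)] \leq C(E_0, W) r^{n-1}$; rescaling, $E_1[\tilde u_i; B_R(0)] \leq C R^{n-1}$ for every fixed $R$ and $i$ large, whence $\tilde u_\infty$ has Euclidean energy growth.

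Finally, I would invoke the hypothesized rigidity. Either $\tilde u_\infty \equiv \pm 1$, which contradicts $|\tilde u_\infty(0)| = |s_\infty| \leq 1-\beta$; or $\tilde u_\infty$ is heteroclinic, i.e.\ $\tilde u_\infty(y) = \HH(a \cdot y + b)$ for some unit vector $a$ and $b \in \RR$, in which case $\nabla \tilde u_\infty(0) = \HH'(b) a$; but $|\HH(b)| \leq 1-\beta$ forces $|\HH'(b)| \geq c(\beta) > 0$ (since $\HH' > 0$ everywhere and only decays as $|\HH| \to 1$), contradicting $\nabla \tilde u_\infty(0) = 0$; or $\tilde u_\infty$ is nontrivial in the sense of the hypothesis, contradicting the assumption. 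The main technical care is in the monotonicity step---ensuring that the Riemannian correction terms are harmless on the (asymptotically small) balls involved---which should be routine given the curvature bounds that accompany the settings in which this proposition is applied.
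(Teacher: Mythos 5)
Your argument is correct and is essentially the paper's own proof: argue by contradiction, blow up at scale $\eps_i$ around the degenerate points, pass to a stable entire limit on $\RR^n$ with Euclidean energy growth (via the monotonicity formula), and contradict the Liouville-type hypothesis using $|v_\infty(0)| \leq 1-\beta$ together with $\nabla v_\infty(0) = 0$ (impossible for $\pm 1$ and for the heteroclinic, whose gradient never vanishes). The only differences are cosmetic: you spell out the monotonicity and stability-passing steps that the paper asserts without comment, and your appeal to ``curvature bounds in Theorem \ref{theo:curvature.estimate}'' should be understood as the ambient bounded-geometry hypotheses on $g$, not that theorem's conclusion (which would be circular, since this proposition is used to make $\cA$ well-defined there).
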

\begin{proof}
	We argue by contradiction. If the assertion were false, there would exist a sequence
	\[ \{ (u_i, \varepsilon_i) \}_{i=1,2,\ldots} \subset C^\infty(B^n_1(0); (-1,1)) \times (0,\infty), \; \lim_i \varepsilon_i = 0, \]
	where each $u_i : B^n_1(0) \to [-1,1]$ is a stable critical point for $E_{\varepsilon_i} \restr B^n_1(0)$, with $(E_{\varepsilon_i} \restr B^n_1(0))[u_i] \leq E_0$, and so that $\lim_i \varepsilon_i \nabla u_i(q_i) = 0$ along some $\{ q_i \}_{i=1,2,\ldots} \subset B^n_{3/4}(0)$. The rescaled functions
	\[ v_i(x) \triangleq u_i(\varepsilon_i(x - q_i)) \]
	are all stable critical points of $E_1 \restr B^n_{(1-|q_i|)/\varepsilon_i(0)}$ with Euclidean energy growth. Since the ellipticity constants are uniform at this scale, we may pass to a subsequence with $\lim_i v_i = v_\infty$ in $C^\infty_{\loc}(\RR^n)$, where $v_\infty$ is a stable critical point of $E_1 \restr \RR^n$ with Euclidean area growth, $|v_\infty(0)| \leq 1-\beta$, and $\nabla v_\infty(0) = 0$. No such $v_\infty$ exists: the only entire stable solutions on $\RR^n$ with Euclidean energy growth are the constants $\pm 1$ and the one-dimensional heteroclinic solution.
\end{proof}

We are now in a position to prove Theorem \ref{theo:curvature.estimate}.

\begin{proof}[Proof of Theorem \ref{theo:curvature.estimate}]
	If the assertion were false, there would exist a sequence
	\[ \{ (u_i, \varepsilon_i) \}_{i=1,2,\ldots} \subset C^\infty(B_1^3(0); (-1,1)) \times (0,\infty), \; \lim_i \varepsilon_i = 0, \] 
	where each $u_i : B_1^3(0) \to (-1,1)$ is a stable critical point for $E_{\varepsilon_i} \restr B_1^3(0)$, with $(E_{\varepsilon_i} \restr B_1^3(0))[u_i] \leq E_0$, and so that the maximum value 
	\[ \max \left\{ \dist(x, \RR^3 \setminus B_{3/4}^3(0)) |\cA(x)| : x \in B^3_1(0) \cap \{ |u| \leq 1-\beta \} \right\} \]
	is attained at some $q_i \in B_{3/4}^3(0)$ with
	\[ \lim_i \dist(q_i, \partial B_{3/4}^3(0)) |\cA_i(q_i)| = \infty. \]
	Next, let $\lambda_i \triangleq |\cA_i(q_i)|$, which we note also satisfies $\lim_i \lambda_i = \infty$.
	
	\begin{clai}
		$\liminf_i \varepsilon_i \lambda_i = 0$.
	\end{clai}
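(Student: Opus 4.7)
The plan is to argue by contradiction via a standard blow-up. Assume $\liminf_i \eps_i \lambda_i > 0$ and pass to a subsequence with $\tilde\eps_i \triangleq \eps_i \lambda_i \to c$ for some $c \in (0,\infty)$; finiteness of $c$ is automatic from $\eps_i^2 \Delta u_i = W'(u_i)$, elliptic regularity, and the lower gradient bound of Proposition~\ref{prop:lower.density.estimate}, which together force $|\cA_{u_i}| = O(\eps_i^{-1})$ on $\{|u_i|\le 1-\beta\}$.

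Next, we rescale at $q_i$ by the factor $\lambda_i$. Using normal coordinates at $q_i$ (well-defined thanks to $\inj_g \ge 3$ and $|\riem_g| \le 1$), define
\[
v_i(y) \triangleq u_i\bigl(\exp_{q_i}(\lambda_i^{-1} y)\bigr), \qquad \hat g_i(y) \triangleq \lambda_i^{2}\,(\exp_{q_i})^* g(\lambda_i^{-1} y),
\]
on $B^3_{R_i}(0)$, where $R_i \triangleq \lambda_i \dist(q_i, \partial B^3_{3/4}(0)) \to \infty$. Because $\lambda_i \to \infty$ and $|\riem_g|+|\nabla \riem_g| \le 1$, we have $\hat g_i \to g_{\mathrm{Euc}}$ in $C^\infty_{\loc}(\RR^3)$. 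Each $v_i$ is a stable critical point of $E_{\tilde\eps_i}$ on $(B^3_{R_i}(0), \hat g_i)$, and the enhanced second fundamental form scales as $|\cA_{v_i}|_{\hat g_i}(y) = \lambda_i^{-1} |\cA_{u_i}|_g\bigl(\exp_{q_i}(\lambda_i^{-1} y)\bigr)$; in particular $|\cA_{v_i}|(0) = 1$ and, by the maximality property of $q_i$, $|\cA_{v_i}| \le 2$ on any fixed compact subset of $\{|v_i|\le 1-\beta\}$ for large $i$.

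Since $\tilde\eps_i \to c > 0$, the equation $\tilde\eps_i^2 \Delta_{\hat g_i} v_i = W'(v_i)$ is uniformly elliptic at unit scale and $|v_i| \le 1$. Interior $C^{k,\alpha}$ Schauder estimates with a diagonal argument produce a smooth subsequential limit $v_\infty : \RR^3 \to [-1,1]$ solving $c^2 \Delta v_\infty = W'(v_\infty)$ on $(\RR^3, g_{\mathrm{Euc}})$, stable (the stability inequality passes to the limit on compactly supported test functions), with $|\cA_{v_\infty}|(0) = 1$, so the level set through $0$ is \emph{not} flat. The substitution $\tilde v_\infty(x) \triangleq v_\infty(cx)$ absorbs the constant $c$ and yields a stable entire solution of the normalized Allen--Cahn equation $\Delta \tilde v_\infty = W'(\tilde v_\infty)$ on $\RR^3$ whose enhanced second fundamental form is nonzero at the origin. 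Provided $\tilde v_\infty$ has quadratic (Euclidean) energy growth, this contradicts the stable Bernstein theorem of Ambrosio--Cabr\'e \cite{AmbrosioCabre00}, which forces such solutions to have one-dimensional profile (and hence $\cA \equiv 0$).

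The main obstacle is the energy-growth verification. A direct scaling gives $E_{\tilde\eps_i}[v_i; B^3_R(0)] = \lambda_i^{2} E_{\eps_i}[u_i; B^3_{R/\lambda_i}(q_i)] \le \lambda_i^2 E_0$, which diverges as $\lambda_i \to \infty$. The remedy is the small-ball bound $E_{\eps_i}[u_i; B^3_r(q_i)] \le C r^{2}$, valid for stable Allen--Cahn solutions on a $3$-manifold with bounded total energy via Hutchinson--Tonegawa style monotonicity \cite{HutchinsonTonegawa00} (a consequence of the stationarity of the Allen--Cahn stress-energy tensor). This bound cancels the factor of $\lambda_i^{2}$ and yields $E_c[v_\infty; B^3_R(0)] \le C R^2$ for every $R$, supplying the Euclidean energy growth needed to invoke Ambrosio--Cabr\'e and close the contradiction.
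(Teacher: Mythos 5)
Your proof is correct and takes essentially the same route as the paper's: a blow-up argument (at the curvature scale $\lambda_i^{-1}$, which under your contradiction hypothesis is comparable to the $\varepsilon_i$-scale the paper rescales by) producing an entire stable solution of Allen--Cahn on $\RR^3$ with quadratic energy growth and non-flat level sets, which is incompatible with the Ambrosio--Cabr\'e classification. The paper argues directly rather than by contradiction --- rescaling by $\varepsilon_i^{-1}$, identifying the limit as the one-dimensional heteroclinic (so $\cA_\infty \equiv 0$), and reading off $\varepsilon_i\lambda_i \to 0$ --- and, like you, it relies on the monotonicity-formula density bound for the asserted quadratic energy growth of the rescaled solutions.
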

	\begin{proof}[Proof of claim]
		Rescale to $v_i(x) \triangleq u_i(\varepsilon_i(x - q_i))$, a stable critical point of $E_1 \restr B^3_{(1-|q_i|)/\varepsilon_i}(0)$ with quadratic energy growth and $|v_i| \leq 1-\beta$. Since our ellipticity constants are uniform at this scale, we can pass to a subsequence such that $\lim_i v_i = v_\infty$ in $C^\infty_{\loc}(\RR^3)$, where $v_\infty$ is a stable critical point of $E_1 \restr \RR^3$ with $|v_\infty(0)| \leq 1-\beta$. The only such $v_\infty$ is the one-dimensional heteroclinic solution, for which $\cA_\infty \equiv 0$, and, therefore $\liminf_i \varepsilon_i \lambda_i = |\cA_\infty(0)| = 0$. This completes the proof of the claim.
	\end{proof}

	Pass to a subsequence for which $\liminf_i \varepsilon_i \lambda_i = 0$ is attained, and rescale to $\widetilde{u}_i(x) \triangleq u_i(\lambda_i^{-1}(x-q_i))$. This is a critical point of $E_{\varepsilon_i \lambda_i} \restr B^3_{(3/4 - |q_i|) \lambda_i}(0)$. We note that
	\begin{equation} \label{eq:curvature.estimate.eps.lambda.limit}
		\lim_i \varepsilon_i \lambda_i = 0, \; \lim_i (3/4 - |q_i|) \lambda_i = \infty.
	\end{equation}
	Moreover, for every $R \geq 1$, 
	\begin{equation} \label{eq:curvature.estimate.energy.growth}
		(E_{\eps_i \lambda_i} \restr B^3_{(3/4 - |q_i|) \lambda_i}(0))(B^3_R(0)) \leq cR^2
	\end{equation}
	for all sufficiently large $i$. Here, $c > 0$ is fixed. Combining \eqref{eq:curvature.estimate.eps.lambda.limit}, \eqref{eq:curvature.estimate.energy.growth}, together with the works of \cite[Theorem 1]{HutchinsonTonegawa00} and  \cite[Appendix B]{Guaraco} for the Riemannian modifications, the $2$-varifolds
	\begin{equation*} 
		V_{\eps_i \lambda_i}[\widetilde{u}_i](\varphi) \triangleq \int \varphi(x, T_x \{ \widetilde{u}_i = \widetilde{u}_i(x) \}) \, \eps_i \lambda_i |\nabla \widetilde{u}_i(x)|^2, 
		 \text{ for }  \varphi \in C^0_c(\operatorname{Gr}_2(B_{(3/4-|q_i|)\lambda_i}^3(0))),
	\end{equation*}
	converge weakly to a stationary integral varifold $V_\infty \in \mathbf{I}_2(\mathbf{R}^3)$. The enhanced second fundamental form estimates, moreover, imply that $\support \Vert V_\infty \Vert$ is $C^{1,1}$ and, therefore, a smooth minimal surface. 
	
	\begin{rema} \label{rema:HT.simplified}
		We note that the most technical elements of \cite{HutchinsonTonegawa00}, such as proving that the limit varifold is integral, can be proven (in the setting at hand) in a much more direct manner given the curvature estimates we now know to be true.
	\end{rema}
	
	The stability of $\widetilde{u}_i$ is also known to imply stability of $\support \Vert V_\infty \Vert$. Indeed, one may plug $\zeta = \psi |\nabla \widetilde{u}_i|$, $\psi \in C^\infty_c(\RR^3)$, into the second variation operator $\delta^2 E_{\eps_i \lambda_i}|_{\widetilde{u}_i}$ and let $i \to \infty$, and recover the second variation operator for $\support \Vert V_\infty \Vert$ with $\psi|_{\support \Vert V_\infty \Vert}$ being the test function. See also \cite{Tonegawa05}.
	
	Summarizing, $\support \Vert V_\infty \Vert$ is a smooth, stable, embedded minimal surface in $\RR^3$. (In fact, with quadratic area growth.) Therefore, the limit is a a disjoint union of planes $P_1, \ldots, P_k \subset \RR^3$ with integer multiplicities $m_1, \ldots, m_k \in \{1, 2, \ldots\}$. Without loss of generality, $P_j = \RR^2 \times \{z_j\}$, with $0 = z_1 < z_2 < \ldots < z_k$. 
	
	We will only need to focus on one of these planes, e.g., $P_1$. Writing
	\[ \{ \widetilde{u}_i = 0 \} \cap (B_1^2(0) \times [-z_2/2,z_2/2]) = \bigcup_{\ell=1}^{m_1} \graph f_{i,\ell}, \]
	it follows from our rescaling that $f_{i,\ell} : B_1^2(0) \to \RR$ all converge, in the $C^{1,\alpha}$ sense on $B_{1/2}^2(0)$, to the zero function as $i \to \infty$. In fact, by dilating as needed, we find ourselves in the setup of Sections \ref{subsec:jacobi.toda.setup}-\ref{subsec:distance.estimates}.
	
	Therefore, by employing Proposition  \ref{prop:ultimate.stable.estimates} (in fact, Proposition \ref{prop:bootstrapped.stable.estimates} suffices), it follows from \eqref{eq:enhanced.sff.estimate} that
	\begin{equation} \label{eq:curvature.estimate.i}
		\limsup_{i \to \infty} \Vert \widetilde{\cA}_i \Vert_{C^0(\sM_\ell(1/6) \cap \{ |\widetilde{u}_i| \leq 1-\beta\})} \leq c' \limsup_{i \to \infty} \widehat{\Gamma}_\ell(1/6),
	\end{equation}
	for all $\ell \in \{1, \ldots, Q\}$, where $\widehat{\Gamma}_\ell$ is as in \eqref{eq:christoffel.symbol.sup}. 

	\begin{clai}
		The right hand side of \eqref{eq:curvature.estimate.i} is zero.
	\end{clai}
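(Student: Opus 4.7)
The plan is to show that each sheet $\Gamma_{\ell,i} = \graph f_{i,\ell}$ (in the rescaled picture) converges to the model flat plane $P_1 = \RR^2 \times \{0\}$ not merely in $C^{1,\alpha}$, but in $C^{2,\alpha}$, which will force the Fermi-coordinate Christoffel symbols $\widehat{\Gamma}_\ell(1/6)$ to vanish in the limit. First I would note that the rescaled ambient metric $g_i(x) \triangleq g(q_i + \lambda_i^{-1} x)$ on $B^3_{(3/4-|q_i|)\lambda_i}(0)$ converges to the Euclidean metric in $C^\infty_{\loc}(\RR^3)$ at rate $O(\lambda_i^{-1})$, so in particular the ambient Christoffel symbols in the standard coordinates go to zero uniformly on compact sets. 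Thus the only part of $\widehat{\Gamma}_\ell$ that is not obviously vanishing in the limit is the contribution from the extrinsic and intrinsic geometry of the sheet $\Gamma_{\ell,i}$ itself.

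Next, I would apply Proposition \ref{prop:ultimate.stable.estimates} (after a mild dilation of $\widetilde{u}_i$ to match the normalization of Section \ref{sec:jacobi.toda.reduction}, which is permitted since $\varepsilon_i \lambda_i \to 0$ and the enhanced second fundamental form is uniformly bounded on compacts). The resulting Corollary \ref{coro:ultimate.stable.estimates} gives that the mean curvature $H_{\Gamma_{\ell,i}}$ of each sheet converges uniformly to zero. Writing out the graphical mean curvature equation for $f_{i,\ell}$ with respect to the converging metric $g_i$ (cf.\ Appendix \ref{app:mean.curvature.graphs}),
\[ \divg_{g_i}\!\left( \frac{\nabla f_{i,\ell}}{\sqrt{1+|\nabla f_{i,\ell}|^2}} \right) + (\text{curvature corrections involving } g_i) = H_{\Gamma_{\ell,i}}, \]
this becomes a quasilinear elliptic PDE whose coefficients linearize near $f \equiv 0$ and $g = \delta$ to the flat Laplacian. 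Since $f_{i,\ell} \to 0$ in $C^{1,\alpha}(B_{1/2}^2(0))$ with uniform $C^{1,1}$ bounds (from the curvature estimate), and the right-hand side is going to zero in $C^0$, Schauder theory applied to the linearization gives $C^{2,\alpha}$ convergence $f_{i,\ell} \to 0$ on any slightly smaller disk.

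With $C^{2,\alpha}$ convergence in hand, the second fundamental form of $\Gamma_{\ell,i}$ with respect to $g_i$ tends to zero in $C^{0,\alpha}$ and the induced metric tends to the Euclidean one in $C^{1,\alpha}$. Because the Fermi-coordinate Christoffel symbols $\Gamma^t_{ij}$, $\Gamma^i_{tj}$, $\Gamma^i_{tj}$ are (at $z=0$) built from the second fundamental form of $\Gamma_{\ell,i}$ and the shape operator, while $\Gamma^k_{ij}$ is built from the intrinsic Christoffel symbols of $\Gamma_{\ell,i}$, these all tend to zero uniformly on $\Gamma_\ell(1/6)$; the remaining $z$-dependence is controlled through Fermi-coordinate ODEs whose driving ambient curvature goes to zero. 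This yields $\limsup_i \widehat{\Gamma}_\ell(1/6) = 0$, proving the claim and, in combination with \eqref{eq:curvature.estimate.i}, forcing $|\widetilde{\cA}_i(0)| \to 0$, which contradicts $|\widetilde{\cA}_i(0)| = 1$.

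The main obstacle is the $C^{1,\alpha} \to C^{2,\alpha}$ upgrade: a priori we only know $|\nabla^2 f_{i,\ell}|$ is bounded (from the curvature estimate at the blow-up scale) and that the \emph{trace} $H_{\Gamma_{\ell,i}}$ of the associated graphical operator goes to zero. The strength of Proposition \ref{prop:ultimate.stable.estimates} is essential here because it shows that the mean curvature of the sheets dominates the nonlinear sheet-interaction terms, which is precisely what justifies treating the $f_{i,\ell}$ as solutions of a genuinely minimal-surface-type PDE with vanishing right-hand side (rather than as solutions of a Toda-system-coupled PDE), and thus what permits the Schauder regularity step to close.
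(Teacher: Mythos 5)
Your proposal follows essentially the same route as the paper's own proof: reduce the vanishing of $\widehat{\Gamma}_\ell(1/6)$ to the vanishing of the second fundamental form of the nodal sheets (the off-sheet Fermi-coordinate dependence being handled by the Riccati-type evolution \eqref{eq:mean.curv.ddt.sff}), and obtain $\sff_{\Gamma_{\ell,i}} \to 0$ by combining the $C^1$-convergence of the graphing functions with the mean curvature control coming from the stable sheet-separation estimates and elliptic (Schauder) theory for the graphical mean curvature equation.

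One imprecision worth flagging: as written, your Schauder step takes as input only that $H_{\Gamma_{\ell,i}} \to 0$ in $C^0$ together with uniform $C^{1,1}$ bounds on $f_{i,\ell}$, and this is not enough — Schauder estimates require H\"older control of the right-hand side, and uniform $C^{1,1}$ bounds plus $C^1$-smallness of $f_{i,\ell}$ do not by themselves force $\nabla^2 f_{i,\ell} \to 0$ (think of $i^{-2}\sin(i x)$). What closes the argument is the uniform $C^{0,\theta}$ bound on the mean curvatures supplied by \eqref{eq:phi.improved.c2a.estimate.full} together with Lemma \ref{lemm:h.phi.comparison.improved}, which is exactly what the paper invokes: it yields uniform $C^{2,\theta}$ bounds on the graphs, and these combined with $C^1$-convergence to zero give $C^2$-convergence, hence $\sff \to 0$. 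Since those H\"older estimates are already among the ingredients you cite elsewhere, the fix is immediate, but it should be stated as the hypothesis feeding Schauder rather than the $C^0$-smallness of $H$.
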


	Notice that this claim violates the fact that our dilations were such that $|\widetilde{\cA}_i(0)| = 1$ for all $i = 1, 2, \ldots$, and Theorem \ref{theo:curvature.estimate} follows.
	
	\begin{proof}[Proof of claim]
		From the Riccati equation, \eqref{eq:mean.curv.ddt.sff}, it suffices to check that the second fundamental form of $\{ |\widetilde{u}_i| = 0 \}$ converges to zero. This follows from our H\"older estimate on the mean curvatures from \eqref{eq:phi.improved.c2a.estimate.full}, Lemma \ref{lemm:h.phi.comparison.improved}, and the fact that our graphing functions converge to zero in $C^1$.
	\end{proof}

	This concludes the proof of the curvature estimates.
\end{proof}

\begin{coro} \label{coro:curvature.estimates}
	Let $(M, g)$, $u$, $\varepsilon$, $\varepsilon_1$ be as in Theorem \ref{theo:curvature.estimate}, and $\theta \in (0, 1)$. Then, 
	\[ [\sff_{\{u=t\}}]_{\theta, \{u=t\} \cap B^3_{1/3}(0)} \leq c_1' \text{ for all } |t| \leq 1-\beta, \]
	where $c_1' = c_1'(n, E_0, \beta, \theta, W)$.
\end{coro}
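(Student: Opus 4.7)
The strategy is to upgrade Theorem \ref{theo:curvature.estimate}'s $C^0$ curvature bound $|\sff_{\{u=t\}}|\leq c_1$ to a macroscopic $C^{0,\theta}$ bound by combining the weighted Hölder machinery from Sections \ref{sec:jacobi.toda.reduction}-\ref{sec:stable.solutions}, which gives control at the microscopic $\eps$-scale, with a final blow-up argument that promotes these bounds to the macroscopic scale.

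For the microscopic step, I would observe that Theorem \ref{theo:curvature.estimate} together with Proposition \ref{prop:lower.density.estimate} place us in the setup of Section \ref{subsec:jacobi.toda.setup} locally: around any $p \in \{u=t\} \cap B_{1/3}^3(0)$, in Fermi coordinates over the tangent plane $T_p\{u=t\}$, the hypotheses \eqref{eq:sheets.sff.bound}-\eqref{eq:sheets.enhanced.sff.grad.bound} are met. Stability of $u$ combined with the sharp sheet-separation bound of Proposition \ref{prop:ultimate.stable.estimates} makes the error terms $A_m$ in \eqref{eq:sup.exp.distance} decay subpolynomially in $\eps$; hence combining \eqref{eq:phi.c2a.estimate.full} with Lemma \ref{lemm:h.phi.comparison.improved} yields $[\nabla^2_{\Gamma_\ell} h_\ell]_\theta = O(\eps^{1-\theta})$ at $\eps$-scale on the centering function $h_\ell$ of the nodal set. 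Since $\{u = t\}$ is, to leading order, a normal translate of the nodal set by $\eps \, \mathbb{H}^{-1}(t)$ up to a $C^{2,\theta}_\eps$-controlled $\phi$-correction from \eqref{eq:phi.c2a.estimate.full}, its graphing function inherits the same $\eps$-scale $C^{2,\theta}$ control, giving $[\sff_{\{u=t\}}]_{\theta, \, \{u=t\} \cap B_\eps(p)} \leq c'$ uniformly in $\eps$.

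For the macroscopic step, I would argue by blow-up and contradiction, in the spirit of the proof of Theorem \ref{theo:curvature.estimate}. Assume a sequence of hypothesis-satisfying $(u_i, \eps_i, t_i)$ with $L_i \triangleq [\sff_{\{u_i=t_i\}}]_{\theta, B^3_{1/3}(0)} \to \infty$, attained along pairs $p_i, q_i$ with $r_i \triangleq |p_i - q_i| \to 0$ (forced by the uniform $C^0$ bound $|\sff_i|\leq c_1$). If $r_i \lesssim \eps_i$, the microscopic step bounds the Hölder ratio at the pair by $c'$, contradicting $L_i \to \infty$. If $r_i \gg \eps_i$, rescale by $r_i$: the rescaled solutions $\tilde u_i(x) \triangleq u_i(p_i + r_i x)$ satisfy Allen--Cahn with $\tilde\eps_i \triangleq \eps_i/r_i \to 0$, remain stable, have $|\tilde\cA_i| \leq r_i c_1 \to 0$, and live on an ambient metric converging to the flat metric on $\RR^3$. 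The microscopic step applied to $\tilde u_i$ at its native scale $\tilde\eps_i$ controls the rescaled Hölder seminorm at distances $\lesssim \tilde\eps_i$; combining this with the $C^0$ smallness $\|\tilde\sff_i\|_{C^0} \leq r_i c_1 \to 0$ (which handles pairs at rescaled distance $\gg \tilde\eps_i$), one forces the rescaled Hölder seminorm over the fixed unit ball to tend to zero, contradicting the normalization that the rescaled Hölder ratio at the pair $(0, (q_i-p_i)/r_i)$ equals $r_i^{1+\theta} L_i$---which the contradiction hypothesis, after possibly reselecting the blow-up scale to $\mu_i = L_i^{-1/(1+\theta)}$ and iterating, keeps bounded away from $0$.

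\textbf{Main obstacle.} The delicate step is the blow-up: specifically, matching the decay rates of $\tilde\eps_i$, $\|\tilde\sff_i\|_{C^0}$, and $r_i^{1+\theta} L_i$ so that the microscopic $\tilde\eps_i$-scale Hölder bound and the macroscopic $C^0$ smallness together exhaust all pairwise scales in the rescaled picture. When $r_i$ is only marginally larger than $\eps_i$, the two regimes dovetail directly; in the intermediate regime $\eps_i \ll r_i \ll 1$ with $\mu_i \in (\eps_i, r_i)$, one may need to iterate the rescaling at the new extremal pair to close the argument. This iteration terminates in boundedly many steps thanks to the fact that the rescaled $|\tilde\cA_i|$ decays linearly in the rescaling factor (by Theorem \ref{theo:curvature.estimate}), rendering the sheet-separation estimate from Proposition \ref{prop:ultimate.stable.estimates} progressively sharper in each iterate.
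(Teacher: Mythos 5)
Your proposal has a genuine gap, and in fact two. First, your ``microscopic step'' never supplies the mechanism that actually converts the estimates of Section \ref{sec:jacobi.toda.reduction} into H\"older control of the second fundamental form. The bounds you cite from \eqref{eq:phi.c2a.estimate.full} and Lemma \ref{lemm:h.phi.comparison.improved} control the centering functions $h_\ell$ and the discrepancy $\phi$, i.e.\ how $u$ deviates from the superposed heteroclinic ansatz built \emph{on} the sheets $\Gamma_\ell$; they do not by themselves control $[\sff_{\Gamma_\ell}]_\theta$, and asserting that the level set ``inherits the same $\eps$-scale $C^{2,\theta}$ control'' conflates $h_\ell$ with the graphing function of the level set. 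What these estimates (together with \eqref{eq:bootstrapped.iii}, \eqref{eq:bootstrapped.iv} and Proposition \ref{prop:ultimate.stable.estimates}) really give is a \emph{unit-scale} $C^{0,\theta}$ bound on the mean curvature of the level sets (indeed, since $\theta<1$, a quantity that is $o(1)$ as $\eps\to 0$); the passage from that to $C^{2,\theta}$ bounds on the level sets requires writing them as graphs at a fixed scale (available because Theorem \ref{theo:curvature.estimate} bounds $|\cA|$, hence $|\sff|$) and applying quasilinear Schauder theory to the prescribed mean curvature equation. This Schauder step is precisely the paper's proof, and it is absent from your argument.

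Second, the macroscopic blow-up step does not close. With your normalization the rescaled H\"older ratio at the extremal pair equals $L_i r_i^{1+\theta}$, and since $|\sff_i|\le c_1$ forces $L_i r_i^\theta\le 2c_1$, this quantity is $\le 2c_1 r_i\to 0$: the contradiction hypothesis $L_i\to\infty$ degenerates under the rescaling, so nothing is ``bounded away from $0$.'' Your proposed repair (rescaling instead by $\mu_i=L_i^{-1/(1+\theta)}$ and iterating) is not carried out: the relative sizes of $\mu_i$, $r_i$, $\eps_i$ are unknown, and no Liouville-type rigidity is identified at the intermediate scales $\eps_i\ll\mu_i\ll 1$ that would produce a contradiction --- any such rigidity would again have to come from the elliptic equation satisfied by the level-set graphs, i.e.\ from the Schauder mechanism you are trying to bypass. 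More basically, microscopic ($\eps$-ball) H\"older bounds plus a global $C^0$ bound can never yield a unit-scale H\"older bound without using the equation, since they only give $[\sff]_\theta\lesssim\max(c',c_1\eps^{-\theta})$. The two-scale structure is unnecessary: once you have the unit-scale $C^{0,\theta}$ mean curvature bound from Lemma \ref{lemm:h.phi.comparison.improved}, \eqref{eq:bootstrapped.iii}--\eqref{eq:bootstrapped.iv} and Proposition \ref{prop:ultimate.stable.estimates}, quasilinear Schauder theory together with Theorem \ref{theo:curvature.estimate} gives the corollary directly.
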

\begin{proof}
	\eqref{eq:bootstrapped.iii}, \eqref{eq:bootstrapped.iv}, Lemma \ref{lemm:h.phi.comparison.improved}, and Proposition \ref{prop:ultimate.stable.estimates} together give $C^\theta$ bounds on the mean curvatures of $\{ u = 0 \}$. The improvement to $C^{2,\theta}$ bounds on the level sets comes from (quasilinear) Schauder theory and Theorem \ref{theo:curvature.estimate}.
\end{proof}

\section{Phase transitions with bounded Morse index ($n=3$)} 


\label{sec:bounded.index}

\subsection{Multiplicity and Jacobi fields} \label{subsec:multiplicity.jacobi.fields}

In this section we prove that uniform bounds on the Morse index generically prevent multiplicity from occurring in the Allen-Cahn setting. Specifically:

\begin{theo} \label{theo:bounded.index}
	Suppose $(M^3, g)$ is a compact Riemannian 3-manifold possibly with $\partial M \neq \emptyset$, and that $u_i \in C^\infty(M; [-1,1])$, $\eps_i > 0$, where each $u_i$ is a critical point of $E_{\varepsilon_i}$, and
	\begin{equation} \label{eq:bounded.index.i}
		E_{\varepsilon_i}[u_i] \leq E_0, \; \ind(u_i) \leq I_0 \text{ for all } i = 1, 2, \ldots
	\end{equation}
	Suppose $\lim_i \eps_i = 0$. Passing to a subsequence, write $V \triangleq \lim_i \energyunit^{-1} V_{\varepsilon_i}[u_i]$ for the limit $2$-varifold. Then $V$ is a stationary integral varifold, $\support \Vert V \Vert$ is smooth in the interior of $M$, and if $\Sigma$ denotes a connected component of $\support \Vert V \Vert$ that is a compact submanifold without boundary, then one of the following is true:
	\begin{enumerate}
		\item $\Sigma$ is two-sided and  $\Theta^2(V, \cdot) = 1$ on $\Sigma$ (i.e., $\Sigma$ has multiplicity $1$);
		\item $\Sigma$ is two-sided, $\Theta^2(V, \cdot) \geq 2$ on $\Sigma$ (i.e., multiple interfaces have converged), it is stable (see \eqref{eq:stable.min.surf}) and carries a smooth positive Jacobi field; or
		\item $\Sigma$ is one-sided, and the two-sided double cover of $\Sigma$ is stable and carries a smooth positive Jacobi field. 
	\end{enumerate}
\end{theo}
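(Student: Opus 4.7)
I would split the argument into four stages, only the third of which contains a genuine difficulty. The index bound $\ind(u_i) \leq I_0$ lets me extract (subsequentially) a finite set $\{p_1, \ldots, p_N\} \subset M$ with $N \leq I_0$ such that each $u_i$ is stable on every compact subset of $M \setminus \{p_1, \ldots, p_N\}$ for $i$ large, in the spirit of the concentration-of-index argument from \cite{Sharp}. Away from these points, Theorem \ref{theo:curvature.estimate} and Corollary \ref{coro:curvature.estimates} give uniform $C^{2,\alpha}$ bounds on $\{u_i = t\}$, and the Hutchinson--Tonegawa theorem (with the more direct integrality argument available here; cf.\ Remark \ref{rema:HT.simplified}) produces a stationary integral limit $V$ whose support is a smooth embedded minimal surface away from $\{p_k\}$. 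The isolated points in $\{p_k\} \cap \Sigma$ are removable by standard bounded-area/bounded-density arguments, so $\Sigma$ is smooth throughout. If $\Sigma$ is two-sided with multiplicity one, case (1) holds.

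\textbf{Stage 2: a positive Jacobi field off $\{p_k\}$.} Suppose $\Sigma$ is two-sided with multiplicity $Q \geq 2$ (the one-sided case is reduced to this below). On any $U \Subset \Sigma \setminus \{p_k\}$, in Fermi coordinates the nodal set $\{u_i = 0\}$ decomposes as $Q$ ordered graphs $f_{i,1} < \cdots < f_{i,Q}$ satisfying the hypotheses of Section \ref{sec:jacobi.toda.reduction}, and Proposition \ref{prop:ultimate.stable.estimates} and Corollary \ref{coro:ultimate.stable.estimates} apply using stability on $U$: sheet separation is mean-curvature-dominated, and each level sheet $\Gamma_{i,\ell}$ has mean curvature $o(\eps_i |\log \eps_i|)$. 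Set $w_{i,\ell} \triangleq f_{i,\ell+1} - f_{i,\ell} > 0$. Subtracting the near-minimal-surface equations for $\Gamma_{i,\ell+1}$ and $\Gamma_{i,\ell}$ and linearizing via Lemma \ref{lemm:mean.curvature.laplacian} gives
\[ L_\Sigma w_{i,\ell} = o(w_{i,\ell}) + o(1) \text{ on } U, \qquad L_\Sigma \triangleq \Delta_\Sigma + |\sff_\Sigma|^2 + \ricc_g(\nu_\Sigma, \nu_\Sigma). \]
Normalizing $w_{i,\ell}$ by its supremum on a fixed compactly exhausting subset, applying elliptic Schauder estimates and Harnack's inequality, and passing to a diagonal subsequence, I obtain a smooth, strictly positive $\varphi$ on $\Sigma \setminus \{p_k\}$ satisfying $L_\Sigma \varphi = 0$.

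\textbf{Stage 3: sliding-plane extension across $\{p_k\}$ --- the main obstacle.} A priori $\varphi$ could blow up at some $p_k$, since the convergence of $u_i$ there is only in the weak varifold sense. Ruling this out and extending $\varphi$ smoothly across $\{p_k\}$ is the core difficulty of the proof. I would handle it by adapting White's compactness-theorem argument \cite{White:compactness.new}, replacing the minimal-surface foliations used there with the curved Allen--Cahn barriers of Theorem \ref{theo:dirichlet.data.construction}. Fix a small geodesic disk $D_k \Subset \Sigma$ containing $p_k$ on whose boundary $\varphi$ is already uniformly controlled. Theorem \ref{theo:dirichlet.data.construction} yields, for small $\eps_i$ and real parameter $s$, a barrier $U^+_{s,i}$ (an approximate Allen--Cahn solution) whose nodal set is a normal graph over $D_k$ equal to $s \cdot \varphi$ on $\partial D_k$. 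Starting from $s$ large enough that $U^+_{s,i} > u_i$ on $D_k$ and sliding $s$ downward, the Allen--Cahn strong maximum principle forbids interior touching and hence yields a uniform upper contact bound $s^+_i \leq C$; a symmetric argument from below gives $s^-_i \geq -C$. Letting $i \to \infty$, these bounds control $\varphi$ uniformly on $D_k$; since $L_\Sigma \varphi = 0$ holds classically on $D_k \setminus \{p_k\}$ and $\varphi$ is bounded, it extends smoothly across $p_k$ by removable singularities for linear elliptic operators. A positive Jacobi field on $\Sigma$ yields stability via Fischer--Colbrie--Schoen, confirming case (2). The one-sided case (3) is handled identically after pulling $u_i$ back to the oriented double cover $\widetilde\Sigma$, where the multiplicity of convergence is at least $2$ and the index/stability analyses of Stages 1--2 adapt to produce a positive Jacobi field on $\widetilde\Sigma$.
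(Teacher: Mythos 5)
Your proposal is correct and takes essentially the same route as the paper: index-concentration points and the curvature/strong sheet-separation estimates produce a positive Jacobi field away from finitely many points, a sliding argument with the Dirichlet barriers of Theorem \ref{theo:dirichlet.data.construction} bounds it near those points, a removable-singularity argument extends it, and the one-sided case is reduced to the double cover. The only notable differences are cosmetic: the paper slides along White's minimal-disk foliation (Proposition \ref{prop:white.foliation}) with the Allen--Cahn barriers centered on its leaves, rather than prescribing boundary nodal data proportional to $s\varphi$ directly (which at the $\eps_i$-scale must in any case be encoded as small offsets admissible in Theorem \ref{theo:dirichlet.data.construction}), and it justifies via the deck transformation that the convergence on the double cover has even (hence $\geq 2$) multiplicity, a step you assert without proof.
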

\begin{proof}
	For $p \in M$, $i = 1, 2, \ldots$, define the index concentration scale by
	\begin{equation} \label{eq:bounded.index.instability.radius}
		\cR(p, i) \triangleq \inf \{ r > 0 : \ind(u_i; B_r(p)) \geq 1 \},
	\end{equation}
	and then further let
	\[ \mathring{\Sigma} \triangleq \{ p \in M : \liminf_{i \to \infty} \cR(p, i) > 0 \}. \]
	By passing to an appropriate subsequence at the beginning of the proof, an elementary covering argument allows us to assume that $\cH^0(\Sigma \setminus \mathring{\Sigma}) \leq I_0$.

	The curvature estimates from Theorem \ref{theo:curvature.estimate} combined with the varifold convergence of $V_{\eps_{i}}[u_{i}]$ (from\footnote{See Remark \ref{rema:HT.simplified}.} \cite[Theorem 1]{HutchinsonTonegawa00}, and  \cite[Appendix B]{Guaraco}) show that along $\mathring \Sigma$, the limit varifold is supported with integer multiplicity (possibly greater than one) on a $C^{1,1}$ (and thus smooth) minimal surface. At this point, we may argue that $\Sigma$ extends smoothly across the index concentration set $\Sigma \setminus \mathring \Sigma$ exactly as in \cite[Proposition 3.10]{Guaraco}. We emphasize here that by using to our curvature estimates, we give an proof of the regularity of $\Sigma$ that does not rely on the deep works of Wickramasekera \cite{Wickramasekera14} and Tonegawa--Wickramasekera \cite{TonegawaWickramasekera12} (cf.\ \cite{Guaraco,Hiesmayr}). 
	
	We now assume that $\Sigma$ is connected (in general, one can apply the following argument to each component of the support of the limit varifold $V$). 
	
	First, suppose $\Sigma$ is two-sided and denote
	\begin{equation*}
		U \triangleq \text{ tubular neighborhood of } \Sigma \text{ such that } (\Sigma \cup \partial M) \cap U = \emptyset.
	\end{equation*}
	We may suppose that $U = Z_\Sigma(\Sigma \times (-1, 1))$.
	
	By the Constancy Theorem \cite[Theorem 41.1]{Simon83}, $\Theta^{2}(V, \cdot)$ is constant on $\Sigma$. If $\Theta^{2}(V, \cdot) = 1$ somewhere on $\Sigma$, then $\Sigma$ occurs entirely with multiplicity one as claimed. 
	
	In what follows we may assume, then, that $\Theta^{2}(V, \cdot) \equiv m \in \{2, 3, \ldots\}$ on $\Sigma$.
	
	Let us assume, for the time being, that $I_0 = 0$, i.e., that the critical points $u_i$ are all stable. The general case will be dealt with later.
	
	It follows from \eqref{eq:bounded.index.i}, Corollary \ref{coro:curvature.estimates}, and the two-sidedness of $\Sigma$, that the level sets $\{ u_i = 0 \} \cap U$ converge graphically in $C^{2,\theta}$ to $\Sigma$. In the case that $\{u_{i}=0\} \cap U$ were minimal surfaces, it is standard to produce a positive Jacobi field on $\Sigma$ out of this setup. We recall the argument here, with the necessary modifications for our lower regularity situation. 
	
	Since $\Sigma$ is two-sided, the level sets $\{ u_i = 0 \} \cap U$ (which are \emph{smoothly} embedded) can be ordered by their signed distance to $\Sigma$ in a fashion that is consistent across $\Sigma$. Without loss of generality, we may assume that there are $Q = 2$ level sets\footnote{Otherwise, we apply the same argument verbatim to the \emph{top} and \emph{bottom} level sets, ignoring all intermediate ones.}. To stay consistent with Section \ref{sec:jacobi.toda.reduction}, let's label the level sets
	\[ \Gamma_{i,1}, \Gamma_{i,2} \subset \{ u_i = 0 \} \cap U. \]
	Denote their corresponding height functions (over $\Sigma$) as $f_{i,1}$, $f_{i,2} : \Sigma \to \RR$, $\ell \in \{1,2\}$, so that $f_{i,1} < f_{i,2}$ on $\Sigma$. We recall \eqref{eq:mean.curv.graphical} from Appendix \ref{app:mean.curvature.graphs}, which tells us that:
	\begin{multline} \label{eq:bounded.index.h}
		H_{\Gamma_{i,\ell}} = - \divg_{g_{f_{i,\ell}}} \left( \frac{\nabla_{g_{f_{i,\ell}}} f_{i,\ell}}{(1 + g^{pq}_{f_{i,\ell}} (f_{i,\ell})_p (f_{i,\ell})_q)^{1/2}} \right) 
			- \frac{\sff_{f_{i,\ell}}^{pq} (f_{i,\ell})_p (f_{i,\ell})_q}{(1 + g^{pq}_{f_{i,\ell}} (f_{i,\ell})_p (f_{i,\ell})_q)^{1/2}} \\
			+ (1 + g^{pq}_{f_{i,\ell}} (f_{i,\ell})_p (f_{i,\ell})_q)^{1/2} H_{f_{i,\ell}} 
	\end{multline} 
	for $\ell = 1$, $2$. Here we're using notation from the appendix, where, e.g., $g = g_z + dz^2$ on $U$.
	
	We now claim that $H_{\Gamma_{i,2}} - H_{\Gamma_{i,1}}$ satisfies a \emph{linear} uniformly elliptic equation in $f_{i,2} - f_{i,1}$, whose parameters (obviously) depend on $f_{i,1}$, $f_{i,2}$. Indeed, \eqref{eq:bounded.index.h} tells us that
	\begin{equation} \label{eq:bounded.index.h.abstract.pde}
		H_{\Gamma_{i,\ell}} = - A(f_{i,\ell}) \divg_{\Sigma} \left( \sB(f_{i,\ell}, \nabla_{\Sigma} f_{i,\ell}) \nabla_{\Sigma} f_{i,\ell} \right) + C(f_{i,\ell}, \nabla_\Sigma f_{i,\ell})
	\end{equation}
	for \emph{smooth} functions (for each $p \in \Sigma$)
	\begin{align*} A & = A_p : \RR \to \RR, \\
	\sB & = \sB_p : \RR \times T_p \Sigma \to \operatorname{End}(T_p \Sigma),\\
	C & = C_p : \RR \times T_p \Sigma \to \RR,
	\end{align*}
	which, additionally, satisfy: $A > 0$, $\sB$ is positive definite. More specifically, at each point $p \in \Sigma$:
	\begin{align*}
		A(z) &  \triangleq \frac{\sqrt{g_0}}{\sqrt{g_z}}, \; z \in \RR,\\
		\sB(z, \mathbf{v}) \mathbf{w} & \triangleq \frac{\sqrt{g_z}}{\sqrt{g_0}} \frac{g_z^{ij} g^0_{jk} \mathbf{w}^j \partial_{y_k}}{(1 + g_z^{pq} g^0_{pk} g^0_{q\ell} \mathbf{v}^k \mathbf{v}^\ell)^{1/2}}, \; z \in \RR, \; \mathbf{v}, \mathbf{w} \in T_p \Sigma,\\
		C(z, \mathbf{v}) & \triangleq - \frac{\sff^{pq}_z g^0_{ip} g^0_{jq} \mathbf{v}^i \mathbf{v}^j}{(1 + g_z^{pq} g^0_{pk} g^0_{q\ell} \mathbf{v}^k \mathbf{v}^\ell)^{1/2}} + (1 + g_z^{pq} g^0_{pk} g^0_{q\ell} \mathbf{v}^k \mathbf{v}^\ell)^{1/2} H_z, \; z \in \RR, \; \mathbf{v} \in T_p \Sigma.
	\end{align*}
	
	From the fundamental theorem of calculus, as well as the fact that the two divergences (for the two cases $\ell = 1$, $2$) are \emph{pointwise} bounded (because the two mean curvatures are bounded), it follows that
	\begin{equation} \label{eq:bounded.index.h.diff.abstract.pde}
		H_{\Gamma_{i,2}} - H_{\Gamma_{i,1}} = - A \divg_\Sigma( \widehat{\sB} \nabla_\Sigma f_i + f_i \widehat{\mathbf{C}}) 
		+ \langle \mathbf{\widehat{D}}, \nabla_\Sigma f_i \rangle_{\Sigma} + \widehat{E}f_i \text{ on } \Sigma,
	\end{equation}
	where $f_i \triangleq f_{i,2} - f_{i,1} > 0$ on $\Sigma$, with coefficients
	\begin{align*} \widehat{\sB} & = \widehat{\sB}_p : \RR^2 \times (T_p \Sigma)^2 \to \operatorname{End}(T_p \Sigma), \\
	\widehat{\mathbf{C}} & = \widehat{\mathbf{C}}_p, \widehat{\mathbf{D}} = \widehat{\mathbf{D}}_p : \RR^2 \times (T_p \Sigma)^2 \to T_p \Sigma, \\
	\widehat{E} & = \widehat{E}_p : \RR^2 \times (T_p \Sigma)^2 \to \RR, 
	\end{align*}
	whose arguments are $(f_{i,1}, f_{i,2}, \nabla_\Sigma f_{i,1}, \nabla_\Sigma f_{i,2}) \in \RR^2 \times (T_p \Sigma)^2$. These coefficients are \emph{uniformly bounded} and satisfy
	\[ A \geq \mu, \; \langle B\mathbf{v}, \mathbf{v}\rangle_\Sigma \geq \mu \Vert \mathbf{v} \Vert_\Sigma^2, \; \mathbf{v} \in T_p \Sigma, \]
	for a fixed $\mu > 0$, provided
	\[ \limsup_{i \to \infty} \Vert f_{i,1} \Vert_{C^1(\Sigma)} + \Vert f_{i,2} \Vert_{C^1(\Sigma)} < \infty. \]
	
	It will be convenient to carry out the exact computation, as that will allow us to study a particular rescaled limit as $i \to \infty$. It will also be convenient to denote
	\[ \zeta_i^{(t)} \triangleq f_{i,1} + t (f_{i,2} - f_{i,1}) \equiv f_{i,2} + tf_i, \; t \in [0,1]. \]
	Note that
	\[ \zeta_i^{(0)} \equiv f_{i,1}, \; \zeta_i^{(1)} \equiv f_{i,2}, \text{ and } \tfrac{\partial}{\partial t} \zeta_i^{(t)} \equiv f_i \text{ on } \Sigma. \]
	
	Let us define $\widehat{\sB}$, $\widehat{\mathbf{C}}$, $\widehat{\mathbf{D}}$, $\widehat{E}$. The easiest term to deal with in \eqref{eq:bounded.index.h.abstract.pde} is the low order term, $C$. Indeed
	\begin{align*}
		& C(f_{i,2}, \nabla_\Sigma f_{i,2}) - C(f_{i,1}, \nabla_\Sigma f_{i,1})  = \underbrace{\left[ \int_0^1 D_z C(\zeta_i^{(t)}, \nabla_\Sigma \zeta_i^{(t)}) \, dt \right]}_{\widehat{E} \text{, term 1 out of 2}} f_i  + \left\langle \underbrace{\int_0^1 D_{\mathbf{v}} C(\zeta_i^{(t)}, \nabla_\Sigma \zeta_i^{(t)}) \, dt}_{\widehat{\mathbf{D}}}, \nabla_\Sigma f_i \right\rangle.
	\end{align*}
	We study the higher order term in two steps. First:
	\begin{align*}
		& \sB(f_{i,2}, \nabla_\Sigma f_{i,2}) \nabla_\Sigma f_{i,2} - \sB(f_{i,1}, \nabla_\Sigma f_{i,1}) \nabla_\Sigma f_{i,1} \\
		& = \underbrace{\left( \left[ \int_0^1 D_z \sB(\zeta_i^{(t)}, \nabla_\Sigma \zeta_i^{(t)}) \nabla_\Sigma \zeta_i^{(t)} \, dt \right] \right)}_{\widehat{\mathbf{C}}} f_i  + \left\langle \underbrace{\left[ \int_0^1 D_{\mathbf{v}} \sB(\zeta_i^{(t)}, \nabla_\Sigma \zeta_i^{(t)}) \nabla_\Sigma \zeta_i^{(t)} \, dt \right]}_{\widehat{\sB} \text{, term 1 out of 2}}, \nabla_\Sigma f_i \right\rangle \\
		& \qquad + \underbrace{\left[ \int_0^1 \sB(\zeta_i^{(t)}, \nabla_\Sigma \zeta_i^{(t)}) \, dt \right]}_{\widehat{\sB} \text{, term 2 out of 2}} \nabla_\Sigma f_i.
	\end{align*}
	Second,
	\begin{align*}
		& (A(f_{i,2}) - A(f_{i,1})) \divg_\Sigma (\sB(f_{i,2}, \nabla_\Sigma f_{i,2}) \nabla_\Sigma f_{i,2}) \\
		& \qquad = \underbrace{\left( \left[ \int_0^1 D_z A(\zeta_i^{(t)}) \, dt \right] \divg_\Sigma (\sB(f_{i,1}, \nabla_\Sigma f_{i,1}) \nabla_\Sigma f_{i,1}) \right)}_{\widehat{E} \text{, term 2 out of 2}} f_i.
	\end{align*}
	
	We now return to the qualitative study of $f_i$. Applying the Harnack inequality in divergence form to \eqref{eq:bounded.index.h.diff.abstract.pde} (after multiplying through by $A^{-1}$), we get
	\begin{equation} \label{eq:bounded.index.harnack}
		\sup_{\Sigma} f_i \leq c \inf_{\Sigma} f_i \text{ for } i = 1, 2, \ldots
	\end{equation}
	with a constant $c > 0$ that doesn't depend on $i$. From Proposition \ref{prop:ultimate.stable.estimates} and Corollary \ref{coro:ultimate.stable.estimates}, we know that
	\begin{align} 
		\lim_{i \to \infty} \frac{\Vert H_{\Gamma_{i,\ell}} \Vert_{C^0(\Gamma_{i,\ell})}}{\varepsilon_i |\log \varepsilon_i|} & = 0 \text{ for } \ell = 1, 2, \label{eq:bounded.index.estimate.meancurv} \\
		\liminf_{i \to \infty} \frac{\inf_{\Sigma} f_i}{\varepsilon_i |\log \varepsilon_i|} & > 0. \label{eq:bounded.index.estimate.height}
	\end{align}
	Define the normalizations
	\begin{equation} \label{eq:bounded.index.normalized.function}
		\widehat{f}_i \triangleq (\sup_\Sigma f_i)^{-1} f_i : \Sigma \to [\tfrac{1}{c}, 1],
	\end{equation}
	where $c$ is as in \eqref{eq:bounded.index.harnack}. In view of \eqref{eq:bounded.index.h.diff.abstract.pde}, ${\widehat{f}_i}$ satisfies the linear, uniformly elliptic equation (note that we've multiplied through by $A^{-1}$, which is uniformly bounded):
	\begin{equation} \label{eq:bounded.index.h.diff.norm.abstract.pde}
		\frac{H_{\Gamma_{i,2}} - H_{\Gamma_{i,1}}}{A \cdot \sup_\Sigma f_i} = - \divg_\Sigma( \widehat{\sB} \nabla_\Sigma \widehat{f}_i + \widehat{f}_i \widehat{\mathbf{C}})
		+ \langle A^{-1} \mathbf{\widehat{D}}, \nabla_\Sigma \widehat{f}_i \rangle_{\Sigma} + A^{-1} \widehat{E}\widehat{f}_i \text{ on } \Sigma.
	\end{equation}
	We will \emph{test} this PDE by multiplying through by some $\zeta \in C^\infty(\Sigma)$ and integrating by parts. By testing, first, with $\zeta = \widehat{f}_i$, we get uniform energy estimates
	\[ \limsup_{i \to \infty} \int_\Sigma |\nabla_\Sigma \widehat{f}_i|^2 < \infty. \]
	Moreover, since $\widehat{f}_i$ is (trivially) bounded, it follows from Rellich's theorem that there exist $\widehat{f} \in W^{1,2}(\Sigma)$ and a subsequence such that
	\[ \widehat{f}_i \weaklyto \widehat{f} \text{ in } W^{1,2}(\Sigma), \; \widehat{f}_i \to \widehat{f} \text{ in } L^2(\Sigma). \]
	Therefore, since the coefficients in \eqref{eq:bounded.index.h.diff.norm.abstract.pde} are all uniformly bounded as $i \to \infty$, it follows that we can test \eqref{eq:bounded.index.h.diff.norm.abstract.pde} with arbitrary $\zeta \in C^\infty(\Sigma)$, and pass to a subsequential limit $i \to \infty$.
	
	The left hand side of  \eqref{eq:bounded.index.h.diff.norm.abstract.pde} converges to zero uniformly as $i \to \infty$ because of  \eqref{eq:bounded.index.estimate.meancurv}-\eqref{eq:bounded.index.estimate.height} above.  Thus, $\widehat{f}$ is a $W^{1,2}$-weak solution of
	\begin{equation} \label{eq:bounded.index.h.diff.norm.limit.pde}
		-\divg_\Sigma (\widehat{\sB}_\infty \nabla_\Sigma \widehat{f} + \widehat{f} \widehat{\mathbf{C}}_\infty) + \langle A^{-1}_\infty \widehat{\mathbf{D}}_\infty, \nabla_\Sigma \widehat{f} \rangle + A^{-1}_\infty \widehat{E}_\infty \widehat{f} = 0 \text{ on } \Sigma,
	\end{equation}
	where $A_\infty$, $\widehat{\sB}_\infty$, $\widehat{\mathbf{C}}_\infty$, $\widehat{\mathbf{D}}_\infty$, $\widehat{E}_\infty$ are just the same coefficients, except now they are evaluated at the limiting configuration of $(0, 0, \mathbf{0}, \mathbf{0})$. It is not hard to see, using the evolutions in Appendix \ref{app:mean.curvature.graphs}, that
	\begin{equation*}
		A_\infty \equiv 1, \; \widehat{\sB}_\infty \equiv \operatorname{Id}, \; \widehat{\mathbf{C}}_\infty \equiv \widehat{\mathbf{D}}_\infty \equiv \mathbf{0}, 
		\text{ and } \widehat{E}_\infty \equiv - (|\sff_\Sigma|^2 + \ricc_g(\nu_\Sigma, \nu_\Sigma)).
	\end{equation*}
	Thus, $\widehat{f}$ is $W^{1,2}$-weak solution of the Jacobi equation,
	\begin{equation} \label{eq:bounded.index.h.jacobi.equation}
		(\Delta_\Sigma + |\sff_\Sigma|^2 + \ricc_g(\nu_\Sigma, \nu_\Sigma)|_\Sigma) h = 0 \text{ on } \Sigma.
	\end{equation}
	It must be smooth---and thus classically a solution---by elliptic regularity. Moreover
	\[ \tfrac{1}{c} \leq \widehat{f}_i \leq 1 \text{ for all } i = 1, 2, \ldots \implies \tfrac{1}{c} \leq \widehat{f} \leq 1. \]
	In particular, the function is positive. It follows that the principal eigenvalue of the Jacobi operator is zero, so $\Sigma$ is stable.{\footnote{The fact that $\ind(u_i) = 0$ for all $i = 1, 2, \ldots$ implies the stability of $\Sigma$ is not new: see \cite{Tonegawa05, TonegawaWickramasekera12, Hiesmayr, Gaspar}. Nonetheless, by appropriately generalizing the argument given here, we are going to be able to extend the conclusion that $\Sigma$ is stable in the case where $\ind(u_i) \leq I_0$ for $i = 1, 2, \ldots$, $I_0 \in \{0, 1, \ldots \}$ and convergence occurs with multiplicity $\geq 2$.}} The result follows.
	
	We now drop the stability assumption and proceed to the general case of $I_0 \in \{0, 1, 2, \ldots\}$. We continue to assume that $\Sigma$ is two-sided. Without loss of generality, we'll assume $I_0 = 1$ from this point on. The general case is similar.
	
	The index concentration set is either empty (in which case, we can argue as in the previous case) or satisfies $\mathring{\Sigma} = \Sigma \setminus \{ P_\star \}$ for some $P_\star \in \Sigma$, and the convergence of $\{ u_i = 0 \} \cap U$ to $\mathring{\Sigma}$ is graphical $C^{2,\theta}_{\loc}$ on $\Sigma \setminus \{ P_\star \}$. Notice that, by definition, for every $r > 0$ there exists a subsequence along which
	\begin{equation} \label{eq:bounded.index.concentration.puncture}
		\ind(u_i; M \setminus B_{r/2}(P_\star)) = 0.
	\end{equation}
	Our previous discussion regarding the stable case applies verbatim to $M \setminus C_{r}(P_\star)$, where, in exponential normal coordinates,
	\[ C_{\rho}(P_\star) \triangleq B^2_{\rho}(P_\star) \times (-1,1), \]
	and yields functions $f_{i,1}$, $f_{i,2} : \Sigma \setminus B_{r}^2(P_\star) \to \RR$ representing the \emph{incomplete} properly embedded surfaces\emph{-with-boundary}
	\begin{equation} \label{eq:bounded.index.graphing.punctured.sheets}
		\Gamma_{i,1}, \Gamma_{i,2} \subset \{ u_i = 0 \} \cap U \setminus C_{r}(P_\star).
	\end{equation}
	
	\begin{rema} \label{rema:bounded.index.notation.i}
		Recall that we assumed $U$ is the image of the normal exponential map of $\Sigma$ restricted to $\Sigma \times (-1, 1)$. Then, $\partial C_\rho(P_\star) \cap U = \partial B_\rho^2(P_\star) \times (-1, 1)$ for every sufficiently small $\rho > 0$.
	\end{rema}

	All of \eqref{eq:bounded.index.h}-\eqref{eq:bounded.index.estimate.height} continue to hold over $\Sigma \setminus B_{r}^2(P_\star)$ instead of $\Sigma$.
	All the constants inevitably depend on our choice of $r > 0$, which is yet to be determined. We note that, trivially, the energy estimate
	\[ \limsup_{i \to \infty} \int_{\Sigma \setminus B_{2r}^2(P_\star)} |\nabla_\Sigma \widehat{f}_i|^2 < \infty \]
	holds true for any fixed $r > 0$ by our previous discussion. In fact, because $\Gamma_{i,1} \setminus C_{r}(P_\star)$, $\Gamma_{i,2} \setminus C_{r}(P_\star)$ converge in $C^{2,\theta}$ to $\Sigma \setminus B^2_{r}(P_\star)$ as $i \to \infty$, a subset of the fixed surface $\Sigma$, the coefficients of \eqref{eq:bounded.index.h.diff.abstract.pde} will satisfy
	\begin{multline*}
		\limsup_{r \to 0} \Big[ \limsup_{i \to \infty} \Vert A \Vert_{C^0(\Sigma \setminus B_{3r/2}^2(P_\star))} + \Vert \widehat{\sB} \Vert_{C^0(\Sigma \setminus B_{3r/2}^2(P_\star))} \\
		+ \Vert \widehat{\mathbf{C}} \Vert_{C^0(\Sigma \setminus B_{3r/2}^2(P_\star))} + \Vert \widehat{\mathbf{D}} \Vert_{C^0(\Sigma \setminus B_{3r/2}^2(P_\star))} 
		+ \Vert \widehat{E} \Vert_{C^0(\Sigma \setminus B_{3r/2}^2(P_\star))} \Big] < \infty,
	\end{multline*}
	and, therefore, we'll have the \emph{uniform} energy estimate
	\[ \limsup_{r \to 0} \left[ \limsup_{i \to \infty} \int_{\Sigma \setminus B_{2r}^2(P_\star)} |\widehat{f}_i|^2 \right] < \infty. \]
	This means we can pass to a limiting $\widehat{f}$ in the following sense:
	\begin{equation} \label{eq:bounded.index.renormalized.h}
		\widehat{f}_i \weaklyto \widehat{f} \text{ in } W^{1,2}_{\loc}(\mathring{\Sigma}), \; \widehat{f}_i \to \widehat{f} \text{ in } L^2_{\loc}(\mathring{\Sigma}).
	\end{equation}
	Now, \eqref{eq:bounded.index.harnack}-\eqref{eq:bounded.index.estimate.height} also hold for each fixed $r > 0$, with the $\sup$ and $\inf$ taken over $\Sigma \setminus B^2_r(P_\star)$, the $C^0$ norm of $H_{\Gamma_{i,\ell}}$ taken over $\Gamma_{i,\ell} \setminus C_r(P_\star)$; the constant $c$ and rates of convergence of the limits, a priori, depend on $r$. Nonetheless, $\widehat{f} \in W^{1,2}_{\loc}(\mathring{\Sigma})$ is a weak solution of \eqref{eq:bounded.index.h.jacobi.equation} on $\mathring{\Sigma}$. By elliptic regularity, $\widehat{f}$ is smooth and solves \eqref{eq:bounded.index.h.jacobi.equation} classically on $\mathring{\Sigma}$.
	
	\begin{prop} \label{prop:bounded.index.nontrivial.limit}
		$\widehat{f} \in L^\infty(\mathring{\Sigma})$, $\widehat{f} \not \equiv 0$ a.e. on $\mathring{\Sigma}$.
	\end{prop}

	We defer the proof of Proposition \ref{prop:bounded.index.nontrivial.limit} to the next section, since the argument is of independent interest.

	This proposition, once verified, completes the proof of Theorem \ref{theo:bounded.index}: by standard removable singularity results for elliptic PDE, $\widehat{f}$ must extend to a smooth nonnegative solution of \eqref{eq:bounded.index.h.jacobi.equation} on $\Sigma$, which is not identically zero, and the result follows as it did in the stable setting.

	Finally, we explain the necessary modifications when $\Sigma$ is one-sided. Assume, as above, that $I_{0}=1$ (the general case is similar). As before, we can define $\mathring \Sigma$ to be the complement of the index concentration set. Considering a tubular neighborhood $U$, of $\Sigma$, we can use the normal exponential map to lift $\Sigma$ and $u : U\to \RR$ to $\check \Sigma \subset \check U$, where $\check\Sigma$ is the orientable double cover of $\Sigma$ and $\check U$ is the associated lift of $U$. We can assume that $\check U$ is diffeomorphic (via the normal exponential map) to $\check \Sigma \times (-1,1)$. Let $\check{\mathring{\Sigma}}$ be the lift of $\mathring{\Sigma}$. Observe that  $\check{\Sigma} \setminus \check{\mathring{\Sigma}}$ contains at most two points (more generally $2I_{0}$ points). 

	Note that the covering map $\pi : \check U \to U$ admits an deck transformation $\tau: \check U \to \check U$ with $\tau^{2}$ equal to the identity. Define $\check u \triangleq u \circ \pi$, which is still a critical point of $E_{\varepsilon_{i}}$. Clearly $\check u \circ \tau = \check u$. 

	We claim that the convergence of $\check u$ to $\check \Sigma$ occurs with even multiplicity. If not, (up to switching the normal vector) we can assume that $\check u \to -1$ on $\check\Sigma \times (-1,0)$ and $\check u \to 1$ on $\check\Sigma \times (0,1)$ (this is clear on $\check{\mathring{\Sigma}}$, which then implies that it holds for all $p\in\check\Sigma$). Note, however, that $\tau(\{p\} \times (0,1)) = \{\tau(p) \} \times (-1,0)$ (otherwise, we would find that $\Sigma$ was two-sided). This contradicts the fact that $\check u$ is invariant under $\tau$. 

	Thus, the convergence of $\check u$ occurs with even multiplicity (and thus multiplicity at least two). Now, the argument above can be applied verbatim to $\check \Sigma$ and $\check u$ to produce a smooth positive Jacobi field on $\check \Sigma$ (we emphasize that it is not clear what the index of $\check u$ is; here, we rely on the index bounds of $u$ to bound the cardinality of $\check{\Sigma} \setminus \check{\mathring{\Sigma}}$; after this step, the definition of $\check{\mathring \Sigma}$, rather than the index bounds is all that is used). As above, this implies that $\check\Sigma$ is stable. 
\end{proof}

\subsection{Sliding heteroclinic barriers} \label{subsec:bounded.index.height.bounds}

For the reader's convenience we start by recalling the following important result of White on local foliations by minimal surfaces.

\begin{prop}[{\cite[Appendix]{White:curvature}}] \label{prop:white.foliation}
	Let $\Phi$ be an even elliptic integrand, where $\Phi$ and $D_2 \Phi$ are $C^{2,\theta}$. Let $\Phi_r$ be the integrand defined by $\Phi_r(x, v) = \Phi(rx, v)$. There is an $\eta > 0$ such that if $r < \eta$ and if
	\[ w : B_1 \subset \RR^2 \to \RR, \; \Vert w \Vert_{C^{2,\theta}} < \eta, \]
	then for each $t \in [-1,1]$, there is a $C^{2,\theta}$ function $v_t : B_1 \to \RR$ whose graph is $\Phi_r$-stationary and such that 
	\[ v_t(x) = w(x) + t \; \text{ if } x \in \partial B_1. \]
	Furthermore, $v_t$ depends in a $C^1$ way on $t$ so that the graphs of the $v_t$ foliate a region of $\RR^3$. If $M$ is a $C^1$ properly immersed $\Phi_r$-stationary surface in $B_{1/2}(0)$ with $\partial M \subset \graph v_t$, then $M \subset \graph v_t$.
\end{prop}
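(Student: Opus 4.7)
The natural approach is the implicit function theorem in H\"older spaces, set up around the trivial one-parameter family of flat graphs $v \equiv t$ that solves the limiting problem at $r=0$, $w\equiv 0$. Concretely, for each $t \in [-1,1]$ write $v = t + \operatorname{ext}(w) + \varphi$, where $\operatorname{ext}(w)$ is a fixed bounded linear extension operator $C^{2,\theta}(\partial B_1) \to C^{2,\theta}(\overline{B_1})$, and $\varphi$ lies in the Banach space
\[
X \triangleq \{ \varphi \in C^{2,\theta}(\overline{B_1}) : \varphi|_{\partial B_1} = 0 \}.
\]
Let $\cE_r[v]$ denote the Euler--Lagrange operator associated to the $\Phi_r$-functional on graphs; this is quasilinear, uniformly elliptic (because $\Phi$ is an elliptic integrand), and its coefficients lie in $C^{0,\theta}(\overline{B_1})$ whenever $v \in C^{2,\theta}(\overline{B_1})$. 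The fixed-point map is
\[
F : X \times C^{2,\theta}(\partial B_1) \times [-1,1] \times [0,\eta) \to C^{0,\theta}(\overline{B_1}), \qquad F(\varphi, w, t, r) \triangleq \cE_r[t + \operatorname{ext}(w) + \varphi].
\]
Because $\Phi_0(x, v) = \Phi(0, v)$ is $x$-translation invariant, every constant function is $\Phi_0$-stationary, so $F(0, 0, t, 0) = 0$ for all $t$.

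The key analytic point is the invertibility of the partial Fr\'echet derivative $D_\varphi F(0, 0, t, 0) : X \to C^{0,\theta}(\overline{B_1})$. This linearization is the Jacobi operator for the $\Phi_0$-area functional at the flat graph $v \equiv t$; by translation invariance it has \emph{constant} coefficients and \emph{no zeroth-order term}, and ellipticity of $\Phi$ makes it uniformly elliptic. Standard Schauder theory (Dirichlet problem on the disk) furnishes a bounded two-sided inverse. The implicit function theorem then yields a $C^1$ map $(w, t, r) \mapsto \varphi(w, t, r) \in X$ defined on a neighborhood of $(0, t, 0)$; compactness of $t \in [-1,1]$ gives a uniform $\eta > 0$. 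Setting
\[
v_t \triangleq t + \operatorname{ext}(w) + \varphi(w, t, r)
\]
produces the desired $\Phi_r$-stationary graphs with $v_t|_{\partial B_1} = w + t$.

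The foliation property follows by differentiating in $t$. The function $\dot{v}_t \triangleq \partial_t v_t$ satisfies the (linear) Jacobi equation for $\Phi_r$-area of $\graph v_t$ with Dirichlet data $\dot{v}_t|_{\partial B_1} \equiv 1$. At $(w,r) = (0,0)$ we have $\dot{v}_t \equiv 1$, and $C^1$-dependence in the IFT gives $\Vert \dot{v}_t - 1 \Vert_{C^{2,\theta}} \to 0$ as $(w, r) \to 0$; shrinking $\eta$ ensures $\dot{v}_t \geq 1/2 > 0$, so the $\graph v_t$ foliate an open region. For the inclusion statement about a $\Phi_r$-stationary $M \subset B_{1/2}(0)$ with $\partial M \subset \graph v_{t_0}$, slide $t$ away from $t_0$ and take the supremum of $t$ for which $\graph v_t$ lies strictly below $M$; at the critical $t$ we obtain interior or boundary tangential contact between $M$ and $\graph v_t$, and the Euler--Lagrange equation being quasilinear uniformly elliptic lets us apply the interior/boundary strong maximum principle (Hopf lemma) to force $M \subset \graph v_{t_0}$.

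The main technical obstacle is checking that the IFT neighborhood size $\eta$ is uniform in $t \in [-1,1]$; this reduces to uniform $C^{2,\theta}$-boundedness of the coefficients and a uniform lower bound on the ellipticity constant of $D_\varphi F(0,0,t,0)$, both of which follow since translation by a constant $t$ leaves the $\Phi_0$-linearization at a flat graph unchanged (so in fact the $t$-dependence of the linearized problem is trivial). A secondary point is that the maximum principle step requires $M$ to be written as a graph near a tangential contact point; since $M$ is $C^1$ and $\Phi_r$-stationary, classical regularity theory upgrades $M$ to $C^{2,\theta}$ at such points, allowing direct comparison via the quasilinear PDE for $v_t$.
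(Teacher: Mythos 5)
Your proposal is correct and follows essentially the same route as the source the paper relies on: the paper states this proposition without proof, citing White's appendix, and that proof is precisely your scheme --- an implicit function theorem for the quasilinear Euler--Lagrange operator around the flat disks at $r=0$ and $w\equiv 0$ (where the linearization is a constant-coefficient, uniformly elliptic operator with no zeroth-order term, invertible on $C^{2,\theta}$ with Dirichlet conditions), positivity of $\partial_t v_t$ (close to $1$) to obtain the foliation, and a sliding/strong maximum principle argument at a tangential contact point, using elliptic regularity to upgrade the $C^1$ stationary surface to $C^{2,\theta}$, for the final inclusion statement. No gaps worth flagging.
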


We will use the minimal disks constructed by this proposition to construct barriers (using Theorem \ref{theo:dirichlet.data.construction}) that will allow us to control the height of the top and bottom $\{ u_i = 0 \}$ sheets near $P_\star$. This can be thought of as a variant of the moving planes method adapted to the Riemannian Allen--Cahn setting.

\begin{proof}[Proof of Proposition \ref{prop:bounded.index.nontrivial.limit}]
	We continue with the same notation as in the previous section. Let $\rho > 2r$, with still being such that \eqref{eq:bounded.index.concentration.puncture}-\eqref{eq:bounded.index.graphing.punctured.sheets} apply.
	
	Let $w_i : B^2_{2\rho}(P_\star) \to \RR$ be a harmonic function (defined on $B^2_{2\rho}(P_\star) \subset \Sigma$) with boundary data
	\[ w_i = f_{i,2} \text{ on } \partial B^2_{\rho}(P_\star). \]
	Recalling, from Corollary \ref{coro:curvature.estimates}, that $f_{i,2} \to 0$ in $C^{2,\theta}(B^2_{2\rho}(P_\star) \setminus B^2_{\rho/2}(P_\star))$, it follows that (by potentially going farther down the sequence of $i = 1, 2, \ldots$) $\Vert w_i \Vert_{C^{2,\theta}}$ --suitably scaled-- is small enough for Proposition \ref{prop:white.foliation} to apply. Once we're in that regime, Proposition \ref{prop:white.foliation} guarantees a foliation 
	\[ t \mapsto D_{i,\rho}(t), \; t \in [-\delta, \delta], \]
	consisting of minimal disks that all project to $B^2_{\rho}(P_\star) \subset \Sigma$. Without loss of generality, we may suppose that the foliated region $\cup_{|t| < \delta}  D_{i,\rho}(t)$ lies entirely within $U$. Note that:
	\begin{enumerate}
		\item the curves $t \mapsto \partial D_{i,\rho}(t)$ move at unit vertical speed in $\partial C_{\rho}(P_\star)$;
		\item the second fundamental form of the disks $D_{i,\rho}(t)$ is bounded in $C^{0,\theta}$ uniformly over $i = 1, 2, \ldots$, $t \in [-\delta, \delta]$,
		\begin{equation} \label{eq:bounded.index.disk.c2alpha}
			|\sff_{D_{i,\rho}(t)}| + [\sff_{D_{i,\rho}(t)}]_{\theta} \leq \eta,
		\end{equation} 
		and $\eta > 0$ can be made arbitrarily small.
	\end{enumerate}

	As a consequence of \eqref{eq:bounded.index.disk.c2alpha}, \eqref{eq:sheets.enhanced.sff.grad.bound}, and minimal surface curvature estimates, the disks also satisfy the following weaker $C^{3,\theta}$ bound uniformly over $i = 1, 2, \ldots$, $t \in [-\delta, \delta]$,
	\begin{equation} \label{eq:bounded.index.disk.c3alpha}
		\eps |\nabla_{D_{i,\rho}(t)} \sff_{D_{i,\rho}(t)}| + \eps^{1+\theta} [\nabla_{D_{i,\rho}(t)} \sff_{D_{i,\rho}(t)}]_{\theta} \leq \eta,
	\end{equation}
	after possibly relaxing $\eta > 0$, which can still nevertheless be made arbitrarily small.

	We'll now use a sliding/moving planes  argument that relies on the barrier construction in Section \ref{sec:dirichlet.data}, adopting relevant notation from therein. We assume, without loss of generality, that
	\begin{equation} \label{eq:bounded.index.sign.u}
		u > 0 \text{ above } \Gamma_{i,2} \text{ in } U \setminus C_{\rho/2}(P_\star).
	\end{equation}
	
	Our constructions below will take place for $\eps = \eps_i$, $i = 1, 2, \ldots$, but we suppress the dependence on $i$ for the sake of notational brevity.
	
	Define $\hat{\chi} : \RR \to [0,1]$ to be a cutoff function such that
	\begin{equation} \label{eq:bounded.index.chi.hat}
		\hat{\chi}(s) = \begin{cases} 1 & |s| \leq B \eps |\log \eps| \\ 0 & |s| \geq 2B \eps |\log \eps|, \end{cases}
	\end{equation}
	where $B \gg 1$ is to be chosen later. This can be constructed so that
	\[ |\hat{\chi}^{(k)}| = O((\eps |\log \eps|)^{-k}) \text{ for } k \geq 1, \; \eps \to 0. \]
	Let's very briefly run through some notation which is introduced later, in Section \ref{sec:dirichlet.data}; we will need to use some of it here in invoking that section's main theorem. In Section \ref{sec:dirichlet.data} we consider $\delta_* \in (0, 1)$ fixed and a H\"older exponent $\alpha \in (0, 1)$, $\alpha \leq \theta$, where $\theta$ is as in \eqref{eq:bounded.index.disk.c2alpha}, \eqref{eq:bounded.index.disk.c3alpha}. (We will eventually choose $\alpha$ near $0$ and $\theta$ near $1$.) In \eqref{eq:dirichlet.data.cutoff}, cutoff functions $\chi_j$ are introduced that are supported on strips of width $O(\eps^{\delta_*})$ (while $\hat{\chi}$ is supported on a thinner strip of size $O(\eps |\log \eps|)$). Finally, in \eqref{eq:dirichlet.data.approximate.heteroclinic}, $\chi_1$ is used to define a suitably truncated approximate heteroclinic solution $\widetilde{\mathbb{H}}_\eps$ that is constant  outside a strip of size $O(\eps^{\delta_*})$. (See Remark \ref{rema:dirichlet.data.trivialization.window}.)
		
	Given this notation, let's set:
	\[ \hat{v}^\sharp(s) \triangleq \gamma \hat{\chi}(s) \mathbb{H}'(\eps^{-1} s) + (1-\hat{\chi}(s)) \begin{cases} 1 - \eps^3 - \widetilde{\mathbb{H}}_\eps(s) & s > 0 \\ - 1 - \widetilde{\mathbb{H}}_\eps(s) & s < 0, \end{cases} \]
	where $\gamma \in \RR$ is chosen so that the orthogonality constraint
	\begin{equation} \label{eq:bounded.index.v.sharp.orthogonality}
		\int_{-\infty}^\infty \hat{v}^\sharp(s) \mathbb{H}'(\eps^{-1} s) \, ds = 0
	\end{equation}
	holds. Recalling \eqref{eq:heteroclinic.expansion.i}-\eqref{eq:heteroclinic.expansion.ii}, and that $\delta_* \in (0, 1)$,  \eqref{eq:bounded.index.v.sharp.orthogonality} is equivalent to
	\begin{align}
		\gamma(\energyunit - o(1)) & = O(\eps^{-1}) \int_{B \eps |\log \eps|}^\infty (1-\mathbb{H}(\eps^{-1} s)) \mathbb{H}'(\eps^{-1} s) \, ds   + O(\eps^2) \int_{2B \eps |\log \eps|}^{\tfrac{47}{50} \eps^{\delta_*}} |\mathbb{H}'(\eps^{-1} s)| \, ds \nonumber \\
		& = O(1) \int_{B |\log \eps|}^\infty (1-\mathbb{H}(s)) \mathbb{H}'(s) \, ds   + O(\eps^3) \int_{2B |\log \eps|}^{\tfrac{47}{50} \eps^{\delta_*-1}} |\mathbb{H}'(s)| \, ds \nonumber \\
		& = O(1) \exp(-2\sqrt{2} B |\log \eps|)  + O(\eps^3) \exp(-2\sqrt{2} B |\log \eps|) =  O(\eps^{2\sqrt{2} B}). \label{eq:bounded.index.v.sharp.estimate.i}
	\end{align}
	Also,
	\begin{equation} \label{eq:bounded.index.v.sharp.estimate.ii}
		\Vert \hat{\chi}(s) \mathbb{H}'(\eps^{-1} s) \Vert_{C^{2,\alpha}_\eps(\RR)} = O(1) \text{ as } \eps \to 0.
	\end{equation}
	Taking $B$ sufficiently large, \eqref{eq:bounded.index.v.sharp.estimate.i}-\eqref{eq:bounded.index.v.sharp.estimate.ii} together imply
	\begin{equation} \label{eq:bounded.index.v.sharp.estimate}
		\Vert \hat{v}^\sharp(s) \Vert_{C^{2,\alpha}_\eps(\RR)} = O(\eps^3).
	\end{equation}
	Next, for $(y, s) \in \partial (B_\rho^2(P_\star) \times [-\tfrac12, \tfrac12])$, define
	\[ \hat{v}^\flat(y,s) \triangleq (1-\chi_4(s)) \begin{cases} 1 - \eps^3 - \widetilde{\mathbb{H}}_\eps(s) & s > 0 \\ -1 - \widetilde{\mathbb{H}}_\eps(s) & s < 0. \end{cases} \]
	Recall that $\chi_{4}$ is defined in \eqref{eq:dirichlet.data.cutoff}. It is easy to see that $\Vert \hat{v}^\flat \Vert_{C^{2,\alpha}_\eps} = O(\eps^3)$. In fact, $\chi_5 \hat{v}^\flat = 0$, so
	\begin{equation} \label{eq:bounded.index.v.flat.estimate}
		\Vert \hat{v}^\flat \Vert_{\widetilde{C}^{2,\alpha}_\eps} = O(\eps^3)
	\end{equation}
	as well (see \eqref{eq:dirichlet.data.ckalpha.eps.modified} for the definition of $\widetilde{C}^{2,\alpha}_\eps$). 
	
	We emphasize that everything from \eqref{eq:bounded.index.chi.hat} to \eqref{eq:bounded.index.v.flat.estimate} above is \emph{agnostic} of our particular solutions with bounded Morse index. They will serve as prescribed boundary data for solutions of the Allen-Cahn equation on the fixed product manifold $B_{\rho}^2 \times [-\tfrac12, \tfrac12]$, albeit with varying interior metric that will depend on $g$, $i = 1, 2, \ldots$, $\rho$, and $t \in [-\delta, \delta]$. Indeed, we let
	\begin{multline} \label{eq:bounded.index.pullback.metric}
		g_{i,\rho}(t) \triangleq \text{ pullback metric from } Z_{D_{i,\rho}(t)}(D_{i,\rho}(t) \times [-\tfrac12, \tfrac12]) \subset U \\
		\text{ to } B_{\rho}^2 \times [-\tfrac12, \tfrac12] \text{ under Fermi coordinates } (y,s) \text{  with respect to } D_{i,\rho}(t).
	\end{multline}
	
	We may apply Theorem \ref{theo:dirichlet.data.construction} with $\hat{v}^\sharp$, $\hat{v}^\flat$ as above, $\hat{\zeta} \equiv 0$, and with the H\"older exponents $\alpha$ near $0$ and $\theta$ near $1$ per the theorem, to $\Omega \triangleq B_{\rho}^2 \times [-\tfrac12, \tfrac12]$ 	and the nonconstant Riemannian metric $g_{i,\rho}(t)$. Note that the conditions of the theorem are met trivially for $\hat{\zeta}$, and are also met for $\hat{v}^\sharp$, $\hat{v}^\flat$ by  \eqref{eq:bounded.index.v.sharp.estimate}-\eqref{eq:bounded.index.v.flat.estimate}.
	
	The theorem yields   $\mathfrak{b}_{i,\rho,t} : \Omega \to \RR$ such that
	\begin{equation} \label{eq:bounded.index.barrier.pde}
		\eps_i^2 \Delta_{g_{i,\rho}(t)} \mathfrak{b}_{i,\rho,t} = W'(\mathfrak{b}_{i,\rho,t})
	\end{equation}
	and, for all $(y, s) \in \partial \Omega$,
	\begin{equation} \label{eq:bounded.index.barrier.boundary.data}
		\mathfrak{b}_{i,\rho,t}(y,s) = \widetilde{\mathbb{H}}_\eps(s) + \chi_4(s) \hat{v}^\sharp(s) + \hat{v}^\flat(y,s).
	\end{equation}
	We constructed $\hat{v}^\sharp$, $\hat{v}^\flat$ specifically so that:
	\begin{equation} \label{eq:bounded.index.barrier.boundary.data.explicit}
		\mathfrak{b}_{i,\rho,t}(y,s) = \begin{cases} 1-\eps_i^3 & (y,s) \in \partial \Omega, \; s \geq 2B \eps_i |\log \eps_i| \\ -1 & (y, s) \in \partial \Omega, \; s \leq -2B \eps_i |\log \eps_i|. \end{cases}
	\end{equation}
	
	\begin{clai}
		For every $\beta > 0$, $\eps_i \leq 1$, we have 
		\begin{equation} \label{eq:bounded.index.barrier.strip}
			|\mathfrak{b}_{i,\rho,t}(y,s)| \leq 1-\beta \implies |s| \leq c' \eps_i,
		\end{equation}
		where $c' = c'(W, \beta, \eta, c_0) > 0$.
	\end{clai}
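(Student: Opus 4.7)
The strategy is to show that $\mathfrak{b}_{i,\rho,t}$ is an $L^\infty$-small perturbation of the approximate heteroclinic profile $\widetilde{\mathbb{H}}_{\eps_i}(s)$ on all of $\Omega$, and then to invoke the exponential decay \eqref{eq:heteroclinic.expansion.i} of $\mathbb{H}$ towards $\pm 1$ to conclude that the transition region $\{|\mathfrak{b}_{i,\rho,t}|\leq 1-\beta\}$ is confined to a strip of vertical width $O(\eps_i)$.

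First I would extract from the fixed-point construction of Theorem \ref{theo:dirichlet.data.construction} that the barrier decomposes as
\[ \mathfrak{b}_{i,\rho,t}(y,s) = \widetilde{\mathbb{H}}_{\eps_i}(s) + \chi_4(s)\hat{v}^\sharp(s) + \hat{v}^\flat(y,s) + \phi_{i,\rho,t}(y,s), \]
where $\phi_{i,\rho,t}$ is the interior correction produced by the fixed-point iteration. Feeding the boundary-data bounds \eqref{eq:bounded.index.v.sharp.estimate}--\eqref{eq:bounded.index.v.flat.estimate} into the weighted Schauder-type estimates of Theorem \ref{theo:dirichlet.data.construction} gives a uniform $L^\infty$ bound
\[ \bigl\| \mathfrak{b}_{i,\rho,t} - \widetilde{\mathbb{H}}_{\eps_i}(\,\cdot\,) \bigr\|_{L^\infty(\Omega)} = O(\eps_i^3), \]
with implicit constant depending only on $W$, $\eta$, and $c_0$, and in particular independent of $i$, $\rho$, and the sliding parameter $t$; here the crucial uniformity stems from the a priori bounds \eqref{eq:bounded.index.disk.c2alpha}--\eqref{eq:bounded.index.disk.c3alpha} on the sliding disks and the consequent uniform control on the pullback metrics $g_{i,\rho}(t)$ in \eqref{eq:bounded.index.pullback.metric}.

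Second I would translate \eqref{eq:heteroclinic.expansion.i} into the statement that for every $\beta>0$ there is $T_\beta = T_\beta(W,\beta)>0$ with $|\mathbb{H}(\tau)|\geq 1-\tfrac{\beta}{2}$ whenever $|\tau|\geq T_\beta$. Because $\widetilde{\mathbb{H}}_{\eps_i}(s) = \mathbb{H}(\eps_i^{-1}s)$ on the central strip (cf.\ \eqref{eq:dirichlet.data.approximate.heteroclinic}) and is within $O(\eps_i^3)$ of $\pm 1$ outside it, this rescales to
\[ |s|\geq T_\beta\, \eps_i \;\Longrightarrow\; \bigl|\widetilde{\mathbb{H}}_{\eps_i}(s)\bigr|\geq 1-\tfrac{\beta}{2}, \]
for $\eps_i$ sufficiently small. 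Combining with the previous paragraph, once the $L^\infty$ error above is at most $\tfrac{\beta}{2}$ we obtain $|\mathfrak{b}_{i,\rho,t}(y,s)|\geq 1-\beta$ whenever $|s|\geq T_\beta\,\eps_i$, which is precisely the contrapositive of the claim with $c' = T_\beta+1$.

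The main obstacle is extracting a genuinely \emph{uniform} $L^\infty$ comparison between $\mathfrak{b}_{i,\rho,t}$ and $\widetilde{\mathbb{H}}_{\eps_i}$ from Theorem \ref{theo:dirichlet.data.construction}: the theorem lives in weighted H\"older spaces $\widetilde{C}^{2,\alpha}_\eps$ and its constants depend on the underlying nondegenerate minimal hypersurface (here, the sliding disk $D_{i,\rho}(t)$) as well as on the metric. What makes this work in our setup is precisely the \emph{$i,\rho,t$-uniformity} of the disk estimates \eqref{eq:bounded.index.disk.c2alpha}--\eqref{eq:bounded.index.disk.c3alpha} and the smoothness with which $g_{i,\rho}(t)$ varies, both of which feed directly into the Schauder constants underlying the fixed-point scheme. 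Once that uniformity is verified, the rest is purely a computation with the one-dimensional heteroclinic profile.
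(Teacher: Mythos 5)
Your argument is essentially the paper's: the paper's proof is exactly the observation that $\mathfrak{b}_{i,\rho,t} = (\widetilde{\mathbb{H}}_{\eps_i} + \chi_4 v^\sharp + v^\flat)\circ D_\zeta$ with $\Vert v^\sharp\Vert_{C^0}, \Vert v^\flat \Vert_{C^0} = o(1)$ and $\Vert \zeta \Vert = O(\eps_i^{2-2\alpha})$, combined with the decay \eqref{eq:heteroclinic.expansion.i}, provided $\alpha$ is small. Two small corrections to your write-up, neither of which breaks the argument. First, Theorem \ref{theo:dirichlet.data.construction} only yields $\Vert v^\flat\Vert_{C^0}, \Vert v^\sharp \Vert_{C^0} = O(\eps_i^2)$ for the \emph{interior} corrections (the $O(\eps_i^3)$ bounds hold for the prescribed boundary data, not for the fixed point), so your claimed $O(\eps_i^3)$ $L^\infty$ comparison is too strong; but only $o(1)$ smallness is needed, so this is harmless. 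Second, your additive decomposition suppresses the composition with the diffeomorphism $D_\zeta$ from \eqref{eq:dirichlet.data.offset.map}: the heteroclinic profile in the barrier is centered at $s = \chi_2(s)\zeta(y)$ rather than $s=0$, so the lumped correction $\phi_{i,\rho,t}$ picks up a term of size $O(\eps_i^{-1}\Vert \zeta\Vert_{C^0}) = O(\eps_i^{1-2\alpha})$, and it is precisely here that the requirement ``$\alpha$ small enough'' (certainly $\alpha < \tfrac12$) enters, which your proposal should state explicitly. With those adjustments—replacing $O(\eps_i^3)$ by $o(1)$ and noting that the vertical offset $\chi_2(s)\zeta(y) = O(\eps_i^{2-2\alpha}) \leq C\eps_i$ only enlarges the strip by a controlled multiple of $\eps_i$—your two-step scheme (uniform $L^\infty$ closeness to $\widetilde{\mathbb{H}}_{\eps_i}$, then the rescaled decay of $\mathbb{H}$) is exactly the intended proof, and the uniformity in $i,\rho,t$ you worry about is supplied by the theorem's constants depending only on $n,\eta,W,\delta_*,\mu,\alpha$ together with \eqref{eq:bounded.index.disk.c2alpha}--\eqref{eq:bounded.index.disk.c3alpha}.
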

	\begin{proof}[Proof of Claim]
		This is a straightforward consequence of the ansatz $\mathfrak{b}_{i,\rho,t} = (\widetilde{\mathbb{H}}_\epsilon + \chi_4 v^\sharp + v^\flat) \circ D_\zeta$, $\Vert v^\sharp \Vert_{C^0}$, $\Vert v^\flat \Vert_{C^0} = o(1)$, $\Vert \zeta \Vert = O(\eps_i^{2-2\alpha})$, and \eqref{eq:heteroclinic.expansion.i}, at least provided we take $\alpha$ small enough.
	\end{proof}

	\begin{clai}
		For sufficiently large $i$, 
		\begin{equation} \label{eq:bounded.index.barrier.start}
			\mathfrak{b}_{i,\rho,\delta} < (Z_{D_{i,\rho}(\delta)})^* u_i \text{ on } \Omega.
		\end{equation}
		Recall that $\delta > 0$ represents the ``top'' of the foliation $D_{i,\rho}(\delta)$. 
	\end{clai}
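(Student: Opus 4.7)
The plan is to establish the strict inequality by first verifying it on $\partial\Omega$ with a quantitative margin, then propagating it into the interior via a case analysis that circumvents the nonconvexity of $W$.

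\textbf{Boundary comparison on $\partial\Omega$.} I would check the three parts of $\partial \Omega$ separately. On the top face $B_\rho^2(P_\star)\times\{1/2\}$, formula \eqref{eq:bounded.index.barrier.boundary.data.explicit} gives $\mathfrak{b}_{i,\rho,\delta}=1-\eps_i^3$, while $(Z_{D_{i,\rho}(\delta)})^*u_i$ is exponentially close to $+1$ since the top face sits at uniform positive distance from any sheet of $\{u_i=0\}$; for large $i$, $C\exp(-c/\eps_i)\ll\eps_i^3$. On the bottom face $B_\rho^2(P_\star)\times\{-1/2\}$, $\mathfrak{b}_{i,\rho,\delta}=-1$ exactly and $u_i>-1$ strictly. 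On the lateral face, the boundary data \eqref{eq:bounded.index.barrier.boundary.data} makes $\mathfrak{b}_{i,\rho,\delta}$ an $O(\eps_i^3)$ perturbation of $\mathbb{H}(\eps_i^{-1}s)$, while $(Z_{D_{i,\rho}(\delta)})^*u_i$ is close to $\mathbb{H}(\eps_i^{-1}(s+\delta))$ near $\Gamma_{i,2}$, since in these Fermi coordinates $\partial\Gamma_{i,2}$ lies $\delta$ below $\partial D_{i,\rho}(\delta)$ by the construction of the foliation. The strict monotonicity of $\mathbb{H}$, together with $\delta>0$ fixed, yields a margin that dominates the $O(\eps_i^3)$ errors everywhere on the lateral face.

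\textbf{Interior propagation.} Setting $w \triangleq (Z_{D_{i,\rho}(\delta)})^*u_i - \mathfrak{b}_{i,\rho,\delta}$, both functions solve the same Allen--Cahn equation on $(\Omega,g_{i,\rho}(\delta))$, so $w$ satisfies a linear elliptic equation $\eps_i^2\Delta_{g_{i,\rho}(\delta)}w=c(x)w$ with $c(x)=\int_0^1 W''(\tau u_i+(1-\tau)\mathfrak{b}_{i,\rho,\delta})\,d\tau$. Because $c$ need not be nonnegative, I would split $\Omega$ into a transition region $S\triangleq\{|\mathfrak{b}_{i,\rho,\delta}|<1-\beta\}\cup\{|u_i|<1-\beta\}$ and its complement. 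Off $S$, $c(x)$ is bounded below by a fixed positive constant (since $W''(\pm 1)=2$), so the strong maximum principle on each connected component propagates the boundary inequality to the interior (in components where $u_i$ and $\mathfrak{b}_{i,\rho,\delta}$ take opposite signs, $w>0$ trivially). On $S$, the ansatz of Theorem \ref{theo:dirichlet.data.construction} identifies $\mathfrak{b}_{i,\rho,\delta}$ with $\widetilde{\mathbb{H}}_\eps(s)$ up to a $C^{2,\alpha}_\eps$ perturbation of size $O(\eps_i^3)$, localizing its transition in $\{|s|\leq K\eps_i|\log\eps_i|\}$; meanwhile Proposition \ref{prop:ultimate.stable.estimates} localizes the transition of $u_i$ within $K\eps_i|\log\eps_i|$ of $\Gamma_{i,2}$, which sits at height $s\approx -\delta$. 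Since $\delta-2K\eps_i|\log\eps_i|>0$ for large $i$, these transition strips are disjoint, so pointwise comparison on $S$ reduces to comparing $\mathbb{H}(\eps_i^{-1}s)$ with $1-O(\exp(-c/\eps_i))$ on the upper strip and with $-1+O(\exp(-c/\eps_i))$ on the lower one, both easy by monotonicity of $\mathbb{H}$.

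\textbf{Main obstacle.} The chief difficulty is that the Allen--Cahn nonlinearity is nonconvex, so the difference of two solutions does not satisfy a maximum principle globally; the two-region splitting sidesteps this by applying the maximum principle only where $W''>0$ and falling back on the explicit ansatz inside the thin transition slab. A secondary difficulty is quantifying the perturbation errors supplied by Theorem \ref{theo:dirichlet.data.construction} so that they are dominated by the fixed vertical separation $\delta$ between the disk $D_{i,\rho}(\delta)$ and the top sheet $\Gamma_{i,2}$; this is where the fact that the foliation parameter $\delta$ is independent of $\eps_i$ enters decisively.
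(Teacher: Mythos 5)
Your overall strategy is the same as the paper's: isolate the thin transition slabs of $u_i$ and of $\mathfrak{b}_{i,\rho,\delta}$ (which are disjoint, since the barrier transitions at height $O(\eps_i)$ around $D_{i,\rho}(\delta)$ by \eqref{eq:bounded.index.barrier.strip} while $\{|u_i|\leq 1-\beta\}$ stays near $\Sigma$, roughly $\delta$ lower), verify the inequality pointwise there and on $\partial\Omega$ from the explicit Dirichlet data and the exponential decay of $1-|u_i|$, and then run the maximum principle for $\eps_i^2\Delta_g w = c(x)w$ on the complement, where both functions lie in the convexity region of $W$. That is exactly the paper's argument for this claim. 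The genuine problem is your lateral-face boundary step. You assert that the strict monotonicity of $\mathbb{H}$ together with the fixed offset $\delta$ produces a margin dominating the $O(\eps_i^3)$ perturbation ``everywhere on the lateral face.'' This is quantitatively false: at a lateral boundary point at height $s$ above $D_{i,\rho}(\delta)$ with $s\gtrsim \tfrac{3}{\sqrt 2}\eps_i|\log\eps_i|$ (in particular for $s$ of order one), the monotonicity margin is $\mathbb{H}(\eps_i^{-1}(s+\delta))-\mathbb{H}(\eps_i^{-1}s)=O(\exp(-\sqrt2\,\eps_i^{-1}s))$, which is far smaller than $\eps_i^3$, so it cannot absorb an $O(\eps_i^3)$ error of unknown sign. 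On that portion of $\partial\Omega$ the inequality holds for a different reason, namely the one you already use on the top face: the boundary data is pinned at $1-\eps_i^3$ by \eqref{eq:bounded.index.barrier.boundary.data.explicit} (and never gets closer to $1$ than a fixed power of $\eps_i$ in the interpolation zone), whereas $u_i>1-\eps_i^3$ there because its distance to $\{u_i=0\}$ is of order $\delta\gg 3B\eps_i|\log\eps_i|$, by \eqref{eq:bounded.index.exp.decay} and the choice of $B$ in \eqref{eq:bounded.index.B.requirement}. As written, this step of your proof would fail; it is repaired by invoking the explicit plateau value and the exponential estimate rather than monotonicity of $\mathbb{H}$.

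Two smaller points. First, your parenthetical ``in components where $u_i$ and $\mathfrak{b}_{i,\rho,\delta}$ take opposite signs, $w>0$ trivially'' is only true for the orientation $u_i\approx +1$, $\mathfrak{b}_{i,\rho,\delta}\approx -1$; the reverse orientation would falsify the claim, so you must exclude it. It is excluded because $\{|u_i|\leq 1-\beta\}$ converges in the Hausdorff sense to $\Sigma$ and $u_i>0$ above the top sheet by \eqref{eq:bounded.index.sign.u}, so $u_i\approx+1$ on the entire region above $D_{i,\rho}(\delta)$, where alone $\mathfrak{b}_{i,\rho,\delta}\approx+1$; this follows from the localization you already set up, but it should be said. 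Second, Proposition \ref{prop:ultimate.stable.estimates} is a sheet-separation lower bound and does not localize the transition region of $u_i$; the localization you need is \eqref{eq:bounded.index.exp.decay} (equivalently the Hausdorff convergence of $\{|u_i|\leq 1-\beta\}$ to $\Sigma$), which is what the paper uses. With these corrections your argument coincides with the paper's proof.
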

	\begin{proof}[Proof of Claim]
		Let's agree, for the remainder of the proof of this claim, to write $u_i$ instead of $(Z_{D_{i,\rho}(\delta)})^* u_i$. We seek to show that $\cG \triangleq \{ x \in \Omega :  \mathfrak{b}_{i,\rho,\delta}(x) < u_i(x) \}$ coincides with $\Omega$. (Recall: we're assuming \eqref{eq:bounded.index.sign.u}.)
		
		Fix $\beta \in (0,1)$ so that $W'' \geq 2\kappa^2 > 0$ on $[-1,-1+\beta] \cup [1-\beta, 1]$ for some $\kappa > 0$.
		
		From \eqref{eq:bounded.index.barrier.strip}, $\{ |\mathfrak{b}_{i,\rho,\delta}| \leq 1-\beta \}$ is contained in an $O(\eps_i)$-neighborhood of $D_{i,\rho}(\delta)$. From \cite[Theorem 1]{HutchinsonTonegawa00}, $\{ |u_i| \leq 1-\beta \}$ converges, in the Hausdorff sense, to $\Sigma$. In particular, for sufficiently large $i$,
		\[ (\Omega \cap \{ |u_i| \leq 1-\beta \}) \cup \{ |\mathfrak{b}_{i,\rho,\delta}| \leq 1-\beta \} \subset \cG. \]
		Note that
		\begin{equation*}
			\eps_i^2 \Delta_g (1-u_i) = - W'(u_i) = \tfrac{W'(1)-W'(u_i)}{1-u_i} (1-u_i) 
			\geq 2\kappa^2 (1-u_i) \text{ on } \{ u_i > 1-\beta \},
		\end{equation*}
		\begin{equation*}
			\eps_i^2 \Delta_g (1+u_i) = W'(u_i) = \tfrac{W'(u_i)-W'(-1)}{u_i-(-1)} (1+u_i) 
			\geq 2\kappa^2 (1+u_i) \text{ on } \{ u_i < -1+\beta \},
		\end{equation*}
		so by an application of the barrier principle together with the saddle property of $W$ at zero (see \cite[Lemma 4.1]{KowalczykLiuPacard12}) we get:
		\begin{equation} \label{eq:bounded.index.exp.decay}
			|u_i^2 - 1| = O\big( \exp(- \kappa \eps_i^{-1} \dist_g(\cdot, \{ u_i = 0 \})) \big).
		\end{equation}
		Combined with  \eqref{eq:bounded.index.barrier.boundary.data.explicit}, this shows $\partial \Omega \subset \cG$ for sufficiently large $i$. Thus:
		\begin{equation} \label{eq:bounded.index.barrier.strip.i}
			\Omega \setminus \cG \subset \Omega \setminus (\partial \Omega \cup \{ |u_i| \leq 1-\beta \} \cup \{ |\mathfrak{b}_{i,\rho,\delta}| \leq 1-\beta \}).
		\end{equation}
		Subtracting from \eqref{eq:bounded.index.barrier.pde} the PDE satisfied by $u_i$, we see that
		\[ \eps_i^2 \Delta_g (\mathfrak{b}_{i,\rho,\tau} - u_i) = c(x) (\mathfrak{b}_{i,\rho,\tau} - u_i) \]
		for $c(x) \triangleq  (W'(\mathfrak{b}_{i,\rho,t}(x)) - W'(u_i(x)))/(\mathfrak{b}_{i,\rho,\tau}(x) - u_i(x))$. This is \emph{negative} on $\Omega \setminus \cG$ by \eqref{eq:bounded.index.barrier.strip.i}, and this violates the maximum principle unless $\cG = \Omega$. The claim follows.
	\end{proof}
		
	Next, since:
	\begin{enumerate}
		\item $\mathfrak{b}_{i,\rho,t}$ and $(Z_{D_{i,\rho}(t)})^* u_i$ both vary continuously in $t \in [-\delta, \delta]$ by Theorem \ref{theo:dirichlet.data.construction},
		\item \eqref{eq:bounded.index.barrier.start} holds true, and
		\item $\mathfrak{b}_{i,\rho,-\delta} \not \leq (Z_{D_{i,\rho}(-\delta)})^* u_i$,
	\end{enumerate}
	there will exist exactly one $\tau_i \in (-\delta,\delta)$, and at least one $Q_i^\star \in \Omega$, such that
	\begin{equation} \label{eq:bounded.index.tau.definition}
		\mathfrak{b}_{i,\rho,t} < (Z_{D_{i,\rho}(t)})^* u_i \text{ on } \Omega \text{ for all } t \in (\tau_i, \delta], 
		\text{ and } \mathfrak{b}_{i,\rho,\tau_i}(Q_i^\star) = [(Z_{D_{i,\rho}(\tau_i)})^* u](Q_i^\star).
	\end{equation}
	
	Our goal is to estimate $\tau_i$. Abusing notation again, we'll write $u_i$ instead of   $(Z_{D_{i,\rho}(\tau_i)})^* u_i$, and $g$ instead of $g_{i,\rho}(\tau_i)$. Thus:
	\begin{equation} \label{eq:bounded.index.max.principle.conditions}
		u_i - \mathfrak{b}_{i,\rho,\tau_i} \geq 0 \text{ on } \Omega, \; (u_i - \mathfrak{b}_{i,\rho,\tau_i})(Q_i^\star) = 0.
	\end{equation}
	Subtracting \eqref{eq:bounded.index.barrier.pde} from the PDE satisfied by $u$, we see that
	\[ \eps_i^2 \Delta_g (u_i - \mathfrak{b}_{i,\rho,\tau_i}) = c(x) (u_i - \mathfrak{b}_{i,\rho,\tau_i}) \]
	for $c(x) \triangleq (W'(u_i(x)) - W'(\mathfrak{b}_{i,\rho,\tau_i}(x)))/(u_i(x) - \mathfrak{b}_{i,\rho,\tau_i}(x))$. The maximum principle, then, tells us that
	\begin{enumerate}
		\item either $Q_i^\star \in \partial \Omega$, or
		\item $u_i \equiv \mathfrak{b}_{i,\rho,\tau_i}$ on $\Omega$.
	\end{enumerate}
	We only consider the first case here, since the second reduces to it by replacing $Q_i^\star$ with another point on $\partial \Omega$. Note that \eqref{eq:bounded.index.tau.definition}, the fact that $\mathfrak{b}_{i,\rho,0}|_{\partial D_{i,\rho}(0)} \equiv 0$, and the uniqueness of $\tau_i$ give a lower bound on $\tau_i$:
	\begin{equation} \label{eq:bounded.index.tau.lower.bound}
		\tau_i \geq 0.
	\end{equation}
	
	The upper bound is more subtle. We claim that
	\begin{equation} \label{eq:bounded.index.tau.upper.bound}
		\tau_i < 7B \eps_i |\log \eps_i|,
	\end{equation} 
	provided $B$ is chosen (independently of $i$) such that
	\begin{equation} \label{eq:bounded.index.B.requirement}
		\dist_g(x, \{ u_i = 0 \}) > 3B \eps_i |\log \eps_i| \implies |u_i(x)| > 1-\eps_i^3.
	\end{equation}
	The existence of a $B$ that satisfies \eqref{eq:bounded.index.B.requirement} is guaranteed by \eqref{eq:bounded.index.exp.decay}.
	
	It will be convenient to introduce the notation (here, $\lambda \geq 0$ is some parameter):
	\begin{align*}
		\overline{\partial \Omega}[\lambda] & \triangleq \{ (y, s) \in \partial \Omega : s \in [\lambda, \tfrac12] \}, \\
		\underline{\partial \Omega}[\lambda] & \triangleq \{ (y,s) \in \partial \Omega : s \in [-\tfrac12, -\lambda] \}.
	\end{align*}
	
	To start, let's estimate the height of $Q_i^\star$ from below. We have
	\[ u_i > -1 \text{ on } (M^n, g) \implies  \mathfrak{b}_{i,\rho,\tau_i}(Q_i^\star) = u_i(Q_i^\star) > -1, \]
	so, from  \eqref{eq:bounded.index.barrier.boundary.data.explicit}:
	\[ Q_i^\star \in \partial \Omega \setminus \underline{\partial \Omega}[2B\eps_i |\log \eps_i|]. \]
	Equivalently, the image $\widetilde{Q}_i^\star$ of $Q_i^\star$ to $(M^n, g)$ under $Z_{D_{i,\rho}(\tau_i)}$ satisfies
	\[ \widetilde{Q}_i^\star \in Z_{D_{i,\rho}(\tau_i)}(\partial \Omega \setminus \underline{\partial \Omega}[2B \eps_i |\log \eps|]). \]
	In particular, $\widetilde{Q}_i^\star$  belongs to the open tubular neighborhood of the image $Z_{D_{i,\rho}(\tau_i)}(\overline{\partial \Omega}[0]) \subset (M^n, g)$ with  radius $3B \eps_i |\log \eps_i|$:
	\begin{equation} \label{eq:bounded.index.height.Q.est.i}
		\widetilde{Q}_i^\star \in B_{3 B \eps_i |\log \eps_i|}\big( Z_{D_{i,\rho}(\tau_i)}(\overline{\partial \Omega}[0]) \big).
	\end{equation}

	We now prove \eqref{eq:bounded.index.tau.upper.bound} by contradiction. We'll show that
	\begin{equation} \label{eq:bounded.index.height.Q.est.ii}
		\dist_g(Z_{D_{i,\rho}(\tau_i)}(\overline{\partial \Omega}[0]), \{u_i=0\}) > 6B \eps_i |\log \eps_i|
	\end{equation}
	when \eqref{eq:bounded.index.tau.upper.bound} fails, i.e., when $\tau_i \geq 7B\eps_i |\log \eps_i|$.
	
	Since $D_{i,\rho}(\tau_i)$ is an $o(1)$-Lipschitz graph over $\Sigma$ (note that the argument used to prove \eqref{eq:bounded.index.barrier.start} shows that $\tau_{i}\to 0$), and $Z_{D_{i,\rho}(\tau_i)}(\partial \Omega) \perp D_{i,\rho}(\tau_i)$, there will exist $\eta > 0$ (independent of $i$) such that
	and
	\[ Z_{D_{i,\rho}(\tau_i)}(\overline{\partial \Omega}[0] \setminus \overline{\partial \Omega}[\eta]) \subset C_{3\rho/2}(P_\star) \setminus C_{\rho/2}(P_\star) \]
	for sufficiently large $i$. Moreover $\lim_{i \to \infty} \{ u_i = 0 \} = \Sigma$ in the Hausdorff topology (\cite[Theorem 1]{HutchinsonTonegawa00}, \cite[Appendix B]{Guaraco}), so
	\[ \liminf_{i \to \infty} \dist_g(Z_{D_{i,\rho}(\tau_i)}(\overline{\partial \Omega}[\eta]), \{ u_i = 0 \}) > 0 \]
	because $\tau_i \geq 0$ by \eqref{eq:bounded.index.tau.lower.bound}. Thus, \eqref{eq:bounded.index.height.Q.est.ii} will follow from
	\begin{equation*}
		\dist_g(Z_{D_{i,\rho}(\tau_i)}(\overline{\partial \Omega}[0] \setminus \overline{\partial \Omega}[\eta]), \{ u_i = 0 \} \cap C_{2\rho}(P_\star) \setminus C_{\rho/2}(P_\star)) > 6B \eps_i |\log \eps_i|.
	\end{equation*}
	when $\tau_i \geq 7B \eps_i |\log \eps_i|$. Since the components of $\{ u_i = 0 \} \cap C_{2\rho}(P_\star) \setminus C_{\rho/2}(P_\star)$ are well-ordered $o(1)$-Lipschitz graphs over $\Sigma$, with $\Gamma_{i,2}$ being the topmost, we may equivalently show
	\begin{equation*}
		\dist_g(Z_{D_{i,\rho}(\tau_i)}(\overline{\partial \Omega}[0] \setminus \overline{\partial \Omega}[\eta]), \Gamma_{i,2}) > 6B \eps_i |\log \eps_i|.
	\end{equation*}
	Because $D_{i,\rho}(t)$, $t \in [0,\tau_i]$, are all $o(1)$-Lipschitz graphs over $\Sigma$ as well, we have
	\begin{equation*}
		\nabla_g (\dist^\pm_g(\cdot; \Gamma_{i,2})), \nabla_g (\dist^\pm_g(\cdot; D_{i,\rho}(t)) \rangle \geq 1-o(1), \; t \in [0,\tau_i] 
	\end{equation*} 
	in a small (but definite) neighborhood of $\Sigma$ in $C_{2\rho}(P_\star) \setminus C_{\rho/2}(P_\star)$. Here $\dist_g^\pm$ denotes the signed distance. From it follows that for every $P \in Z_{D_{i,\rho}(\tau_i)}(\overline{\partial \Omega}[0] \setminus \overline{\partial \Omega}[\eta])$,
	\begin{multline*}
		\dist^\pm_g(P; \Gamma_{i,2}) \geq (1-o(1)) \dist_g^\pm(P; D_{i,\rho}(0)) \geq (1-o(1))(\tau_i + \dist_g^\pm(P; D_{i,\rho}(\tau_i))) \\
			\geq (1-o(1)) \tau_i > (1-o(1)) 7B \eps_i |\log \eps_i| > 6B \eps_i |\log \eps_i|,
	\end{multline*}
	as claimed, and \eqref{eq:bounded.index.height.Q.est.ii} follows. It is now an automatic consequence of  \eqref{eq:bounded.index.height.Q.est.i}-\eqref{eq:bounded.index.height.Q.est.ii} that:
	\[ \dist_g(\widetilde{Q}_i^\star, \{ u_i = 0 \}) > 3B \eps_i |\log \eps_i|. \]
	Recalling  \eqref{eq:bounded.index.B.requirement}, we find: $|u_i(Q_i^\star)| > 1-\eps_i^3$. Combined with $\dist_g^\pm(\widetilde{Q}_i^\star; \Gamma_{i,2}) > 6B \eps_i |\log \eps_i| > 0$, which guarantees that $u_i(Q_i^\star) > 0$, we conclude $u_i(Q_i^\star) > 1-\eps_i^3$. This contradicts  \eqref{eq:bounded.index.barrier.boundary.data.explicit}. Thus, \eqref{eq:bounded.index.tau.upper.bound} is true.
	
	Summarizing  \eqref{eq:bounded.index.tau.lower.bound}-\eqref{eq:bounded.index.tau.upper.bound}: $0 \leq \tau_i < 7B \eps_i |\log \eps_i|$. Combined with the defining property \eqref{eq:bounded.index.max.principle.conditions} of $\tau_i$, we get the following height estimate over $\Sigma$:
	\[ f_{i,2} \leq h^{D_{i,\rho}(\tau_i)} \leq h^{D_{i,\rho}(7B \eps_i |\log \eps_i|)} \text{ on } \Sigma \cap B^2_{2\rho}(P_\star) \setminus B^2_r(P_\star), \]
	where:
	\begin{enumerate}
		\item $f_{i,2} : \Sigma \setminus B_r^2(P_\star) \to \RR$ is the height of $\Gamma_{i,2}$ over $\Sigma$, with $r \in (0,\rho/2)$ as in \eqref{eq:bounded.index.concentration.puncture}-\eqref{eq:bounded.index.graphing.punctured.sheets}, and
		\item $h^{D_{i,\rho}(t)}$ denotes the height of the minimal disk $D_{i,\rho}(t)$ over $\Sigma$.
	\end{enumerate} 
	
	The same sliding argument, carried out below the bottom-most sheet $\Gamma_{i,1}$ of $\{ u_i = 0 \}$, similarly gives:
	\[ f_{i,1} \geq h^{D'_{i,\rho}(-7B \eps_i |\log \eps_i|)} \text{ on } \Sigma \cap B^2_{2\rho}(P_\star) \setminus B^2_{r}(P_\star). \]
	Notice that we're denoting the disks by $D'_{i,\rho}(-7B\eps_i |\log \eps_i|)$, since they come from a different foliation, namely, the one generated by applying Proposition \ref{prop:white.foliation} to $w_i = f_{i,1}$. Therefore, by the regularity of the foliation guaranteed by Proposition \ref{prop:white.foliation}:
	\begin{align*}
		f_i  = f_{i,2} - f_{i,1} & \leq h^{D_{i,\rho}(7B \eps_i |\log \eps_i|)} - h^{D'_{i,\rho}(-7B\eps_i |\log \eps_i|)} \leq c \left( 7B \eps_i |\log \eps_i| + h^{D_{i,\rho}(0)} - h^{D'_{i,\rho}(0)} \right) \\
			& \leq c' \left( \eps_i |\log \eps_i| + \max_{B^2(\rho)(P_\star)} (h^{D_{i,\rho}} - h^{D'_{i,\rho}}) \right) \leq c' \left( \eps_i |\log \eps_i| + \max_{\partial B^2_\rho(P_\star)} (f_{i,2} - f_{i,1}) \right)
	\end{align*}
	on $\Sigma \cap B^2_{2\rho}(P_\star) \setminus B^2_{r}(P_\star)$. The last inequality follows from the maximum principle. We emphasize that $c'$ is independent of $i$ and $r$. 
	
	The proof of Proposition \ref{prop:bounded.index.nontrivial.limit} is essentially done. Indeed, fix $0 < r < \rho/2$. By what we've shown so far, we have
	\[ \sup_{\Sigma \setminus B^2_r(P_\star)} f_i \leq c' \left( \eps_i |\log \eps_i| + \sup_{\Sigma \setminus B^2_{\rho}(P_\star)} f_i \right). \]
	By the Harnack inequality \eqref{eq:bounded.index.harnack} and sheet separation lower bound \eqref{eq:bounded.index.estimate.height} on $\Sigma \setminus B^2_{2\rho}(P_\star)$:
	\[ \sup_{\Sigma \setminus B^2_r(P_\star)} f_i \leq c'' \inf_{\Sigma \setminus B^2_{\rho}(P_\star)} f_i. \]
	This holds independently of $i$, $r$, so the renormalized limit $\widehat{f}$ taken in \eqref{eq:bounded.index.renormalized.h} (first with $i \to \infty$ and then with $r \to 0$) is nontrivial. This completes the proof of Proposition \ref{prop:bounded.index.nontrivial.limit}.
\end{proof}

\section{Phase transitions with multiplicity one}


\label{sec:multiplicity.one}

In this section we return to working in arbitrary dimension $n \geq 3$, and we consider a compact Riemannian manifold $(M^{n},g)$ and a sequence $u_i \in C^\infty(M; (-1,1))$ of critical points of $E_{\eps_i}$, $\eps_i > 0$, $E_{\varepsilon_i}[u_i] \leq E_0$, for all $i = 1, 2, \ldots$, with $\lim_i \eps_i = 0$. Let $V \triangleq \lim_{i}\energyunit^{-1}V_{\eps_{i}}[u_{i}]$ denote the limit stationary integral varifold, which exists by \cite[Theorem 1]{HutchinsonTonegawa00} (see \cite[Appendix B]{Guaraco} for Riemannian modifications). In this section we will \emph{assume} that:
\begin{equation} \label{eq:multiplicity.one.assumption}
	\Theta^{n-1}(V, \cdot) = 1 \text{ on } \Sigma \triangleq \support \Vert V \Vert, \text{ which is a smooth minimal surface} \subset M \setminus \partial M.
\end{equation}
In other words, we assume that the limit $V$ is \emph{smooth} and that it occurs with \emph{multiplicity one}. (We are not assuming any bounds on the Morse index.)

\begin{rema} 
	We recall that this is \emph{automatically} the case when: (i) $3 \leq n \leq 7$ and each $u_i$ 	minimizes $E_{\eps_i}$ among compact perturbations in $M$ 
	(by \cite[Theorem 2]{HutchinsonTonegawa00}), or (ii) $n = 3$, $\limsup_i \ind(u_i) < \infty$, and $\Sigma$ carries no positive Jacobi fields (this follows from Theorem \ref{theo:bounded.index}). We emphasize that this section requires only the multiplicity one assumption \eqref{eq:multiplicity.one.assumption}, not (i) or (ii).
\end{rema}

The main goal of this section is to prove Theorem \ref{theo:index.lower.bounds}. Roughly, it says that the Morse index is \emph{upper semicontinuous}. Note that, in general, one only expects the index to be \emph{lower} semicontinuous. This has been recently confirmed in the work of Hiesmayr \cite{Hiesmayr}; see also Gaspar's generalization to one-sided limit surfaces \cite{Gaspar}. Upper semicontinuity hinges strongly on the multiplicity one assumption, as the following example suggests:
	
\begin{exam}\label{exam:upper.semi.fails.index}
	Let $(u_i, \eps_i)$, $i=1,2,\ldots$, $\lim_i \eps_i = 0$, be a sequence constructed by \cite{delPinoKowalczykWeiYang:interface} to converge, with multiplicity $\geq 2$, to a two-sided minimal surface $\Sigma$ in a closed Riemannian 3-manifold $(M^3, g)$ with positive Ricci curvature. Then, by Theorem \ref{theo:bounded.index}, $\liminf_i \ind(u_i) = \infty$, because $\Sigma$ cannot be stable and there there are no stable two-sided minimal surfaces in the presence of positive Ricci curvature.
\end{exam}

In order to study the semicontinuity of the Morse index, we need to obtain a detailed understanding of the convergence of the $u_i$ and their level sets to $\Sigma$. Somewhat surprisingly, the regularity estimates in Section \ref{sec:jacobi.toda.reduction} (or \cite[Section 15]{WangWei}) do not seem to suffice for our purposes. Instead, we must upgrade the estimates so that we have an explicit understanding of the $O(\eps^2)$ term in \eqref{eq:jacobi.toda}. We use an ansatz inspired by the work of del~Pino--Kowalczyk--Wei \cite{delPinoKowalczykWei}, although our setting is different: rather than having constructed $u$, we are \emph{given} an arbitrary solution $u$ converging with multiplicity one. This technique does not seem to have been previously considered in the context of regularity in Allen--Cahn.

\subsection{Improved convergence} \label{subsec:multiplicity.one.convg}

Note that by scaling $M$, we can arrange that \eqref{eq:sheets.sff.bound}-\eqref{eq:sheets.metric.bound} hold; we will do so without further remark in the sequel. Note that then, due Lemma \ref{lemm:multiplicity.one.lower.gradient.bound} below,  \eqref{eq:sheets.eps.bound}-\eqref{eq:sheets.enhanced.sff.grad.bound} hold as well. Thus, Section \ref{sec:jacobi.toda.reduction} applies (as does \cite[Section 15]{WangWei} in the flat setting).

\begin{lemm} \label{lemm:multiplicity.one.lower.gradient.bound}
	Let $U \subset \subset M \setminus \partial M$ be a neighborhood of $\Sigma$, and $\beta \in (0, 1)$. Then, for sufficiently large $i$, $\eps_i |\nabla u_i| \geq c > 0$ on $U \cap \{ |u_i| \leq 1-\beta \}$.
\end{lemm}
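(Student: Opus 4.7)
The plan is to argue by contradiction and blow-up. If the lower bound failed, then (after passing to a subsequence) there would exist points $p_i \in U \cap \{|u_i| \leq 1-\beta\}$ with $\eps_i |\nabla u_i(p_i)| \to 0$. I would first show that $p_i \to p \in \Sigma$, using the varifold convergence $V_{\eps_i}[u_i] \to V$ (from \cite{HutchinsonTonegawa00}; see \cite[Appendix B]{Guaraco} for the Riemannian case) together with the standard exponential decay of $|u_i|$ to $1$ away from the nodal set; combined, these force the Hausdorff convergence $\{|u_i| \leq 1-\beta\} \to \Sigma$ (see, e.g., the analogous argument yielding \eqref{eq:bounded.index.exp.decay}).

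Next I would rescale at the natural Allen--Cahn scale: let $v_i(y) \triangleq u_i(\exp_{p_i}(\eps_i y))$, which solves the standard ($\eps=1$) Allen--Cahn equation $\Delta_{g_i} v_i = W'(v_i)$ on a ball $B_{R_i}(0) \subset \RR^n$ with $R_i \to \infty$ and $g_i \to g_{\mathrm{flat}}$ in $C^\infty_{\loc}(\RR^n)$. Since $|v_i| \leq 1$, elliptic regularity provides uniform $C^k_{\loc}$ bounds, so along a subsequence $v_i \to v_\infty$ in $C^\infty_{\loc}(\RR^n)$, with $v_\infty$ a bounded entire solution of the standard Allen--Cahn equation satisfying $|v_\infty(0)| \leq 1-\beta$ and $\nabla v_\infty(0) = 0$.

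The hard part will be using the multiplicity one assumption \eqref{eq:multiplicity.one.assumption} to rigidify $v_\infty$ as the one-dimensional heteroclinic $\mathbb{H}(\langle \nu_\Sigma(p), \cdot\rangle - t_0)$ for some $t_0 \in \RR$. The rescaling transforms the localized varifolds $V_{\eps_i}[u_i]|_{B_{R\eps_i}(p_i)}$ into $V_1[v_i]|_{B_R(0)}$ (up to an $\eps_i^{n-1}$ normalization factor), and the multiplicity one assumption for $V$ at $p$, combined with the smoothness of $\Sigma$, forces the limit energy of $v_\infty$ on $B_R(0)$ to be asymptotic to $h_0 \omega_{n-1} R^{n-1}$ with ``transition direction'' $\nu_\Sigma(p)$ inherited from $T_p \Sigma$. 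Modica's monotonicity formula for entire solutions on $\RR^n$, combined with the classical rigidity statement identifying entire Allen--Cahn solutions whose varifolds realize a single flat multiplicity-one interface as 1D heteroclinics (essentially the equality case of Modica's inequality), will pin $v_\infty$ down to the claimed profile.

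The contradiction is then immediate: for $v_\infty(y) = \mathbb{H}(\langle \nu_\Sigma(p), y \rangle - t_0)$, one has $|\nabla v_\infty(y)| = |\mathbb{H}'(\langle \nu_\Sigma(p), y \rangle - t_0)|$, which is strictly positive on $\{|v_\infty| < 1\}$ since $\mathbb{H}'(t) = \sqrt{2W(\mathbb{H}(t))} > 0$ whenever $|\mathbb{H}(t)| < 1$. Since $|v_\infty(0)| \leq 1-\beta < 1$, the vanishing $\nabla v_\infty(0) = 0$ is impossible. This argument is structurally parallel to Proposition \ref{prop:lower.density.estimate}, with the multiplicity one hypothesis here playing the role that the stable Bernstein theorem plays there.
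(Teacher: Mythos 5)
Your overall architecture is the same as the paper's: argue by contradiction, rescale by $\eps_i^{-1}$ around the bad points, extract an entire solution $v_\infty$ of the $\eps=1$ equation with $|v_\infty(0)|\leq 1-\beta$ and $\nabla v_\infty(0)=0$, use the multiplicity one hypothesis \eqref{eq:multiplicity.one.assumption} to force $v_\infty$ to be one-dimensional, and contradict the interior critical point since $\mathbb{H}'>0$. The preliminary step showing the bad points accumulate on $\Sigma$ is fine (and implicit in the paper), and your transfer of the multiplicity one information to the blown-up scale is morally what the paper does when it says ``by the monotonicity formula and multiplicity one convergence at the original scale, the tangent cone at infinity of $\widetilde u$ is a multiplicity one plane.''

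The genuine gap is in the rigidity step. Knowing that the blow-down (tangent cone at infinity) of $v_\infty$ is a single multiplicity-one plane is \emph{not} ``essentially the equality case of Modica's inequality,'' and the mechanism you describe would not close the argument: the monotone density ratio $R^{1-n}E_1(v_\infty;B_R)$ increases from $0$ at small scales up to $\energyunit\omega_{n-1}$ at infinity, so there is no constancy across scales and no point at which you obtain equality in Modica's inequality $\tfrac12|\nabla v_\infty|^2\leq W(v_\infty)$; hence the classical Modica/Caffarelli--Garofalo--Segala rigidity does not apply. What is actually needed is the much deeper Allard/Savin-type theorem for Allen--Cahn: an entire solution whose tangent cone at infinity is a multiplicity one plane has flat level sets. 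This is exactly what the paper invokes, namely \cite[Theorem 11.1]{Wang:Allard} (cf.\ \cite[Theorem 3.6]{Mantoulidis}), and it rests on an improvement-of-flatness/Allard-type argument, not on the equality case of a pointwise inequality. If you replace your ``classical rigidity'' appeal by this theorem, your proof coincides with the paper's; as written, that step is unjustified and would fail. (Your concluding contradiction, and the analogy with Proposition \ref{prop:lower.density.estimate} where the role of the rigidity input is instead played by the stable Bernstein theorem with Euclidean energy growth, are both correct.)
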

\begin{proof}
We argue by contradiction. If the result were false, we'd be able to pick a subsequence (labeled the same) along which there would exist $x_i \in U \cap \{ |u_i| \leq 1-\beta \}$ with $\eps_i |\nabla u_i(x_i)| \to 0$. After rescaling by $\eps_i^{-1}$ around $x_i$, the rescaled critical points $\widetilde{u}_i$ converge to a nontrivial critical point of $E_1$ on $\mathbf{R}^n$ with $|\widetilde{u}(0)| \leq 1-\beta$, $\nabla \widetilde{u}(0) = 0$. By the monotonicity formula (see  \cite[Section 3]{HutchinsonTonegawa00} and \cite[Appendix B]{Guaraco}) and multiplicity one convergence at the original scale, we see that the tangent cone at infinity of $\widetilde{u}$ is a multiplicity one plane. Hence, by  \cite[Theorem 11.1]{Wang:Allard} (cf.\ \cite[Theorem 3.6]{Mantoulidis}), $\widetilde{u}$ has flat level sets. This contradicts $|\widetilde{u}(0)| \leq 1-\beta$, $\nabla \widetilde{u}(0) = 0$.
\end{proof}

Combined with the multiplicity one analysis in \cite[Section 15]{WangWei} (cf. Section \ref{sec:jacobi.toda.reduction} and Remark \ref{rema:major.goal} above), we may argue as in the proof of Theorem \ref{theo:bounded.index} to conclude that $\Sigma=\supp V$ is a smooth two-sided embedded minimal hypersurface and the convergence of the level sets of $u_{i}$ to $\Sigma$ occurs in $C^{2,\theta}$. (Of course, convergence in the Hausdorff sense follows immediately from \cite[Theorem 1]{HutchinsonTonegawa00}.)

\begin{lemm} \label{lemm:multiplicity.one.convergence}
		If $U \subset \subset M \setminus \partial M$ is a neighborhood of $\Sigma$, and $\theta,\beta \in (0, 1)$, then $U \cap \{ u_i = t \}$ converges uniformly in $C^{2,\theta}$ to $\Sigma$, for every $t \in (-1+\beta, 1-\beta)$.
\end{lemm}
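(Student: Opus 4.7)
The plan is to reduce to the already-noted $C^{2,\theta}$ convergence of the nodal set $\{u_i = 0\}$ by way of an implicit function theorem argument, using the superimposed-heteroclinic ansatz of Section~\ref{sec:jacobi.toda.reduction}. Working in Fermi coordinates $(y,z)$ around $\Sigma$ in the tubular neighborhood $U$ (with a finite covering argument to cover all of $\Sigma$), the multiplicity-one hypothesis \eqref{eq:multiplicity.one.assumption} places us locally in the setting $Q=1$ of Section~\ref{sec:jacobi.toda.reduction}, and that construction produces
\[
u_i(y,z) = \mathbb{H}\bigl(\eps_i^{-1}(z - h_i(y))\bigr) + \phi_i(y,z) + O(\eps_i^3),
\]
where the centering function $h_i : \Sigma \to \RR$ satisfies $\Vert h_i \Vert_{C^{2,\theta}} \to 0$ by Lemma~\ref{lemm:h.phi.comparison.improved} combined with the established nodal-set convergence, and the discrepancy $\phi_i$ obeys $\Vert \phi_i \Vert_{C^{2,\theta}_{\eps_i}} = O(\eps_i^2)$ by \eqref{eq:phi.c2a.estimate.full} (with all $A_m$ sheet-interaction terms absent in the $Q=1$ case).

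Next, for fixed $t \in (-1+\beta, 1-\beta)$, Lemma~\ref{lemm:multiplicity.one.lower.gradient.bound} yields $\eps_i |\nabla u_i| \geq c > 0$ on $\{|u_i| \leq 1-\beta\}$; combined with the ansatz (where $\partial_z u_i \sim \eps_i^{-1} \mathbb{H}'$ dominates by a factor of $\eps_i^{-1}$ over $\partial_y u_i$), this gives $\eps_i \partial_z u_i \geq c/2$ on the relevant strip for large $i$, so the implicit function theorem solves $u_i(y, f_i^{(t)}(y)) = t$ smoothly for $z = f_i^{(t)}(y)$ and identifies $\{u_i = t\} \cap U = \graph f_i^{(t)}$. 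Substituting and inverting $\mathbb{H}$ yields
\[
f_i^{(t)}(y) = h_i(y) + \eps_i \mathbb{H}^{-1}\bigl(t - \phi_i(y, f_i^{(t)}(y)) - O(\eps_i^3)\bigr),
\]
which pointwise gives $|f_i^{(t)} - h_i| = O(\eps_i)$.

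The main step, which I expect to be the main obstacle, is upgrading this pointwise identity to the $C^{2,\theta}$ bound $\Vert f_i^{(t)} - h_i \Vert_{C^{2,\theta}(\Sigma)} = O(\eps_i)$, uniformly in $t$ in compact subsets of $(-1, 1)$. This is carried out by differentiating the implicit identity once and twice in $y$: each derivative of an $\mathbb{H}^{-1}(\eps_i^{-1}\cdot)$-type expression pulls out a factor of $\eps_i^{-1}$, which is precisely compensated by the $\eps_i^k$-weighting built into the norm $\Vert \cdot \Vert_{C^{2,\theta}_{\eps_i}}$ controlling $\phi_i$ from \eqref{eq:phi.c2a.estimate.full}; schematically, $\eps_i \cdot \eps_i^{-1} \partial_y \phi_i = O(\eps_i^2)$ via the weighted bound, and similarly for higher-order derivatives and for the H\"older seminorm. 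The uniformity in $t$ follows from the smoothness and uniform boundedness of the derivatives of $\mathbb{H}^{-1}$ on the compact interval $[-1+\beta, 1-\beta]$. Combined with $h_i \to 0$ in $C^{2,\theta}$, one obtains $f_i^{(t)} \to 0$ in $C^{2,\theta}(\Sigma)$ uniformly for $t \in (-1+\beta, 1-\beta)$, which is the claimed convergence of $\{u_i = t\}$ to $\Sigma$.
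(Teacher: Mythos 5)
Your argument presupposes the very thing this lemma exists to establish. In Section \ref{sec:multiplicity.one} there are no stability or Morse index hypotheses---only the multiplicity one assumption \eqref{eq:multiplicity.one.assumption}---so the curvature estimates of Theorem \ref{theo:curvature.estimate} are unavailable, and the $C^{2,\theta}$ convergence of the nodal set $\{u_i=0\}$ is not an independently ``already-noted'' fact: it is the $t=0$ case of the lemma itself. More seriously, every estimate you invoke from Section \ref{sec:jacobi.toda.reduction}---the decomposition $u_i=\overline{\mathbb{H}}(\eps_i^{-1}(z-h_i))+\phi_i$, the bound \eqref{eq:phi.c2a.estimate.full} on $\phi_i$, and Lemma \ref{lemm:h.phi.comparison.improved} for $h_i$---is derived under the a priori hypotheses \eqref{eq:sheets.enhanced.sff.bound} and \eqref{eq:sheets.graph.apriori.C1.bounds}--\eqref{eq:sheets.graph.apriori.C2.bounds}, i.e.\ a uniform bound on the enhanced second fundamental form $\cA_i$ on $\{|u_i|\leq 1-\beta\}$ and $C^2$ control of the graphing functions of the nodal set. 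Producing exactly this uniform $C^2$ bound \emph{without} stability is the entire content of the lemma; assuming it in order to run the ansatz and the implicit function theorem makes the proposal circular. (Granting that bound, your transfer from $t=0$ to general $t\in(-1+\beta,1-\beta)$ is fine, but it is then already subsumed by Remark \ref{rema:major.goal}, which gives uniform $C^{2,\theta}$ control of all such level sets at once.)

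The missing idea is how multiplicity one substitutes for stability. The paper argues by contradiction and blow-up: if $\lambda_i\triangleq|\cA_i(x_i)|\to\infty$ at near-maximal points $x_i\in U\cap\{|u_i|\leq 1-\beta\}$, then elliptic regularity gives $\limsup_i\lambda_i\eps_i<\infty$, and in fact $\lambda_i\eps_i\to 0$, because by the monotonicity formula the multiplicity one hypothesis forces any $\eps_i$-scale limit to be an entire critical point of $E_1$ on $\RR^n$ with a multiplicity one planar tangent cone at infinity, hence trivial by Wang's Allard-type theorem \cite[Theorem 11.1]{Wang:Allard}. Rescaling instead by $\lambda_i^{-1}$ produces solutions with $\widetilde\eps_i\to 0$, uniformly bounded enhanced second fundamental form and $|\widetilde\cA_i(0)|=1$, whose interfaces converge to a $C^{1,1}$ minimal surface that, again by monotonicity and multiplicity one, must be a plane---contradicting $|\widetilde\cA_i(0)|=1$ via Remark \ref{rema:major.goal}. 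This blow-up step, which uses the multiplicity one assumption twice through monotonicity, is what your proposal omits; without it the invocation of \eqref{eq:phi.c2a.estimate.full} and Lemma \ref{lemm:h.phi.comparison.improved} is unjustified.
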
	
\begin{proof}
	By Section \ref{sec:jacobi.toda.reduction}, it suffices to check that the level sets are bounded in $C^2$. One uses a blow-up argument again, as in the proof of Theorem \ref{theo:curvature.estimate}. Suppose that the enhanced second fundamental form weren't bounded. Pick $x_i \in U \cap \{ |u_i| \leq 1-\beta \}$ such that $\lambda_i \triangleq |\cA_i(x_i)|$ are within a factor of $\tfrac{1}{2}$ from $\sup_{U \cap \{|u_i| \leq 1-\beta\}} |\cA_i|$; thus, $\lambda_i \to \infty$. Note that $\limsup_i \lambda_i \eps_i < \infty$ by elliptic regularity. Moreover, we in fact have that $\limsup_i \lambda_i \eps_i = 0$ because (by \cite[Theorem 11.1]{Wang:Allard} and monotonicity) there are no nontrivial (i.e., nonconstant and nonheteroclinic) entire critical points of $E_1$ in $\mathbf{R}^n$ with a planar tangent cone at infinity. In particular, rescaling by $\lambda_i^{-1}$ around $x_i$, we get a sequence $(\widetilde{u}_i, \widetilde{\eps}_i)$ with $\widetilde{\eps}_i \to 0$ and uniformly bounded enhanced second fundamental form, $|\widetilde{\cA}_i(0)| = 1$, and which therefore converges to a $C^{1,1}$ minimal surface in $\mathbf{R}^n$. However, by monotonicity, this minimal surface is a plane; this contradicts $|\widetilde{\cA}_i(0)| = 1$ by Remark \ref{rema:major.goal}.
\end{proof}

Let's return to the notation and conventions used in Section \ref{sec:jacobi.toda.reduction}. Also, we drop the subscript $i$.

Because of the multiplicity one assumption, we have reasonably strong estimates on $\phi,h,$ and $H_{\Gamma}$; see \eqref{eq:discrepancy.function}. We will write $h$ for $\ve{h}$, $U$ for $U[\ve{h}]$, $\Gamma$ for $\Gamma_1$, and $d$ for $d_1$, since $Q=1$. We record the specialization of \eqref{eq:phi.c2a.estimate.full} and Lemma \ref{lemm:h.phi.comparison} here (cf.\ \cite[Section 15]{WangWei}, and \cite[Theorem 3.6]{Mantoulidis}):
\begin{equation}\label{eq:mult.one.initial.bds}
\Vert \phi \Vert_{C^{2,\theta}_{\eps}(\cM)} + \eps \Vert \Delta _{\Gamma} h - H_{\Gamma}\Vert_{C^{0,\theta}_{\eps}(\Gamma)} + \eps^{-1} \Vert h \Vert_{C^{2,\theta}_{\eps}(\Gamma)}\leq c' \eps^{2},
\end{equation}
where $\cM \triangleq \{X \in M : |d(X)| < 1\}$. 
As we have already indicated, we must upgrade our estimates for $\Delta_{\Gamma} h - H_{\Gamma}$ in \eqref{eq:mult.one.initial.bds} as well as determine the $O(\eps^{2})$ behavior of $\phi$. 

Let us work in Fermi coordinates around $\Gamma$ so as not to write the diffeomorphism $Z_{\Gamma}$ explicitly below. We will also denote $\Gamma_z \triangleq \{ X \in \cM : d(X) = z \}$, and will write $\overline{\mathbb{H}}$ for $\overline{\mathbb{H}}{}^{3 |\log \eps|}$. 

We can compute the equation for $\phi$ as follows. Using \eqref{eq:mean.curv.ddt.sff}, \eqref{eq:mean.curv.ddt.h}, \eqref{eq:mean.curv.ddt.laplace}, as well as \eqref{eq:sheets.eps.bound}-\eqref{eq:sheets.enhanced.sff.grad.bound}, \eqref{eq:approximate.heteroclinic.behavior}, and \eqref{eq:mult.one.initial.bds}, one computes (see \cite[(9.4)]{WangWei}) in $\cM$
\begin{align} \label{eq:mult.one.phi.eqn.prelim}
	\eps^2 \Delta_g \phi 
		& = \eps^{2} \Delta_{\Gamma_z} \phi + \eps^{2} H_{\Gamma_z}\partial_{z}\phi + \eps^2 \partial^{2}_{z}\phi \nonumber\\
		& = W'(u) - \eps^2 \Delta_{\Gamma_z} U - \eps^{2} H_{\Gamma_z} \partial_{z} U - \eps^2 \partial^{2}_{z} U \nonumber \\
		& = W'(U + \phi) - (W'(U) + O(\eps^3)) + \eps (\Delta_{\Gamma_z} h -  H_{\Gamma_z}) \cdot  \overline{\mathbb{H}}'(\eps^{-1}(z-h(y))) \nonumber \\
		& \qquad - |\nabla_{\Gamma_z} h|^{2} \cdot  \overline{\mathbb{H}}''(\eps^{-1}(z-h(y))) \nonumber \\
		& = W''(U)\phi +  \eps ((\Delta_{\Gamma} h - H_{\Gamma}) \circ \Pi_\Gamma) \cdot  \overline{\mathbb{H}}'(\eps^{-1}(z-h(y)))  \nonumber \\ 
		& \qquad +   \eps ((|\sff_{\Gamma}|^{2} +\ricc_g(\partial_{z},\partial_{z}))\circ \Pi_\Gamma) \cdot z \cdot \overline{\mathbb{H}}'(\eps^{-1}(z-h(y))) \nonumber \\
		& \qquad +  \eps^2 O(|z|) \cdot \overline{\mathbb{H}}'(\eps^{-1}(z-h(y))) + O( \eps^3).
\end{align}
By using \eqref{eq:mult.one.initial.bds}, \eqref{eq:mult.one.phi.eqn.prelim}, and the multiplicity one assumption, one may revisit \cite[Appendix B]{WangWei} and establish the following bounds. 
\begin{lemm}\label{lemm:mult.one.imp.hHeqn}
We can improve the estimate in \eqref{eq:mult.one.initial.bds} to $\eps \Vert \Delta_{\Gamma} h - H_{\Gamma}\Vert_{C^{0}(\Gamma)} \leq c' \eps^{3}$. 
\end{lemm}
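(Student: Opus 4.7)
The plan is a Lyapunov--Schmidt type projection of the PDE \eqref{eq:mult.one.phi.eqn.prelim} for $\phi$ onto the approximate kernel direction of the linearized Allen--Cahn operator. Concretely, I multiply \eqref{eq:mult.one.phi.eqn.prelim} by $\overline{\mathbb{H}}'(\eps^{-1}(z-h(y)))$ and integrate in $z$ over $[-\eta,\eta]$ at fixed $y \in \Gamma$. The orthogonality relation \eqref{eq:h.defn.orth}, which in this $Q=1$ setting reads $F(y) \triangleq \int \phi(y,z)\,\overline{\mathbb{H}}'(\eps^{-1}(z-h(y)))\,dz \equiv 0$ on $\Gamma$, annihilates the dominant part of the left-hand side, while on the right-hand side it isolates $\energyunit \eps^{2}(\Delta_{\Gamma}h - H_{\Gamma})(y)$ with an $O(\eps^{4})$ remainder; this yields the claimed improvement.

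On the right-hand side of \eqref{eq:mult.one.phi.eqn.prelim}, after the change of variables $\tau = \eps^{-1}(z-h)$, the leading $\eps(\Delta_{\Gamma}h - H_{\Gamma})\overline{\mathbb{H}}'$ term contributes $\energyunit\eps^{2}(\Delta_{\Gamma}h - H_{\Gamma})(y)$ modulo exponentially small truncation errors. The term $\eps A(y)\,z\,\overline{\mathbb{H}}'$ with $A = |\sff_{\Gamma}|^{2} + \ricc_{g}(\partial_{z},\partial_{z})$ integrates to $\eps^{2}A(y)h(y)\energyunit$ since $\int \tau(\mathbb{H}'(\tau))^{2}\,d\tau = 0$ by the oddness of $\mathbb{H}'$ (a consequence of the $W$-symmetry); because $\Vert h\Vert_{C^{0}} = O(\eps^{3})$ by \eqref{eq:mult.one.initial.bds}, this is $O(\eps^{5})$. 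The $\eps^{2}O(|z|)\overline{\mathbb{H}}'$ term is bounded by $\eps^{2} \cdot \int |z|\,|\overline{\mathbb{H}}'|\,dz = O(\eps^{4})$, using that the change of variables gives $\int|z|\,|\overline{\mathbb{H}}'|\,dz = O(\eps^{2})$ (with no logarithmic factor). The $O(\eps^{3})$ remainder contributes $O(\eps^{3})\cdot\int|\overline{\mathbb{H}}'|\,dz = O(\eps^{4})$.

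For the left-hand side, I split $\eps^{2}\Delta_{g} = \eps^{2}\Delta_{g_{z}} + \eps^{2}\partial_{z}^{2} + \eps^{2}H_{\Gamma_{z}}\partial_{z}$. The normal piece $\int(\eps^{2}\partial_{z}^{2}\phi - W''(U)\phi)\overline{\mathbb{H}}'\,dz$ is estimated by integrating by parts twice in $z$ (no boundary contribution, since $\overline{\mathbb{H}}'$ has support in $\{|z-h| \leq 6\eps|\log\eps|\}$) and invoking the identity $\mathbb{H}''' = W''(\mathbb{H})\mathbb{H}'$ combined with $|W''(\mathbb{H}) - W''(U)| = O(\eps^{3})$: the two terms essentially cancel up to an $O(\eps^{6})$ error. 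The mean-curvature piece $\int\eps^{2}H_{\Gamma_{z}}\partial_{z}\phi\,\overline{\mathbb{H}}'\,dz$ is pointwise bounded by $\eps^{2}\Vert\partial_{z}\phi\Vert_{C^{0}}\int|\overline{\mathbb{H}}'|\,dz = O(\eps^{4})$. The delicate term is the tangential one; the key observation is to apply $\Delta_{g_{0}}$ to the identity $F \equiv 0$ (interchanging with the $z$-integral) and use the product rule, giving
\[ \int(\Delta_{g_{0}}\phi)\overline{\mathbb{H}}'\,dz = -2\int\langle\nabla_{y}\phi,\nabla_{y}\overline{\mathbb{H}}'\rangle\,dz - \int\phi\,\Delta_{g_{0}}\overline{\mathbb{H}}'\,dz. \]
Using $|\nabla_{y}\overline{\mathbb{H}}'| \leq \eps^{-1}|\nabla h|\,|\overline{\mathbb{H}}''|$, the analogous bound on $\Delta_{g_{0}}\overline{\mathbb{H}}'$, the estimates $\Vert\nabla h\Vert_{C^{0}} = O(\eps^{2})$, $\Vert\nabla^{2}h\Vert_{C^{0}} = O(\eps)$, $\Vert\phi\Vert_{C^{0}} = O(\eps^{2})$, $\Vert\nabla\phi\Vert_{C^{0}} = O(\eps)$ from \eqref{eq:mult.one.initial.bds}, and $\int|\overline{\mathbb{H}}^{(k)}|\,dz = O(\eps)$ for $k \geq 1$, one finds $\int(\Delta_{g_{0}}\phi)\overline{\mathbb{H}}'\,dz = O(\eps^{3})$, so the $\Delta_{g_{0}}$ contribution is $O(\eps^{5})$. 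The correction $\eps^{2}\int(\Delta_{g_{z}} - \Delta_{g_{0}})\phi\,\overline{\mathbb{H}}'\,dz$ is controlled by $\eps^{2}\Vert\nabla^{2}\phi\Vert_{C^{0}}\int|z|\,|\overline{\mathbb{H}}'|\,dz = O(\eps^{2}) \cdot O(\eps^{2}) = O(\eps^{4})$.

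Assembling, the left-hand side is $O(\eps^{4})$ and the right-hand side equals $\energyunit\eps^{2}(\Delta_{\Gamma}h - H_{\Gamma})(y) + O(\eps^{4})$, yielding $\Vert \Delta_{\Gamma}h - H_{\Gamma} \Vert_{C^{0}(\Gamma)} = O(\eps^{2})$, which is the claim. The main technical obstacle is avoiding a spurious logarithmic loss in the $\Delta_{g_{z}}\phi$ contribution: the naive pointwise bound $\int|\Delta_{g_{0}}\phi|\,|\overline{\mathbb{H}}'|\,dz \leq \Vert\Delta\phi\Vert_{C^{0}}\cdot O(\eps) = O(\eps)$ would yield only $O(\eps^{3})$ after the outer $\eps^{2}$, which is insufficient. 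The fix is to replace this with the tangentially-differentiated orthogonality $\Delta_{g_{0}}F \equiv 0$ above, which gains two orders of $\eps$. A secondary subtlety is exploiting the $W$-symmetry via the parity of $\mathbb{H}'$ to reduce the a priori $O(\eps^{3})$ contribution of $\eps z\overline{\mathbb{H}}'$ to a term proportional to $h = O(\eps^{3})$, hence $O(\eps^{5})$.
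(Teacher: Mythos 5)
Your proposal is correct and follows essentially the same route as the paper's proof: project \eqref{eq:mult.one.phi.eqn.prelim} against $\overline{\mathbb{H}}'(\eps^{-1}(z-h))$, kill the tangential Laplacian contribution by differentiating the orthogonality relation \eqref{eq:h.defn.orth} twice, use the parity identity $\int t\,\mathbb{H}'(t)^{2}\,dt=0$ on the $z\,\overline{\mathbb{H}}'$ term, integrate by parts in $z$ and invoke \eqref{eq:approximate.heteroclinic.behavior} for the normal piece, and bound the mean-curvature term directly. The only nitpick is cosmetic: the vanishing of $\int \tau(\mathbb{H}'(\tau))^{2}\,d\tau$ comes from $\mathbb{H}'$ being \emph{even} (so the integrand is odd), not from ``oddness of $\mathbb{H}'$''.
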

\begin{proof}
Multiply \eqref{eq:mult.one.phi.eqn.prelim} by $\overline{\mathbb{H}}'(\eps^{-1}(z-h(y)))$ and integrate over $z \in [-\eta,\eta]$. We find (at $y \in \Sigma$ fixed)
\begin{align*}
& \int_{-\eta}^{\eta} (\eps^2 (\Delta_{\Gamma_z} \phi + H_{\Gamma_z}\partial_{z}\phi + \partial^{2}_{z}\phi )- W''(U)\phi) \cdot  \overline{\mathbb{H}}'(\eps^{-1}(z-h(y)))) \, dz\\
& =  \eps^2 (\energyunit-o(1)) (\Delta_{\Gamma} h - H_{\Gamma}) + \eps (|\sff_{\Gamma}|^{2} + \ricc_g(\partial_{z},\partial_{z})) \int_{-\eta}^{\eta} z  \overline{\mathbb{H}}'(\eps^{-1}(z-h(y)))^{2} \, dz\\
& \qquad + \int_{-\eta}^{\eta}  \eps^2 O(|z|) \cdot  \overline{\mathbb{H}}'(\eps^{-1}(z-h(y)))^{2}  \, dz + O(\eps^4)\\
& = \eps^2 (\energyunit - o(1)) (\Delta_{\Gamma} h - H_{\Gamma}) + O( \eps^{4}).
\end{align*}
We have used \eqref{eq:heteroclinic.expansion.ii} together with $\int_{-\infty}^{\infty} t \mathbb{H}'(t)^{2} dt = 0$ (which holds by parity).

Twice differentiating the orthogonality relation \eqref{eq:h.defn.orth} used to define $h$ (see Section \ref{subsec:approximate.solutions} and \cite[Appendix B]{WangWei}) and using \eqref{eq:mult.one.initial.bds}, we have
\[
\int_{-\eta}^{\eta}  \eps^2 (\Delta_{\Gamma_z} \phi) \cdot \overline{\mathbb{H}}'(\eps^{-1}(z-h(y))) \, dz = O( \eps^{4}). 
\]
From \eqref{eq:mult.one.initial.bds}, we have
\[
\int_{-\eta}^{\eta} \eps^2 H_{\Gamma_z} \partial_{z} \phi \cdot \overline{\mathbb{H}}'(\eps^{-1}(z-h(y))) \, dz = O( \eps^{5}).
\]
Finally, an integration by parts shows that
\begin{align*}
& \int_{-\eta}^{\eta} \left(\eps^2 \partial^{2}_{z} \phi \cdot  \overline{\mathbb{H}}'(\eps^{-1}(z-h(y))) -  W''(u) \phi \overline{\mathbb{H}}'(\eps^{-1}(z-h(y))) \right) \, dz \\
& = \int_{-\eta}^{\eta} \left(  \overline{\mathbb{H}}'''(\eps^{-1}(z-h(y))) - W''(u)  \overline{\mathbb{H}}'(\eps^{-1}(z-h(y))) \right) \phi \, dz.
\end{align*}
Using \eqref{eq:approximate.heteroclinic.behavior} here, combined with the previous expressions, we conclude the proof.
\end{proof}

Thus, returning to \eqref{eq:mult.one.phi.eqn.prelim} we find that in $\cM$, we have:
\begin{equation}\label{eq:mult.one.phi.upgrade}
	\eps^2 \Delta_g \phi - W''(U)\phi = \eps((|\sff_{\Gamma}|^{2} +\ricc_g(\partial_{z},\partial_{z}))\circ \Pi_\Gamma) \cdot z \cdot \overline{\mathbb{H}}'(\eps^{-1}(z-h(y))) + O(\eps^3).
\end{equation}
We have used the fact that $z \overline{\mathbb{H}}'(\eps^{-1}(z - h(y))) = O(\eps)$.

Observe that the right hand side of \eqref{eq:mult.one.phi.upgrade} is only bounded in $O(\eps^2)$. Thus, we expect this to represent the leading term of $\phi$, after inverting $ \eps^2 \Delta_g - W''(U)$. To make this precise, we first define (cf.\ \cite[Section 3.2]{delPinoKowalczykWei}) a function $\mathbb{J}(t)$ to be the unique bounded solution of the ODE
\begin{equation}\label{eq:ODE.for.J}
\mathbb{J}''(t) = W''(\mathbb{H}(t)) \mathbb{J}(t) + t \mathbb{H}'(t), \text{ with } \mathbb{J}(0) = 0. 
\end{equation}
Indeed, we even have the explicit expression (cf.\ \cite[p.\ 82]{delPinoKowalczykWei})
\[
\mathbb{J}(t) = \mathbb{H}'(t) \int_{0}^{t} \int_{-\infty}^{s} \tau \mathbb{H}'(s)^{-2} \, \mathbb{H}'(\tau)^{2} d\tau ds
\]
which shows that $\mathbb{J}$ is well defined and decays exponentially as $t\to\pm\infty$. It will be important in the sequel to observe that $\mathbb{J}(-t) = -\mathbb{J}(t)$, which follows from the parity of $\mathbb{H}(t)$ and either the uniqueness of solutions to the ODE, or the explicit integral expression.

Observe that $|\sff_{\Gamma}|^{2} +\ricc_g(\partial_{z},\partial_{z})$ converges to $|\sff_{\Sigma}|^{2} +\ricc_g(\nu,\nu)$ in $C^{0,\theta}$ because  $\Gamma$ converges to $\Sigma$ in $C^{2,\theta}$  by Lemma \ref{lemm:multiplicity.one.convergence}. We fix functions $V :\Gamma \to\RR$ with the property that $V$ still converges to $|\sff_{\Sigma}|^{2} +\ricc_g(\nu,\nu)$ in $C^{0}$ and $\Vert V \Vert_{C^{2}(\Gamma)} \leq C$. For definiteness we choose $V(y) = (|\sff_{\Sigma}|^{2} +\ricc_g(\nu,\nu))\circ \Pi_{\Sigma}(y)$, where $\Pi_{\Sigma}$ is the nearest point projection to $\Sigma$. 

We claim that $\eps^{2}V(y) \mathbb{J}(\eps^{-1}(z-h(y)))$ represents the leading order term in $\phi$. To this end, in $\cM$, we define a refined discrepancy function
\[ \widetilde{\phi}(y,z) \triangleq \phi(y,z) - \eps^{2}(V \circ \Pi_\Gamma)(y, z) \cdot \mathbb{J}(\eps^{-1}(z-h(y))). \]
We compute (using the $C^{2}$ bounds for $V$, as well as \eqref{eq:mult.one.initial.bds} and Lemma \ref{lemm:mult.one.imp.hHeqn}) that on $\cM$, we have
\begin{align*}
& \eps^{2} \Delta_g \widetilde{\phi} - W''(U) \widetilde{\phi} \\
& =  \eps ((|\sff_{\Gamma}|^{2} +\ricc_g(\partial_{z},\partial_{z}))\circ \Pi_\Gamma) \cdot z \cdot \overline{\mathbb{H}}'(\eps^{-1}(z-h(y))) \\
& \qquad - \eps^{2} (V \circ \Pi_\Gamma) \big[ \mathbb{J}''(\eps^{-1}(z-h(y))) - W''(U) \cdot \mathbb{J}(\eps^{-1}(z-h(y))) \big] + O(\eps^3)\\
& =  \eps \big[ (|\sff_{\Gamma}|^{2} +\ricc_g(\partial_{z},\partial_{z}))\circ \Pi_\Gamma - V\circ \Pi_\Gamma \big] \cdot z \cdot \overline{\mathbb{H}}'(\eps^{-1}(z-h(y))) \\
& \qquad - \eps^{2} \big[ W''(\mathbb{H}(\eps^{-1}(z-h(y)))) - W''(U) \big] (V\circ \Pi_\Gamma) \cdot \mathbb{J}(\eps^{-1}(z-h(y))) + O(\eps^3)\\
& = o(\eps^{2}).
\end{align*}
We again used that $z \overline{\mathbb{H}}'(\eps^{-1}(z-h(y))) = O(\eps)$ as well as the definition of $V$. We now use the defining property of $h$ to invert $\eps^2 \Delta_g - W''(U)$.

\begin{prop}\label{prop:mult.one.improved.phi.behav}
	We have that $\widetilde{\phi} = o(\eps^{2})$ on $\cM$. 
\end{prop}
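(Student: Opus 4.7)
The plan is to treat $\widetilde{\phi}$ as the result of inverting the linearized operator $L \triangleq \eps^{2}\Delta_{g} - W''(U)$ on a quantity of size $o(\eps^{2})$, using the Fredholm-type theory for $L$ that is already implicit in the derivation of \eqref{eq:mult.one.initial.bds} (cf.\ \cite[Section 9, Appendix B]{WangWei} and \cite[Section 3]{delPinoKowalczykWei}). The computation carried out just before the statement of the proposition already gives $L\widetilde{\phi} = o(\eps^{2})$ in $\cM$, so the only things to check are that $\widetilde{\phi}$ is approximately $L^{2}$-orthogonal to the heteroclinic mode $\mathbb{H}'(\eps^{-1}(z-h(y)))$ in each fiber $y \in \Gamma$, and that its boundary values on $\partial\cM$ are negligible at scale $\eps^{2}$.

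For the orthogonality, the defining relation \eqref{eq:h.defn.orth} for $h$ combined with \eqref{eq:approximate.heteroclinic.behavior} and \eqref{eq:mult.one.initial.bds} gives
\[ \int_{-\eta}^{\eta}\phi(y,z)\,\mathbb{H}'(\eps^{-1}(z-h(y)))\,dz = o(\eps^{3}). \]
Under the change of variables $t=\eps^{-1}(z-h(y))$ the corresponding integral of the correction equals $\eps^{3}V(y)\int_{\RR}\mathbb{J}(t)\mathbb{H}'(t)\,dt$, and this vanishes by parity since $\mathbb{J}$ is odd and $\mathbb{H}'$ is even. Consequently $\int\widetilde{\phi}(y,z)\,\mathbb{H}'(\eps^{-1}(z-h(y)))\,dz = o(\eps^{3})$, which is the required (approximate) orthogonality. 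For the boundary behavior, on $\partial\cM=\{|d|=1\}$ both $u$ and $U$ differ from $\pm 1$ by $O(\eps^{N})$ for every $N$ (by the truncation in the ansatz for $U$ and the exponential decay for $u$ arising from the saddle behavior of $W$ at $\pm 1$, cf.\ \eqref{eq:bounded.index.exp.decay}), while the exponential decay of $\mathbb{J}$ makes $\eps^{2}V\cdot\mathbb{J}(\eps^{-1}(z-h))$ also $O(\eps^{N})$ there. Hence $\widetilde{\phi}=O(\eps^{N})$ on $\partial\cM$, so boundary data contribute nothing at scale $\eps^{2}$.

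Given these two inputs, the argument concludes by the linear invertibility of $L$ on the orthogonal complement of $\mathbb{H}'(\eps^{-1}(z-h(y)))$: the one-dimensional Jacobi operator $\partial_{t}^{2}-W''(\mathbb{H})$ has kernel $\lspan\{\mathbb{H}'\}$ and admits a bounded inverse in exponentially weighted H\"older spaces on its $L^{2}$-complement, which transfers to $L$ at scale $\eps$ after the usual freezing of coefficients along $\Gamma$ and treatment of the tangential Laplacian $\eps^{2}\Delta_{\Gamma_{z}}$ as a perturbation---exactly the framework already used to obtain \eqref{eq:mult.one.initial.bds} and the improved bound in Lemma \ref{lemm:mult.one.imp.hHeqn}. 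Absorbing the $o(\eps^{3})$ failure of orthogonality by subtracting a suitable multiple of $\mathbb{H}'(\eps^{-1}(z-h(y)))$ (of size $o(\eps^{2})$) from $\widetilde{\phi}$ before inverting, we arrive at an estimate
\[ \Vert\widetilde{\phi}\Vert_{C^{0}(\cM)} \leq C\Vert L\widetilde{\phi}\Vert_{C^{0}(\cM)} + C\Vert\widetilde{\phi}\Vert_{C^{0}(\partial\cM)} + o(\eps^{2}) = o(\eps^{2}), \]
which is the desired conclusion.

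The main technical obstacle is verifying, in the present curved and $\eps$-dependent setting, the sharp version of linear invertibility with a constant that does not absorb the improvement from $O(\eps^{2})$ to $o(\eps^{2})$; however, this is not new content and follows by adapting the weighted Schauder framework of \cite[Appendix B]{WangWei} verbatim, the only change being that we are applying it to $\widetilde{\phi}$ rather than to $\phi$ and to the improved source term on the right-hand side of \eqref{eq:mult.one.phi.upgrade} after the leading-order piece has been cancelled by construction of the ansatz $\eps^{2}V\cdot\mathbb{J}(\eps^{-1}(z-h))$.
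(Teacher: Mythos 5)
Your overall architecture — fiberwise orthogonality of $\widetilde{\phi}$ (exact from \eqref{eq:h.defn.orth} plus the parity of $\mathbb{J}$ odd against $\mathbb{H}'$ even), exponentially small boundary values on $\{|d|=1\}$, and then an $\eps$-uniform inversion estimate for $\eps^{2}\Delta_{g}-W''(U)$ on the fiberwise orthogonal complement — is a legitimate alternative to the paper's argument, and the bookkeeping in your first two steps is essentially right. The problem is the third step, which is where all the content lies, and your justification of it does not work as stated. The uniform $C^{0}$ (or weighted Schauder) estimate on the orthogonal complement does \emph{not} follow by "freezing coefficients along $\Gamma$ and treating $\eps^{2}\Delta_{\Gamma_{z}}$ as a perturbation" of the one-dimensional operator $\partial_{t}^{2}-W''(\mathbb{H})$: at the natural scale the tangential Laplacian is an order-one operator on a domain of diameter $\sim\eps^{-1}$, not a small perturbation. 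What \emph{is} cheap is the $L^{2}$ spectral-gap estimate on the complement (cf.\ Lemma \ref{lemm:index.perp.comp}), but upgrading $\Vert\widetilde{\phi}\Vert_{L^{2}}=o(\eps^{2})$ to $\Vert\widetilde{\phi}\Vert_{L^{\infty}}=o(\eps^{2})$ by local elliptic estimates at scale $\eps$ loses a factor $\eps^{-n/2}$, so it does not give the conclusion. The sup-norm estimate you need is of the type of \cite[Proposition 3.1]{Pacard12} (the paper proves a Dirichlet analogue in Lemma \ref{lemm:dirichlet.data.prop.3.1}), and its proof is itself a blow-up/contradiction argument whose key ingredient is the Liouville-type classification of \emph{bounded} solutions of $(\Delta_{\RR^{n-1}}+\partial_{z}^{2}-W''(\mathbb{H}(z)))w=0$ on $\RR^{n-1}\times\RR$ (\cite[Lemma 3.7]{Pacard12}) — a genuinely multi-dimensional fact, not a consequence of the ODE kernel computation. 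Your appeal to \cite[Appendix B]{WangWei} "verbatim" also does not close this: those estimates produce bounds with additive $c'\eps^{2}$ terms (this is exactly \eqref{eq:mult.one.initial.bds}), so quoting them cannot by itself preserve the gain from $O(\eps^{2})$ to $o(\eps^{2})$.

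To repair the proof you must either (a) state and prove the $\eps$-uniform inversion estimate for the actual operator at hand (potential $W''(U)$ with the $O(\eps^{3})$ offset $h$, truncated heteroclinic, domain the slab $\cM$), which amounts to running a blow-up argument with two regimes — max points at bounded $\eps^{-1}|z|$, handled by \cite[Lemma 3.7]{Pacard12} together with the orthogonality to kill the $\mathbb{H}'$ mode, and max points with $\eps^{-1}|z|\to\infty$, handled by coercivity since $W''(\pm1)>0$ — or (b) run that contradiction argument directly on $\widetilde{\phi}$, which is precisely what the paper does. In other words, your proposal is not wrong in outcome, but the decisive estimate is asserted rather than proved, and the mechanism you give for it would fail; once you supply it, your proof collapses into essentially the same blow-up argument as the paper's, split between cases (i) and (ii) there.
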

\begin{proof}
For contradiction, suppose that $\lambda \triangleq  \sup_{\cM} |\widetilde{\phi}| \geq \gamma \eps^{2}$ for some $\gamma > 0$. Note that $\widetilde{\phi}$ is exponentially small at points that are uniformly bounded away from $\Gamma$, so it is clear that this supremum is achieved at some $X^* \in \cM$ with $d(X^*) \to 0$. We can assume that $\widetilde{\phi}(X^{*}) = \lambda$. Write $X^* = (y^*,z^*)$ in Fermi coordinates over $\Gamma$. We split the argument into two cases: (i) $\eps^{-1} |z^*|$ is uniformly bounded or (ii) $\eps^{-1}|z^*| \to \infty$. 

First we consider case (i). We can assume that $\eps^{-1} z^* \to z_{\infty}$. Define $\widehat{\phi}(\widehat{X}) = \lambda^{-1} \widetilde{\phi}(X^* + \eps \widehat{X})$, which, in blown up Fermi coordinates $\widehat{X} = (\widehat{y}, \widehat{z})$, satisfies:
\[
\Delta_{\widehat{g}} \widehat{\phi}(\widehat{y}, \widehat{z}) - W''(\overline{\mathbb{H}}(\eps^{-1}z^* + \widehat{z} - \eps^{-1}h(y^*+\eps \widehat{y}))) \widehat{\phi}(\widehat{y}, \widehat{z}) = o(1),
\]
for $\widehat{z} \in (-\eps^{-1} \eta, \eps^{-1} \eta)$ and $\widehat{y} \in \Sigma$, and where $\widehat{g}$ is converging smoothly to the Euclidean metric. Moreover, $\widehat{\phi}(0) = 1$ and $|\widehat{\phi}|$ is uniformly bounded on compact sets. Interior Schauder estimates yield uniform bounds for $\widehat{\phi}$ in $C^{1,\theta}_{\loc}$. Thus, $\widehat{\phi}$ converges in $C^{1}$ to a weak (and thus strong, by elliptic regularity) solution of 
\[ \Delta \widehat{\phi}(\widehat{y}, \widehat{z}) - W''({\mathbb{H}}(z_{\infty} + \widehat{z})) \widehat{\phi}(\widehat{y}, \widehat{z}) = 0 \]
on $\RR^{n-1}\times \RR$. By \cite[Lemma 3.7]{Pacard12} (see also \cite{PacardRitore03}), we have that
\[ \widehat{\phi}(\widehat{y}, \widehat{z}) = \rho {\mathbb{H}}'(z_{\infty} + \widehat{z}) \text{ for some } \rho \in\RR, \]
because $\widehat{\phi} \in L^{\infty}(\RR^{n-1}\times \RR)$. In fact, $\widehat{\phi}(0) = 1$ implies that $\rho = \mathbb{H}'(z_\infty)^{-1}$. At the original scale, write $X = (y, z)$ in Fermi coordinates over $\Gamma$. Then, for $K$ fixed sufficiently large, if $|z| \leq K\eps$, we have:
\[ \widetilde{\phi}(y, z) = \lambda \big[ \mathbb{H}'(z_\infty)^{-1} {\mathbb{H}}'(z_{\infty} + \eps^{-1} (z-z^*)) + o(1) \big]. \]
Therefore,
\begin{equation} \label{eq:mult.one.improved.phi.i}
	\phi(y, z) = \lambda \big[ \mathbb{H}'(z_\infty)^{-1} {\mathbb{H}}'(z_{\infty} + \eps^{-1} (z-z^*)) + o(1) \big] + \eps^2 V(y) \mathbb{J}(\eps^{-1}(z-h(y))).
\end{equation}
By estimating the exponential tail using \eqref{eq:heteroclinic.expansion.ii}, and then using the definition of $\phi$ and $h$, and also \eqref{eq:mult.one.initial.bds}, we have:
\begin{equation} \label{eq:mult.one.improved.phi.ii}
	\int_{-K\eps}^{K\eps} \phi(y, z) \cdot  \overline{\mathbb{H}}'(\eps^{-1}(z-h(y))) \, dz = O(\eps^3 e^{-\sqrt{2} K}).
\end{equation}
By parity ($\mathbb{H}'$ is even, $\mathbb{J}$ is odd) and similarly estimating an exponential tail we also have
\begin{equation} \label{eq:mult.one.improved.phi.iii}
	\int_{-K\eps}^{K\eps} \mathbb{J}(\eps^{-1}(z - h(y))) \cdot  \overline{\mathbb{H}}'(\eps^{-1}(z-h(y))) \, dz = O(\eps e^{-\sqrt{2} K}).
\end{equation}
Finally:
\begin{equation} \label{eq:mult.one.improved.phi.iv}
	\int_{-K\eps}^{K\eps} \mathbb{H}'(z_\infty+ \eps^{-1}(z - z^*)) \cdot  \overline{\mathbb{H}}'(\eps^{-1}(z - h(y))) \, dz \geq (\energyunit - O(e^{-\sqrt{2}K})) \eps.
\end{equation}
Altogether, \eqref{eq:mult.one.improved.phi.i}-\eqref{eq:mult.one.improved.phi.iv} imply $\lambda = \energyunit^{-1} O(\eps^2 e^{-\sqrt{2} K})$, which (for large $K$) contradicts our assumption that $\lambda \geq \gamma \eps^2$ for a fixed $\gamma > 0$. This is a contradiction, completing the proof of case (i).

We now turn to case (ii). The proof here is analogous (and simpler). By rescaling as above, we find a non-zero smooth function $\widehat{\phi} \in L^{\infty}(\RR^{n-1}\times \RR)$ solving $\Delta \widehat{\phi}  - W''(\pm 1) \widehat{\phi} = 0$. An integration by parts, using $W''(\pm 1) > 0$, shows that $\widehat{\phi} = 0$. This is a contradiction, completing the proof of case (ii). 
\end{proof}

\subsection{Relating the second variations and index upper semicontinuity}

We now can give the fundamental computation linking the index of $u$ as a critical point of $E_{\eps}$ with the index of $\Sigma$ as a critical point of area. Our argument is closely related to the proof of \cite[Lemma 9.2]{delPinoKowalczykWei}. Recall from \eqref{eq:second.var.AC} that the second variation of $E_{\eps}$ is given by
\begin{equation*}
\cQ_{u}(\zeta,\psi) \triangleq \delta^{2}E_{\eps}[u]\{\zeta,\xi\}  = \int_{M} \left( \eps \langle\nabla \zeta,\nabla \xi \rangle + \frac{W''(u)}{\eps} \zeta \xi\right) \, d\mu_{g}.
\end{equation*}
Similarly, we recall that the second variation of area at $\Sigma$ is given by
\[
\cQ_{\Sigma}(\zeta,\xi) \triangleq \delta^{2} \operatorname{Area}[\Sigma]\{\zeta,\xi\} = \int_{\Sigma} \left( \langle\nabla\zeta,\nabla \xi \rangle - (|\sff_{\Sigma}|^{2} + \ricc_g(\nu,\nu))\zeta\xi \right) \, d\mu_\Sigma
\]
\begin{lemm}\label{lemm:index.Hprime.comp}
For $f \in C^{2}(\Sigma)$, setting
\[ \psi(y, z) = f(y) \cdot  \overline{\mathbb{H}}'(\eps^{-1}(z-h(y))) \]
for $(y,z)$ Fermi coordinates with respect to $\Gamma$ (the nodal set of $u$), and $\psi = 0$ far from $\Gamma$, we have that 
\begin{align*}
\cQ_{u}(\psi,\psi) 
	& = \eps^{2}(\energyunit-o(1)) \int_{\Gamma} \left( |\nabla_{\Gamma} f|^{2} - \left( (|\sff_{\Gamma}|^{2} +\ricc_{g}(\partial_{z},\partial_{z}))\circ\Pi_\Gamma\right) f^{2} \right) d\mu_\Gamma \\
	& \qquad + o(\eps^{2}) \int_{\Gamma} \left( |\nabla_{\Gamma} f|^{2} + f^{2}\right) d\mu_\Gamma.
\end{align*}
Here, $\Pi_\Gamma$ denotes the nearest point projection onto $\Gamma$.
\end{lemm}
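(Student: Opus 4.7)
The plan is to expand $\cQ_u(\psi,\psi)=\int(\eps|\nabla\psi|^2+\eps^{-1}W''(u)\psi^2)\,d\mu_g$ in Fermi coordinates $(y,z)$ about $\Gamma$, writing $d\mu_g=J(y,z)\,dz\,d\mu_\Gamma$ with $J(y,0)=1$, and changing variables $z=h(y)+\eps t$ at fixed $y\in\Gamma$ (so $dz=\eps\,dt$). Differentiating $\psi=f(y)\overline{\mathbb{H}}'(t)$ gives $\partial_z\psi=\eps^{-1}f\,\mathbb{H}''(t)$ and $\nabla_y\psi=\nabla_y f\,\mathbb{H}'(t)-\eps^{-1}f\,\nabla_y h\,\mathbb{H}''(t)$, so in the main region $|t|\leq 3|\log\eps|$,
\begin{equation*}
\eps|\nabla\psi|^2+\eps^{-1}W''(u)\psi^2=\eps^{-1}f^2\bigl[\mathbb{H}''(t)^2+W''(u)\mathbb{H}'(t)^2\bigr]+\eps|\nabla_y f|_{g_z}^2\mathbb{H}'(t)^2+R,
\end{equation*}
where $R$ collects cross terms and the $|\nabla h|^2\mathbb{H}''(t)^2$ term. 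Since $|\nabla h|=O(\eps^2)$ by \eqref{eq:mult.one.initial.bds}, these integrate to $o(\eps^2)$. The middle term contributes $\eps^2\energyunit|\nabla_\Gamma f|^2+o(\eps^2)$ after Taylor expanding $g_z$ at $z=0$ and using parity in $t$. Tail contributions from $|t|>3|\log\eps|$ (where $\overline{\mathbb{H}}\ne\mathbb{H}$) are $O(\eps^N)$ for any $N$.

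The crux is extracting the Jacobi potential from $\eps^{-1}f^2[\mathbb{H}''(t)^2+W''(u)\mathbb{H}'(t)^2]$. The Pohozaev identity $\int(\mathbb{H}''(t)^2+W''(\mathbb{H}(t))\mathbb{H}'(t)^2)\,dt=0$ (one IBP against $\mathbb{H}'''=W''(\mathbb{H})\mathbb{H}'$) kills the leading order, so the residual is $O(\eps^2)$ from two sources. First, the Jacobian expansion
\begin{equation*}
J(y,h+\eps t)=1+\eps tH_\Gamma+\tfrac{(\eps t)^2}{2}\bigl(H_\Gamma^2-|\sff_\Gamma|^2-\ricc_g(\partial_z,\partial_z)\bigr)+O(\eps^3),
\end{equation*}
where the second derivative invokes the Riccati relation $\partial_z H=-|\sff|^2-\ricc(\partial_z,\partial_z)$: the odd term vanishes by parity, and the identity
\begin{equation*}
\int_{-\infty}^\infty t^2\bigl(\mathbb{H}''(t)^2+W''(\mathbb{H}(t))\mathbb{H}'(t)^2\bigr)\,dt=\energyunit
\end{equation*}
(one IBP) yields $-\tfrac{\eps^2\energyunit}{2}(|\sff_\Gamma|^2+\ricc_g(\partial_z,\partial_z))f^2+o(\eps^2)f^2$, where the $H_\Gamma^2$ piece is absorbed into $o(\eps^2)$ using $H_\Gamma=O(\eps)$ from Lemma \ref{lemm:mult.one.imp.hHeqn}. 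Second, Taylor expand $W''(u)=W''(\mathbb{H}(t))+W'''(\mathbb{H}(t))\phi+O(\phi^2)$ and substitute the refined asymptotic $\phi(y,h+\eps t)=\eps^2V(y)\mathbb{J}(t)+o(\eps^2)$ from Proposition \ref{prop:mult.one.improved.phi.behav}. The key computation here is
\begin{equation*}
\int_{-\infty}^\infty W'''(\mathbb{H}(t))\,\mathbb{J}(t)\,\mathbb{H}'(t)^2\,dt=-\energyunit/2,
\end{equation*}
proved by IBP (writing $W'''(\mathbb{H})\mathbb{H}'=(W''(\mathbb{H}))'$) and using the defining ODE \eqref{eq:ODE.for.J}, which reduces the integral to $\int t\mathbb{H}'\mathbb{H}''\,dt=-\energyunit/2$. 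This contributes $-\tfrac{\eps^2\energyunit}{2}V(y)f^2+o(\eps^2)f^2$. Since $\Gamma\to\Sigma$ in $C^{2,\theta}$ by Lemma \ref{lemm:multiplicity.one.convergence}, we may replace $V(y)$ with $(|\sff_\Gamma|^2+\ricc_g(\partial_z,\partial_z))\circ\Pi_\Gamma$ to within $o(1)$, and the two contributions combine to $-\eps^2\energyunit(|\sff_\Gamma|^2+\ricc_g(\partial_z,\partial_z))f^2+o(\eps^2)f^2$.

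The main obstacle, and the reason this lemma could not have been established with only the $O(\eps^2)$ bound on $\phi$ in \cite{WangWei}, is that the Jacobian expansion and the $\phi$-correction each individually contribute only half the required Jacobi coefficient: the sum of two separate $-\energyunit/2$ contributions produces the full $-\energyunit$. Consequently, the explicit leading profile $\phi\approx\eps^2V(y)\mathbb{J}(t)$ from Proposition \ref{prop:mult.one.improved.phi.behav}---itself a consequence of the multiplicity one assumption and the ODE \eqref{eq:ODE.for.J} for $\mathbb{J}$---is indispensable to pin down the precise coefficient.
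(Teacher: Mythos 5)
Your proposal is correct and is essentially the paper's own computation: the leading order is killed by the same heteroclinic identity, and the Jacobi potential arises---exactly as in the paper---from two contributions of $-\tfrac12\energyunit$ each, one geometric (via the Riccati identity $\partial_z H_z = -|\sff_z|^2 - \ricc_g(\partial_z,\partial_z)$) and one from the refined profile $\phi = \eps^2 V\,\mathbb{J}(\eps^{-1}(z-h)) + o(\eps^2)$ of Proposition \ref{prop:mult.one.improved.phi.behav} through $\int_{-\infty}^{\infty} W'''(\mathbb{H})\mathbb{J}(\mathbb{H}')^2\,dt = -\tfrac12\energyunit$. The only difference is bookkeeping: you keep the Dirichlet form and Taylor-expand the Fermi volume Jacobian to second order, while the paper integrates by parts and expands the $H_{\Gamma_z}\partial_z$ term in the Fermi decomposition of $\Delta_g$, using $\int_{-\infty}^{\infty} t\,\mathbb{H}'\mathbb{H}''\,dt = -\tfrac12\energyunit$, to which your identity $\int_{-\infty}^{\infty} t^2\bigl((\mathbb{H}'')^2 + W''(\mathbb{H})(\mathbb{H}')^2\bigr)\,dt = \energyunit$ reduces after one integration by parts (also, your tail contributions are really $O(\eps^{3\sqrt{2}-1})$, coming from the truncation of $\overline{\mathbb{H}}$ at $3|\log\eps|$, rather than $O(\eps^N)$ for every $N$, but this is still $o(\eps^2)$, so nothing breaks).
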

\begin{proof}
We compute, using \eqref{eq:approximate.heteroclinic.behavior}:
\begin{align*}
& \cQ_{u}(\psi,\psi) \\
& = \int_{M} \left( -\eps \psi \Delta_{g} \psi + \eps^{-1} W''(u) \psi^{2} \right)\, d\mu_{g}\\
& = \int_{-\eta}^{\eta} \int_{\Gamma}\left( -\eps \psi \Delta_{\Gamma_{z}} \psi - \eps H_{\Gamma_{z}} \psi \partial_{z} \psi - \eps \psi \partial^{2}_{z} \psi + \eps^{-1} W''(u) \psi^{2} \right)\, d\mu_{g_{z}} dz\\
& = \int_{-\eta}^{\eta} \int_{\Gamma}\left( \eps |\nabla_{\Gamma_{z}} \psi|^{2} - \eps H_{\Gamma_{z}} \psi \partial_{z} \psi - \eps \psi \partial^{2}_{z} \psi + \eps^{-1} W''(u) \psi^{2} \right)\, d\mu_{g_{z}} dz\\
& = \int_{-\eta}^{\eta} \int_{\Gamma}\Big( \eps |(\nabla_{\Gamma_{z}}f) \cdot  \overline{\mathbb{H}}'(\eps^{-1}(z-h(y))) - \eps^{-1}f(y)(\nabla_{\Gamma_{z}}h) \cdot  \overline{\mathbb{H}}''(\eps^{-1}(z-h(y))) |^{2}\\
& \qquad -  H_{\Gamma_{z}} f(y)^{2} \cdot  \overline{\mathbb{H}}'(\eps^{-1}(z-h(y)))  \overline{\mathbb{H}}''(\eps^{-1}(z-h(y)))\\
& \qquad - \eps^{-1} f(y)^{2} \cdot  \overline{\mathbb{H}}'(\eps^{-1}(z-h(y))) \overline{\mathbb{H}}'''(\eps^{-1}(z-h(y))) \\
& \qquad + \eps^{-1} W''(u) f(y)^{2} \cdot \overline{\mathbb{H}}'(\eps^{-1}(z-h(y)))^{2} \Big)\, d\mu_{g_{z}} dz.
\end{align*}
Using, additionally, \eqref{eq:mult.one.initial.bds}, our $C^2$ bounds on $\Gamma$, \eqref{eq:mean.curv.ddt.metric}, \eqref{eq:mean.curv.ddt.sff}, and \eqref{eq:mean.curv.ddt.h}:
\begin{align*}
& \cQ_u(\psi, \psi) \\
& = \int_{-\eta}^{\eta} \int_{\Gamma}\Big( \eps |(\nabla_{\Gamma_{z}}f) \cdot  \overline{\mathbb{H}}'(\eps^{-1}(z-h(y))) - \eps^{-1}f(y)(\nabla_{\Gamma_{z}}h) \cdot  \overline{\mathbb{H}}''(\eps^{-1}(z-h(y))) |^{2}\\
& \qquad -   H_{\Gamma} f(y)^{2} \cdot \overline{\mathbb{H}}'(\eps^{-1}(z-h(y)))  \overline{\mathbb{H}}''(\eps^{-1}(z-h(y)))\\
& \qquad +  \left( (|\sff_{\Gamma}|^{2} +\ricc_{g}(\partial_{z},\partial_{z})) \circ \Pi_\Gamma \right) f(y)^{2} \cdot z \cdot \overline{\mathbb{H}}'(\eps^{-1}(z-h(y)))  \overline{\mathbb{H}}''(\eps^{-1}(z-h(y)))\\
& \qquad + \eps^{-1} (W''(U+\phi)-W''(U)) f(y)^{2} \cdot \overline{\mathbb{H}}'(\eps^{-1}(z-h(y)))^{2} \Big)\, d\mu_{g_{z}} dz\\
& \qquad + O(\eps^{3}) \int_{\Gamma} f(y)^{2} d\mu_\Gamma \\
& = \int_{-\eta}^{\eta} \int_{\Gamma}\Big( \eps |\nabla_{\Gamma_{z}}f|^{2} \cdot  \overline{\mathbb{H}}'(\eps^{-1}(z-h(y)))^{2}\\
& \qquad +  \left( (|\sff_{\Gamma}|^{2} +\ricc_{g}(\partial_{z},\partial_{z})) \circ \Pi_\Gamma \right) f(y)^{2} \cdot z \cdot  \overline{\mathbb{H}}'(\eps^{-1}(z-h(y)))  \overline{\mathbb{H}}''(\eps^{-1}(z-h(y)))\\
& \qquad + \eps^{-1} W'''(U) \phi  f(y)^{2} \cdot \overline{\mathbb{H}}'(\eps^{-1}(z-h(y)))^{2} \Big)\, d\mu_{g_{z}} dz\\
& \qquad + O(\eps^{3}) \int_{\Gamma} (|\nabla_{\Gamma}f|^{2} + f^{2}) \, d\mu_\Gamma \\
& = \eps^{2}(\energyunit-o(1)) \int_{\Gamma} \left( |\nabla_{\Gamma} f|^{2} - \left( (|\sff_{\Gamma}|^{2} +\ricc_{g}(\partial_{z},\partial_{z})) \circ \Pi_\Gamma \right) f(y)^{2} \right) d\mu_\Gamma \\
& \qquad + o(\eps^{2}) \int_{\Gamma} \left( |\nabla_{\Gamma} f|^{2} + f^{2}\right) d\mu_\Gamma.
\end{align*}
In the final equality, we have used 
\[
\int_{-\infty}^{\infty} t \mathbb{H}'(t) \mathbb{H}''(t) dt = \tfrac 12 \int_{-\infty}^{\infty} t \tfrac{d}{dt} \mathbb{H}'(t)^{2}  dt = - \tfrac 12 \energyunit
\]
on the second term. We have also used $\phi = \eps^{2} V(y) \mathbb{J}(\eps^{-1}(z-h(y))) +o(\eps^{2})$, $V(y) = (|\sff_{\Gamma}|^{2} +\ricc_{g}(\partial_{z},\partial_{z}))\circ\Pi_\Gamma + o(1)$, and the following identity that follows by differentiating \eqref{eq:ODE.for.J} once and integrating by parts:
\begin{align*}
\int_{-\infty}^{\infty} W'''(\mathbb{H}(t)) \mathbb{J}(t) \mathbb{H}'(t)^{2} dt & = \int_{-\infty}^{\infty} \left( \mathbb{J}'''(t)\mathbb{H}'(t) - W''(\mathbb{H}(t)) \mathbb{J}'(t)\mathbb{H}'(t) - \mathbb{H}'(t)^{2} - t\mathbb{H}'(t)\mathbb{H}''(t)
 \right) dt\\
& = \int_{-\infty}^{\infty} \left( \mathbb{J}'(t)\mathbb{H}'''(t) - W''(\mathbb{H}(t)) \mathbb{J}'(t)\mathbb{H}'(t) - \mathbb{H}'(t)^{2} - t\mathbb{H}'(t)\mathbb{H}''(t)
 \right) dt\\
& = \int_{-\infty}^{\infty} \left(  - \mathbb{H}'(t)^{2} - t\mathbb{H}'(t)\mathbb{H}''(t)
 \right) dt = -\tfrac 12 \energyunit.
\end{align*}
This completes the proof. 
\end{proof}

Let $\Omega$ denote the $\eta$-tubular neighborhood of $\Gamma$ and consider the restriction $\cQ_u^\Omega$ of $\cQ_{u}$ to $\Omega$:
\[
\cQ_{u}^{\Omega}(\zeta,\xi) \triangleq \delta^2 (E_\eps \restr \Omega)[u]\{ \zeta, \xi \} = \int_{\Omega} \left( \eps \langle\nabla \zeta,\nabla \xi \rangle + \frac{W''(u)}{\eps} \zeta\xi\right) \, d\mu_{g}, \; \zeta, \xi \in C^{\infty}(\Omega).
\]

Consider $w \in C^{\infty}(\Omega)$.
We decompose $w$ as
\begin{equation} \label{eq:index.w.decomposition}
	w(y,z) = f(y) \cdot  \overline{\mathbb{H}}'(\eps^{-1}(z-h(y))) + w^{\perp}(y,z)
\end{equation}
where
\begin{equation}\label{eq:index.w.perp.defn}
\int_{-\eta}^{\eta} w^{\perp}(y,z) \overline{\mathbb{H}}'(\eps^{-1}(z-h(y)))  \, dz = 0.
\end{equation}
It is useful to write
\begin{equation} \label{eq:index.w.parallel}
	\psi(y,z) = f(y) \cdot  \overline{\mathbb{H}}'(\eps^{-1}(z-h(y))).
\end{equation}

Note that
\begin{align}\label{eq:index.decomp.L2.norm}
\int_{\Omega} w^{2} d\mu_{g} 
	& = \int_{-\eta}^{\eta} \int_{\Gamma}  f^{2} \cdot  \overline{\mathbb{H}}'(\eps^{-1}(z-h(y)))^{2} \, d\mu_{g_{z}} \, dz + \int_{\Omega} (w^{\perp})^{2} \, d\mu_{g} \nonumber \\
	& \qquad + 2 \int_{-\eta}^{\eta} \int_{\Gamma} f w^{\perp} \cdot \overline{\mathbb{H}}'(\eps^{-1}(z-h(y))) \, d\mu_{g_{z}} \, dz \nonumber \\
	& = (1+o(1))\int_{-\eta}^{\eta} \int_{\Gamma}  f^{2} \cdot  \overline{\mathbb{H}}'(\eps^{-1}(z-h(y)))^{2} \, d\mu_\Gamma \, dz + (1+o(1))\int_{\Omega} (w^{\perp})^{2} \, d\mu_{g} \nonumber \\
	& = \eps (\energyunit-o(1)) \int_{\Gamma} f ^{2}  \, d\mu_\Gamma + (1+o(1))\int_{\Omega} (w^{\perp})^{2} \, d\mu_{g}.
\end{align}

We now use this decomposition to estimate $\cQ^{\Omega}_{u}(w,w)$. 
\begin{lemm}\label{lemm:index.perp.comp}
For $w^{\perp}$ as in \eqref{eq:index.w.perp.defn}, there is $\gamma>0$ so that for $\eps>0$ sufficiently small,
\[
\cQ_{u}^{\Omega}(w^{\perp},w^{\perp}) \geq \gamma \int_{\Omega} \eps |\nabla w^{\perp}|^{2} + \eps^{-1} (w^{\perp})^{2} \, d\mu_{g}.
\]
\end{lemm}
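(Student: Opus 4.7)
The plan is to reduce the claimed coercivity to the classical spectral gap of the one-dimensional linearized Allen--Cahn operator $L_0 \triangleq -\partial_t^2 + W''(\mathbb{H}(t))$ acting on $H^1(\RR)$. Working in Fermi coordinates $(y,z)$ around $\Gamma$ with volume element $d\mu_g = J(y,z)\, dz\, d\mu_\Gamma(y)$, where $J$ is uniformly close to $1$, and splitting the ambient gradient as $|\nabla w^\perp|_g^2 = |\nabla_{\Gamma_z} w^\perp|^2 + (\partial_z w^\perp)^2$, the tangential piece $\int_\Omega \eps |\nabla_{\Gamma_z} w^\perp|^2 \, d\mu_g$ is automatically nonnegative and directly supplies the horizontal part of the right-hand side. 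It therefore suffices to prove the pointwise ``vertical'' coercivity
\[
\int_{-\eta}^{\eta} \Bigl( \eps (\partial_z \psi_y)^2 + \eps^{-1} W''(u(y,z)) \psi_y^2 \Bigr) dz \geq \gamma \int_{-\eta}^{\eta} \Bigl( \eps (\partial_z \psi_y)^2 + \eps^{-1} \psi_y^2 \Bigr) dz
\]
for $\psi_y(z) \triangleq w^\perp(y,z)$, uniformly in $y \in \Gamma$, where the constraint $\int \psi_y(z)\, \overline{\mathbb{H}}'(\eps^{-1}(z-h(y)))\, dz = 0$ is imposed by \eqref{eq:index.w.perp.defn}.

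The pointwise estimate is obtained by rescaling $t = \eps^{-1}(z-h(y))$ and $\tilde\psi(t) \triangleq \psi_y(h(y)+\eps t)$, under which the left-hand side becomes
\[
\int_{-T_-}^{T_+} \Bigl( (\tilde\psi')^2 + W''(u(y, h(y)+\eps t)) \tilde\psi^2 \Bigr) dt,\quad T_\pm \triangleq \eps^{-1}(\eta \mp h(y)) \to \infty.
\]
By \eqref{eq:mult.one.initial.bds} and the definition of $U[\mathbf{h}]$ (with $Q=1$, so $U$ is a single truncated heteroclinic profile coinciding with $\mathbb{H}$ on $\{|t| \leq 3|\log\eps|\}$), one has $u(y,h(y)+\eps t) = \mathbb{H}(t) + O(\eps^2)$ uniformly, hence $W''(u) = W''(\mathbb{H}(t)) + O(\eps^2)$. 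Moreover $\overline{\mathbb{H}}' - \mathbb{H}'$ is supported in $\{|t| \geq 3|\log\eps|\}$ where $|\mathbb{H}'|$ is already $O(\eps^{3\sqrt{2}})$ by \eqref{eq:heteroclinic.expansion.ii}, so the given orthogonality forces $|\int_\RR \tilde\psi\, \mathbb{H}'\, dt| = O(\eps^{3\sqrt{2}}) \Vert \tilde\psi \Vert_{L^2}$. The classical spectral analysis of $L_0$---whose $L^2$-kernel is spanned by the strictly positive ground state $\mathbb{H}'$ and whose essential spectrum begins at $W''(\pm 1) = 2 > 0$, so that the second eigenvalue $\lambda_2 > 0$ is strictly positive (cf.\ \cite[Lemma 3.7]{Pacard12})---then gives, for any $\tilde\psi \perp \mathbb{H}'$ in $L^2(\RR)$, the $L^2$-coercivity $\int (\tilde\psi')^2 + W''(\mathbb{H})\tilde\psi^2 \geq \lambda_2 \int \tilde\psi^2$, which together with the uniform bound $|W''(\mathbb{H})| \leq C$ upgrades to full $H^1$-coercivity $\int (\tilde\psi')^2 + W''(\mathbb{H})\tilde\psi^2 \geq \mu \int (\tilde\psi')^2 + \tilde\psi^2$ for some fixed $\mu > 0$. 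Unscaling and integrating over $y \in \Gamma$, then combining with the horizontal gradient contribution, yields the desired inequality with $\gamma = \mu/2$.

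The main obstacle is to verify that the various perturbations of the reference one-dimensional spectral problem---the $O(\eps^2)$ deviation of $W''(u)$ from $W''(\mathbb{H})$, the replacement of $\RR$ by the finite interval $[-T_-,T_+]$, and the $O(\eps^{3\sqrt{2}})$ failure of exact $\mathbb{H}'$-orthogonality---do not degrade the coercivity constant as $\eps \to 0$. All three are benign: the $O(\eps^2) \int \tilde\psi^2$ perturbation is absorbed into $\mu/2$ using the $L^2$-coercivity; near the rescaled endpoints $|t| \sim \eps^{-1}\eta$ the potential $W''(u) \to W''(\pm 1) = 2 > 0$ is strongly \emph{stabilizing} rather than destabilizing, so extending $\tilde\psi$ by zero preserves the spectral gap with negligible loss; and the tangential component $a\, \mathbb{H}'$ with $a = O(\eps^{3\sqrt{2}}) \Vert \tilde\psi \Vert_{L^2}$ needed to exactly orthogonalize $\tilde\psi$ contributes negligibly to the vertical quadratic form since $L_0 \mathbb{H}' = 0$. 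Consequently the coercivity constant $\gamma > 0$ can be chosen independently of $y \in \Gamma$ and of all small $\eps > 0$.
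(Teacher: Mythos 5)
Your proposal is correct and follows essentially the same route as the paper: both rest on the one-dimensional spectral gap for $-\partial_t^2 + W''(\mathbb{H})$ on the orthogonal complement of $\mathbb{H}'$, combined with the observation that the finite interval, the truncation $\overline{\mathbb{H}}$ versus $\mathbb{H}$, and the $O(\eps^2)$ discrepancy $W''(u) - W''(\mathbb{H})$ are absorbable perturbations, with the nonnegative tangential gradient supplying the horizontal part of the estimate (the paper compresses your perturbation bookkeeping into ``a change of variables and a compactness argument''). One small caveat: extending $\tilde\psi$ by zero beyond $t = \pm\eps^{-1}(\eta \mp h)$ is not literally admissible unless it vanishes there, so that phrase should be replaced by the standard cutoff decomposition (or a compactness argument), using that $W''(u) \geq c > 0$ on the tails---which is exactly the stabilizing mechanism you already invoke.
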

\begin{proof}
Recall that there is some $\gamma = \gamma(W)>0$ so that if $f(t)$ satisfies $\int_{-\infty}^{\infty}f(t) \mathbb{H}'(t) dt =0$, then
\[
\int_{-\infty}^{\infty} f'(t)^{2} + W''(\mathbb{H}(t)) f(t)^{2} \, dt \geq 4\gamma \int_{-\infty}^{\infty} f'(t)^{2} + f(t)^{2} \, dt.
\]
(See, e.g.\ \cite[(9.28)]{delPinoKowalczykWei}.) A change of variables and a compactness argument imply that
\[
\int_{-\eta}^{\eta} \eps (\partial_{z}w^{\perp}(y,z))^{2} + \eps^{-1}W''(U) (w^{\perp}(y,z))^{2} \, dz \geq 3 \gamma \int_{-\eta}^{\eta} \eps (\partial_{z}w^{\perp}(y,z))^{2} + \eps^{-1}(w^{\perp}(y,z))^{2} \, dz
\]
as long as $\eps>0$ is sufficiently small. From this, and \eqref{eq:mult.one.initial.bds}, we find
\begin{align*}
\cQ_{u}^{\Omega}(w^{\perp},w^{\perp}) & = \int_{-\eta}^{\eta} \int_{\Gamma} \left( \eps |\nabla_{\Gamma_{z}}w^{\perp}|^{2} + \eps (\partial_{z}w^{\perp})^{2} + \eps^{-1} W''(u) (w^{\perp})^{2} \right)d\mu_{g_{z}} dz\\
& = \int_{-\eta}^{\eta} \int_{\Gamma} \left( \eps |\nabla_{\Gamma_{z}}w^{\perp}|^{2} + \eps (\partial_{z}w^{\perp})^{2} + \eps^{-1} W''(U) (w^{\perp})^{2} \right)d\mu_{g_{z}} dz\\
& \qquad + O(\eps) \int_{\Omega} (w^{\perp})^{2} d\mu_{g} \\
& \geq 2\gamma \int_{-\eta}^{\eta} \int_{\Gamma} \left( \eps (\partial_{z}w^{\perp})^{2} + \eps^{-1} (w^{\perp})^{2} \right)d\mu_\Gamma dz\\
& \qquad + \int_{-\eta}^{\eta} \int_{\Gamma}  \eps |\nabla_{\Gamma_{z}}w^{\perp}|^{2} d\mu_{g_{z}} dz + O(\eps) \int_{\Omega} (w^{\perp})^{2} d\mu_{g} \\
& \geq \gamma \int_{\Omega} \eps |\nabla w^{\perp}|^{2} + \eps^{-1}(w^{\perp})^{2} \, d\mu_{g}.
\end{align*}
This completes the proof. 
\end{proof}
\begin{lemm}\label{lemm:index.mixed.terms}
For $\psi$, $f$, $w^{\perp}$ as in \eqref{eq:index.w.decomposition}-\eqref{eq:index.w.parallel}, we have
\begin{align*}
 \cQ_{u}^{\Omega}(\psi, w^{\perp}) & \geq - o(\eps^{2}) \int_{\Gamma} |\nabla_\Gamma f|^{2} + f^{2}\, d\mu_\Gamma -  o(1) \int_{\Omega} \eps |\nabla w^{\perp}|^{2} + \eps^{-1} (w^{\perp})^{2} \, d\mu_{g}.
\end{align*}
\end{lemm}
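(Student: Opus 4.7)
My plan is to integrate by parts so that $\cQ_u^\Omega(\psi, w^\perp)$ becomes an $L^2$ pairing of $w^\perp$ against $\eps^{-1} \cL_u \psi$, where $\cL_u \triangleq -\eps^2 \Delta_g + W''(u)$, and then to exploit the orthogonality \eqref{eq:index.w.perp.defn} to kill the leading-order contribution. Boundary terms at $z = \pm \eta$ are negligible since $\overline{\mathbb{H}}'$ and $\overline{\mathbb{H}}''$ are of order $e^{-c/\eps}$ there.

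Computing $\cL_u \psi$ in Fermi coordinates, with $t = \eps^{-1}(z-h(y))$ and $\Delta_g = \Delta_{\Gamma_z} + H_{\Gamma_z} \partial_z + \partial_z^2$, I would exploit two main cancellations: (i) $-\eps^2 \partial_z^2 \psi + W''(\overline{\mathbb{H}}) \psi = O(\eps^3) f$ by differentiating \eqref{eq:approximate.heteroclinic.behavior}; (ii) Taylor expanding $W''(u) = W''(U) + W'''(U)\phi + O(\phi^2)$ and substituting Proposition~\ref{prop:mult.one.improved.phi.behav} contributes $\eps^2 V(y) W'''(\overline{\mathbb{H}}(t)) \mathbb{J}(t) f \overline{\mathbb{H}}' + o(\eps^2) f \overline{\mathbb{H}}'$. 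Combining these with Lemma~\ref{lemm:mult.one.imp.hHeqn} and \eqref{eq:mult.one.initial.bds} (yielding $\Delta h - H = O(\eps^2)$ and $\nabla h = O(\eps^2)$), I arrive at a schematic expansion
\[
\cL_u \psi = \Psi(y,t) \, \overline{\mathbb{H}}'(t) + O(\eps^3)(|f|+|\nabla f|)\, \overline{\mathbb{H}}''(t),
\]
where $\Psi(y,t) = -\eps^2 (\Delta_{\Gamma_z} f)|_{z=h(y)+\eps t} + \eps^2 V(y) W'''(\overline{\mathbb{H}}(t)) \mathbb{J}(t) f(y) + o(\eps^2) f$.

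To exploit the orthogonality without introducing $\Delta f$, I would handle the tangential contribution $\eps \langle \nabla_{\Gamma_z} f, \nabla_{\Gamma_z} w^\perp\rangle \overline{\mathbb{H}}'$ directly, before integrating the gradient onto $w^\perp$. Differentiating the constraint $\int w^\perp \overline{\mathbb{H}}' \, dz = 0$ in $y$ and using $\nabla_y \overline{\mathbb{H}}'(t) = -\eps^{-1} \overline{\mathbb{H}}''(t) \nabla h$ with $|\nabla h| = O(\eps^2)$ yields
\[
\int (\nabla_y w^\perp) \overline{\mathbb{H}}' \, dz = O(\eps) \int w^\perp \overline{\mathbb{H}}'' \, dz = O(\eps^{3/2}) \Vert w^\perp\Vert_{L^2_z},
\]
so that by Cauchy--Schwarz
\[
\Big| \eps \int_\Omega \langle \nabla f, \nabla_{\Gamma_z} w^\perp\rangle \overline{\mathbb{H}}' \, d\mu_g \Big| \leq C\eps^{5/2} \Vert \nabla f\Vert_{L^2(\Gamma)} \Vert w^\perp\Vert_{L^2(\Omega)}.
\]
The remaining pieces of $\cQ_u^\Omega(\psi, w^\perp)$ are pointwise dominated by $o(\eps^2)(|f|+|\nabla f|)\rho(t)$ for some rapidly decaying $\rho$ (coming from $\mathbb{J}$, $\overline{\mathbb{H}}'$, or $\overline{\mathbb{H}}''$), and, together with $\int(\overline{\mathbb{H}}')^2 \, dz = O(\eps)$, contribute at most $o(\eps^{5/2}) \Vert f\Vert_{H^1(\Gamma)} \Vert w^\perp\Vert_{L^2(\Omega)}$. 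Applying Young's inequality with a slowly decaying parameter $\delta$, together with the fact $\eps \Vert w^\perp\Vert_{L^2(\Omega)}^2 = o(1) \cdot \int_\Omega \eps^{-1}(w^\perp)^2$, produces the stated lower bound (in fact in both directions).

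The main obstacle is the careful bookkeeping: amongst the many error terms generated by the Taylor and Fermi-coordinate expansions, one must identify exactly which $\overline{\mathbb{H}}'$-parallel piece is $t$-independent (and hence dies under orthogonality) versus which pieces merely decay fast enough in $t$ to survive Cauchy--Schwarz. The improved $\phi$-asymptotics from Proposition~\ref{prop:mult.one.improved.phi.behav} and the improved $\Delta h - H$ estimate from Lemma~\ref{lemm:mult.one.imp.hHeqn} are both essential, since without them the $\overline{\mathbb{H}}'$-direction coefficient would contain a $y$-dependent remainder of order $\eps^2$ that the orthogonality is unable to annihilate.
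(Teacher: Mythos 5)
Your proposal is correct and follows essentially the same route as the paper's proof: one writes $\cQ_u^\Omega(\psi,w^\perp)=\int_\Omega \eps^{-1}\big(-\eps^2\Delta_g+W''(u)\big)\psi\cdot w^\perp\,d\mu_g$, expands in Fermi coordinates using \eqref{eq:approximate.heteroclinic.behavior}, \eqref{eq:mult.one.initial.bds} and Lemma \ref{lemm:mult.one.imp.hHeqn}, kills the one dangerous term $\eps\langle\nabla_\Gamma f,\nabla_\Gamma w^\perp\rangle\,\overline{\mathbb{H}}'$ by differentiating the orthogonality constraint \eqref{eq:index.w.perp.defn} in $y$ (exactly your identity $\int \partial_{y_j}w^\perp\,\overline{\mathbb{H}}'\,dz=\eps^{-1}(\partial_{y_j}h)\int w^\perp\,\overline{\mathbb{H}}''\,dz$ with $\nabla h=O(\eps^2)$), and absorbs everything else by Cauchy--Schwarz/Young; the only extra care the paper takes, still within your ``bookkeeping,'' is freezing $g_z$, $d\mu_{g_z}$ at $z=0$ first, whose $O(\eps+|z|)$ errors are precisely what the $o(1)\int_\Omega\eps|\nabla w^\perp|^2$ allowance absorbs. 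One correction to your closing remark: Proposition \ref{prop:mult.one.improved.phi.behav} is not needed for this lemma and the paper does not invoke it here; the crude bound $\phi=O(\eps^2)$ already gives $\eps^{-1}(W''(u)-W''(U))\psi\,w^\perp=O(\eps)|f|\,\overline{\mathbb{H}}'\,|w^\perp|$, hence (using $\int(\overline{\mathbb{H}}')^2\,dz=O(\eps)$) a contribution of size $O(\eps^{3/2})\Vert f\Vert_{L^2(\Gamma)}\Vert w^\perp\Vert_{L^2(\Omega)}$ — so your intermediate rate $o(\eps^{5/2})$ is too optimistic — and this still splits comfortably as $o(\eps^2)\int_\Gamma f^2+o(1)\int_\Omega\eps^{-1}(w^\perp)^2$ because the allowance on $\Vert w^\perp\Vert_{L^2}^2$ is $o(\eps^{-1})$, not $o(\eps)$. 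An order-$\eps^2$, $y$-dependent coefficient along $\overline{\mathbb{H}}'$ only becomes fatal when it is paired against $\psi$ itself, i.e.\ in the diagonal term of Lemma \ref{lemm:index.Hprime.comp}; that is where the refined expansion of $\phi$ is genuinely essential, not here.
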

\begin{proof}
Repeatedly using \eqref{eq:heteroclinic.expansion.ii}, \eqref{eq:heteroclinic.expansion.iii},  \eqref{eq:mult.one.initial.bds}, Lemma \ref{lemm:mult.one.imp.hHeqn},  \eqref{eq:mean.curv.ddt.h}, \eqref{eq:mean.curv.ddt.grad}:
\begin{align*}
& \cQ_{u}^{\Omega}(\psi,w^{\perp}) \\
& = \int_{\Omega} \left( - \eps (\Delta_{g}\psi) w^{\perp} + \eps^{-1} W''(u) \psi w^{\perp} \right) d\mu_{g}\\
& = \int_{-\eta}^{\eta} \int_{\Gamma} \Big( - \eps (\Delta_{\Gamma_{z}} \psi) w^{\perp} - H_{\Gamma_{z}} f w^{\perp} \cdot \overline{\mathbb{H}}''(\eps^{-1}(z-h(y))) \\
& \qquad - \eps^{-1} f w^{\perp} \cdot \overline{\mathbb{H}}'''(\eps^{-1}(z-h(y))) + \eps^{-1} W''(u) \psi w^{\perp} \Big) d\mu_{g_{z}}dz \\
& = \int_{-\eta}^{\eta} \int_{\Gamma} \Big( - \eps (\Delta_{\Gamma_{z}}f) w^{\perp} \cdot \overline{\mathbb{H}}'(\eps^{-1}(z-h(y))) + 2  \langle \nabla_{\Gamma_{z}}f,\nabla_{\Gamma_{z}} h\rangle w^{\perp} \cdot \overline{\mathbb{H}}''(\eps^{-1}(z-h(y))) \\
& \qquad + (\Delta_{\Gamma_{z}}h - H_{\Gamma_{z}}) f w^{\perp} \cdot \overline{\mathbb{H}}''(\eps^{-1}(z-h(y))) \\
& \qquad - \eps^{-1} f w^\perp |\nabla_{\Gamma_z} h|^2 \cdot  \overline{\mathbb{H}}'''(\eps^{-1}(z - h(y))) \\
& \qquad  - \eps^{-1} f w^{\perp} \cdot \overline{\mathbb{H}}'''(\eps^{-1}(z-h(y))) + \eps^{-1} W''(u) \psi w^{\perp}  \Big) d\mu_{g_z} dz \\
& = \int_{-\eta}^{\eta} \int_{\Gamma} \Big( - \eps (\Delta_{\Gamma_{z}}f) w^{\perp} \cdot \overline{\mathbb{H}}'(\eps^{-1}(z-h(y))) + 2  \langle \nabla_{\Gamma_{z}}f,\nabla_{\Gamma_{z}} h\rangle w^{\perp} \cdot \overline{\mathbb{H}}''(\eps^{-1}(z-h(y))) \\
& \qquad + (\Delta_{\Gamma_{z}}h - H_{\Gamma_{z}}) f w^{\perp} \cdot \overline{\mathbb{H}}''(\eps^{-1}(z-h(y))) \\
& \qquad +  \eps^{-1}(W''(U+\phi) - W''(U) + O(\eps^3)) \psi w^\perp \Big) d\mu_{g_z} dz \\
& = \int_{-\eta}^{\eta} \int_{\Gamma} \Big( - \eps (\Delta_{\Gamma_{z}}f) w^{\perp} \cdot \overline{\mathbb{H}}'(\eps^{-1}(z-h(y))) + 2  \langle \nabla_{\Gamma_{z}}f,\nabla_{\Gamma_{z}} h\rangle w^{\perp} \cdot \overline{\mathbb{H}}''(\eps^{-1}(z-h(y))) \\
& \qquad + (\Delta_{\Gamma_{z}}h - H_{\Gamma_{z}}) f w^{\perp} \cdot \overline{\mathbb{H}}''(\eps^{-1}(z-h(y)))) d\mu_{g_z} dz -  O(\eps) \int_\Omega |f w^\perp| \, d\mu_g. \\
& = \int_{-\eta}^\eta \int_\Gamma \Big( \eps \langle \nabla_{\Gamma_z} f, \nabla_{\Gamma_z} w^\perp \rangle \cdot \overline{\mathbb{H}}'(\eps^{-1}(z - h(y))) + \langle \nabla_{\Gamma_z} f, \nabla_{\Gamma_z} h \rangle w^\perp \cdot \overline{\mathbb{H}}''(\eps^{-1}(z-h(y)))  \\
& \qquad + (\Delta_{\Gamma_z} h - H_{\Gamma_z}) f w^\perp \cdot \overline{\mathbb{H}}''(\eps^{-1}(z-h(y))) \Big) d\mu_{g_z} dz - O(\eps) \int_\Omega |f w^\perp| \, d\mu_g \\
& = \int_{-\eta}^\eta \int_\Gamma \Big( \eps \langle \nabla_{\Gamma_z} f, \nabla_{\Gamma_z} w^\perp \rangle \cdot \overline{\mathbb{H}}'(\eps^{-1}(z - h(y))) + (\Delta_{\Gamma_z} h - H_{\Gamma_z}) f w^\perp \cdot \overline{\mathbb{H}}''(\eps^{-1}(z-h(y))) \Big) d\mu_{g_z} dz \\
& \qquad - O(\eps) \int_\Omega |f w^\perp| \, d\mu_g - O(\eps^2) \int_\Omega |\nabla_{\Gamma_z} f| |w^\perp| \, d\mu_g \\
& = \int_{-\eta}^\eta \int_\Gamma \Big( \eps \langle \nabla_{\Gamma_z} f, \nabla_{\Gamma_z} w^\perp \rangle\cdot \overline{\mathbb{H}}'(\eps^{-1}(z - h(y))) + (\Delta_{\Gamma_z} h - H_{\Gamma_z}) f w^\perp \cdot \overline{\mathbb{H}}''(\eps^{-1}(z-h(y))) \Big) d\mu_{g_z} dz \\
& \qquad - o(\eps^3) \int_\Gamma |\nabla_{\Gamma} f|^2 + f^2 \, d\mu_\Gamma - o(1) \int_\Omega \eps^{-1} (w^\perp)^2 \, d\mu_g.
\end{align*}
In the last inequality, we estimated, using Cauchy--Schwarz, $2 ab \leq \eps^{1-\sigma} a^2 + \eps^{-1+\sigma} b^2$, for $\sigma \in (0, 1)$, $(a, b) = (|f|, |w^\perp|)$, $(|\nabla_{\Gamma_z} f|, |w^\perp|)$. We can further estimate, using \eqref{eq:mult.one.initial.bds},  \eqref{eq:mean.curv.ddt.h}, \eqref{eq:mean.curv.ddt.grad}, and \eqref{eq:mean.curv.ddt.laplace}:
\[ \Delta_{\Gamma_z} h - H_{\Gamma_z} = \Delta_\Gamma h - H_\Gamma + O(|z|) = O(\eps + |z|), \]
and
\[ \langle \nabla_{\Gamma_z} f, \nabla_{\Gamma_z} w^\perp \rangle = \langle \nabla_\Gamma f, \nabla_\Gamma w^\perp \rangle + O(\eps + |z|) |\nabla_\Gamma f| |\nabla_\Gamma w^\perp|. \]
By the same Cauchy--Schwarz estimate applied to $(a, b) = (|f|, |w^\perp|)$, $(|\nabla_\Gamma f|, |\nabla_\Gamma w^\perp|)$ we get:
\begin{align*}
	\cQ_u^\Omega(\psi, w^\perp) 
		& = \int_{-\eta}^\eta \int_\Gamma \eps \langle \nabla_\Gamma f, \nabla_\Gamma w^\perp \rangle\cdot \overline{\mathbb{H}}'(\eps^{-1}(z - h(y))) \, d\mu_{g_z} dz \\
		& \qquad - o(\eps^2) \int_\Gamma |\nabla_\Gamma f|^2 + f^2 \, d\mu_\Gamma - o(1) \int_\Omega \eps |\nabla w^\perp|^2 + \eps^{-1} (w^\perp)^2 \, d\mu_g.
\end{align*}
Estimating $|d\mu_{g_z} - d\mu_\Gamma| = O(|z|) d\mu_\Gamma$ and using the same Cauchy--Schwarz inequality, we deduce:
\begin{align*}
	\cQ_u^\Omega(\psi, w^\perp) 
		& = \int_{-\eta}^\eta \int_\Gamma \eps \langle \nabla_\Gamma f, \nabla_\Gamma w^\perp \rangle\cdot \overline{\mathbb{H}}'(\eps^{-1}(z - h(y))) \, d\mu_\Gamma dz \\
		& \qquad - o(\eps^2) \int_\Gamma |\nabla_\Gamma f|^2 + f^2 \, d\mu_\Gamma - o(1) \int_\Omega \eps |\nabla w^\perp|^2 + \eps^{-1} (w^\perp)^2 \, d\mu_g \\
		& = \int_\Gamma \int_{-\eta}^\eta \eps \langle \nabla_\Gamma f, \nabla_\Gamma w^\perp \rangle\cdot \overline{\mathbb{H}}'(\eps^{-1}(z - h(y))) \, dz \, d\mu_\Gamma\\
		& \qquad - o(\eps^2) \int_\Gamma |\nabla_\Gamma f|^2 + f^2 \, d\mu_\Gamma - o(1) \int_\Omega \eps |\nabla w^\perp|^2 + \eps^{-1} (w^\perp)^2 \, d\mu_g.
\end{align*}
Since $\langle \nabla_\Gamma f, \nabla_\Gamma w^\perp \rangle = g^{ij}_\Gamma \partial_{y_i} f \partial_{y_j} w^\perp$, whose first two factors are independent of $z$, we can use
\[
\int_{-\eta}^{\eta} \partial_{y_j} w^{\perp}\overline{\mathbb{H}}'(\eps^{-1}(z-h(y))) \, dz =  \eps^{-1} \int_{-\eta}^{\eta} (\partial_{y_j} h) w^{\perp} \overline{\mathbb{H}}''(\eps^{-1}(z-h(y))) \, dz,
\]
which follows from differentiating \eqref{eq:index.w.perp.defn} once horizontally. We thus have:
\begin{align*}
	\cQ_u^\Omega(\psi, w^\perp)
		& = \int_{-\Gamma} \int_{-\eta}^\eta \langle \nabla_\Gamma f, \nabla_\Gamma h \rangle w^\perp \cdot \overline{\mathbb{H}}''(\eps^{-1}(z-h(y))) \, dz \, d\mu_\Gamma\\
		& \qquad - o(\eps^2) \int_\Gamma |\nabla_\Gamma f|^2 + f^2 \, d\mu_\Gamma - o(1) \int_\Omega \eps |\nabla w^\perp|^2 + \eps^{-1} (w^\perp)^2 \, d\mu_g.
\end{align*}
This completes the proof, since we're already estimated terms of this form with the correct error term.
\end{proof}

\begin{lemm}\label{lemm:first.eig.stability.Q.low.bd}
There is $\sigma =\sigma(M,g,W,\Sigma)>0$ so that for $\eps > 0$ sufficiently small and any $w \in C^{\infty}(\Omega)$, we have
\[
\cQ^{\Omega}_{u}(w,w) \geq - \eps \sigma \int_{\Omega} w^{2} d\mu_{g}.
\]
\end{lemm}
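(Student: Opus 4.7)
The plan is to use the decomposition \eqref{eq:index.w.decomposition}--\eqref{eq:index.w.parallel}, combined with the three preceding lemmas that estimate the three terms arising from expanding $\cQ^{\Omega}_u(w,w)$.

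First, I would expand
\[ \cQ^{\Omega}_u(w,w) = \cQ^{\Omega}_u(\psi,\psi) + 2\cQ^{\Omega}_u(\psi,w^\perp) + \cQ^{\Omega}_u(w^\perp,w^\perp). \]
For the parallel part, Lemma \ref{lemm:index.Hprime.comp} reduces $\cQ^{\Omega}_u(\psi,\psi)$ to $\eps^2 (\energyunit - o(1))$ times the Jacobi quadratic form $\cQ_{\Gamma}(f,f)$ (plus an $o(\eps^2)(\Vert \nabla_\Gamma f\Vert^2_{L^2} + \Vert f\Vert^2_{L^2})$ error). Since $\Gamma$ converges smoothly to $\Sigma$ by Lemma \ref{lemm:multiplicity.one.convergence}, the Jacobi operator on $\Gamma$ converges to the one on $\Sigma$ in operator norm, and in particular its lowest Dirichlet eigenvalue on the (slightly larger than $\Gamma$) domain is uniformly bounded below by some $-\lambda_0 = -\lambda_0(M,g,\Sigma)$. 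Thus
\[ \cQ_\Gamma(f,f) \geq -\lambda_0 \int_{\Gamma} f^2 \, d\mu_\Gamma \]
for all $\eps$ sufficiently small, and the $o(\eps^2)$ gradient error can be absorbed once we observe that the Jacobi form controls $\Vert \nabla_\Gamma f\Vert^2_{L^2}$ modulo $\Vert f \Vert_{L^2}^2$ terms (e.g., use $\cQ_\Gamma(f,f) = \Vert \nabla f\Vert^2 - \int V f^2$ and bound $\int V f^2 \leq C\Vert f\Vert^2_{L^2}$).

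For the perpendicular part, Lemma \ref{lemm:index.perp.comp} gives the definite positivity
\[ \cQ^{\Omega}_u(w^\perp,w^\perp) \geq \gamma \int_{\Omega} \eps|\nabla w^\perp|^2 + \eps^{-1}(w^\perp)^2 \, d\mu_g, \]
and for the cross term, Lemma \ref{lemm:index.mixed.terms} gives an error of the form $-o(\eps^2)(\Vert \nabla_\Gamma f\Vert^2+\Vert f\Vert^2) - o(1)(\eps\Vert\nabla w^\perp\Vert^2 + \eps^{-1}\Vert w^\perp\Vert^2)$. The latter can be absorbed into the $\gamma$ term from Lemma \ref{lemm:index.perp.comp} for small $\eps$, and the former into the parallel term's gradient and $L^2$ parts of $f$.

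Combining everything, we obtain
\[ \cQ^{\Omega}_u(w,w) \geq -\eps^2 (\energyunit+o(1))\lambda_0 \int_{\Gamma} f^2 \, d\mu_\Gamma + \tfrac{\gamma}{2}\int_\Omega \eps^{-1}(w^\perp)^2 \, d\mu_g. \]
Now invoke \eqref{eq:index.decomp.L2.norm}, which expresses $\int_\Omega w^2$ as $\eps(\energyunit-o(1))\int_\Gamma f^2 + (1+o(1))\int_\Omega (w^\perp)^2$. The first term on the right is therefore $\geq -\eps\lambda_0(1+o(1))\int_\Omega w^2 + \eps\lambda_0(1+o(1))\int_\Omega (w^\perp)^2$, and the extra (positive) $(w^\perp)^2$ contribution is absorbed by the $\gamma$ term (since $\eps\lambda_0 \ll \eps^{-1}\gamma$ for small $\eps$). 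The upshot is the bound with $\sigma = 2\lambda_0$, say, for all sufficiently small $\eps$.

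The only subtle point—really a sanity check—is that the ``lowest eigenvalue'' bound on $\Gamma$ must be uniform in $\eps$: this follows because $\Gamma = \Gamma(\eps) \to \Sigma$ in $C^{2,\theta}$, so the Jacobi operators (acting on functions pulled back via $\Pi_\Sigma|_\Gamma$) converge in coefficients, yielding uniform control on the Rayleigh quotient's infimum. Everything else is bookkeeping on the three prior lemmas.
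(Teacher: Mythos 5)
Your proposal is correct and follows essentially the same route as the paper: decompose $w=\psi+w^{\perp}$ as in \eqref{eq:index.w.decomposition}--\eqref{eq:index.w.parallel}, apply Lemmas \ref{lemm:index.Hprime.comp}, \ref{lemm:index.perp.comp}, and \ref{lemm:index.mixed.terms}, absorb the error terms into the coercive $w^{\perp}$ contribution, and convert $\int_{\Gamma}f^{2}$ into $\int_{\Omega}w^{2}$ via \eqref{eq:index.decomp.L2.norm}. The only cosmetic difference is that the paper bounds $\cQ_{\Gamma}(f,f)\geq \delta\int_{\Gamma}|\nabla_{\Gamma}f|^{2}-\delta^{-1}\int_{\Gamma}f^{2}$ directly (using only the uniform $C^{0}$ bound on the potential coming from the $C^{2,\theta}$ convergence of $\Gamma$ to $\Sigma$, and keeping a small positive gradient term to absorb the $o(\eps^{2})$ errors), whereas you phrase the same fact as a uniform eigenvalue lower bound plus a separate absorption through $\|\nabla_{\Gamma}f\|_{L^{2}}^{2}\leq \cQ_{\Gamma}(f,f)+C\|f\|_{L^{2}}^{2}$ --- equivalent in substance.
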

\begin{proof}
Because $\Gamma$ converges to $\Sigma$ in $C^{2,\theta}$, we find that for $\delta=\delta(M,g,\Sigma) \in (0, 1)$ and $\eps>0$ sufficiently small, we have that 
\[
\int_{\Gamma} |\nabla_{\Gamma} f|^{2} - \left( (|\sff_{\Gamma}|^{2} +\ricc_{g}(\partial_{z},\partial_{z}))\circ\Pi_\Gamma \right) f^{2} d\mu_\Gamma \geq \int_{\Gamma} \delta |\nabla_{\Gamma}f|^{2} -\delta^{-1}f^{2} \, d\mu_\Gamma. 
\]
Thus, using \eqref{eq:index.w.decomposition}-\eqref{eq:index.w.parallel}, Lemmas \ref{lemm:index.Hprime.comp}, \ref{lemm:index.perp.comp}, and \ref{lemm:index.mixed.terms}, we find that
\begin{align*}
\cQ^{\Omega}_{u}(w,w) & = \cQ^{\Omega}_{u}(\psi,\psi) + \cQ_u^{\Omega}(w^{\perp},w^{\perp}) + 2 \cQ^{\Omega}_{u}(\psi,w^{\perp})\\
& \geq \eps^{2} (\energyunit - o(1)) \int_{\Gamma} \delta |\nabla_{\Gamma} f|^{2} - \delta^{-1} f^{2} \, d\mu_\Gamma + \gamma \int_{\Omega} \eps |\nabla w^{\perp}|^{2} + \eps^{-1} (w^{\perp})^{2} \, d\mu_{g}\\
& \qquad - o(\eps^{2}) \int_{\Gamma} |\nabla f|^{2} + f^{2}\, d\mu_\Gamma - o(1) \int_{\Omega} \eps |\nabla w^{\perp}|^{2} + \eps^{-1} (w^{\perp})^{2} \, d\mu_{g}\\
& \geq - \eps^{2} \delta^{-1}(\energyunit-o(1)) \int_{\Gamma} f^{2} d\mu_\Gamma \geq - \eps \delta^{-1} (1+o(1)) \int_{\Omega} w^{2} d\mu_{g} .
\end{align*}
In the last inequality we used \eqref{eq:index.decomp.L2.norm}. This completes the proof. 
\end{proof}

We are now able to prove the main theorem. In what follows:
\begin{itemize}
	\item $\ind(u)$, $\nul(u)$ denote the index and nullity of the second variation of Allen--Cahn energy functional (see \eqref{eq:second.var.AC}), and 
	\item $\ind(\Sigma)$, $\nul(\Sigma)$ denote the index and nullity of the second variation of the area functional for the limiting multiplicity one smooth minimal surface (recall  \eqref{eq:multiplicity.one.assumption}).
\end{itemize}
For simplicity, we will assume that $\partial M = \emptyset$, although we expect that the general strategy used here should extend to Dirichlet or Neumann boundary conditions with appropriate modifications.

\begin{theo} \label{theo:index.lower.bounds}
	If $(M^n, g)$, $u$, $\Sigma$ are as above, and $\partial M = \emptyset$, then, for sufficiently small $\eps > 0$,
	\[ \ind(\Sigma) + \nul(\Sigma) \geq \ind(u) + \nul(u). \]
\end{theo}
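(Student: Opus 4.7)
Let $k = \ind(u) + \nul(u)$ and pick a $k$-dimensional subspace $V \subset C^\infty(M)$ on which $\cQ_u \leq 0$. For each $w \in V$, apply the decomposition \eqref{eq:index.w.decomposition}--\eqref{eq:index.w.perp.defn} on the tubular neighborhood $\Omega$ of $\Gamma = \{u = 0\}$ to write
\[ w|_\Omega = f_w(y)\,\overline{\mathbb{H}}'(\eps^{-1}(z-h(y))) + w^\perp(y,z). \]
Transplanting $f_w$ from $\Gamma$ to $\Sigma$ via the inverse of the nearest-point projection (a $C^{2,\theta}$ diffeomorphism for small $\eps$ by Lemma \ref{lemm:multiplicity.one.convergence}) defines a linear map $T : V \to C^\infty(\Sigma)$. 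I will show that $T$ is injective and that $\cQ_\Sigma$ is non-positive on $T(V)$ up to an $o(1)$ perturbation; the desired index bound will then follow from spectral continuity of the Jacobi operator on $\Sigma$.

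\textbf{Injectivity of $T$.} Suppose $w \in V$ with $T(w) \equiv 0$, equivalently $f_w \equiv 0$. Then $w|_\Omega = w^\perp$, so Lemma \ref{lemm:index.perp.comp} yields
\[ \cQ_u^\Omega(w,w) \geq \gamma \int_\Omega \left(\eps|\nabla w|^2 + \eps^{-1} w^2\right) d\mu_g. \]
On $M \setminus \Omega$, the standard exponential decay of $u$ toward $\pm 1$ (cf.\ \eqref{eq:bounded.index.exp.decay}) together with $W''(\pm 1) = 2$ give $W''(u) \geq c > 0$, and hence
\[ \int_{M \setminus \Omega}\left(\eps|\nabla w|^2 + \eps^{-1}W''(u)w^2\right) d\mu_g \geq c\eps^{-1}\int_{M \setminus \Omega} w^2 \, d\mu_g. \]
Summing, $\cQ_u(w,w) \geq c' \int_M (\eps|\nabla w|^2 + \eps^{-1}w^2) \, d\mu_g \geq 0$, and combined with the hypothesis $\cQ_u(w,w) \leq 0$, this forces $w \equiv 0$. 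Hence $\dim T(V) = k$.

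\textbf{Energy comparison and conclusion.} For arbitrary $w \in V$, combining Lemmas \ref{lemm:index.Hprime.comp}, \ref{lemm:index.perp.comp}, and \ref{lemm:index.mixed.terms} with the exterior positivity from the previous paragraph yields
\[ 0 \geq \cQ_u(w,w) \geq \eps^2(\energyunit - o(1)) \cQ_\Gamma(f_w, f_w) - o(\eps^2)\|f_w\|_{H^1(\Gamma)}^2 + (\gamma - o(1))\int_\Omega \left(\eps|\nabla w^\perp|^2 + \eps^{-1}(w^\perp)^2\right) d\mu_g. \]
Dropping the non-negative $w^\perp$-term and dividing by $\eps^2 \energyunit$ gives $\cQ_\Gamma(f_w, f_w) \leq o(1) \|f_w\|_{H^1(\Gamma)}^2$. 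Since $\Gamma \to \Sigma$ in $C^{2,\theta}$, pulling back via $T$ produces a $k$-dimensional subspace $T(V) \subset C^\infty(\Sigma)$ on which the perturbed form $\cQ_\Sigma - o(1)\langle \cdot, \cdot\rangle_{H^1(\Sigma)}$ is non-positive. Setting $k_0 = \ind(\Sigma) + \nul(\Sigma)$, the Jacobi operator on the closed surface $\Sigma$ has discrete spectrum with its $(k_0 + 1)$-st eigenvalue $\lambda_{k_0+1} > 0$; as $\eps \to 0$ the perturbed form converges to $\cQ_\Sigma$ in operator norm on $H^1(\Sigma)$, so by continuity of eigenvalues its non-positive spectrum has dimension at most $k_0$ for small $\eps$. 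Hence $k = \dim T(V) \leq k_0$, as desired.

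\textbf{Main obstacle.} The delicate point is the $o(\eps^2)\|f\|_{H^1(\Gamma)}^2$ remainder in Lemma \ref{lemm:index.Hprime.comp}, which at first glance enlarges the effective Jacobi potential and could in principle add index. The resolution is that this remainder is of order $o(1)$ relative to the leading $\eps^2 \energyunit$ prefactor, and hence is a small $H^1$-perturbation of $\cQ_\Sigma$; discreteness of the Jacobi spectrum of $\Sigma$ together with the separation of $\lambda_{k_0+1}$ from zero ensures that only at most $k_0$ eigenvalues can cross zero under such a perturbation. A secondary technical point is making the comparison $\cQ_\Gamma \approx \cQ_\Sigma$ precise, but this follows from the $C^{2,\theta}$ convergence $\Gamma \to \Sigma$ provided by Lemma \ref{lemm:multiplicity.one.convergence} and standard perturbation bounds for Dirichlet forms under diffeomorphisms close to the identity.
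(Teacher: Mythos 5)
Your proposal is correct and follows essentially the same route as the paper: the same fiberwise decomposition \eqref{eq:index.w.decomposition}--\eqref{eq:index.w.perp.defn}, the same three quadratic-form estimates (Lemmas \ref{lemm:index.Hprime.comp}, \ref{lemm:index.perp.comp}, \ref{lemm:index.mixed.terms}), and the same exterior positivity of $W''(u)$ off the tubular neighborhood. The only real difference is the finite-dimensional endgame: the paper tests $\Pi_\eps(w)$ against the first $\ind(\Sigma)+\nul(\Sigma)$ eigenfunctions of $\cQ_\Gamma$ on $\Gamma$ and reaches a contradiction via the quantitative positivity \eqref{eq:orth.ind.Q.big} together with \eqref{eq:neg-dir-concentrate}, whereas you transplant the $k$-dimensional space of profile functions to $\Sigma$ and conclude by min-max/eigenvalue stability for the $o(1)$-perturbed Jacobi form -- equivalent bookkeeping resting on the same spectral-continuity input (your ``operator norm on $H^1$'' phrasing is loose, but the min-max argument behind it is sound).
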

\begin{proof}
	For brevity, let's set $I_\Sigma \triangleq \ind(\Sigma) + \nul(\Sigma)$, $I_0 \triangleq \ind(u) + \nul(u)$.	First, we show:

	\begin{clai}
		There are smooth functions $f_1, \ldots, f_{I_{\Sigma}} : \Gamma \to \RR$ and a constant $\delta > 0$ so that if $f \in C^{1}(\Gamma)$ satisfies $\langle f, f_i \rangle_{L^2(\Gamma)} = 0$ for all $i = 1, \ldots, I_\Sigma$, then
		\begin{equation}\label{eq:ind.gamma.pos.dir}
			\cQ_{\Gamma}(f,f)  \geq  \delta \int_{\Gamma} |\nabla_{\Gamma} f|^{2} + f^{2} \, d\mu_\Gamma.
		\end{equation}
	\end{clai}
	\begin{proof}[Proof of claim]	
		Because the nodal set $\Gamma$ converges to $\Sigma$ in $C^{2,\theta}$ (by Lemma \ref{lemm:multiplicity.one.convergence}), it is not hard to see that is a uniform lower bound $\nu > 0$ for the first \emph{positive} eigenvalue of the second variation of area of $\Gamma$. Take $f_1, \ldots, f_{I_\Sigma}$ to be the first $I_\Sigma$ eigenfunctions of $\cQ_\Gamma$. Then:
		\[ \cQ_\Gamma(f, f) = \int_\Gamma |\nabla_\Gamma f|^2 - (|\sff_\Gamma|^2 + \ricc_g(\partial_z, \partial_z)) f^2 \, d\mu_\Gamma \geq \nu \int_\Gamma f^2 \, d\mu_\Gamma, \]
		for $f \in C^1(\Gamma)$, $\langle f, f_1 \rangle_{L^2(\Gamma)} = \ldots = \langle f, f_{I_\Sigma} \rangle_{L^2(\Gamma)} = 0$. If $|\sff_\Gamma|^2 + \ricc_g(\partial_z, \partial_z)\leq C$, then
		\[ \tfrac{\nu}{2C} \cQ_\Gamma(f, f) = \tfrac{\nu}{2C} \int_\Gamma |\nabla_\Gamma f|^2 - (|\sff_\Gamma|^2 + \ricc_g(\partial_z, \partial_z)) f^2 \, d\mu_\Gamma \geq \int_\Gamma \tfrac{\nu}{2C} |\nabla_\Gamma f|^2 - \tfrac{\nu}{2} f^2 \, d\mu_\Gamma. \]
		The claim follows by adding these two inequalities.
	\end{proof}

	We define the linear functional $\Pi_\eps : L^2(M) \to L^2(\Gamma)$:
	\[ \Pi_\eps(w)(y) \triangleq \eps^{-1} \int_{-\eta}^\eta w(y, z) \cdot \overline{\mathbb{H}}'(\eps^{-1}(z-h(y))) \, dz, \]
	and another linear functional $\cI_\Gamma : C^1(\Gamma) \to \RR^{I_\Sigma}$:
	\[ \cI_{\Gamma}(f) \triangleq \big( \langle f, f_1 \rangle_{L^2(\Gamma)}, \ldots, \langle f, f_{I_\Sigma} \rangle_{L^2(\Gamma)} \big), \]
	so that $f \in \ker \cI_\Gamma$ precisely implies \eqref{eq:ind.gamma.pos.dir}. We note one more property of elements of $\ker \cI_\Gamma$:
	\begin{clai}
		Let $w \in C^{\infty}(\Omega)$ be such that $\Pi_\eps(w) \in \ker \cI_\Gamma$. Then, 
		\begin{equation} \label{eq:orth.ind.Q.big} 
			\cQ^{\Omega}_{u} (w,w) \geq \eps \sigma' \int_{\Omega} w^{2} \, d\mu_{g}
		\end{equation}
		for $\sigma'=\sigma'(M,g,W,\Sigma)>0$ and $\eps>0$ sufficiently small.
	\end{clai}
	\begin{proof}[Proof of claim]
		We proceed as in Lemma \ref{lemm:first.eig.stability.Q.low.bd} but we use the improved lower bound for $\cQ_{\Gamma}(f,f)$, \eqref{eq:ind.gamma.pos.dir}, for $f = \Pi_\eps(w)$. Write $\psi(y,z) = \Pi_\eps(w) \overline{\mathbb{H}}'(\eps^{-1}(z-h(y)))$. Then, using Lemmas \ref{lemm:index.Hprime.comp}, \ref{lemm:index.perp.comp}, and \ref{lemm:index.mixed.terms}:
		\begin{align*}
			\cQ^{\Omega}_{u}(w,w) 
				& = \cQ^{\Omega}_{u}(\psi,\psi) + \cQ^{\Omega}(w^{\perp},w^{\perp}) + 2 \cQ^{\Omega}_{u}(\psi,w^{\perp})\\
				& \geq \eps^{2} \delta (\energyunit - o(1)) \int_{\Gamma}  |\nabla_{\Gamma} f|^{2} + f^{2} \, d\mu_\Gamma + \gamma \int_{\Omega} \eps |\nabla w^{\perp}|^{2} + \eps^{-1} (w^{\perp})^{2} \, d\mu_{g}\\
				& - o(\eps^{2}) \int_{\Gamma} |\nabla f|^{2} + f^{2}\, d\mu_\Gamma -  o(1) \int_{\Omega} \eps |\nabla w^{\perp}|^{2} + \eps^{-1} (w^{\perp})^{2} \, d\mu_{g}\\
				& \geq \eps \sigma' \int_{\Omega} w^{2} \, d\mu_{g}
		\end{align*}
		The claim follows.
	\end{proof}
	
	\begin{clai}
		If $w \in C^{\infty}(M)$ satisfies $\cQ_{u}(w,w) \leq 0$, then 
		\begin{equation} \label{eq:neg-dir-concentrate}
			\int_{M\setminus \Omega} w^{2} \, d\mu_{g} \leq C \eps^{2} \int_{\Omega} w^{2} \, d\mu_g,
		\end{equation}
		for $C=C(M,g,W,\Sigma,\eta)>0$ and $\eps > 0$ sufficiently small.
	\end{clai}
	\begin{proof}[Proof of claim]
		Using Lemma \ref{lemm:first.eig.stability.Q.low.bd} and that $W''(u) \geq \kappa > 0$ on $M\setminus\Omega$ for $\eps>0$ small, we compute 
		\begin{equation*}
			0 \geq \cQ_{u}(w,w) \geq \cQ_{u}^{\Omega}(w,w) + \eps^{-1} \kappa \int_{M\setminus\Omega} w^{2} d\mu_{g} \geq -\eps \sigma \int_{\Omega} w^{2} d\mu_{g} + \eps^{-1} \kappa \int_{M\setminus\Omega} w^{2} d\mu_{g}.
		\end{equation*}
		Rearranging this completes the proof. 
	\end{proof}

	Now, let $w_{1}, \ldots, w_{I_{0}} \in C^{\infty}(M)$ denote an $L^{2}(M)$-orthonormal set of eigenfunctions for $\cQ_{u}$ with non-positive eigenvalue, and
	\[ W_\Omega \triangleq \lspan\{ w_1|_\Omega, \ldots, w_{I_0}|_\Omega \} \subset C^\infty(\Omega), \; W_\Gamma \triangleq \left\{ \Pi_\eps(w) : w \in \lspan\{ w_1, \ldots, w_{I_0} \} \right\}. \] 
	We emphasize that
	\begin{equation} \label{eq:negative.eigenfunctions.cQ}
		\cQ_u(w, w) \leq 0 \text{ for all } w \in \lspan\{ w_1, \ldots, w_{I_0} \} \subset C^\infty(M).
	\end{equation}
		
	\begin{clai}
		$\dim W_\Omega = \dim W_{\Gamma} = I_0$ for $\eps > 0$ sufficiently small.
	\end{clai}
	\begin{proof}[Proof of claim]
		To see $\dim W_\Omega = I_0$, it suffices to note that no nontrivial linear combination $w$ of $w_1, \ldots, w_{I_0}$ can vanish on $\Omega$ because of \eqref{eq:neg-dir-concentrate} and \eqref{eq:negative.eigenfunctions.cQ}.
		
		Likewise, to see $\dim W_\Gamma = I_0$, it suffices to note that no nontrivial linear combination $w$ of $w_1, \ldots, w_{I_0}$ has $\Pi_\eps(w) = 0$ because of \eqref{eq:orth.ind.Q.big}, \eqref{eq:neg-dir-concentrate}, and \eqref{eq:negative.eigenfunctions.cQ}.
	\end{proof}

	Finally, suppose, for the sake of contradiction, that $I_{\Sigma} < I_{0}$. Because $\dim W_\Gamma = I_{0} > I_{\Sigma}$, it must hold that there exists $w \in \lspan\{ w_1, \ldots, w_{I_0} \} \setminus \{ 0 \}$ such that $\cI_\Gamma(\Pi_\eps(w)) = 0$. For $\eps>0$ sufficiently small so that $W''(u) \geq 0$ on $M\setminus\Omega$,
	\[ 0 \geq \cQ_{u}(w,w) = \cQ_{u}^{\Omega}(w,w) + \int_{M\setminus \Omega} \eps |\nabla w|^{2} + \eps^{-1} W''(u) w^{2} \, d\mu_{g} \geq \cQ_{u}^{\Omega}(w,w)  \geq \eps \sigma' \int_{\Omega} w^{2} d\mu_{g}. \]
	We used \eqref{eq:orth.ind.Q.big} in the last step. Thus, $w \equiv 0$ on $\Omega$, so $w \equiv 0$ on $M$ by \eqref{eq:neg-dir-concentrate}, \eqref{eq:negative.eigenfunctions.cQ}, a contradiction. 
\end{proof}

\section{Geometric applications} 


\label{sec:applications}

\begin{coro}[Multiplicity one, two-sidedness, and index of Allen--Cahn limits for bumpy or positive Ricci curvature metrics] \label{coro:mult.one.conj}
	Let $(M^{3},g)$ denote a closed $3$-manifold with a bumpy metric (see Definition \ref{def:bumpy.metric}) or with positive Ricci curvature. Suppose that $u_i \in C^\infty(M; [-1,1])$, $\eps_i > 0$, $u_i$ is a critical point of $E_{\varepsilon_i}$, $E_{\varepsilon_i}[u_i] \leq E_0$, $\ind(u_i) \leq I_0$ for all $i = 1, 2, \ldots$ and $\lim_i \eps_i = 0$. Passing to a subsequence, denote by $V \triangleq \lim_i \energyunit^{-1} V_{\varepsilon_i}[u_i]$ the limit varifold. Then:
	\begin{itemize}
		\item The support $\Sigma$ of $V$ is a smooth, embedded, two-sided minimal surface with $\ind(\Sigma) \leq I_{0}$.
		\item The limiting varifold $V$ is equal to the varifold associated to $\Sigma$ with multiplicity one. 
		\item For $\beta \in (0,1)$ fixed, the level sets $u_{i}^{-1}(t)$, 		$|t| < 1-\beta$, converge in $C^{2,\theta}$ with multiplicity one to $\Sigma$. 
		\item For $i$ sufficiently large, $\nul(\Sigma) + \ind(\Sigma) \geq \nul(u_{i}) + \ind(u_{i})$.
	\end{itemize}
\end{coro}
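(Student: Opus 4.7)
The plan is to deduce everything directly from Theorem \ref{theo:bounded.index}, Theorem \ref{theo:index.lower.bounds}, Lemma \ref{lemm:multiplicity.one.convergence}, and the known lower semicontinuity of the Allen--Cahn Morse index due to Hiesmayr \cite{Hiesmayr} and Gaspar \cite{Gaspar}. Passing to a subsequence as allowed, Theorem \ref{theo:bounded.index} provides a stationary integral varifold $V$ whose support is smooth in $M\setminus\partial M$, and partitions each connected component $\Sigma'$ of $\support\Vert V\Vert$ into three exhaustive cases: (a) $\Sigma'$ is two-sided with multiplicity one; (b) $\Sigma'$ is two-sided with multiplicity $\geq 2$ and carries a smooth positive Jacobi field; or (c) $\Sigma'$ is one-sided and its two-sided double cover carries a smooth positive Jacobi field.

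The first step will be to rule out (b) and (c) under either hypothesis on $g$. When $g$ is bumpy, (b) is excluded immediately from Definition \ref{def:bumpy.metric}; case (c) is excluded since the two-sided double cover is itself a closed immersed minimal hypersurface in $(M,g)$ with a nontrivial Jacobi field, again contradicting bumpiness. When $\ricc_g > 0$, any smooth positive Jacobi field on a closed two-sided minimal surface forces its principal Jacobi eigenvalue to be $\leq 0$, i.e., it forces the surface to be stable; testing \eqref{eq:stable.min.surf} with $\zeta \equiv 1$ would then give
\[ 0 \leq -\int_{\Sigma'} \big(|\sff_{\Sigma'}|^{2} + \ricc_g(\nu,\nu)\big)\, d\mu_g, \]
which is strictly negative under $\ricc_g > 0$, a contradiction. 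The same argument applied to the two-sided double cover of $\Sigma'$ rules out (c).

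Having eliminated (b) and (c), every connected component of $\support\Vert V\Vert$ is two-sided with multiplicity one; patching the components together gives the first two bullets (modulo the index bound on $\Sigma$), and then Lemma \ref{lemm:multiplicity.one.convergence} applied componentwise yields the $C^{2,\theta}$ multiplicity-one convergence of the level sets $\{u_i = t\}$ to $\Sigma$ for every $|t| < 1-\beta$. The lower semicontinuity of the Morse index along the singular limit \cite{Hiesmayr, Gaspar} gives
\[ \ind(\Sigma) \leq \liminf_{i\to\infty} \ind(u_i) \leq I_0, \]
completing the first bullet. Finally, because the multiplicity one hypothesis \eqref{eq:multiplicity.one.assumption} is now in force, Theorem \ref{theo:index.lower.bounds} applies for all sufficiently large $i$ and yields $\nul(\Sigma) + \ind(\Sigma) \geq \nul(u_i) + \ind(u_i)$, proving the last bullet. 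The hard part of the argument has really been absorbed into Theorems \ref{theo:bounded.index} and \ref{theo:index.lower.bounds}; the remaining subtlety is just the bookkeeping in the positive Ricci case to handle potentially one-sided components by passing to their two-sided double cover before invoking the stability obstruction.
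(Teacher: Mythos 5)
Your proposal is correct and follows essentially the same route as the paper's own proof: rule out the multiplicity/one-sided cases of Theorem \ref{theo:bounded.index} via the positive Jacobi field (bumpiness excludes any nontrivial Jacobi field, including on the two-sided double cover, while positive Ricci contradicts the resulting stability by testing \eqref{eq:stable.min.surf} with $\zeta\equiv 1$), then invoke Lemma \ref{lemm:multiplicity.one.convergence} for the $C^{2,\theta}$ convergence, \cite{Hiesmayr,Gaspar} for $\ind(\Sigma)\leq I_0$, and Theorem \ref{theo:index.lower.bounds} for the final inequality. The only quibble is the phrasing ``principal Jacobi eigenvalue $\leq 0$, i.e.\ stable''---a positive Jacobi field forces the principal eigenvalue to equal zero, which is what gives stability---but this does not affect the argument.
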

\begin{proof}
By Theorem \ref{theo:bounded.index}, any component of $\Sigma$ that does not satisfy the conclusion at hand must admit a two-sided double cover with a positive Jacobi field. This cannot happen if $g$ is bumpy (irrespective of the sign of the Jacobi field). Similarly, because a positive Jacobi field implies that the two-sided double cover is stable, this cannot occur for positive Ricci curvature. The $C^{2,\theta}$ convergence follows from Lemma \ref{lemm:multiplicity.one.convergence}. The index upper bounds for $\Sigma$ follow from \cite{Hiesmayr} (also from \cite{Gaspar}). Finally, the index lower bounds follow from Theorem \ref{theo:index.lower.bounds}. 
\end{proof}

Finally, we note that Corollary \ref{coro:mult.one.conj} proves Yau's conjecture for bumpy metrics (or those with positive Ricci curvature) on a $3$-manifold. In fact, we establish the following strengthened version of Yau's conjecture, which describes certain geometric properties of the minimal surfaces. That a generic Riemannian manifold contains an embedded two-sided minimal surface of each positive Morse index was conjectured by Marques and Neves (cf.\ \cite[p.\ 17]{neves:ICM}, \cite[Conjecture 6.2]{MarquesNeves:spaceOfCycles}).

\begin{coro}[Yau's conjecture for bumpy metrics and geometric properties of the minimal surfaces]\label{coro:Yau.conj}
	Let $(M^3, g)$ be a closed $3$-manifold with a bumpy metric. There is $C=C(M,g,W)>0$ and a smooth embedded minimal surface $\Sigma_{p}$ for each positive integer $p$ so that
	\begin{itemize}
		\item each component of $\Sigma_{p}$ is two-sided,
		\item the area of $\Sigma_{p}$ satisfies $C^{-1} p^{\frac 1 3}\leq \area_{g}(\Sigma_{p}) \leq C p^{\frac 1 3}$, 
		\item the index of $\Sigma_{p}$ is satisfies $\ind(\Sigma_{p}) = p$, 
		and
		\item the genus of $\Sigma_{p}$ satisfies $\genus(\Sigma_{p}) \geq \frac p 6 - C p^{\frac 13}$.
	\end{itemize}
In particular, thanks to the index estimate, all of the $\Sigma_{p}$ are geometrically distinct. 
\end{coro}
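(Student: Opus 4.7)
The plan is to build each $\Sigma_p$ as the $\eps\searrow 0$ limit of a sequence of critical points of $E_\eps$ produced by the Allen--Cahn $p$-width min-max scheme of Gaspar--Guaraco \cite{GasparGuaraco}. For each $p \in \{1,2,\ldots\}$ and $\eps > 0$ they construct a critical point $u^p_\eps$ of $E_\eps$ at energy level $c_\eps(p)$, the $p$-th Allen--Cahn width, whose Morse index and nullity satisfy the two-sided bound $\ind(u^p_\eps) \leq p \leq \ind(u^p_\eps) + \nul(u^p_\eps)$. A Weyl-law asymptotic (in the Allen--Cahn setting, due to Gaspar--Guaraco \cite{GasparGuaraco:weyl}) furthermore gives $c_\eps(p) = \energyunit \cdot c_{\mathrm{MN}}(M,g) \, p^{1/3} + o(p^{1/3})$, uniformly in small $\eps$, for a positive constant $c_{\mathrm{MN}}(M,g)$. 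Hence, along any sequence $\eps_i \searrow 0$, the critical points $u_i \triangleq u^p_{\eps_i}$ have uniformly bounded energy and Morse index, precisely the hypotheses needed to invoke the main results of the paper.

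The crux is then extracting the correct geometric properties of the limit. Passing to a subsequence and applying Corollary \ref{coro:mult.one.conj}, the varifolds $\energyunit^{-1} V_{\eps_i}[u_i]$ converge to the stationary integral varifold of a smooth two-sided embedded minimal surface $\Sigma_p$, with multiplicity one, and $\ind(\Sigma_p) \leq p$. Upper semicontinuity of the index at multiplicity one limits (Theorem \ref{theo:index.lower.bounds}) gives $\ind(\Sigma_p) + \nul(\Sigma_p) \geq \ind(u_i) + \nul(u_i) \geq p$ for all large $i$. The bumpy assumption on $g$ forces $\nul(\Sigma_p) = 0$, so $\ind(\Sigma_p) = p$. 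In particular, the surfaces $\{\Sigma_p\}$ are pairwise geometrically distinct, since they are distinguished by their Morse indices.

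For the area, the multiplicity one $C^{2,\theta}$ convergence of level sets (Lemma \ref{lemm:multiplicity.one.convergence}) combined with the standard $\Gamma$-convergence identity yields $\energyunit \area_g(\Sigma_p) = \lim_i E_{\eps_i}[u_i] = \lim_i c_{\eps_i}(p)$; combined with the Weyl law this gives $C^{-1}p^{1/3} \leq \area_g(\Sigma_p) \leq C p^{1/3}$. For the genus, we invoke an index/genus/area inequality for two-sided closed embedded minimal surfaces in $(M^3,g)$ of the form
\[
	\ind(\Sigma) \leq 6 \, \genus(\Sigma) + C(M,g) \area_g(\Sigma),
\]
following \cite{CKM}; substituting $\ind(\Sigma_p) = p$ and the area bound, we obtain $\genus(\Sigma_p) \geq p/6 - C p^{1/3}$.

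The main obstacle is ensuring $\ind(\Sigma_p) = p$ exactly. Without the multiplicity one conclusion, a naive min-max count would only produce $\ind(\Sigma_p) \leq p$ with the limit potentially appearing with multiplicity $m \geq 2$ and (a priori) small index per sheet, leaving open whether distinct values of $p$ give distinct surfaces. The rigidity provided by Corollary \ref{coro:mult.one.conj} (ruling out multiplicity and one-sidedness under bumpiness) together with the index upper semicontinuity of Theorem \ref{theo:index.lower.bounds} are exactly the ingredients needed to identify $\ind(\Sigma_p)$ with the min-max parameter $p$, at which point the area and genus bounds follow from relatively off-the-shelf inputs.
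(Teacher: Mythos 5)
Your proposal is correct and follows essentially the same route as the paper's proof: the Gaspar--Guaraco min-max critical points with $\ind(u)\le p\le \ind(u)+\nul(u)$ and energy $\sim p^{1/3}$, then Corollary \ref{coro:mult.one.conj} (multiplicity one, two-sidedness, index upper and lower bounds, which with bumpiness forces $\ind(\Sigma_p)=p$), then an index-versus-area-plus-genus inequality. Two small corrections to your inputs: the paper needs only the width bounds $C^{-1}p^{1/3}\le E_\eps[u_{p,\eps}]\le Cp^{1/3}$ from \cite[Theorems 3, 4]{GasparGuaraco} rather than the Weyl law of \cite{GasparGuaraco:weyl}, and the inequality $\ind(\Sigma)\le C\,\area_g(\Sigma)+6\,\genus(\Sigma)$ you use is Ejiri--Micallef \cite[Theorem 4.3]{EjiriMicallef} (applied componentwise and summed), not \cite{CKM}, whose bound runs in the opposite direction.
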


\begin{proof}
Gaspar--Guaraco set up a min-max procedure for the Allen--Cahn energy functional and showed \cite[Theorems 3, 4]{GasparGuaraco} that there is $C=C(M,g,W)>1$ so that for each integer $p>0$, there exists $\varepsilon_{0}(p)>0$ so that for $\varepsilon \in (0,\varepsilon_{0})$, there exists ${u_{p,\eps}}$, a critical point of $E_{\eps}$ with
\[
C^{-1}p^{\frac 1 3} \leq E_{\eps}[{u_{p,\eps}}]\leq C p^{\frac 13}, \; \ind({u_{p,\eps}}) \leq p, \; \nul({u_{p,\eps}}) + \ind({u_{p,\eps}}) \geq p 
\]
(see \cite[Theorem 3.3(2)]{GasparGuaraco}). Now, the first three bullet points follow from Corollary \ref{coro:mult.one.conj} applied to an arbitrary sequence $({u_{p,\eps_{i}}},\eps_{i})$ with $\eps_{i}\to 0$. 

The genus bounds follow from an estimate of Ejiri--Micallef \cite[Theorem 4.3]{EjiriMicallef} who prove that there is a constant $C=C(M,g)$ so that writing $\Sigma_{p} = \cup_{m=1}^{N}\Sigma_{p}^{(m)}$, where $\Sigma_{p}^{(m)}$ are connected and ${N} =|\pi_{0}(\Sigma_{p})|$ is the number of connected components of $\Sigma_{p}$, we have
\[
\ind(\Sigma_{p}^{(m)}) \leq C \area(\Sigma_{p}^{(m)}) + r(\genus(\Sigma_{p}^{(m)})), \; {m = 1, \ldots, N,}
\]
where $r(g)$ is the dimension of the space of conformal structures on a genus $g$ surface, i.e., 
\[
r(g) = \begin{cases} 0 & g = 0, \\ 2 & g = 1, \\ 6(g-1) & g>1.\end{cases}
\]
Thus, we find that
\[
p = \sum_{m=1}^{N} \ind(\Sigma_{p}^{(m)}) \leq C \area_{g}(\Sigma_{p}) + \sum_{m=1}^{N} r(\genus(\Sigma_{p}^{(m)})).
\]
Using $r(g) \leq 6g$ and $\area(\Sigma_{p})\leq C p^{\frac 13}$ (for some $C=C(M,g)$ as explained above), we find that,
\[
{\frac p 6 - C p^{\frac 1 3}} \leq \sum_{m=1}^{M} \genus(\Sigma_{p}^{(m)}) = \genus(\Sigma_{p}),
\]
{for $C = C(M, g)$.} This proves the fourth bullet point, completing the proof. 
\end{proof}

\begin{rema}
We note that for $(M^{3},g)$ with positive Ricci curvature, the same conclusion as in Corollary \ref{coro:Yau.conj} holds, except the third bullet point is replaced by $\ind(\Sigma_{p}) + \nul(\Sigma_{p}) = p$. The genus bound still holds by the same result of Ejiri--Micallef \cite[Theorem 4.3]{EjiriMicallef}.
\end{rema}

When $(M^{3},g)$ does not have positive Ricci, $\Sigma_{p}$ might have several components. We can use the discrepancy between the linear index growth and sublinear area growth to prove that at least one of the components has large index and genus (note that this discrepancy has been leveraged in a rather different manner by Marques--Neves \cite{MarquesNeves:posRic} in their proof of Yau's conjecture in positive Ricci curvature). 

\begin{coro}[Connected components of the $p$-width having large index and genus] \label{coro:Yau.conj.components}
{ Let $(M^3, g)$ denote a closed 3-manifold with a bumpy metric.} There is $C=C(M,g,W)>0$ so that some connected component $\Sigma_{p}'$ of the minimal surface $\Sigma_{p}$ discussed in Corollary \ref{coro:Yau.conj} has 
{$\genus(\Sigma_p') \geq C^{-1} \ind(\Sigma_p') \geq C^{-1} p^{\frac 23}$.}
\end{coro}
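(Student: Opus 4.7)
The plan is to combine the total area/index bounds of Corollary \ref{coro:Yau.conj} with the Ejiri--Micallef inequality via a pigeonhole argument, using a uniform lower area bound on individual components.

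First, I will bound the number of connected components of $\Sigma_p$. By the monotonicity formula for minimal surfaces in a closed Riemannian manifold, there is a constant $A_0 = A_0(M,g) > 0$ so that every connected closed embedded minimal surface in $(M^3, g)$ satisfies $\area_g \geq A_0$. Writing $\Sigma_p = \bigcup_{m=1}^N \Sigma_p^{(m)}$ as in the proof of Corollary \ref{coro:Yau.conj}, this gives $N \cdot A_0 \leq \area_g(\Sigma_p) \leq C p^{1/3}$, so $N \leq C' p^{1/3}$ for some $C' = C'(M, g, W)$.

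Next, I will extract a component of large index by pigeonhole. Since $\sum_{m=1}^N \ind(\Sigma_p^{(m)}) = \ind(\Sigma_p) = p$, there exists some $m_\star \in \{1, \ldots, N\}$ with
\[ \ind(\Sigma_p^{(m_\star)}) \geq \frac{p}{N} \geq (C')^{-1} p^{2/3}. \]
Set $\Sigma_p' \triangleq \Sigma_p^{(m_\star)}$.

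Finally, I will convert the index bound into a genus bound via the Ejiri--Micallef inequality (already used in the proof of Corollary \ref{coro:Yau.conj}):
\[ \ind(\Sigma_p') \leq C_{\mathrm{EM}} \area_g(\Sigma_p') + 6 \genus(\Sigma_p'), \]
where $C_{\mathrm{EM}} = C_{\mathrm{EM}}(M, g)$. Since $\area_g(\Sigma_p') \leq \area_g(\Sigma_p) \leq C p^{1/3}$, for all sufficiently large $p$ we have $C_{\mathrm{EM}} \area_g(\Sigma_p') \leq \tfrac12 \ind(\Sigma_p')$ (the area term is order $p^{1/3}$, while the index term is order $p^{2/3}$). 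Rearranging then gives $\genus(\Sigma_p') \geq \tfrac{1}{12} \ind(\Sigma_p') \geq (12C')^{-1} p^{2/3}$ for large $p$. Absorbing the finitely many small-$p$ cases into the constant (enlarging $C$ as needed so that the stated inequalities become vacuous for such $p$), we obtain the corollary with a single constant $C = C(M, g, W)$.

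There is no genuine obstacle here beyond having the correct area/index/component count inputs; the only point worth highlighting is the use of the monotonicity formula to bound $N$, which is essential to turn the global index lower bound $p$ into a $p^{2/3}$ bound on a single component.
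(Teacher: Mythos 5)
Your proposal is correct and follows essentially the same route as the paper: a uniform lower area bound per component from the monotonicity formula, hence at most $Cp^{1/3}$ components, a pigeonhole step giving one component with $\ind \geq C^{-1}p^{2/3}$, and then the Ejiri--Micallef inequality (with $r(g)\leq 6g$) combined with $\area \leq Cp^{1/3}$ to convert index into genus. The only difference is your explicit remark about the finitely many small-$p$ cases, which the paper leaves implicit; this does not change the substance of the argument.
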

\begin{proof}
Write the surfaces $\Sigma_p$ obtained in Corollary \ref{coro:Yau.conj} above as a union of their connected components, i.e., $\Sigma_{p} = \cup_{m=1}^{N} \Sigma_{p}^{(m)}$. By the monotonicity formula, there is $c=c(M,g)>0$ so that any closed minimal surface $\Sigma'$ in $M$ has $\area_{g}(\Sigma') \geq c$. Thus,
\begin{equation} \label{eq:connected.component.bound}
{N} c \leq \sum_{m=1}^{N} \area_{g}(\Sigma_{p}^{(m)}) = \area_{g}(\Sigma_{p}) \leq C p^{\frac 13}. 
\end{equation}
{Because} $p = \sum_{m=1}^{ N}\ind(\Sigma_{p}^{(m)})$, 
{ \eqref{eq:connected.component.bound} implies that $\ind(\Sigma_{p}^{m})\geq C p^{\frac 2 3}$ for some} $m \in \{1,\dots, {N} \}$. {For this particular $m$,} $\area_{g}(\Sigma_{p}^{(m)}) \leq C p^{\frac 13}$ and the estimate of Ejiri--Micallef \cite{EjiriMicallef} used above implies that
\[
\genus(\Sigma_{p}^{(m)}) \geq {C^{-1} \ind(\Sigma_p^{(m)}) \geq C^{-1}} p^{\frac 2 3}.
\]
This completes the proof. 
\end{proof}

\section{Barriers with Dirichlet data}


\label{sec:dirichlet.data}

\subsection{Setup} \label{subsec:dirichlet.data.setup}

The heteroclinic solution from Section \ref{subsec:heteroclinic.solution} lifts trivially to a solution of the Allen-Cahn PDE, \eqref{eq:ac.pde}, on $\RR^n$, for any $n \geq 1$; indeed, one may just take $u(x^1, \ldots, x^n) \triangleq \mathbb{H}_\eps(x^n)$. Notice that this solution is ``centered'' on the $\{x^n = 0\}$ hyperplane. One may just as easily center it on any hyperplane in $\RR^n$ by a suitable translation and rotation.

The question of centering approximate heteroclinic solutions on arbitrary minimal $\Sigma^{n-1} \subset (M^n, g)$ has been well-studied in the compact setting; see, e.g., \cite{PacardRitore03} for the boundary-less case and the geometrically natural case of  Neumann conditions at the boundary when $\partial M$, $\partial \Sigma \neq \emptyset$, or see \cite{Pacard12} for a more general survey with a faster construction than \cite{PacardRitore03}, albeit only presented in the boundary-less case.

In this section we establish a corresponding existence theorem similar in spirit to those in \cite{PacardRitore03, Pacard12}, except we prescribe Dirichlet data. This theorem provides the barriers that were a crucial ingredient in the final ``sliding'' argument of Section \ref{sec:bounded.index}. 

The setup is as follows. Define $C^{k,\alpha}_\eps$, $\alpha \in (0,1)$, $\eps > 0$, to be the standard H\"older space after rescaling by $\eps$, i.e., whose Banach norm is
\begin{equation} \label{eq:dirichlet.data.ckalpha.eps}
	\Vert v \Vert_{C^{k,\alpha}_\eps} \triangleq \sum_{j=0}^k \eps^j \Vert \nabla^j v \Vert_{L^\infty} + \eps^{k+\alpha} [\nabla^k v]_\alpha.
\end{equation}
Various choices of domain and metric will be specified below. See Remarks \ref{rema:dirichlet.data.regularity.product.vs.omega}, \ref{rema:dirichlet.data.various.norms}.

Next, suppose that $D^{n-1}$ is an $(n-1)$-dimensional manifold with nonempty boundary, over which we take a topological cylinder $\Omega \triangleq D \times [-1,1]$, whose coordinates we label $X = (y, z) \in D \times [-1,1]$. Let $g$ be a smooth metric on $\Omega$, given in $(y, z)$ coordinates (Fermi coordinates) by
\[ g = g_z + dz^2. \]

We require that
\begin{equation} \label{eq:dirichlet.data.sigma.minimal}
	\Sigma \triangleq D \times \{0\} \subset (\Omega, g) \text{ is a minimal surface}
\end{equation}
whose second fundamental form is uniformly bounded  in $C^{0,\theta}$, for some $\theta \in (0, 1)$ that will be eventually chosen to be near $1$ (see Theorem \ref{theo:dirichlet.data.construction}):
\begin{equation} \label{eq:dirichlet.data.sigma.c2alpha}
	|\sff_\Sigma| + [\sff_\Sigma]_{\theta} \leq \eta,
\end{equation}
and also\footnote{It is crucial for Section \ref{sec:bounded.index} that we only work with the weaker bounds on derivatives of $\sff$ given in \eqref{eq:dirichlet.data.sigma.c2alpha}, \eqref{eq:dirichlet.data.sigma.c3alpha}, which are precisely the types of estimates we derived in Section \ref{sec:stable.solutions}.} in $C^{1,\theta}_\eps$:
\begin{equation} \label{eq:dirichlet.data.sigma.c3alpha}
	\eps |\nabla_{\Sigma} \sff_\Sigma| + \eps^{1+\theta} [\nabla_{\Sigma} \sff_\Sigma]_{\theta} \leq \eta,
\end{equation}
with $\eta > 0$ small. We furthermore assume that there are $C^{2,\theta}$-coordinate charts on $\Sigma$ so that the induced metric $g_{0}$ is $C^{0,\theta}$ and $C^{1,\theta}_{\varepsilon}$-close to the Euclidean metric in the sense that 
\begin{equation}\label{eq:dirichlet.data.sigma.induced.c2alpha}
	|(g_{0})_{ij} - \delta_{ij}| + [(g_{0})_{ij}]_{\theta} \leq \eta,
\end{equation}
\begin{equation}\label{eq:dirichlet.data.sigma.induced.c3alpha}
	\varepsilon|\partial_{k}(g_{0})_{ij}| + \varepsilon^{1+\theta} [\partial_{k}(g_{0})_{ij}]_\theta \leq \eta,
\end{equation}
where $i$, $j$, $k$ run through the coordinates $(y^{1},\dots,y^{n-1})$ on $\Sigma$ in the given coordinate chart.

Note that \eqref{eq:dirichlet.data.sigma.c2alpha} implies that Fermi coordinates $(y, z)$ with respect to $\Sigma$ are a diffeomorphism which is $C^{1,\theta}$-close to the identity, so in particular, together with \eqref{eq:dirichlet.data.sigma.induced.c2alpha}, it follows that the metric $g$ is $C^{0,\theta}$-close to being Euclidean in Fermi coordinates:
\begin{equation} \label{eq:dirichlet.data.g.c0alpha}
	|g_{\kappa \lambda} - \delta_{\kappa \lambda}| + [g_{\kappa \lambda}]_{\theta} \leq \eta',
\end{equation}
for small $\eta' = \eta'(\eta, n) > 0$. Here, $\kappa$, $\lambda$ run through all $n$ Fermi coordinates $(y^1, \ldots, y^{n-1}, z)$.

Likewise, \eqref{eq:dirichlet.data.sigma.c3alpha} and \eqref{eq:dirichlet.data.sigma.induced.c3alpha} imply that Fermi coordinates are $C^{2,\theta}_\eps$-close to the identity and
\begin{equation} \label{eq:dirichlet.data.g.c1alpha}
	\eps |\partial_{\mu} g_{\kappa \lambda}| + \eps^{1+\theta} [\partial_{\mu} g_{\kappa \lambda}]_{\theta} \leq \eta'.
\end{equation}
Here, $\kappa$, $\lambda$, $\mu$ run through all $n$ Fermi coordinates.

We also require that $\Sigma$ carries no nontrivial Jacobi fields with Dirichlet boundary conditions in the following quantitative sense:
\begin{equation} \label{eq:dirichlet.data.sigma.nondegenerate}
	\int_\Sigma (J_\Sigma f)^2 \, d\mu_{g_0} \geq \eta \int_\Sigma f^2 \, d\mu_{g_0} \text{ for every } f \in C^\infty_c(\Sigma \setminus \partial \Sigma).
\end{equation}
where
\begin{equation} \label{eq:dirichlet.data.sigma.jacobi.operator}
	J_\Sigma f \triangleq -\Delta_{g_0} f - (|\sff_\Sigma|^2 + \ricc_{g}(\partial_z, \partial_z)|_{\Sigma})f
\end{equation}
denotes the Jacobi operator on $\Sigma$. (Note that our sign convention for the Jacobi operator differs from the one in \cite{Pacard12}.)

Let's also fix $\delta_* \in (0, 1)$, and define cutoff functions $\chi_j : \RR \to [0,1]$, with $\chi_j' \geq 0$ on $[0, \infty)$, so that
\begin{equation} \label{eq:dirichlet.data.cutoff}
	\chi_j(t) = \begin{cases} 1 & |t| \leq \eps^{\delta_*} \Big( 1 - \tfrac{2j-1}{100} \Big) \\ 0 & |t| \geq \eps^{\delta_*} \Big( 1 - \tfrac{2j-2}{100} \Big). \end{cases}
\end{equation}
as well as $\Vert \chi_j \Vert_{C^{3}_{\eps^{\delta_*}}(\RR)} \leq 200$. We further require that the $\chi_j$ be even functions. 

For $\eps > 0$, set
\begin{equation} \label{eq:dirichlet.data.approximate.heteroclinic}
\widetilde{\mathbb{H}}_\eps(t) \triangleq \chi_1(t) \mathbb{H}_\eps(t) \pm (1 - \chi_1(t)),
\end{equation}
where the $\pm$ corresponds to $t > 0$, $t < 0$, respectively, and $\mathbb{H}_\eps$ is as in \eqref{eq:heteroclinic.eps}. This is a truncation of the one-dimensional solution, $\mathbb{H}_\eps$, which coincides with $\mathbb{H}_\eps$ near $\Sigma$ and with $\pm 1$ away from $\Sigma$.

The functions $\chi_j$, $\mathbb{H}_\eps$, $\widetilde{\mathbb{H}}_\eps$ lift trivially to $\Sigma \times \RR$. We also set
\[ \Omega_j \triangleq \{ (y, z) \in \Sigma \times \RR : z \in \support \chi_j \}. \]
Using the Fermi coordinates $(y, z)$, $\chi_j$, $\mathbb{H}_\eps$, $\widetilde{\mathbb{H}}_\eps$ also give functions on $\Omega$ that depend only on $z$. By \eqref{eq:dirichlet.data.sigma.induced.c3alpha}, \eqref{eq:dirichlet.data.cutoff}, these functions are uniformly $C^{2,\theta}_\eps$ in $\Sigma \times \RR$ with respect to the product metric $g_0 + dz^2$ and also in $\Omega$ with respect to the metric $g$. Likewise, by \eqref{eq:dirichlet.data.sigma.c2alpha}, \eqref{eq:dirichlet.data.sigma.c3alpha}, the slab $\Omega_j$ can also be viewed as a subset of $(\Omega, g)$ whose boundary is $C^{1,\theta}$ and $C^{2,\theta}_\eps$-close to being totally geodesic.

\begin{rema} \label{rema:dirichlet.data.regularity.product.vs.omega}
	By \eqref{eq:dirichlet.data.sigma.induced.c3alpha}, \eqref{eq:dirichlet.data.g.c1alpha}, there exists a constant $C = C(\eta)$ such that
	\[ C^{-1} \Vert f \Vert_{C^{k,\alpha}_\eps(\Omega)} \leq \Vert f \Vert_{C^{k,\alpha}_{\eps}(\Sigma \times \RR)} \leq C \Vert f \Vert_{C^{k,\alpha}_\eps(\Omega)}, \; k = 0, 1, 2, \; \alpha \in (0, \theta], \]
	for any function $f : \Omega \to \RR$ with support in the interior of $\Omega$. The norms above are taken with respect to the product metric $g_0 + dz^2$ on $\Sigma \times \RR$ and the metric $g$ on $\Omega$.
\end{rema}

\begin{rema} \label{rema:dirichlet.data.trivialization.window}
	We cannot reuse the truncation from Section \ref{sec:jacobi.toda.reduction}, because we now need a truncation that trivializes outside a polynomial window instead of a logarithmic window.
\end{rema}

For subsets $S \subset \Sigma$, let's define 
\[ \Pi_\eps : L^2(S \times \RR) \to L^2(S), \; \Pi_\eps^\perp : L^2(S \times \RR) \to L^2(S \times \RR) \]
to be given by
\begin{align} 
	\Pi_\eps(f)(y) & \triangleq  \eps^{-1} \energyunit^{-1}  \int_{-\infty}^\infty f(y, z) \cdot  \mathbb{H}'(\eps^{-1} z) \, dz, \label{eq:dirichlet.data.proj} \\
	\Pi_\eps^\perp(f)(y,z) & \triangleq f(y,z) - \Pi_\eps(f)(y) \mathbb{H}'(\eps^{-1} z). \label{eq:dirichlet.data.proj.perp}
\end{align}
We note two things:
\begin{enumerate}
	\item $S$ does not appear in the projection notation, but it will clear from the context when it is  relevant.
	\item Our normalization is such that $\Pi_\eps( \{ z \mapsto  \mathbb{H}'(\eps^{-1} z) \} ) = \eps \Pi_\eps (\mathbb{H}_\eps') =  1$.
\end{enumerate} 

From this point forward we also consider another H\"older exponent, $\alpha \in (0, 1)$, which is such that
\[ \alpha \leq \theta \]
(with $\theta$ is as in \eqref{eq:dirichlet.data.sigma.c2alpha}-\eqref{eq:dirichlet.data.sigma.induced.c3alpha}). The exponent $\alpha$ will be eventually taken to be near $0$ (see Theorem \ref{theo:dirichlet.data.construction}).

We point out the following trivial lemma:

\begin{lemm} \label{lemm:dirichlet.data.proj.holder.norms}
	Both $\Pi_\eps$ and $\Pi_\eps^\perp$ lift to linear maps
	\[ \Pi_\eps : C^{0,\alpha}_\eps(S \times \RR) \to C^{0,\alpha}_\eps(S), \; \Pi_\eps^\perp : C^{0,\alpha}_\eps(S \times \RR) \to C^{0,\alpha}_\eps(S \times \RR). \]
	The $C^{0,\alpha}_\eps(S \times \RR)$ norm is taken with respect to the product metric $g_{0} + dz^{2}$. Viewed as linear maps over these H\"older spaces, we have $\sup_{\eps > 0} \big(  \Vert \Pi_\eps \Vert  + \Vert \Pi_\eps^\perp \Vert \big) < \infty$. 
\end{lemm}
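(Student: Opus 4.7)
The estimate is essentially a scaling exercise: the weight $\eps^{\alpha}$ in the definition \eqref{eq:dirichlet.data.ckalpha.eps} of $\Vert \cdot \Vert_{C^{0,\alpha}_\eps}$ is designed precisely to be compatible with the rescaling $z \mapsto \eps^{-1} z$ that appears in $\Pi_\eps$ and in the function $\mathbb{H}'(\eps^{-1} \cdot)$. I will treat the two operators in turn, exploiting the exponential decay \eqref{eq:heteroclinic.expansion.ii} of $\mathbb{H}'$ throughout.

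First, for $\Pi_\eps$, a change of variables $s = \eps^{-1} z$ in the definition \eqref{eq:dirichlet.data.proj} absorbs the prefactor $\eps^{-1}$ and yields the pointwise bound $|\Pi_\eps(f)(y)| \leq \energyunit^{-1} \Vert f \Vert_{L^\infty} \int_{-\infty}^{\infty} |\mathbb{H}'(s)|\, ds$, with the integral finite and $\eps$-independent. For the H\"older seminorm on $S$, I will apply the same estimate slicewise: since the $C^{0,\alpha}_\eps(S \times \RR)$ norm is taken with respect to the product metric $g_0 + dz^2$, for any $y_1, y_2 \in S$ one has $|f(y_1, z) - f(y_2, z)| \leq [f]_\alpha \, d_{g_0}(y_1, y_2)^\alpha$, and this bound passes under the integral to give $[\Pi_\eps(f)]_\alpha \leq \energyunit^{-1} [f]_\alpha \int |\mathbb{H}'|\, ds$. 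Combining the two estimates (and noting the $\eps^\alpha$ weight balances on both sides) produces $\Vert \Pi_\eps(f) \Vert_{C^{0,\alpha}_\eps(S)} \leq C \Vert f \Vert_{C^{0,\alpha}_\eps(S \times \RR)}$ with $C$ independent of $\eps$.

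Next, for $\Pi_\eps^\perp$, by the triangle inequality it suffices to control the product $\Pi_\eps(f)(y) \cdot \mathbb{H}'(\eps^{-1} z)$ in $C^{0,\alpha}_\eps(S \times \RR)$. Here the crucial observation is that the function $z \mapsto \mathbb{H}'(\eps^{-1} z)$ has uniformly bounded $C^{0,\alpha}_\eps$ norm on $\RR$: boundedness of $\mathbb{H}'$ handles the $L^\infty$ piece, while the Lipschitz bound $|\mathbb{H}''| \leq C$ combined with the trivial $L^\infty$ bound yields $[\mathbb{H}'(\eps^{-1} \cdot)]_\alpha \leq C \eps^{-\alpha}$, which is exactly neutralized by the $\eps^{\alpha}$ weight in $\Vert \cdot \Vert_{C^{0,\alpha}_\eps}$. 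Combining this with the $S$-estimate on $\Pi_\eps(f)$ from the previous paragraph, via the standard algebra-type inequality for weighted H\"older norms, gives a uniform bound on $\Vert \Pi_\eps^\perp f \Vert_{C^{0,\alpha}_\eps(S \times \RR)}$ in terms of $\Vert f \Vert_{C^{0,\alpha}_\eps(S \times \RR)}$.

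There is no real obstacle here; the only point to be careful about is the scale-matching between the weighted H\"older norm and $\mathbb{H}'(\eps^{-1} \cdot)$, which is precisely what the normalization \eqref{eq:dirichlet.data.ckalpha.eps} is set up to accommodate. The proof therefore reduces to a brief direct computation along the lines sketched above.
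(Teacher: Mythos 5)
Your argument is correct: the change of variables $s=\eps^{-1}z$, the slicewise H\"older estimate in $y$, and the observation that $z\mapsto\mathbb{H}'(\eps^{-1}z)$ has $\eps$-uniformly bounded $C^{0,\alpha}_\eps$ norm are exactly the computations needed, and the $\eps^\alpha$ weights balance as you say. The paper offers no proof at all (it labels the lemma trivial), so your write-up simply supplies the routine verification the authors omitted.
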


For $\zeta \in C^{2,\alpha}(\Sigma)$, we define $D_\zeta$ to be the map
\begin{equation} \label{eq:dirichlet.data.offset.map}
D_\zeta(y, t) \triangleq (y, t - \chi_2(t) \zeta(y)).
\end{equation}

Finally, we introduce the modified H\"older norm:
\begin{equation} \label{eq:dirichlet.data.ckalpha.eps.modified}
	\Vert v \Vert_{\widetilde{C}^{k,\alpha}_\eps(\Omega)} \triangleq \eps^{-2} \Vert \chi_5 v \Vert_{C^{k,\alpha}_\eps(\Omega)} + \Vert v \Vert_{C^{k,\alpha}_\eps(\Omega)}.
\end{equation}
Recall that $\Vert \cdot \Vert_{C^{k,\alpha}_\eps}$ is as in \eqref{eq:dirichlet.data.ckalpha.eps}. As with Remark \ref{rema:dirichlet.data.regularity.product.vs.omega}, the $C^{k,\alpha}_{\eps}(\Omega)$ norm is taken with respect to $g$.

The main result of this section is:

\begin{theo} \label{theo:dirichlet.data.construction}
	If $\alpha \leq \alpha_0$, $\eps \leq \varepsilon_0$ and we're given boundary data
	\begin{enumerate}
		\item $\widehat{v}^\flat \in \widetilde{C}^{2,\alpha}_\eps(\partial \Omega)$, $\Vert \widehat{v}^\flat \Vert_{\widetilde{C}^{2,\alpha}_\eps(\partial \Omega)} \leq \mu \eps^2$, $\widehat{v}^\flat = 0$ on $\{ \chi_4 = 1 \} \cap \partial\Omega$,
		\item $\widehat{v}^\sharp \in C^{2,\alpha}_\eps(\partial \Sigma \times \RR)$, $\Vert \widehat{v}^\sharp \Vert_{C^{2,\alpha}_\eps(\partial \Sigma \times \RR)} \leq \mu \eps^2$, $\Pi_\eps(\widehat{v}^\sharp) \equiv 0$ on $\partial \Sigma$,
		\item $\widehat{\zeta} \in C^{2,\alpha}(\partial \Sigma)$, $\eps^{2\alpha} \Vert \widehat{\zeta} \Vert_{C^{2,\alpha}(\partial \Sigma)} \leq \mu \eps^2$,
	\end{enumerate}
	and a metric $g$ for which \eqref{eq:dirichlet.data.sigma.minimal}-\eqref{eq:dirichlet.data.sigma.induced.c3alpha} hold with $\theta \geq \theta_0 \geq \alpha_0$, there exist 
	\begin{enumerate}
		\item $v^\flat \in \widetilde{C}^{2,\alpha}(\Omega)$, $v^\flat|_{\partial \Omega} = \widehat{v}^\flat$, $\Vert v^\flat \Vert_{\widetilde{C}^{2,\alpha}_\eps(\Omega)} \leq C \eps^2$,
		\item $v^\sharp \in C^{2,\alpha}(\Sigma \times \RR)$, $v^\sharp|_{\partial \Sigma \times \RR} = \widehat{v}^\sharp$, $\Pi_\eps v^\sharp \equiv 0$, $\Vert v^\sharp \Vert_{C^{2,\alpha}_\eps(\Sigma \times \RR)} \leq C \eps^2$,
		\item $\zeta \in C^{2,\alpha}(\Sigma)$, $\zeta|_{\partial \Gamma} = \widehat{\zeta}$, $\eps^{2\alpha} \Vert \zeta \Vert_{C^{2,\alpha}(\Sigma)} \leq C \eps^2$,
	\end{enumerate}
	so that $\mathfrak{u} = (\widetilde{\mathbb{H}}_{\eps} + \chi_4 v^\sharp + v^\flat) \circ D_\zeta$ satisfies
	\begin{equation} \label{eq:dirichlet.data.pde}  
		\eps^2 \Delta_g \mathfrak{u} = W'(\mathfrak{u}) \text{ on } \Omega.
	\end{equation}
	The solution map $(\widehat{v}^\flat, \widehat{v}^\sharp, \widehat{\zeta}, g) \mapsto (v^\flat, v^\sharp, \zeta)$ is Lipschitz continuous, with Lipschitz constant $L$, as a map
	\begin{equation*}
		\widetilde{C}^{2,\alpha}_\eps(\partial \Omega) \times C^{2,\alpha}_\eps(\partial \Sigma \times \RR) \times C^{2,\alpha}(\partial \Sigma) \times \operatorname{Met}_{\eps,\eta}(\Omega) 
		\to \widetilde{C}^{2,\alpha}_\eps(\Omega) \times C^{2,\alpha}_\eps(\Sigma \times \RR) \times C^{2,\alpha}(\Sigma)
	\end{equation*}
	where $\operatorname{Met}_{\eps,\eta}(\Omega)$ denotes the set of metrics satisfying \eqref{eq:dirichlet.data.g.c0alpha}-\eqref{eq:dirichlet.data.g.c1alpha} with the obvious topology. The spaces $\widetilde{C}^{2,\alpha}_\eps(\Omega) \times C^{2,\alpha}_\eps(\Sigma \times \RR) \times C^{2,\alpha}(\Sigma)$, $\widetilde{C}^{2,\alpha}(\partial \Omega) \times C^{2,\alpha}_\eps(\partial \Sigma \times \RR) \times C^{2,\alpha}(\partial \Sigma)$ are topologized using the norms in \eqref{eq:dirichlet.data.interior.product.norm},  \eqref{eq:dirichlet.data.boundary.product.norm}, respectively. Here, $\eps_0 = \eps_0(n, \eta, W, \delta_*, \mu, \alpha)$, $\alpha_0 = \alpha_0(n, \eta, W, \delta_*, \mu)$, $\theta_0 = \theta_0(\delta_*)$, $C = C(n, \eta, W, \delta_*, \mu, \alpha)$, $L = L(n, \eta, W, \delta_*, \mu, \alpha, \theta)$.
\end{theo}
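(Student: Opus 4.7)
The plan is to solve for the triple $(v^\flat, v^\sharp, \zeta)$ via a Banach fixed point argument modeled on Pacard's infinite-dimensional Lyapunov--Schmidt scheme~\cite{Pacard12}, with modifications to handle Dirichlet data and the low regularity of $\Sigma$. First, I would pull back the ansatz $\mathfrak{u} = (\widetilde{\mathbb{H}}_\eps + \chi_4 v^\sharp + v^\flat) \circ D_\zeta$ by $D_\zeta$ and compute the nonlinear error
\[ \cF(v^\flat, v^\sharp, \zeta) \triangleq \eps^2 \Delta_{D_\zeta^* g}\bigl(\widetilde{\mathbb{H}}_\eps + \chi_4 v^\sharp + v^\flat\bigr) - W'\bigl(\widetilde{\mathbb{H}}_\eps + \chi_4 v^\sharp + v^\flat\bigr) \]
on the fixed domain $\Omega$. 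Using \eqref{eq:approximate.heteroclinic.behavior} and the expansion of $\Delta_g$ in Fermi coordinates (Appendix \ref{app:mean.curvature.graphs}, and \eqref{eq:mean.curv.ddt.h}-\eqref{eq:mean.curv.ddt.laplace}), the zeroth-order error $\cF(0,0,0)$ is---modulo exponentially small truncation errors and $O(\eps^3)$---equal to $-\eps H_{\Sigma_z}\mathbb{H}_\eps'(z)$, which vanishes at $z=0$ by \eqref{eq:dirichlet.data.sigma.minimal} and whose leading linear expansion in $z$ produces the Jacobi operator's potential $(|\sff_\Sigma|^2 + \ricc_g(\partial_z,\partial_z))$. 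This dictates the splitting of the equation into a ``kernel direction'' (the $\mathbb{H}_\eps'$ factor) and its orthogonal complement.

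The three unknowns correspond exactly to three decoupled linear inversions. First, applying $\Pi_\eps^\perp$ from \eqref{eq:dirichlet.data.proj.perp} inside $\{\chi_4 = 1\}$ yields an equation for $v^\sharp$ on $\Sigma \times \RR$: the operator $\eps^2 \Delta_{g_0 + dz^2} - W''(\mathbb{H}_\eps)$ restricted to $\{\Pi_\eps = 0\}$ is uniformly invertible from $C^{0,\alpha}_\eps$ to $C^{2,\alpha}_\eps$ with Dirichlet data on $\partial\Sigma \times \RR$, via the spectral gap of $-\partial_t^2 + W''(\mathbb{H})$ on the $L^2$-orthogonal complement of $\mathbb{H}'$ (cf.\ \cite[Lemma 3.7]{Pacard12} and Lemma \ref{lemm:index.perp.comp}). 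Second, on the complement of the interface strip, $W'' \geq \tfrac{1}{2} \min\{W''(\pm 1)\} > 0$ makes $\eps^2 \Delta_g - W''(\widetilde{\mathbb{H}}_\eps)$ uniformly coercive, so a standard Dirichlet problem solves for $v^\flat$; the weight $\eps^{-2}\chi_5$ in \eqref{eq:dirichlet.data.ckalpha.eps.modified} is precisely what is needed so that boundary data of size $\mu\eps^2$ produces an interior correction that is $O(\eps^4)$ inside the interface strip (where it must not compete with $\chi_4 v^\sharp$) and $O(\eps^2)$ elsewhere. Third, applying $\Pi_\eps$ to the remaining equation produces a Jacobi-type equation on $\Sigma$,
\[ J_\Sigma \zeta = F[\zeta, v^\sharp, v^\flat], \qquad \zeta|_{\partial \Sigma} = \widehat{\zeta}, \]
whose solvability at the desired norm level follows from the nondegeneracy assumption \eqref{eq:dirichlet.data.sigma.nondegenerate} together with standard Dirichlet Schauder theory for $J_\Sigma$ (cf.\ \eqref{eq:dirichlet.data.sigma.jacobi.operator}).

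Concatenating the three inversions defines a map $T : \mathcal X \to \mathcal X$ on the product Banach space $\mathcal X = \widetilde{C}^{2,\alpha}_\eps(\Omega) \times C^{2,\alpha}_\eps(\Sigma \times \RR) \times \eps^{-2\alpha} C^{2,\alpha}(\Sigma)$, and I would prove that $T$ sends the ball of radius $C\eps^2$ into itself and is a $\tfrac{1}{2}$-contraction on that ball, provided $\eps$ and $\alpha$ are sufficiently small. The key quantitative inputs are: the a~priori error $\cF(0,0,0) = O(\eps^2)$ from minimality of $\Sigma$; quadratic nonlinear error estimates $\Vert Q[u_1] - Q[u_2] \Vert = O(\eps^2) \Vert u_1 - u_2\Vert$ on the ball, coming from $W$ being smooth and $\Vert u_j \Vert = O(\eps^2)$; and geometric perturbation estimates for $D_\zeta^* g - g$ in terms of $\eps^{2\alpha}\Vert \zeta \Vert_{C^{2,\alpha}}$. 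Banach's theorem then produces the unique fixed point, and Lipschitz dependence on $(\widehat{v}^\flat, \widehat{v}^\sharp, \widehat{\zeta}, g)$ is automatic from the uniform inversion estimates.

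The main obstacle is the low regularity of $\Sigma$: \eqref{eq:dirichlet.data.sigma.c2alpha}-\eqref{eq:dirichlet.data.sigma.induced.c3alpha} control only two derivatives of $g_0$ pointwise and provide the third derivative only at the weighted scale $\eps^{-1}$, which is precisely the mild $C^{2,\theta}$ regularity that $\{u=0\}$ inherits from \eqref{eq:phi.improved.c2a.estimate.full} and Corollary \ref{coro:curvature.estimates}. This forces us to work throughout in the $\eps$-scaled H\"older spaces $C^{k,\alpha}_\eps$ rather than classical ones, and to avoid invoking any estimate that requires three honest derivatives of the background metric. Concretely, the change-of-variables by $D_\zeta$ must be controlled by $\eps^{2\alpha} \Vert\zeta\Vert_{C^{2,\alpha}}$ rather than $\Vert\zeta\Vert_{C^{3,\alpha}}$, which is why the parameter $\alpha$ must be taken near $0$ (giving $\theta_0 = \theta_0(\delta_*)$ near $1$ as the compatible bound). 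The remaining bookkeeping---threading the five cutoffs $\chi_1,\ldots,\chi_5$ so that $v^\sharp$ and $v^\flat$ decouple at the boundary without spoiling the orthogonality constraint $\Pi_\eps v^\sharp = 0$---is routine once the scale-invariant norms are correctly chosen.
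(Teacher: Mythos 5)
Your proposal follows essentially the same route as the paper's Section \ref{sec:dirichlet.data}: a Pacard-style scheme with the same three unknowns $(v^\flat, v^\sharp, \zeta)$, the same three linear inversions (a globally coercive constant-potential operator on $\Omega$ for $v^\flat$, the linearized operator on $\Sigma\times\RR$ restricted to $\{\Pi_\eps = 0\}$ for $v^\sharp$, and $J_\Sigma$ with the nondegeneracy hypothesis \eqref{eq:dirichlet.data.sigma.nondegenerate} for $\zeta$), the same modified norm $\widetilde{C}^{2,\alpha}_\eps$ justified by a barrier/maximum-principle gain of $\eps^2$ near the interface, and the same contraction-mapping closure in $\eps$-weighted H\"older spaces with $\alpha$ near $0$ and $\theta$ near $1$; Lipschitz dependence on the data and the metric is obtained exactly as you indicate.

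There is, however, one quantitative step that fails as you have stated it. Your list of ``key quantitative inputs'' includes only $\cF(0,0,0) = O(\eps^2)$ ``from minimality of $\Sigma$,'' but for the $\zeta$-component this is not enough: the Jacobi equation is driven by $\eps^{-1}\Pi_\eps$ of the error, so to land $\zeta$ in the ball $\eps^{2\alpha}\Vert\zeta\Vert_{C^{2,\alpha}(\Sigma)} \leq C\eps^2$ one needs $\Vert \Pi_\eps M_\eps(0,0,0)\Vert_{C^{0,\alpha}(\Sigma)} = O(\eps^3)$, i.e.\ a full extra order beyond the pointwise size of the error (this is the content of Lemma \ref{lemm:dirichlet.data.lemm.3.8}). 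That extra order comes from two separate cancellations: minimality kills the constant term of $H_z$, and the \emph{parity} identity $\int_{-\infty}^{\infty} \chi(z)\, z\, (\mathbb{H}'(\eps^{-1}z))^2\, dz = 0$ kills the linear-in-$z$ term of the Taylor expansion $H_z = -(|\sff_\Sigma|^2 + \ricc_g(\partial_z,\partial_z))z + O(z^2)$ under the projection $\Pi_\eps$, leaving only the quadratic remainder, which contributes $O(\eps^3)$. With only $\cF(0,0,0)=O(\eps^2)$ you would get $\Vert\zeta\Vert_{C^{2,\alpha}} = O(\eps)$, hence $\eps^{2\alpha}\Vert\zeta\Vert = O(\eps^{1+2\alpha})$, which does not fit in the ball of radius $M\eps^2$ when $\alpha$ is small; moreover all the Lipschitz estimates for $M_\eps$ and $N_\eps$ (your ``geometric perturbation estimates for $D_\zeta^* g$'') presuppose $\Vert\zeta\Vert_{C^{2,\alpha}} \lesssim \eps^{2-2\alpha}$, so the contraction would not close. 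Incorporating the parity cancellation into the projected zeroth-order estimate repairs the argument and brings it in line with the paper's proof.
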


This follows along the lines of \cite[Section 3]{Pacard12}, provided one makes the necessary modifications to account for (possibly nonzero, but small) Dirichlet data as well as the important fact that our Fermi coordinate regularity is constrained by the weaker assumptions \eqref{eq:dirichlet.data.sigma.c2alpha}-\eqref{eq:dirichlet.data.sigma.c3alpha}. This lower regularity situation makes certain aspects of Theorem \ref{theo:dirichlet.data.construction} delicate, so we describe the proof in detail below.

\subsection{Linear scheme} \label{subsec:dirichlet.data.linear.scheme}

In this section we generalize linear estimates found in \cite[Section 3]{Pacard12} to allow Dirichlet boundary conditions, possibly with nonzero data. The operators we'll study are:
\begin{align}
	L_* & \triangleq \Delta_{\RR^n} + \partial_z^2 - W''(\mathbb{H}) \text{ on } \RR_+^n \times \RR, \label{eq:dirichlet.data.l.star} \\
	L_\eps & \triangleq \eps^2 (\Delta_{g_0} + \partial_z^2) - W''(\mathbb{H}_\eps) \text{ on } \Sigma \times \RR,	\label{eq:dirichlet.data.l.eps} \\
	\cL_\eps & \triangleq \eps^2 \Delta_g - W''(\pm 1) \text{ on } \Omega. \label{eq:dirichlet.data.cl.eps}
\end{align}

\begin{lemm}[cf. {\cite[Lemma 3.7]{Pacard12}}] \label{lemm:dirichlet.data.lemm.3.7}
	Assume that $w \in L^\infty(\RR^n_+ \times \RR)$ satisfies $L_* w = 0$ and $w \equiv 0$ on $\partial \RR^n_+ \times \RR$. Then $w \equiv 0$.
\end{lemm}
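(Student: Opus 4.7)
The plan is to reduce this half-space Liouville assertion to its whole-space analogue—essentially \cite[Lemma 3.7]{Pacard12}, which states that any bounded smooth solution of $L_* v = 0$ on all of $\RR^n \times \RR$ must take the form $v(x, z) = C\,\mathbb{H}'(z)$ for some $C \in \RR$—and then extract vanishing from an odd reflection in the half-space direction.

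First I would boost the regularity of $w$. Since $L_*$ is uniformly elliptic with smooth, $x$-independent coefficients, interior Schauder estimates applied to $L_* w = 0$ render $w$ smooth in the interior of $\RR^n_+ \times \RR$, while boundary Schauder estimates combined with the Dirichlet condition $w\big|_{\partial \RR^n_+ \times \RR} \equiv 0$ promote $w$ to $C^{2,\alpha}$ up to the boundary. Writing $x = (x', x_n) \in \RR^{n-1} \times [0, \infty)$, I would then consider the odd reflection
$$\widetilde{w}(x', x_n, z) \triangleq \sign(x_n)\, w(x', |x_n|, z), \quad (x', x_n, z) \in \RR^n \times \RR,$$
which is $C^{1,\alpha}$ across $\{x_n = 0\}$ by the vanishing of $w$ there, and which solves $L_* \widetilde{w} = 0$ weakly on $\RR^n \times \RR$ because the coefficients of $L_*$ depend only on $z$. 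Elliptic regularity upgrades this to a classical bounded solution on the whole space.

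Applying the whole-space Liouville result to $\widetilde{w}$ gives $\widetilde{w}(x,z) = C\,\mathbb{H}'(z)$ for some $C \in \RR$. The right-hand side is independent of $x_n$, whereas $\widetilde{w}$ is odd in $x_n$ by construction; since $\mathbb{H}' \not\equiv 0$, this forces $C = 0$. Hence $\widetilde{w} \equiv 0$ on $\RR^n \times \RR$, and in particular $w \equiv 0$ on $\RR^n_+ \times \RR$, as claimed.

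The main obstacle is thus the whole-space Liouville statement, which I would prove by the standard spectral decomposition. Set $c(x) \triangleq \energyunit^{-1} \int_\RR v(x,z)\,\mathbb{H}'(z)\,dz$ and $v^\perp \triangleq v - c(x)\,\mathbb{H}'(z)$. Pairing $L_* v = 0$ with $\mathbb{H}'(z)$ and integrating in $z$ (the $z = \pm\infty$ boundary terms vanish by boundedness of $v$ and exponential decay of $\mathbb{H}', \mathbb{H}''$), together with the identity $(\partial_z^2 - W''(\mathbb{H}))\mathbb{H}' = 0$, reduces the pairing to $\energyunit\,\Delta_{\RR^n} c = 0$; the classical Liouville theorem then forces the bounded harmonic function $c$ to be constant. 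For the orthogonal remainder $v^\perp$, I would use the spectral gap $\gamma > 0$ of the 1-d operator $-(\partial_z^2 - W''(\mathbb{H}))$ on the $L^2(\RR)$-orthogonal complement of its positive ground state $\mathbb{H}'$: testing $L_* v^\perp = 0$ against $\phi(x)^2 v^\perp$ for a logarithmic cutoff $\phi \in C^\infty_c(\RR^n)$, integrating by parts in $x$, and applying the gap fiberwise in $z$ would dominate $\int \phi^2 (v^\perp)^2$ by a $|\nabla\phi|^2$-term that can be made arbitrarily small. The subtle step here is controlling the $z$-integration by parts for a merely bounded $v^\perp$, which is handled via the exponential decay in $z$ of any bounded solution of $L_0 u = f$ with $\Pi(u) \equiv 0$ and $\Pi(f) \equiv 0$.
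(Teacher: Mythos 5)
Your proposal is correct and takes essentially the same route as the paper: the paper's proof is precisely an odd reflection of $w$ across $\partial \RR^n_+$ followed by an appeal to \cite[Lemma 3.7]{Pacard12} (the whole-space classification of bounded solutions as multiples of $\mathbb{H}'(z)$), with oddness in $x_n$ forcing the multiple to vanish. The additional material you supply — the boundary regularity bootstrap and your sketch of a proof of the whole-space Liouville statement — is simply cited rather than reproved in the paper.
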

\begin{proof}
	The result follows from \cite[Lemma 3.7]{Pacard12} after an odd reflection of $w$ across $\partial \RR^{n}_{+}$. 
\end{proof}

The next results that need to be adapted pertain to $L_\eps$ and functions $\varphi \in L^\infty(\Sigma \times \RR)$ satisfying $\Pi_\eps(\varphi) \equiv 0$ on $\Sigma$, where $\Pi_\eps$ is as in \eqref{eq:dirichlet.data.proj}.

\begin{lemm}[cf. {\cite[Proposition 3.1]{Pacard12}}] \label{lemm:dirichlet.data.prop.3.1}
	If $\eps \leq \eps_0$, $w \in C^{2,\alpha}_\eps(\Sigma \times \RR)$, and $\Pi_\eps(w) \equiv 0$ on $\Sigma$, then
	\[ \Vert w \Vert_{C^{2,\alpha}_\eps(\Sigma\times \RR)} \leq C( \Vert L_\eps w \Vert_{C^{0,\alpha}_\eps(\Sigma \times \RR)} + \Vert w|_{\partial \Sigma \times \RR} \Vert_{C^{2,\alpha}_\eps(\partial \Sigma \times \RR)}). \]
	Here, $\eps_0 = \eps_0(n, \eta, W)$, $C = C(n, \eta, W, \alpha)$.
\end{lemm}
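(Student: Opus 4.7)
The proof follows the standard a priori estimate blueprint for linearized Allen--Cahn operators of the type carried out by Pacard \cite[Proposition 3.1]{Pacard12}, adapted to allow Dirichlet data on $\partial\Sigma\times\RR$. The plan is to prove the $L^\infty$-analogue
\[
\|w\|_{L^\infty(\Sigma\times\RR)} \leq C\bigl(\|L_\eps w\|_{L^\infty(\Sigma\times\RR)} + \|w|_{\partial\Sigma\times\RR}\|_{L^\infty(\partial\Sigma\times\RR)}\bigr)
\]
by a rescaling-and-contradiction argument, and then upgrade it to the full $C^{2,\alpha}_\eps$ estimate using interior and boundary Schauder theory.

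For the $L^\infty$ estimate: if the inequality fails, we obtain a sequence $\eps_i \searrow 0$ and normalized $w_i \in C^{2,\alpha}_{\eps_i}(\Sigma\times\RR)$ with $\Pi_{\eps_i}(w_i)\equiv 0$, $\|w_i\|_{L^\infty}=1$, but $\|L_{\eps_i} w_i\|_{L^\infty} + \|w_i|_{\partial\Sigma\times\RR}\|_{L^\infty}\to 0$. Pick $(y_i,z_i)\in\Sigma\times\RR$ with $|w_i(y_i,z_i)|\geq 1/2$. Rescale in both variables by $\eps_i^{-1}$ around $(y_i,z_i)$, defining $\widetilde w_i(y,z)\triangleq w_i(y_i+\eps_i y, z_i+\eps_i z)$ in the appropriate chart. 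By the $C^{1,\theta}_\eps$-closeness of $g_0$ to the Euclidean metric in the given coordinate charts (cf.\ \eqref{eq:dirichlet.data.sigma.induced.c2alpha}-\eqref{eq:dirichlet.data.sigma.induced.c3alpha}) and standard elliptic regularity, a subsequence of $\widetilde w_i$ converges in $C^{2,\alpha}_{\loc}$ to a bounded function $\widetilde w_\infty$. The limiting equation depends on the location of $(y_i,z_i)$:
\begin{itemize}
\item If $\dist_{g_0}(y_i,\partial\Sigma)/\eps_i\to\infty$ and $|z_i|$ is bounded, the limit lives on $\RR^{n-1}\times\RR$, satisfies $L_* \widetilde w_\infty = 0$ (with the Laplacian on $\RR^{n-1}$), and inherits the orthogonality $\int_\RR \widetilde w_\infty(y,z)\mathbb{H}'(z)\,dz \equiv 0$ (see below).
\item If $\dist_{g_0}(y_i,\partial\Sigma)/\eps_i$ stays bounded, we translate so $y_i$ is sent to a fixed point of $\partial\RR^{n-1}_+$, and the limit solves $L_*\widetilde w_\infty=0$ on $\RR^{n-1}_+\times\RR$ with $\widetilde w_\infty\equiv 0$ on $\partial\RR^{n-1}_+\times\RR$ and the same orthogonality constraint.
\item If $|z_i|/\eps_i\to\infty$, then $W''(\mathbb{H}_{\eps_i})\to W''(\pm 1) = 2$ uniformly on the rescaled domain, and $\widetilde w_\infty$ is a bounded solution of a coercive constant-coefficient equation; testing against itself and integrating by parts yields $\widetilde w_\infty\equiv 0$, contradicting $|\widetilde w_\infty(0)|\geq 1/2$.
\end{itemize}
To pass the orthogonality to the limit, note that $\Pi_{\eps_i}(w_i)(y_i+\eps_i y)=0$ rewrites as $\energyunit^{-1}\int_\RR w_i(y_i+\eps_i y, z)\mathbb{H}'(\eps_i^{-1} z)\,dz = 0$; changing variables $z\mapsto z_i+\eps_i z$ and using that $\mathbb{H}'$ decays exponentially, together with uniform local boundedness of $\widetilde w_i$, we can pass to the limit and obtain $\int_\RR \widetilde w_\infty(y,z)\mathbb{H}'(z)\,dz=0$ (shifted appropriately by $z_i/\eps_i$, which is assumed bounded in the relevant cases). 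In the first case, this together with \cite[Lemma 3.7]{Pacard12} forces $\widetilde w_\infty\equiv 0$, contradicting $|\widetilde w_\infty(0)|\geq 1/2$; in the second case, Lemma \ref{lemm:dirichlet.data.lemm.3.7} applies (the orthogonality is not even needed after the odd reflection) and yields the same contradiction.

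Having established the $L^\infty$ bound, the full $C^{2,\alpha}_\eps$ estimate follows from scaled interior and boundary Schauder estimates applied to the operator $L_\eps$ on balls of radius comparable to $\eps$: the coefficient $W''(\mathbb{H}_\eps)$ is bounded uniformly and the metric coefficients satisfy \eqref{eq:dirichlet.data.sigma.induced.c2alpha}-\eqref{eq:dirichlet.data.sigma.induced.c3alpha}, so at the scale $\eps$ the rescaled operator has uniformly bounded $C^{0,\alpha}$ coefficients (which is why we require $\alpha\leq\theta$). The anticipated main obstacle is the bookkeeping in the blow-up argument, in particular verifying that the orthogonality constraint survives the limiting process in each of the three scenarios and that the limiting boundary, when present, is flat (which is why the weak regularity \eqref{eq:dirichlet.data.sigma.induced.c3alpha} of the charts on $\Sigma$ suffices, since we only need $C^{1,\theta}_\eps$-closeness to Euclidean at scale $\eps$ around each blow-up point).
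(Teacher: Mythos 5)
Your proof is correct and follows essentially the same route as the paper: the paper's proof simply cites \cite[Proposition 3.1]{Pacard12} (whose proof is precisely the rescaling/contradiction argument you spell out), the half-space Liouville result of Lemma \ref{lemm:dirichlet.data.lemm.3.7}, and boundary Schauder estimates, relying on the $C^{1,\theta}_\eps$ control of $g_0$ from \eqref{eq:dirichlet.data.sigma.induced.c3alpha}. The only cosmetic point is that your case split should be phrased in terms of $|z_i|/\eps_i$ (bounded versus unbounded) rather than $|z_i|$, as your later parenthetical about the shift $z_i/\eps_i$ already indicates.
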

\begin{proof}
	This follows from the $C^{1,\alpha}_\eps$ control of $g_0$ by way of  \eqref{eq:dirichlet.data.sigma.induced.c3alpha}, \cite[Proposition 3.1]{Pacard12}, Lemma \ref{lemm:dirichlet.data.lemm.3.7}, and boundary Schauder estimates (e.g., \cite[Theorem 5]{Simon97}).
\end{proof}

\begin{lemm}[cf. {\cite[Proposition 3.2]{Pacard12}}] \label{lemm:dirichlet.data.prop.3.2}
	There exists $\eps_0 > 0$ depending on $n$, $\eta > 0$, $W$, such that for all $\eps \in (0, \eps_0)$, all $f \in C^{0,\alpha}_\eps(\Sigma \times \RR)$ with $\Pi_\eps(f) \equiv 0$ on $\Sigma$, and all $\hat f \in C^{2,\alpha}_\eps(\partial \Sigma \times \RR)$ with $\Pi_\eps(\hat f) \equiv 0$ on $\partial \Sigma$, there exists a unique function $w \in C^{2,\alpha}_\eps(\Sigma \times \RR)$, also with $\Pi_\eps(w) \equiv 0$ on $\Sigma$, such that
	\[ L_\eps w = f \text{ in } \Sigma \times \RR, \; w = \hat f \text{ on } \partial \Sigma \times \RR. \]
\end{lemm}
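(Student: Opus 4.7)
\textbf{Proof plan for Lemma \ref{lemm:dirichlet.data.prop.3.2}.} The strategy is to combine the a priori estimate of Lemma \ref{lemm:dirichlet.data.prop.3.1} (which subsumes uniqueness) with a constrained variational existence argument. First, I would reduce to the case of homogeneous boundary data. Choose a bounded linear extension operator $\cE : C^{2,\alpha}_\eps(\partial \Sigma \times \RR) \to C^{2,\alpha}_\eps(\Sigma \times \RR)$ that acts only in the $y$-variable (e.g., a harmonic extension pointwise in $z$, composed with a tubular cutoff) and set $\tilde f \triangleq \cE \hat f$. Since $\Pi_\eps$ is an integral in $z$ and thus commutes with any such $y$-extension, we have $\Pi_\eps \tilde f = \cE(\Pi_\eps \hat f) \equiv 0$ on $\Sigma$ by hypothesis. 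A direct integration by parts in $z$, using $\eps^2 \mathbb{H}_\eps'' = W'(\mathbb{H}_\eps)$, shows that $\Pi_\eps \circ L_\eps$ vanishes on functions with $\Pi_\eps \equiv 0$; hence $f' \triangleq f - L_\eps \tilde f$ still satisfies the compatibility condition $\Pi_\eps f' \equiv 0$, and it suffices to solve $L_\eps w = f'$ with $w = 0$ on $\partial \Sigma \times \RR$.

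For the homogeneous problem I would apply Lax--Milgram on the closed subspace
\[ V \triangleq \{ w \in H^1_0(\Sigma \times \RR) : \Pi_\eps w \equiv 0 \text{ on } \Sigma \}, \]
where $H^1_0$ refers to vanishing trace on $\partial \Sigma \times \RR$, equipped with the symmetric bilinear form
\[ a(v, w) \triangleq \int_{\Sigma \times \RR} \eps^2 \langle \nabla v, \nabla w \rangle_{g_0 + dz^2} + W''(\mathbb{H}_\eps) v w. \]
The key point is that the one-dimensional linearized operator $-\partial_z^2 + W''(\mathbb{H})$ has a positive spectral gap $\lambda_1 > 0$ on the $L^2(\RR)$-orthogonal complement of its ground state $\mathbb{H}'$; rescaling by $\eps$ gives $\int_\RR \eps^2 (\partial_z w)^2 + W''(\mathbb{H}_\eps) w^2 \, dz \geq \lambda_1 \int_\RR w^2 \, dz$ for every $w(y, \cdot)$ pointwise orthogonal to $\mathbb{H}_\eps'$. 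Integrating over $\Sigma$ and using the nonnegativity of the horizontal contribution $\int \eps^2 |\nabla_{g_0} w|^2$ shows that $a$ is uniformly coercive on $V$ with respect to the weighted norm $\|w\|_{H^1_\eps}^2 \triangleq \|w\|_{L^2}^2 + \eps^2 \|\nabla w\|_{L^2}^2$, with constant independent of $\eps$. Lax--Milgram thus produces a unique $v \in V$ with $a(v, w) = \int f' w$ for all $w \in V$.

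The Euler--Lagrange equation on the constrained space yields $L_\eps v = f' - \lambda(y) \mathbb{H}_\eps'(z)$ distributionally, where $\lambda \in L^2(\Sigma)$ is a Lagrange multiplier associated with the pointwise-in-$y$ constraint $\Pi_\eps w \equiv 0$. Testing against $\mathbb{H}_\eps'(z) \phi(y)$ for $\phi \in C^\infty_c(\Sigma \setminus \partial \Sigma)$ and integrating $L_\eps$ twice by parts, one uses the identities $\eps^2 \mathbb{H}_\eps''' = W''(\mathbb{H}_\eps) \mathbb{H}_\eps'$, $\Pi_\eps v \equiv 0$, and $\Pi_\eps f' \equiv 0$ to conclude $\int_\Sigma \lambda \phi \, d\mu_{g_0} = 0$ for every admissible $\phi$; hence $\lambda \equiv 0$ and $L_\eps v = f'$ weakly. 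Standard interior and boundary Schauder theory, together with the $C^{0,\theta}$ and $C^{1,\theta}_\eps$ control of $g_0$ from \eqref{eq:dirichlet.data.sigma.induced.c2alpha}--\eqref{eq:dirichlet.data.sigma.induced.c3alpha}, upgrades $v$ to a classical solution in $C^{2,\alpha}_\eps$, and the full quantitative estimate on $\|v\|_{C^{2,\alpha}_\eps}$ follows directly from Lemma \ref{lemm:dirichlet.data.prop.3.1}.

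The main technical obstacle is the Lagrange multiplier step: it is essentially a self-adjointness check, but requires some care because $\Sigma \times \RR$ is noncompact and one must first verify enough decay of the Lax--Milgram solution at $z \to \pm\infty$ to justify the relevant integrations by parts. This decay follows from the $L^2(\Sigma \times \RR)$ bound together with the fact that $W''(\mathbb{H}_\eps) \to W''(\pm 1) > 0$ as $|z| \to \infty$, which makes $L_\eps$ an operator with uniformly positive zeroth-order coefficient outside a thin strip around $\Sigma$ and yields exponential decay for $v$ by a standard barrier argument. A secondary bookkeeping point is that the extension $\cE$ must preserve the $\eps$-anisotropic H\"older norm, which is straightforward since $\cE$ acts only in the $y$-variable and commutes with $z$-derivatives.
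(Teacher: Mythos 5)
Your overall route is the same as the paper's: reduce to homogeneous boundary data by extending $\hat f$ to $\Sigma \times \RR$ in a way that preserves $\Pi_\eps(\cdot) \equiv 0$ (so that $\Pi_\eps(f - L_\eps \tilde f) \equiv 0$, using $(\eps^2\partial_z^2 - W''(\mathbb{H}_\eps))\mathbb{H}'(\eps^{-1}z) = 0$), and then run the Pacard-type variational argument on $W^{1,2}_0(\Sigma\times\RR)$ restricted to the kernel of $\Pi_\eps$, with the one-dimensional spectral gap giving coercivity, a vanishing Lagrange multiplier giving the unconstrained equation, and Schauder theory plus Lemma \ref{lemm:dirichlet.data.prop.3.1} giving regularity and uniqueness. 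That is exactly what the paper does (it simply cites the functional-analytic argument of Pacard applied to $W^{1,2}_0$), and your Lagrange-multiplier and reduction steps are correct.

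There is, however, one genuine gap in the Lax--Milgram step as you have written it. You take the right-hand side functional to be $w \mapsto \int_{\Sigma\times\RR} f' w$, but $f' = f - L_\eps \tilde f$ is only bounded (it lies in $C^{0,\alpha}_\eps$), not in $L^2(\Sigma\times\RR)$: the cylinder has infinite measure in the $z$-direction, so for general admissible data this functional is not bounded on $L^2$, let alone on your space $V \subset H^1_0$, and Lax--Milgram does not apply directly. (The hypothesis $\Pi_\eps f' \equiv 0$ gives no decay in $z$, so it does not rescue the pairing.) The standard repair, which is also what makes the a priori estimate of Lemma \ref{lemm:dirichlet.data.prop.3.1} do real work here, is to first solve for data truncated in $z$ by a smooth cutoff (or to solve on slabs $\Sigma \times (-T,T)$ with Dirichlet conditions at $z = \pm T$), where the variational argument applies verbatim, and then let the truncation parameter tend to infinity: Lemma \ref{lemm:dirichlet.data.prop.3.1} gives $C^{2,\alpha}_\eps$ bounds uniform in the truncation, one extracts a locally convergent subsequence, and the limit satisfies the equation with the full data, inherits $\Pi_\eps w \equiv 0$, and attains the boundary values. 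With that exhaustion step inserted (and with the small bookkeeping correction that coercivity of $a$ on the $H^1_\eps$-norm requires retaining, not merely discarding, the horizontal term $\eps^2\int |\nabla_{g_0} w|^2$), your argument is complete and matches the proof the paper intends via its citation.
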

\begin{proof}
	When $\hat f \equiv 0$ this follows from the functional analytic argument already found in \cite[Proposition 3.2]{Pacard12} applied, instead, to $W^{1,2}_0(\Sigma \times \RR)$.
	
	When $\hat f \not \equiv 0$, this follows by extending $\hat f$ to $C^{2,\alpha}(\Sigma \times \RR)$, $\Pi_\eps(\hat f) \equiv 0$, and applying the previous existence result with zero boundary data to solve $L_\eps w = f - L_\eps \hat f$.
\end{proof}

Finally, \cite{Pacard12} deals with $\cL_\eps$. 

\begin{lemm}[cf. {\cite[Proposition 3.3]{Pacard12}}] \label{lemm:dirichlet.data.prop.3.3}
	If $\eps \in (0, 1)$, then 
	\[ \Vert w \Vert_{C^{2,\alpha}_\eps(\Omega)} \leq C(\Vert \cL_\eps w \Vert_{C^{0,\alpha}_\eps(\Omega)} + \Vert w|_{\partial \Omega} \Vert_{C^{2,\alpha}_\eps(\partial \Omega)}). \]
	Here, $C = C(n, \eta, W, \alpha)$. 
\end{lemm}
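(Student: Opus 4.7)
The plan is to reduce the estimate to a standard Schauder-type inequality for a uniformly elliptic operator with a coercive zeroth-order term by rescaling by a factor of $\eps$. Set $\tilde X \triangleq \eps^{-1} X$, $\tilde w(\tilde X) \triangleq w(\eps \tilde X)$, and let $\tilde g$ be the pullback of $g$ under this dilation. Then on $\tilde \Omega \triangleq \eps^{-1} \Omega$,
\[
\tilde{\cL} \tilde w(\tilde X) \triangleq \bigl( \Delta_{\tilde g} - W''(\pm 1) \bigr) \tilde w(\tilde X) = (\cL_\eps w)(\eps \tilde X).
\]
By \eqref{eq:dirichlet.data.g.c0alpha}-\eqref{eq:dirichlet.data.g.c1alpha}, the components of $\tilde g$ (in the rescaled Fermi coordinates) are uniformly $C^{0,\alpha}$-close to the Euclidean ones, with $C^{0,\alpha}$-seminorms controlled solely by $\eta$ and $n$. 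By the very definition \eqref{eq:dirichlet.data.ckalpha.eps}, the rescaled and original H\"older norms satisfy $\Vert w \Vert_{C^{k,\alpha}_\eps(\Omega)} \asymp \Vert \tilde w \Vert_{C^{k,\alpha}(\tilde \Omega)}$ for $k = 0, 1, 2$ (with equivalence constants depending only on $\eta$). Hence it suffices to prove the scale-invariant estimate
\[
\Vert \tilde w \Vert_{C^{2,\alpha}(\tilde \Omega)} \leq C \bigl( \Vert \tilde{\cL} \tilde w \Vert_{C^{0,\alpha}(\tilde \Omega)} + \Vert \tilde w|_{\partial \tilde \Omega} \Vert_{C^{2,\alpha}(\partial \tilde \Omega)} \bigr).
\]

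To establish this, I would combine two ingredients. The first is the classical boundary Schauder estimate for uniformly elliptic operators with $C^{0,\alpha}$ coefficients and Dirichlet data (e.g., \cite[Theorem 5]{Simon97} or \cite[Ch.~6]{GilbargTrudinger01} applied in charts), which yields
\[
\Vert \tilde w \Vert_{C^{2,\alpha}(\tilde \Omega)} \leq C \bigl( \Vert \tilde{\cL} \tilde w \Vert_{C^{0,\alpha}(\tilde \Omega)} + \Vert \tilde w \Vert_{C^0(\tilde \Omega)} + \Vert \tilde w|_{\partial \tilde \Omega} \Vert_{C^{2,\alpha}(\partial \tilde \Omega)} \bigr).
\]
The second ingredient is the weak maximum principle, which applies because the zeroth-order coefficient $-W''(\pm 1) = -2$ of $\tilde{\cL}$ is a strictly negative constant, uniformly in $\eps$. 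At any interior maximum of $\tilde w$ one has $\Delta_{\tilde g}\tilde w \leq 0$, so $\tilde{\cL}\tilde w \leq -2\tilde w$; applying this argument to $\pm\tilde w$ gives
\[
\Vert \tilde w \Vert_{C^0(\tilde \Omega)} \leq \tfrac{1}{2} \Vert \tilde{\cL} \tilde w \Vert_{C^0(\tilde \Omega)} + \Vert \tilde w|_{\partial \tilde \Omega} \Vert_{C^0(\partial \tilde \Omega)}.
\]
Substituting into the boundary Schauder estimate absorbs the interior $C^0$ term, and undoing the rescaling yields the conclusion.

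The principal technical point is that $\partial \Omega = (\partial D \times [-1,1]) \cup (D \times \{\pm 1\})$ meets itself along the codimension-two corner $\partial D \times \{\pm 1\}$, so the boundary Schauder estimate has to be applied with some care near these corners. This is routine rather than conceptual: the product structure makes the dihedral angle exactly $\pi/2$ and $\tilde g$ is $C^{0,\alpha}$-close to Euclidean, so one may either reflect across each face and apply an interior estimate for the reflected operator, or piece together interior Schauder estimates with boundary Schauder estimates away from the corners using a finite covering and interpolation. I expect this bookkeeping to be the main, and essentially the only, obstacle, which is presumably why the analogue \cite[Proposition 3.3]{Pacard12} is stated without proof.
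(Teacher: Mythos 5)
Your proposal is correct and follows essentially the same route as the paper: rescaling to unweighted Schauder norms, interior and boundary Schauder estimates (using that the metric is $C^{1,\alpha}_\eps$-close to Euclidean), and odd reflection across $D\times\{\pm 1\}$ to handle the $\pi/2$ corners. Your explicit maximum-principle step, exploiting $W''(\pm 1)=2>0$ to absorb the interior $C^0$ term, is a detail the paper's proof leaves implicit, and it is a worthwhile addition rather than a different approach.
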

\begin{proof}
	The interior estimate follows from interior Schauder theory, since $g$ is $C^{1,\alpha}_\eps$ by \eqref{eq:dirichlet.data.g.c1alpha}. The boundary estimate on the regular portion of $\partial \Omega$ follows from boundary Schauder theory, because $\partial \Omega$ is $C^{2,\alpha}_\eps$ at those points by \eqref{eq:dirichlet.data.sigma.c3alpha}. Finally, the estimate at the corners of $\partial \Omega$ follows from the boundary theory as well. This is because we can carry out odd reflections across $D \times \{\pm 1\}$ since the angles at the corners are all $\pi/2$.
\end{proof}

We also derive an \emph{improved} estimate for functions satisfying $\cL_\eps w = 0$ on a strip of height $O(\eps^{\delta_*})$, and $w = 0$ on its lateral boundary. Recall the definition of the norm $\widetilde{C}^{2,\alpha}_{\varepsilon}$ in \eqref{eq:dirichlet.data.ckalpha.eps.modified}.

\begin{lemm}[cf. {\cite[(3.26)]{Pacard12}}] \label{lemm:dirichlet.data.eq.3.26}
	If $\eps \leq \eps_0$, $w \in C^{2,\alpha}_\eps(\Omega)$, and
	\[ \cL_\eps w = 0 \text{ on } \Omega_4, \text{ and } w = 0 \text{ on } \partial \Omega_4 \cap \partial \Omega, \]
	then
	\[ \Vert w \Vert_{\widetilde{C}^{2,\alpha}_\eps(\Omega)} \leq C ( \Vert \cL_\eps w \Vert_{C^{0,\alpha}_\eps(\Omega)} + \Vert w|_{\partial \Omega} \Vert_{C^{2,\alpha}_\eps(\partial \Omega)}). \]
	Here, $\eps_0 = \eps_0(n, \eta, W, \delta_*)$, $C = C(n, \eta, W, \delta_*, \alpha)$. 
\end{lemm}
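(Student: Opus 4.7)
The plan is to exploit the fact that $\cL_\eps = \eps^2 \Delta_g - 2$ (using $W''(\pm 1) = 2$) carries a uniform positive mass term. On the strip $\Omega_4$, the lateral Dirichlet data of $w$ vanishes, so I expect the positive mass term to force exponential decay at rate $O(\eps^{-1})$ inward from the horizontal components of $\partial \Omega_4$. Since those components sit at vertical distance $\gtrsim \eps^{\delta_*}$ from $\support \chi_5$ and $\delta_* < 1$, the resulting smallness $\exp(-c \eps^{\delta_* - 1})$ dominates any power of $\eps$---in particular the $\eps^{-2}$ weight encoded in the $\widetilde{C}^{2,\alpha}_\eps$ norm \eqref{eq:dirichlet.data.ckalpha.eps.modified}.

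I would first apply Lemma \ref{lemm:dirichlet.data.prop.3.3} directly to obtain $\Vert w \Vert_{C^{2,\alpha}_\eps(\Omega)} \leq C R$, where $R$ abbreviates the right-hand side of the claimed estimate; this already controls the unweighted summand of $\widetilde{C}^{2,\alpha}_\eps$. For the weighted summand I would introduce the one-dimensional barrier
\[ \phi(z) \triangleq CR \cdot \frac{\cosh(z/\eps)}{\cosh(z_\star/\eps)}, \qquad z_\star \triangleq \eps^{\delta_*}\bigl(1 - \tfrac{6}{100}\bigr), \]
so that $z_\star$ bounds the vertical extent of $\Omega_4$. Working in Fermi coordinates and using that $\Delta_g \phi = \partial_z^2 \phi + H_z \partial_z \phi$ when $\phi$ depends only on $z$, together with the bound $|H_z| \leq C$ inherited from \eqref{eq:dirichlet.data.sigma.c2alpha} via Riccati on the narrow strip $|z| \lesssim \eps^{\delta_*}$, a short computation gives $\cL_\eps \phi \leq -\tfrac{1}{2} \phi < 0$ for $\eps$ small. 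On the horizontal parts of $\partial \Omega_4$, $\phi = CR \geq \Vert w \Vert_{C^0(\Omega)}$; on the lateral parts $\partial \Omega_4 \cap \partial \Omega$, $\phi \geq 0 = |w|$. The comparison principle for $\cL_\eps$ (valid because of the positive zeroth-order term) then gives $|w| \leq \phi$ throughout $\Omega_4$.

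Evaluating the barrier at points of $\support \chi_5 \subset \{|z| \leq 0.92\, \eps^{\delta_*}\}$ produces $|w| \leq CR \exp(-c \eps^{\delta_* - 1}) \leq CR \eps^N$ for any $N$ once $\eps$ is small. Interior and boundary $\eps$-scaled Schauder estimates, analogous to those used in Lemma \ref{lemm:dirichlet.data.prop.3.3}, applied to $\cL_\eps w = 0$ on the slightly larger strip $\{|z| \leq 0.93\, \eps^{\delta_*}\}$ with zero lateral Dirichlet data, upgrade this $C^0$ smallness to $\Vert w \Vert_{C^{2,\alpha}_\eps(\{|z| \leq 0.92\, \eps^{\delta_*}\})} \leq CR \eps^2$. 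Since $\Vert \chi_5 \Vert_{C^{2,\alpha}_\eps(\RR)} \leq C$ (by \eqref{eq:dirichlet.data.cutoff} and $\delta_* < 1$), the Leibniz rule gives $\Vert \chi_5 w \Vert_{C^{2,\alpha}_\eps(\Omega)} \leq CR \eps^2$, and adding this to the global bound from the first step yields the desired $\widetilde{C}^{2,\alpha}_\eps$ estimate.

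The main technical subtlety will be verifying that the cosh-barrier is a genuine supersolution for $\cL_\eps$ given only the weak Fermi-coordinate regularity \eqref{eq:dirichlet.data.sigma.c2alpha}-\eqref{eq:dirichlet.data.sigma.c3alpha}. Concretely, the extra curvature contribution $\eps^2 H_z \partial_z \phi = \eps H_z \tanh(z/\eps)\, \phi$ has to be absorbed into the $-\phi$ coming from the mass term; since $|H_z|$ is uniformly bounded on the narrow strip $|z| \lesssim \eps^{\delta_*}$ and multiplied by $\eps$, this is a comfortable perturbation for $\eps$ small, but it is the one place where the low-regularity Riemannian setting enters nontrivially.
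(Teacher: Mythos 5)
Your proposal is correct and is essentially the paper's own argument: Lemma \ref{lemm:dirichlet.data.prop.3.3} for the unweighted part, then a $\cosh(\gamma\eps^{-1}\cdot)$ barrier (valid since $\gamma=1<\sqrt{W''(\pm 1)}$ and $\eps^2 H_z\partial_z$ is an $O(\eps)$ perturbation) exploiting the gap between the strip width $\eps^{\delta_*}$ and the length scale $\eps$, followed by $\eps$-scaled interior/boundary Schauder with zero lateral data to convert the $C^0$ smallness on $\support\chi_5$ into the weighted $\widetilde{C}^{2,\alpha}_\eps$ bound. The only differences are cosmetic: the paper uses a family of translated barriers $\varphi_{z_0}(z)=\cosh(\gamma\eps^{-1}(z-z_0))$ and performs the Schauder step before the barrier step, whereas you use a single barrier centered at $z=0$ and reverse the order.
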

\begin{proof}
	Considering Lemma \ref{lemm:dirichlet.data.prop.3.3}, it suffices to check that
	\begin{equation} \label{eq:dirichlet.data.eq.3.26.i}
		\Vert \chi_5 w \Vert_{C^{2,\alpha}_\eps(\Omega)} \leq C \eps^2 (\Vert \cL_\eps w \Vert_{C^{0,\alpha}_\eps(\Omega)} + \Vert w|_{\partial \Omega} \Vert_{C^{2,\alpha}_\eps(\partial \Omega)}).
	\end{equation}
	
	Since $\cL_\eps = 0$ on $\Omega_4$, $w = 0$ on $\partial \Omega_4 \cap \partial \Omega$, and $\delta_* \in (0,1)$, Schauder's \emph{interior} estimates estimates on $\partial\Omega_5 \setminus \partial \Omega$, Schauder's \emph{boundary} estimates near $\partial \Omega_5 \cap \partial \Omega$, \eqref{eq:dirichlet.data.sigma.c3alpha}, and \eqref{eq:dirichlet.data.g.c1alpha}, imply:
	\[ \Vert w \Vert_{C^{2,\alpha}_\eps(\Omega_5)} \leq C \Vert w \Vert_{L^\infty(\{ \chi_4 = 1 \})}. \]
	In particular, given the decay of the first and second derivatives of $\chi_j$ from \eqref{eq:dirichlet.data.cutoff} and $\delta_* \in (0,1)$, \eqref{eq:dirichlet.data.eq.3.26.i} will follow as long as
	\begin{equation} \label{eq:dirichlet.data.eq.3.26.ii}
		\Vert w \Vert_{L^\infty(\{ \chi_4 = 1 \})} \leq C \eps^2 \Vert w \Vert_{L^\infty(\Omega)}
	\end{equation}
	
	We use the same barrier argument as in \cite[Remark 3.2]{Pacard12}, paying closer attention to the boundary and to the regularity. Define
	\[ \varphi_{z_0}(z) \triangleq \cosh (\gamma \eps^{-1} (z-z_0)) \]
	with $|z_0| \leq \eps^{\delta_*}$ and $\gamma \in (0, (W''(\pm 1))^{\tfrac{1}{2}})$. If $H_z$ denotes the mean curvature of of a $z$-level set in Fermi coordinates, then:
	\begin{align*} 
		\eps^2 \Delta_g \varphi_{z_0}(z) 
			& = \gamma^2 \varphi_{z_0}(z) + H_z \gamma \eps \sinh( \gamma \eps^{-1} (z-z_0))  \leq (\gamma^2 + \gamma \eps |H_z|) \varphi_{z_0}(z).
	\end{align*}
	It follows from \eqref{eq:dirichlet.data.sigma.c2alpha} and  \eqref{eq:mean.curv.ddt.sff}-\eqref{eq:mean.curv.ddt.h} that $|H_z|$ is uniformly bounded. In particular, for sufficiently small $\eps$, depending on $\gamma$, $\eta$, $n$, we have
	\[ \eps^2 \Delta_g \varphi_{z_0}(z) \leq W''(\pm 1) \varphi_{z_0}(z), \]
	so $\varphi_{z_0}$ is a barrier, as it was in \cite{Pacard12}. It therefore follows from the maximum principle applied to $w - t \varphi_{z_0}$ that, for $(y, z_0) \in \Omega_4$,
	\[ |w(y,z_0)| \leq \left( \inf_{\Omega \setminus \Omega_4} \varphi_{z_0} \right)^{-1} \max_{\partial \Omega_4} |w|, \]
	which is trivially bounded by $ c \eps^2 \Vert w \Vert_{L^\infty(\Omega)}$ whenever $(y,z_0) \in \{ \chi_4 = 1 \}$, and $\eps > 0$ is small. This implies \eqref{eq:dirichlet.data.eq.3.26.ii} and, in turn, \eqref{eq:dirichlet.data.eq.3.26.i}.
\end{proof}

\subsection{Nonlinear scheme} \label{subsec:dirichlet.data.nonlinear.scheme}

We consider the following nonlinear functionals, originally defined in \cite[Section 3]{Pacard12}:
\begin{align} 
	\sE_\eps(\zeta) 
		& \triangleq \eps^2 \Delta_g (\widetilde{\mathbb{H}}_\eps \circ D_\zeta) \circ D_\zeta^{-1} - W'(\widetilde{\mathbb{H}}_\eps), \label{eq:dirichlet.data.E.eps} \\
	Q_\eps(v) 
		& \triangleq W'(\widetilde{\mathbb{H}}_\eps + v) - W'(\widetilde{\mathbb{H}}_\eps) - W''(\widetilde{\mathbb{H}}_\eps) v, \label{eq:dirichlet.data.Q.eps} \\
	M_\eps(v^\flat, v^\sharp, \zeta) 
		& \triangleq \chi_3 \Big[ L_\eps v^\sharp - \eps^2 \Delta_g (v^\sharp \circ D_\zeta) \circ D_\zeta^{-1} + W''(\mathbb{H}_\eps) v^\sharp \label{eq:dirichlet.data.M.eps} \\
		& \qquad - \eps^2 (\Delta_g (v^\flat \circ D_\zeta) \circ D_\zeta^{-1} - \Delta_g v^\flat) - \sE_\eps(\zeta) + \eps^2 (J_\Sigma \zeta)  \partial_z \mathbb{H}_\eps \nonumber \\
		& \qquad  - Q_\eps(\chi_4 v^\sharp + v^\flat) + (W''(\mathbb{H}_\eps) - W''(\pm 1)) v^\flat \Big], \nonumber \\
	N_\eps(v^\flat, v^\sharp, \zeta)
		& \triangleq (\chi_4-1) \Big[ \eps^2( \Delta_g( v^\flat \circ D_\zeta) \circ D_\zeta^{-1} - \Delta_g v^\flat) \label{eq:dirichlet.data.N.eps} \\ 
		& \qquad \qquad + (W''(\widetilde{\mathbb{H}}_\eps) - W''(\pm 1)) v^\flat - \sE_\eps(\zeta) - Q_\eps(\chi_4 v^\sharp + v^\flat) \Big] \nonumber \\
		& \qquad \qquad - \eps^2 (\Delta_g ((\chi_4 v^\sharp) \circ D_\zeta) - \chi_4 \Delta_g(v^\sharp \circ D_\zeta)) \circ D_\zeta^{-1} . \nonumber
\end{align}

These functionals allow us to pose \eqref{eq:dirichlet.data.pde} as a fixed point problem:
\begin{align} 
	\cL_\eps v^\flat & = N_\eps(v^\flat, v^\sharp, \zeta) \label{eq:dirichlet.data.pde.vsharp} \\
	L_\eps v^\sharp & = \Pi_\eps^\perp M_\eps(v^\flat, v^\sharp, \zeta) \label{eq:dirichlet.data.pde.vflat} \\
	J_\Sigma \zeta & = \eps^{-1} \Pi_\eps M_\eps(v^\flat, v^\sharp, \zeta) \label{eq:dirichlet.data.pde.zeta},
\end{align}
(cf. \cite[(3.31), (3.32), (3.33)]{Pacard12}). We impose, as does \cite[Section 3]{Pacard12}, the additional constraint:
\[ \Pi_\eps v^\sharp \equiv 0 \text{ on } \Sigma. \]

\begin{lemm}[cf. {\cite[Lemma 3.8]{Pacard12}}]  \label{lemm:dirichlet.data.lemm.3.8}
	The following estimates hold:
	\[ \Vert N_\eps(0, 0, 0) \Vert_{C^{0,\alpha}_\eps(\Omega)} + \Vert \Pi_\eps^\perp M_\eps(0, 0, 0) \Vert_{C^{0,\alpha}_\eps(\Sigma \times \RR)} +  \eps^{-1} \Vert \Pi_\eps M_\eps(0, 0, 0) \Vert_{C^{0,\alpha}(\Sigma)} \leq c_0 \eps^2. \]
	Here, $\eps \in (0, \tfrac12)$, $c_0 = c_0(n, \eta, W, \delta_*, \alpha)$.
\end{lemm}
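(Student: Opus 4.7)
Substituting $v^\flat = v^\sharp = 0$ and $\zeta = 0$ into \eqref{eq:dirichlet.data.M.eps}--\eqref{eq:dirichlet.data.N.eps} causes a drastic simplification: $D_0 = \operatorname{id}$ kills every $D_\zeta$-pullback difference, the linear Jacobi term drops because $\zeta = 0$, and $Q_\eps(0) = 0$ by definition, leaving only the ``consistency error'' $\sE_\eps(0)$ of the approximate heteroclinic:
\begin{equation*}
M_\eps(0,0,0) = -\chi_3\, \sE_\eps(0), \qquad N_\eps(0,0,0) = -(1-\chi_4)\, \sE_\eps(0).
\end{equation*}
In Fermi coordinates $\Delta_g f(z) = f''(z) + H_z f'(z)$ for $z$-only functions, where $H_z$ is the mean curvature of $\{z = \text{const}\}$. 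On $\support \chi_3 \subset \{\chi_1 = 1\}$ we have $\widetilde{\mathbb{H}}_\eps = \mathbb{H}_\eps$, so the heteroclinic ODE $\eps^2 \mathbb{H}_\eps'' = W'(\mathbb{H}_\eps)$ yields the clean identity $\sE_\eps(0)(y,z) = \eps^2 H_z(y)\, \mathbb{H}_\eps'(z)$. Outside this set, $|z| \geq \tfrac{99}{100}\eps^{\delta_*}$ and $\mathbb{H}_\eps - (\pm 1)$ and its derivatives are $O(\exp(-c\eps^{\delta_*-1})) = o(\eps^N)$ for every $N$, so all cutoff corrections are negligible.

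For the $C^{0,\alpha}_\eps$ bound on $\sE_\eps(0)$, the minimality $H_0 \equiv 0$ together with the Riccati identity $\partial_z H_z = -|\sff_{\Sigma_z}|^2 - \ricc_g(\partial_z, \partial_z) = O(1)$ (controlled by \eqref{eq:dirichlet.data.sigma.c2alpha} and \eqref{eq:dirichlet.data.g.c0alpha}, propagated along $z$ via Riccati) gives $|H_z(y)| \leq C|z|$, while $|z \mathbb{H}_\eps'(z)| = \eps\, |t \mathbb{H}'(t)| \leq C\eps$. Hence $|\eps^2 H_z \mathbb{H}_\eps'| \leq C\eps^2$ pointwise, and the corresponding $C^{0,\alpha}_\eps$ bound (for $\alpha \leq \theta$) follows by standard rescaling from the $C^{0,\theta}$ regularity of $\sff_\Sigma$ and $g$. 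This at once delivers $\Vert N_\eps(0,0,0)\Vert_{C^{0,\alpha}_\eps} \leq c_0 \eps^2$ (in fact much stronger, since $\support(1-\chi_4)$ lies where $\mathbb{H}_\eps'$ is already exponentially small) and, via Lemma \ref{lemm:dirichlet.data.proj.holder.norms}, $\Vert \Pi_\eps^\perp M_\eps(0,0,0)\Vert_{C^{0,\alpha}_\eps} \leq c_0 \eps^2$.

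The remaining estimate $\eps^{-1}\Vert \Pi_\eps M_\eps(0,0,0)\Vert_{C^{0,\alpha}(\Sigma)} \leq c_0 \eps^2$ requires gaining an extra factor of $\eps$ via a parity argument. Substituting $t = z/\eps$ in the definition of $\Pi_\eps$:
\begin{equation*}
\Pi_\eps\!\left( \chi_3\, \eps^2 H_z\, \mathbb{H}_\eps' \right)(y) = \energyunit^{-1}\, \eps \int_{-\infty}^{\infty} \chi_3(\eps t)\, H_{\eps t}(y)\, \mathbb{H}'(t)^2\, dt.
\end{equation*}
Both $\chi_3$ (even by \eqref{eq:dirichlet.data.cutoff}) and $\mathbb{H}'(t)^2$ (even, as $\mathbb{H}$ is odd) let us replace $H_{\eps t}(y)$ by the symmetric combination $\tfrac{1}{2}[H_{\eps t}(y) + H_{-\eps t}(y)]$. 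A two-sided Taylor expansion of $H_z$ in $z$ then gives $H_{\eps t}(y) + H_{-\eps t}(y) = O(\eps^2 t^2)$: the zeroth-order terms vanish by minimality $H_0 = 0$, the first-order odd contribution vanishes by symmetrization, and the second-order remainder is bounded via a $\partial_z^2 H_z = O(1)$ estimate obtained by iterating the Riccati equation together with the curvature bounds on the ambient metric. Hence the integral is $O(\eps^2)$, so $\Pi_\eps M_\eps(0,0,0) = O(\eps^3)$, and dividing by $\eps$ produces the claimed $O(\eps^2)$; the uniform $C^{0,\alpha}(\Sigma)$-control in $y$ comes from the tangential regularity of $\sff_\Sigma$ and $g$ in \eqref{eq:dirichlet.data.sigma.c3alpha}, \eqref{eq:dirichlet.data.g.c1alpha}. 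The chief technical obstacle is precisely this uniform H\"older control in $y$ of the second-order $z$-Taylor remainder of $H_z(y)$, which is what effectively forces the $\alpha \leq \alpha_0$ restriction in Theorem \ref{theo:dirichlet.data.construction}.
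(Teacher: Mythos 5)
Your proposal is correct and takes essentially the same route as the paper: reduce $M_\eps(0,0,0)$, $N_\eps(0,0,0)$ to the consistency error, which on the relevant support equals $\eps^2 H_z\,\partial_z\mathbb{H}_\eps$ up to super-polynomially small truncation terms, use $H_0\equiv 0$ together with the Riccati equation \eqref{eq:mean.curv.ddt.h} to get the $O(\eps^2)$ bound in $C^{0,\alpha}_\eps$, and gain the extra factor of $\eps$ on $\Pi_\eps M_\eps(0,0,0)$ by the parity of the even cutoff and $(\mathbb{H}')^2$ against the odd linear term of the $z$-Taylor expansion of $H_z$, with the quadratic remainder controlled (uniformly in $y$, in $C^{0,\alpha}$, using $\alpha\leq\theta$) via a $\partial_z^2 H_z$ bound from differentiating the Riccati equation --- exactly as in the paper. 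Only cosmetic issues: the sign of $N_\eps(0,0,0)$ and the intermediate identity $|z\,\mathbb{H}_\eps'(z)| = \eps\,|t\,\mathbb{H}'(t)|$ (with the chain-rule convention this quantity is $O(1)$, not $O(\eps)$) are slips that do not affect the conclusion, and the closing remark attributing the $\alpha\leq\alpha_0$ restriction of Theorem \ref{theo:dirichlet.data.construction} to this lemma is inaccurate (it arises from the contraction estimates of Lemma \ref{lemm:dirichlet.data.lemm.3.9}), but none of this is a gap.
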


\begin{proof}
	Note that
	\[ M_\eps(0, 0, 0) = - \chi_3 \sE_\eps(0), \; N_\eps(0, 0, 0) = (1-\chi_4) \sE_\eps(0). \]
	Straightforward computation shows $\sE_\eps(0) = \eps^2 \Delta_g \widetilde{\mathbb{H}}_\eps - W'(\widetilde{\mathbb{H}}_\eps)$. From \eqref{eq:dirichlet.data.approximate.heteroclinic}: 
	\begin{equation} \label{eq:dirichlet.data.lemm.3.8.Htilde.minus.H}
		\widetilde{\mathbb{H}}_\eps - \mathbb{H}_\eps = (1-\chi_1)(\pm 1 - \mathbb{H}_\eps),
	\end{equation}
	($\pm$ depends on $z > 0$ or $z < 0$), a quantity that decays exponentially to all orders with $\eps \to 0$. Since $\mathbb{H}_\eps$ does too on $\support (1-\chi_4)$, we in fact get
	\[ \Vert N_\eps(0, 0, 0) \Vert_{C^{0,\alpha}_\eps(\Omega)} \leq C_m \eps^m \]
	for all $m \in \NN$. (Taking $m=2$ will suffice.)

	To estimate $M_\eps(0, 0, 0)$, we proceed to further rewrite:
	\begin{align*}
		\sE_\eps(0) 
			& = \eps^2 \Delta_g \widetilde{\mathbb{H}}_\eps - W'(\widetilde{\mathbb{H}}_\eps) \\
			& = \eps^2 \Delta_g \mathbb{H}_\eps - W'(\mathbb{H}_\eps) + \eps^2 \Delta_g (\widetilde{\mathbb{H}}_\eps - \mathbb{H}_\eps) - (W'(\widetilde{\mathbb{H}}_\eps) - W'(\mathbb{H}_\eps)) \\
			& = \eps^2 H_z \partial_z \mathbb{H}_\eps - (\eps^2 \Delta_g - W''(\widetilde{\mathbb{H}}_\eps) - Q_\eps) ({\mathbb{H}}_\eps - \widetilde{\mathbb{H}}_\eps).
	\end{align*}
	Note that
	\[ \widetilde{\mathbb{H}}_\eps \equiv 1 \text{ on } \Omega \setminus \Omega_1 \implies \sE_\eps(0) \equiv 0 \text{ on } \Omega \setminus \Omega_1. \]
	If $\chi : \Omega \to [0,1]$ is the cutoff function $\chi(z) = \chi_1(z/2)$, then note that $\chi \equiv 1$ on $\support \sE_\eps(0)$ so that
	\[ \sE_\eps(0) = \chi \cdot \eps^2 H_z \partial_z \mathbb{H}_\eps -  \chi \cdot (\eps^2 \Delta_g - W''(\widetilde{\mathbb{H}}_\eps) - Q_\eps)({\mathbb{H}}_\eps - \widetilde{\mathbb{H}}_\eps). \]

	It follows from \eqref{eq:dirichlet.data.g.c1alpha},  \eqref{eq:dirichlet.data.cutoff}, and \eqref{eq:dirichlet.data.lemm.3.8.Htilde.minus.H} that
	\begin{equation} \label{eq:dirichlet.data.lemm.3.8.i}
		\Vert \chi \cdot (\eps^2 \Delta_g - W''(\widetilde{\mathbb{H}}_\eps) - Q_\eps)({\mathbb{H}}_\eps - \widetilde{\mathbb{H}}_\eps) \Vert_{C^{0,\alpha}_\eps(\Sigma \times \RR)} \leq C_m \eps^m,
	\end{equation}
	for $m \in \NN$. (Taking $m = 4$ will suffice.)

	Recalling \eqref{eq:mean.curv.ddt.h}:
	\begin{equation} \label{eq:dirichlet.data.lemm.3.8.ddt.h}
		\partial_z H_z = - |\sff_z|^2 + \ricc_g(\partial_z, \partial_z)|_{D \times \{z\}}, \; z \in [-1,1].
	\end{equation}
	Certainly, this already implies, since $\alpha \leq \theta$,
	\[ \sup_{|z| \leq 1} \Vert y \mapsto \partial_z H_z \Vert_{C^{0,\alpha}(\Sigma)} \leq C. \]
	Combining  \eqref{eq:dirichlet.data.lemm.3.8.ddt.h} with  \eqref{eq:dirichlet.data.sigma.c2alpha}, $\alpha \leq \theta$,  \eqref{eq:mean.curv.ddt.metric}, and  \eqref{eq:mean.curv.ddt.sff},
	we even find that
	\begin{equation} \label{eq:dirichlet.data.lemm.3.8.ddt.sq.h}
		\sup_{|z| \leq 1} \Vert y \mapsto \partial_z^2 H_z(y,z) \Vert_{C^{0,\alpha}(\Sigma)} \leq C.
	\end{equation} 
	In particular,  \eqref{eq:dirichlet.data.sigma.minimal}, \eqref{eq:dirichlet.data.lemm.3.8.ddt.sq.h} and Taylor's theorem imply
	\begin{equation} \label{eq:dirichlet.data.lemm.3.8.taylor}
		H_z = - (|\sff_0|^2 + \ricc_g(\partial_z, \partial_z)|_{\Sigma}) z + \cR(y,z) z^2,
	\end{equation}
	where
	\begin{equation} \label{eq:dirichlet.data.3.8.proj.remainder}
		\sup_{|z|\leq 1} \Vert y \mapsto \cR(y,z) \Vert_{C^{0,\alpha}(\Sigma)} \leq C.
	\end{equation}
	
	From the trivial estimate $|z| \partial_z \mathbb{H}_\eps \leq C$, \eqref{eq:dirichlet.data.cutoff}, and \eqref{eq:dirichlet.data.lemm.3.8.taylor}, we find that
	\begin{equation} \label{eq:dirichlet.data.lemm.3.8.ii}
		\Vert \chi \cdot \eps^2 H_z \partial_z \mathbb{H}_\eps \Vert_{C^{0,\alpha}_\eps(\Sigma \times \RR)} \leq C \eps^2.
	\end{equation}
	Put together, \eqref{eq:dirichlet.data.lemm.3.8.i}, \eqref{eq:dirichlet.data.lemm.3.8.ii}, and Lemma \ref{lemm:dirichlet.data.proj.holder.norms} imply:
	\[ \Vert \Pi_\eps^\perp M_\eps(0,0,0) \Vert_{C^{0,\alpha}_\eps(\Sigma \times \RR)} \leq C \eps^2. \]
	
	Finally, by \eqref{eq:dirichlet.data.lemm.3.8.taylor},
	\[ \Pi_\eps (\chi \cdot \eps^2 H_z \partial_z \mathbb{H}_\eps)  = \energyunit^{-1} \int_{-\infty}^{\infty} \chi(z) (\partial_z H_z(y,0) \cdot z + \cR(y,z) z^2) (\mathbb{H}'(\eps^{-1} z))^2 \, dz. \]
	Recalling that, from parity, (since $\chi(z)$ is even)
	\[ \int_{-\infty}^\infty \chi(z) z (\mathbb{H}'(\eps^{-1} z))^2 \, dz = 0 \]
	it follows that
	\[ \Pi_\eps(\chi \cdot \eps^2 H_z \partial_z \mathbb{H}_\eps) = \energyunit^{-1} \int_{-\infty}^\infty \chi(z) \cR(y,z) z^2 (\mathbb{H}'(\eps^{-1} z))^2 \, dz, \]
	at which point we can directly estimate using \eqref{eq:heteroclinic.expansion.ii}, \eqref{eq:heteroclinic.eps}, and \eqref{eq:dirichlet.data.3.8.proj.remainder}, and get:
	\[ \Vert \Pi_\eps (\chi \cdot \eps^2 H_z \partial_z \mathbb{H}_\eps) \Vert_{C^{0,\alpha}(\Sigma)} \leq C  \eps^3, \]
	Together with \eqref{eq:dirichlet.data.lemm.3.8.i} (with $m=4$), this implies
	\[ \Vert \Pi_\eps M_\eps(0, 0, 0) \Vert_{C^{0,\alpha}(\Sigma)} \leq C  \eps^3. \]
 	This completes the proof.
\end{proof}

\begin{lemm}[cf. {\cite[Lemma 3.9]{Pacard12}}] \label{lemm:dirichlet.data.lemm.3.9}
	For $\alpha \leq \alpha_0$, $\eps \leq \eps_0$:
	\begin{align} 
		& \Vert N_\eps(v_2^\flat, v_2^\sharp, \zeta_2) - N_\eps(v_1^\flat, v_1^\sharp, \zeta_1) \Vert_{C^{0,\alpha}_\eps(\Omega)} \label{eq:dirichlet.data.lemm.3.9.n} \\
			& \qquad \leq c_1 \eps^{\delta} \Big( \Vert v_2^\flat - v_1^\flat \Vert_{C^{2,\alpha}_\eps(\Omega)} + \Vert v_2^\sharp - v_1^\sharp \Vert_{C^{2,\alpha}_\eps(\Sigma \times \RR)} + \Vert \zeta_2 - \zeta_1 \Vert_{C^{2,\alpha}(\Sigma)} \Big),  \nonumber \\
		& \Vert \Pi_\eps^\perp(M_\eps(v_2^\flat, v_2^\sharp, \zeta_2) - M_\eps(v_1^\flat, v_1^\sharp, \zeta_1)) \Vert_{C^{0,\alpha}_\eps(\Sigma \times \RR)} \label{eq:dirichlet.data.lemm.3.9.m.perp} \\
			& \qquad \leq c_1 \eps^\delta \Big( \Vert v_2^\flat - v_1^\flat \Vert_{\widetilde{C}^{2,\alpha}_\eps(\Omega)} + \Vert v_2^\sharp - v_1^\sharp \Vert_{C^{2,\alpha}_\eps(\Sigma \times \RR)} + \Vert \zeta_2 - \zeta_1 \Vert_{C^{2,\alpha}(\Sigma)} \Big), \nonumber \\
		& \Vert \Pi_\eps( M_\eps(v_2^\flat, v_2^\sharp, \zeta_2) - M_\eps(v_1^\flat, v_1^\sharp, \zeta_1) ) \Vert_{C^{0,\alpha}(\Sigma)} \label{eq:dirichlet.data.lemm.3.9.m} \\
			& \qquad \leq c_1 \eps^{1 + \delta} \Vert v_2^\flat - v_1^\flat \Vert_{\widetilde{C}^{2,\alpha}_\eps(\Omega)}  + c_1 \eps^{1-\alpha} \Vert v_2^\sharp - v_1^\sharp \Vert_{C^{2,\alpha}_\eps(\Sigma \times \RR)} + c_1  \eps^{1 + \delta} \Vert \zeta_2 - \zeta_1 \Vert_{C^{2,\alpha}(\Sigma)}, \nonumber
	\end{align}
	provided \eqref{eq:dirichlet.data.sigma.c2alpha}-\eqref{eq:dirichlet.data.sigma.induced.c3alpha} hold with $\theta \geq \theta_0 \geq \alpha_0$, and
	\[ \sum_{j=1,2} \Vert v_j^\flat \Vert_{\widetilde{C}^{2,\alpha}_\eps(\Omega)} + \Vert v_j^\sharp \Vert_{C^{2,\alpha}_\eps(\Sigma \times \RR)} + \eps^{2\alpha} \Vert \zeta_j \Vert_{C^{2,\alpha}(\Sigma)} \leq C' \eps^2. \]
	Here, $\eps_0 = \eps_0(n, \eta, W, \delta_*)$, $\delta = \delta(\delta_*)$, $\theta_0  = \theta_0(\delta_*)$, $\alpha_0 = \alpha_0(\delta_*)$, $c_1 = c_1(n, \eta, W, \delta_*, C', \alpha)$.
\end{lemm}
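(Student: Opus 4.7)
The plan is to establish \eqref{eq:dirichlet.data.lemm.3.9.n}--\eqref{eq:dirichlet.data.lemm.3.9.m} by decomposing $N_\eps$ and $M_\eps$ into their constituent pieces --- namely, the phase-error $\sE_\eps(\zeta)$, the nonlinear remainder $Q_\eps$, the conjugation-commutators $\eps^2(\Delta_g(v \circ D_\zeta) \circ D_\zeta^{-1} - \Delta_g v)$, the potential correction $(W''(\widetilde{\mathbb{H}}_\eps) - W''(\pm 1)) v^\flat$, and the localizations against $\chi_3$, $\chi_4$ --- and bounding the Lipschitz constant of each in the appropriate $C^{0,\alpha}_\eps$ or $C^{0,\alpha}$ norm. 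Throughout, the a priori bounds on $(v_j^\flat, v_j^\sharp, \zeta_j)$ allow us to treat $Q_\eps$ as a quadratic remainder: a Taylor expansion of $W'$ around $\widetilde{\mathbb{H}}_\eps$ yields $\Vert Q_\eps(v) - Q_\eps(w)\Vert_{C^{0,\alpha}_\eps} \leq C(\Vert v\Vert_{C^0} + \Vert w\Vert_{C^0})\Vert v - w\Vert_{C^{0,\alpha}_\eps}$, so the a priori smallness $\Vert v_j^\sharp\Vert, \Vert v_j^\flat\Vert = O(\eps^2)$ produces a factor $\eps^{2-2\alpha} \leq \eps^{2\delta}$ for any $\delta < 1-\alpha_0$ once $\alpha_0$ is chosen small.

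Next I would handle $\zeta \mapsto \sE_\eps(\zeta)$. Writing $\widetilde{\mathbb{H}}_\eps \circ D_\zeta(y,z) = \widetilde{\mathbb{H}}_\eps(z - \chi_2(z) \zeta(y))$ and expanding, one gets $\sE_\eps(\zeta) - \sE_\eps(0)$ as a sum of terms where derivatives of $\widetilde{\mathbb{H}}_\eps$ are paired with derivatives of $\chi_2 \zeta$, together with an extra contribution from the Christoffel symbols of $g$ acting on the shifted gradient. The $\eps^{-2\alpha}$ cost of Schauder-norming a $\zeta$-dependent quantity is offset by the $\eps^2$ prefactor from $\eps^2 \Delta_g$ and by the exponential decay of $\mathbb{H}_\eps'$, giving a net gain $\eps^{\delta}$ for $\delta = \delta(\delta_*) > 0$ provided $\alpha_0$ is sufficiently small. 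A similar argument, now exploiting that the commutator $\eps^2(\Delta_g(v^\flat \circ D_\zeta) \circ D_\zeta^{-1} - \Delta_g v^\flat)$ is linear in $\nabla^2 \zeta$, $\nabla \zeta$ modulo lower order, with coefficients bounded in $C^{0,\alpha}_\eps$ by \eqref{eq:dirichlet.data.g.c0alpha}--\eqref{eq:dirichlet.data.g.c1alpha}, supplies the needed Lipschitz estimate in $(\zeta, v^\flat)$; the $\widetilde{C}^{2,\alpha}_\eps$ norm (rather than $C^{2,\alpha}_\eps$) is exactly what is needed for the $M_\eps$ estimate, since multiplication by $\chi_3$ localizes to $\{\chi_5 \equiv 1\}$ where the two norms are equivalent up to a factor $\eps^2$. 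The commutator $\eps^2(\Delta_g((\chi_4 v^\sharp) \circ D_\zeta) - \chi_4 \Delta_g(v^\sharp \circ D_\zeta)) \circ D_\zeta^{-1}$ is supported on $\{\nabla \chi_4 \neq 0\} \subset \Omega \setminus \Omega_4$, where $\widetilde{\mathbb{H}}_\eps$ is constant to all polynomial orders in $\eps$, and hence contributes only error terms of size $O(\eps^m)$ for every $m$.

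Assembling these pieces gives \eqref{eq:dirichlet.data.lemm.3.9.n} and \eqref{eq:dirichlet.data.lemm.3.9.m.perp} immediately, since $\Pi_\eps^\perp$ is bounded on $C^{0,\alpha}_\eps(\Sigma \times \RR)$ by Lemma \ref{lemm:dirichlet.data.proj.holder.norms}, and the $\chi_4 - 1$ factor in $N_\eps$ kills the potentially large $\sE_\eps$ and $Q_\eps$ contributions by restricting to the trivialized region.

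The main obstacle, and the reason for carefully designing $M_\eps$ with the \emph{subtraction} $\eps^2(J_\Sigma \zeta) \partial_z \mathbb{H}_\eps$, is the projection estimate \eqref{eq:dirichlet.data.lemm.3.9.m}, where one must gain an additional factor of $\eps$ over \eqref{eq:dirichlet.data.lemm.3.9.m.perp}. For the $\zeta$-dependence, the extra $\eps$ comes from the structural identity that the leading-order term in $\eps^{-1}\Pi_\eps \sE_\eps(\zeta)$ is exactly $J_\Sigma \zeta \cdot (\text{constant}) \mathbb{H}_\eps'$-projected, which is cancelled by the $\eps^2(J_\Sigma \zeta) \partial_z \mathbb{H}_\eps$ subtraction; the remainder is the Taylor remainder of $H_z$ beyond its linear term in $z$, combined with the parity cancellation $\int z (\mathbb{H}')^2 \, dz = 0$ used already in Lemma \ref{lemm:dirichlet.data.lemm.3.8}. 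Differentiating this identity in $\zeta$ and carefully propagating the $\eps$-weighted H\"older norms through the chain rule yields the $\eps^{1+\delta} \Vert \zeta_2 - \zeta_1\Vert_{C^{2,\alpha}(\Sigma)}$ bound. For the $v^\flat$-contribution, parity of $\mathbb{H}_\eps'$ versus the odd-in-$z$ structure of the leading commutator term $\eps^2(\Delta_g(v^\flat \circ D_\zeta) \circ D_\zeta^{-1} - \Delta_g v^\flat)$ (to leading order in $z$) again provides the extra factor of $\eps$. For the $v^\sharp$ term, however, no such parity cancellation is available --- one only has the weaker orthogonality $\Pi_\eps v^\sharp \equiv 0$, which is not preserved exactly by the conjugation by $D_\zeta$. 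Directly integrating against $\mathbb{H}_\eps'$, one obtains only $\eps^{1-\alpha} \Vert v_2^\sharp - v_1^\sharp \Vert_{C^{2,\alpha}_\eps(\Sigma \times \RR)}$ rather than $\eps^{1+\delta}$, which is exactly the weaker bound appearing in the statement. This weaker power is tolerable in the subsequent fixed-point argument because it is paired against solutions of the $v^\sharp$-equation, whose natural scale is already $O(\eps^2)$; taking $\alpha < 1$ small ensures $\eps^{1-\alpha} \cdot \eps^2 \ll \eps^2$, which is what is needed to close the contraction.
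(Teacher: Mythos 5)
Your overall decomposition mirrors the paper's (term-by-term Lipschitz estimates using the a priori $O(\eps^2)$ smallness, exponential decay off the interface, and the designed cancellation against $\eps^2 (J_\Sigma\zeta)\partial_z\mathbb{H}_\eps$), but several of the justifications you give for the hardest terms are wrong, and one essential difficulty is never addressed. First, the cutoff-commutator $\eps^2(\Delta_g((\chi_4 v^\sharp)\circ D_\zeta)-\chi_4\Delta_g(v^\sharp\circ D_\zeta))\circ D_\zeta^{-1}$ in \eqref{eq:dirichlet.data.N.eps} is \emph{not} $O(\eps^m)$ for all $m$: it involves $\nabla\chi_4$ and $\Delta\chi_4$ hitting $v^\sharp$, and $v^\sharp$ has no decay in $z$ built into its norm, so the triviality of $\widetilde{\mathbb{H}}_\eps$ on $\support\nabla\chi_4$ is irrelevant. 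The correct gain is only $\eps^{1-\delta_*}$, coming from $\Vert\chi_4\Vert_{C^3_{\eps^{\delta_*}}}\leq 200$ in \eqref{eq:dirichlet.data.cutoff} (this is why $\delta$ depends on $\delta_*$). Similarly, your parity mechanisms are misplaced: the $D_\zeta$-commutator in $v^\flat$ has no odd-in-$z$ structure (it depends on arbitrary $v^\flat,\zeta$), and no such cancellation is needed — the extra $\eps$ in the $\Pi_\eps$ bound for that slot comes simply from the $\eps^{2-2\alpha}$ smallness already forced by the a priori bounds, with $\alpha$ small; the identity $\int z(\mathbb{H}')^2\,dz=0$ enters only the base-point estimate of Lemma \ref{lemm:dirichlet.data.lemm.3.8}, not the Lipschitz estimate. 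Also, the role of $\widetilde{C}^{2,\alpha}_\eps$ is misattributed: $\support\chi_3$ is \emph{larger} than $\{\chi_5=1\}$ (the cutoffs shrink as $j$ increases), and the modified norm \eqref{eq:dirichlet.data.ckalpha.eps.modified} is needed not for the commutator but for the potential term $\chi_3(W''(\mathbb{H}_\eps)-W''(\pm1))(v_2^\flat-v_1^\flat)$, where $W''(\mathbb{H}_\eps)-W''(\pm1)$ is $O(1)$ near the interface; one must split with $\chi_5$, using $\eps^{-2}\Vert\chi_5(\cdot)\Vert$ on $\Omega_5$ and the superpolynomial decay of the potential on $\Omega\setminus\Omega_5$. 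Without this splitting you get no $\eps$-gain for that term at all.

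The most serious gap is in the $\zeta$-slot of \eqref{eq:dirichlet.data.lemm.3.9.m} (and the corresponding part of \eqref{eq:dirichlet.data.lemm.3.9.m.perp}): after the $J_\Sigma$-cancellation one must bound the difference between the graphical Jacobi-type operator at height $z+\zeta$ and at height $0$, acting on $\zeta_2-\zeta_1$ measured in the \emph{unweighted} $C^{2,\alpha}(\Sigma)$ norm. The integrated evolution formula \eqref{eq:mean.curv.ddt.laplace} produces a term $\int_0^{z+\zeta}\langle\nabla_{g_t}H_t,\nabla_{g_t}(\zeta_2-\zeta_1)\rangle\,dt$, and under the low-regularity hypotheses \eqref{eq:dirichlet.data.sigma.c2alpha}--\eqref{eq:dirichlet.data.sigma.c3alpha} one only knows $|\nabla_\Sigma\sff_\Sigma|=O(\eps^{-1})$, so a naive chain-rule/Schauder propagation — which is all your proposal invokes — gives $\Vert\nabla_{g_z}H_z\Vert_{C^{0,\alpha}}=O(\eps^{-1})$ and destroys the required $\eps^{1+\delta}$ gain. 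The paper recovers it by interpolating $C^{1,\theta}\hookrightarrow C^{1,\alpha}\hookrightarrow C^{0,\theta}$ (Lemma \ref{lemm:holder.space.interpolation}) to get $\Vert\nabla_{g_z}H_z\Vert_{C^{0,\alpha}}\leq C\eps^{-\frac12\delta_*}$, which is precisely why the statement requires $\theta\geq\theta_0(\delta_*)$ close to $1$ and $\alpha\leq\alpha_0(\delta_*)$ close to $0$; your argument never explains where these hypotheses are used, and without that interpolation (or an equivalent device) the estimate does not close. On the $v^\sharp$-slot you state the correct weaker bound $\eps^{1-\alpha}$, but the mechanism should be made explicit: the coefficients of $\eps^2(\Delta_{g_0}+\partial_z^2-\Delta_g)$ vanish like $O(|z|)$ in Fermi coordinates, and $\int|z|\,\partial_z\mathbb{H}_\eps\,dz=O(\eps)$ supplies the extra $\eps$ upon projection — orthogonality $\Pi_\eps v^\sharp\equiv 0$ plays no role there.
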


\begin{rema} \label{rema:dirichlet.data.various.norms}
	We emphasize that three different norms are used:
	\begin{enumerate}
		\item On $v^\flat$, we use the \emph{modified} weighted H\"older norm
			\[ \Vert w \Vert_{\widetilde{C}^{2,\alpha}_\eps(\Omega)} = \Vert w \Vert_{C^{2,\alpha}_\eps(\Omega)} + \eps^{-2} \Vert \chi_5 w \Vert_{C^{2,\alpha}_\eps(\Omega)}. \]
			Here, the H\"older norms are measured with respect to the metric $g$.
		\item On $v^\sharp$, we use the standard weighted H\"older norm $C^{2,\alpha}_\eps(\Sigma \times \RR)$. Here, the H\"older norms are measured with respect to the product metric $g_{0}+dz^{2}$.
		\item On $\zeta$, we use the \emph{unweighted} H\"older norm $C^{2,\alpha}(\Sigma)$, which strictly dominates $C^{2,\alpha}_\eps(\Sigma)$:
			\[ \Vert \zeta \Vert_{C^{2,\alpha}_\eps(\Sigma)} \leq \Vert \zeta \Vert_{C^{2,\alpha}(\Sigma)}. \]
			Here, the H\"older norms are measured with respect to the metric $g_{0}$ induced on $\Sigma$.
	\end{enumerate}
\end{rema}

\begin{proof}[Proof of Lemma {\ref{lemm:dirichlet.data.lemm.3.9}}]
	In what follows we may assume that $\alpha_0 \leq \tfrac14$. 
	
	Note, from \eqref{eq:dirichlet.data.cutoff}, \eqref{eq:dirichlet.data.N.eps}, that
	\[ N_\eps(v_1^\flat, v_1^\sharp, \zeta_1) \equiv N_\eps(v_2^\flat, v_2^\sharp, \zeta_2) \equiv 0 \text{ on } \{ \chi_4 = 1 \}. \]
	Therefore, since $\delta_* \in (0,1)$,
	\begin{align*}
		 \Vert N_\eps(v_2^\flat, v_2^\sharp, \zeta_2) - N_\eps(v_1^\flat, v_1^\sharp, \zeta_1) \Vert_{C^{0,\alpha}_\eps(\Omega)} & = \Vert N_\eps(v_2^\flat, v_2^\sharp, \zeta_2) - N_\eps(v_1^\flat, v_1^\sharp, \zeta_1) \Vert_{C^{0,\alpha}_\eps(\{ \chi_4 \neq 1 \})} \\
		&  \leq \Vert N_\eps(v_2^\flat, v_2^\sharp, \zeta_2) - N_\eps(v_1^\flat, v_1^\sharp, \zeta_1) \Vert_{C^{0,\alpha}_\eps(\Omega \setminus \Omega_5)}.
	\end{align*}
	We'll estimate this by pairing up the terms, making sure to use use the fact that our H\"older norm is taken over $\Omega \setminus \Omega_5$ instead of over $\Omega$, in order to gain a factor of $\eps^\delta$, for some $\delta > 0$ that depends on $\delta_*$.
	
	In all that follows, we'll repeatedly (and implicitly) use that our Fermi coordinates (and thus also $D_\zeta$, $D_\zeta^{-1}$) are $C^{2,\alpha}_\eps$ close to the identity, and that our metric $g$ in Fermi coordinates is $C^{1,\alpha}_\eps$ close to Euclidean.  
	
	We start by estimating
	\begin{equation*}
		\Vert \eps^2 (\Delta_g (v_2^\flat \circ D_{\zeta_2}) \circ D_{\zeta_2}^{-1} - \Delta_g v_2^\flat) 
		- \eps^{2} (\Delta_g (v_1^\flat \circ D_{\zeta_1}) \circ D_{\zeta_1}^{-1} - \Delta_g v_1^\flat) \Vert_{C^{0,\alpha}_\eps(\Omega)}.
	\end{equation*}
	(We can deduce a good estimate on all of $\Omega$, not just on $\Omega \setminus \Omega_5$.) By working in Fermi coordinates in scale $O(\eps)$, we see that
	\begin{equation} \label{eq:dirichlet.data.lemm.3.9.F1}
		\cF_1(v, \zeta) \triangleq \eps^2 \Delta_g (v \circ D_\zeta) \circ D_\zeta^{-1}
	\end{equation}
	is a \emph{smooth} nonlinear Banach space functional $\cF_1 : C^{2,\alpha}_\eps(\Omega) \times C^{2,\alpha}_\eps(\Sigma) \to C^{0,\alpha}_\eps(\Omega)$, and is \emph{linear} in $v$. In particular,
	\begin{align*}
		& \eps^2 \big[ (\Delta_g (v_2^\flat \circ D_{\zeta_2}) \circ D_{\zeta_2}^{-1} - \Delta_g v_2^\flat) - (\Delta_g (v_1^\flat \circ D_{\zeta_1}) \circ D_{\zeta_1} - \Delta_g v_1^\flat) \big] \\
		& \qquad = (\cF_1(v_2^\flat, \zeta_2) - \cF_1(v_1^\flat, \zeta_1)) - (\cF_1(v_2^\flat, 0) - \cF_1(v_1^\flat, 0)) \\
		& \qquad = \int_0^1 \langle D_v \cF_1(v_1^\flat + t(v_2^\flat - v_1^\flat), \zeta_1 + t(\zeta_2 - \zeta_1)), v_2^\flat - v_1^\flat \rangle \\
		& \qquad \qquad + \langle D_\zeta \cF_1(v_1^\flat + t(v_2^\flat - v_1^\flat), \zeta_1 + t(\zeta_2 - \zeta_1)), \zeta_2 - \zeta_1 \rangle \, dt \\
		& \qquad - \int_0^1 \langle D_v \cF_1(v_1^\flat + t(v_2^\flat - v_1^\flat), 0), v_2^\flat - v_1^\flat \rangle dt \\
		& \qquad = \int_0^1 \int_0^1 \langle D_\zeta D_v \cF_1(v_1^\flat + t(v_2^\flat - v_1^\flat), s \zeta_1 + st (\zeta_2-\zeta_1)),  (\zeta_1 + t(\zeta_2 - \zeta_1)) \otimes (v_2^\flat - v_1^\flat)  \rangle \, ds \, dt \\
		& \qquad + \int_0^1 \langle D_\zeta \cF_1(v_1^\flat + t(v_2^\flat - v_1^\flat), \zeta_1 + t(\zeta_2 - \zeta_1)), \zeta_2 - \zeta_1 \rangle \, dt.
	\end{align*}
	Seeing as to how $\Vert v_j^\flat  \Vert_{C^{2,\alpha}_\eps(\Omega)} \leq C' \eps^2$, $\Vert \zeta_j \Vert_{C^{2,\alpha}(\Sigma)} \leq C' \eps^{2-2\alpha}$, and using the linearity in $v$ of $\cF_1$ (and thus of $D_\zeta \cF_1$), we can directly estimate:
	\begin{align}
		& \Vert \eps^2((\Delta_g (v_2^\flat \circ D_{\zeta_2}) \circ D_{\zeta_2}^{-1} - \Delta_g v_2^\flat) -  (\Delta_g (v_1^\flat \circ D_{\zeta_1}) \circ D_{\zeta_1} - \Delta_g v_1^\flat)) \Vert_{C^{0,\alpha}_\eps(\Omega)} \nonumber \\
		& \qquad \leq C (\Vert \zeta_1 \Vert_{C^{2,\alpha}_\eps(\Sigma)} + \Vert \zeta_2 \Vert_{C^{2,\alpha}_\eps(\Sigma)}) \Vert v_2^\flat - v_1^\flat \Vert_{C^{2,\alpha}_\eps(\Omega)} + C (\Vert v_1^\flat \Vert_{C^{2,\alpha}_\eps(\Omega)} + \Vert v_2^\flat  \Vert_{C^{2,\alpha}_\eps(\Omega)}) \Vert \zeta_2 - \zeta_1 \Vert_{C^{2,\alpha}_\eps(\Sigma)} \nonumber \\
		& \qquad \leq C (\Vert \zeta_1 \Vert_{C^{2,\alpha}(\Sigma)} + \Vert \zeta_2 \Vert_{C^{2,\alpha}(\Sigma)}) \Vert v_2^\flat - v_1^\flat \Vert_{C^{2,\alpha}_\eps(\Omega)}  + C (\Vert v_1^\flat \Vert_{C^{2,\alpha}_\eps(\Omega)} + \Vert v_2^\flat \Vert_{C^{2,\alpha}_\eps(\Omega)}) \Vert \zeta_2 - \zeta_1 \Vert_{C^{2,\alpha}(\Sigma)} \nonumber \\
		& \qquad \leq C \eps^{2-2\alpha} \Vert v_2^\flat - v_1^\flat \Vert_{C^{2,\alpha}_\eps(\Omega)} + C \eps^2 \Vert \zeta_2 - \zeta_1 \Vert_{C^{2,\alpha}(\Sigma)}. \label{eq:dirichlet.data.lemm.3.9.laplace.zeta.commutator}
	\end{align}
	This estimate is of the desired form.
	
	Next, we estimate
	\[ \Vert (W''(\widetilde{\mathbb{H}}_\eps) - W''(\pm 1))(v_2^\flat - v_1^\flat) \Vert_{C^{0,\alpha}_\eps(\Omega \setminus\Omega_5)} \]
	The desired estimate is a simple consequence of Remark \ref{rema:dirichlet.data.regularity.product.vs.omega} and how, on $\Omega \setminus \Omega_5$, we have
	\begin{equation} \label{eq:dirichlet.data.lemm.3.9.H.decay}
		\Vert W''(\widetilde{\mathbb{H}}_\eps) - W''(\pm 1) \Vert_{C^{0,\alpha}_\eps(\Omega \setminus \Omega_5)} \leq C_m \eps^m,
	\end{equation}
	for all $m \in \NN$; thus, any $\delta > 0$ will do.
	
	Next, we estimate
	\[ \Vert \sE_\eps(\zeta_2) - \sE_\eps(\zeta_1) \Vert_{C^{0,\alpha}_\eps(\Omega \setminus \Omega_5)}. \]
	We have
	\begin{align*}
		& \sE_\eps(\zeta_2) - \sE_\eps(\zeta_1) = \eps^2 ( \Delta_g (\widetilde{\mathbb{H}}_\eps \circ D_{\zeta_2}) \circ D_{\zeta_2}^{-1} - \Delta_g (\widetilde{\mathbb{H}}_\eps \circ D_{\zeta_1}) \circ D_{\zeta_1}^{-1})  = \cF_1'(\widetilde{\mathbb{H}}_\eps, \zeta_2) - \cF_1'(\widetilde{\mathbb{H}}_\eps, \zeta_1),
	\end{align*}
	where $\cF_1' : C^{2,\alpha}_\eps(\Omega \setminus \Omega_5) \times C^{2,\alpha}_\eps(\Sigma) \to C^{0,\alpha}_\eps(\Omega \setminus \Omega_5)$ is the restriction of $\cF_1$ from \eqref{eq:dirichlet.data.lemm.3.9.F1}. Arguing as before, we get
	\begin{align}
		& \Vert \sE_\eps(\zeta_2) - \sE_\eps(\zeta_1) \Vert_{C^{0,\alpha}_\eps(\Omega \setminus \Omega_5)}  \leq C \Vert \widetilde{\mathbb{H}}_\eps \Vert_{C^{2,\alpha}_\eps(\Omega \setminus \Omega_5)} \Vert \zeta_2 - \zeta_1 \Vert_{C^{2,\alpha}_\eps(\Sigma)} \leq C_m \eps^m \Vert \zeta_2 - \zeta_1 \Vert_{C^{2,\alpha}(\Sigma)}, \label{eq:dirichlet.data.lemm.3.9.Eeps.n}
	\end{align}
	for all $m \in \NN$, which implies what we want, for any $\delta > 0$.
	
	Next, we estimate
	\[ \Vert Q_\eps(\chi_4 v_2^\sharp + v_2^\flat) - Q_\eps(\chi_4 v_1^\sharp + v_1^\flat) \Vert_{C^{0,\alpha}_\eps(\Omega)}. \]
	Note that
	\begin{multline*}
		Q_\eps(\chi_4 v_2^\sharp + v_2^\flat) - Q_\eps(\chi_4 v_1^\sharp + v_1^\flat) \\
		= W'(\widetilde{\mathbb{H}}_\eps + \chi_4 v_2^\sharp + v_2^\flat) - W'(\widetilde{\mathbb{H}}_\eps + \chi_4 v_1^\sharp + v_1^\flat) 
		- W''(\widetilde{\mathbb{H}}_\eps)(\chi_4 (v_2^\sharp - v_1^\sharp) + (v_2^\flat - v_1^\flat)).
	\end{multline*}
	Define
	\begin{equation} \label{eq:dirichlet.data.lemm.3.9.F2}
		\cF_2(v) \triangleq W'(\widetilde{\mathbb{H}}_\eps + v),
	\end{equation}
	viewed as a \emph{smooth} nonlinear Banach space functional $\cF_2 : C^{0,\alpha}_\eps(\Omega) \to C^{0,\alpha}_\eps(\Omega)$.	Note that
	\[ \langle D_v \cF_2(v), w \rangle = W''(\widetilde{\mathbb{H}}_\eps + v) w, \; \langle D_v D_v \cF_2(v), w \otimes w' \rangle = W''(\widetilde{\mathbb{H}}_\eps + v) ww', \]
	for $w$, $w' \in C^{0,\alpha}_\eps(\Omega)$. In particular, the expression we're trying to bound equals
	\begin{align*}
		& = \cF_2(\chi_4 v_2^\sharp + v_2^\flat) - \cF_2(\chi_4 v_1^\sharp + v_1^\flat)  - \langle D_v \cF_2(0), \chi_4 (v_2^\sharp-v_1^\sharp) + v_2^\flat - v_1^\flat \rangle \\
		& = \int_0^1 \langle D_v \cF_2(\chi_4 v_1^\sharp + v_1^\flat + t(\chi_4(v_2^\sharp - v_1^\sharp) + v_2^\flat - v_1^\flat)),  \chi_4 (v_2^\sharp - v_1^\sharp) + v_2^\flat - v_1^\flat \rangle \, dt \\
		& \qquad - \langle  D_v \cF_2(0), \chi_4 (v_2^\sharp-v_1^\sharp) + v_2^\flat - v_1^\flat \rangle \\
		& = \int_0^1 \int_0^1 \langle D_v D_v \cF_2(s(\chi_4 v_1^\sharp + v_1^\flat + t(\chi_4(v_2^\sharp - v_1^\sharp) + v_2^\flat - v_1^\flat)), \\
		& \qquad \qquad \qquad (\chi_4 v_1^\sharp + v_1^\flat + t(\chi_4(v_2^\sharp - v_1^\sharp) + v_2^\flat - v_1^\flat)) \otimes (\chi_4(v_2^\sharp - v_1^\sharp) + v_2^\flat - v_1^\flat)) \rangle \, ds \, dt.
	\end{align*} 
	Recalling Remark \ref{rema:dirichlet.data.regularity.product.vs.omega}, \eqref{eq:dirichlet.data.cutoff}, and $\delta_* \in (0,1)$, we can estimate
	\begin{align*}
		\Vert Q_\eps(\chi_4 v_2^\sharp + v_2^\flat) - Q_\eps(\chi_4 v_1^\sharp + v_1^\flat) \Vert_{C^{0,\alpha}_\eps(\Omega)} & \leq C (\Vert v_1^\sharp  \Vert_{C^{0,\alpha}_\eps(\Sigma \times \RR)} + \Vert v_1^\flat \Vert_{C^{0,\alpha}_\eps(\Omega)} + \Vert v_2^\sharp \Vert_{C^{0,\alpha}_\eps(\Sigma \times \RR)} + \Vert v_2^\flat \Vert_{C^{0,\alpha}_\eps(\Omega)}) \\
		& \qquad \qquad \cdot (\Vert v_2^\sharp - v_1^\sharp \Vert_{C^{0,\alpha}_\eps(\Sigma \times \RR)} + \Vert v_2^\flat - v_1^\flat \Vert_{C^{0,\alpha}_\eps(\Omega)}).
	\end{align*}
	This gives
	\begin{equation} \label{eq:dirichlet.data.lemm.3.9.Qeps}
		\Vert Q_\eps(\chi_4 v_2^\sharp + v_2^\flat) - Q_\eps(\chi_4 v_1^\sharp + v_1^\flat) \Vert_{C^{0,\alpha}_\eps(\Omega)} 
		\leq C \eps^2 (\Vert v_2^\sharp - v_1^\sharp \Vert_{C^{0,\alpha}_\eps(\Sigma \times \RR)} + \Vert v_2^\flat - v_1^\flat \Vert_{C^{0,\alpha}_\eps(\Omega)}),
	\end{equation}
	using $\Vert v_j^\sharp \Vert_{C^{0,\alpha}_\eps(\Sigma \times \RR)}, \Vert v_j^\flat \Vert_{C^{0,\alpha}_\eps(\Omega)} \leq C' \eps^2$. 
	
	Next, we consider
	\begin{multline*}
		\Vert \eps^2 ((\Delta_g ((\chi_4 v_2^\sharp) \circ D_{\zeta_2}) - \chi_4 \Delta_g(v_2^\sharp \circ D_{\zeta_2})) \circ D_{\zeta_2}^{-1}) \\
		- (\Delta_g ((\chi_4 v_1^\sharp) \circ D_{\zeta_1}) - \chi_4 \Delta_g(v_1^\sharp \circ D_{\zeta_1})) \circ D_{\zeta_1}^{-1}) \Vert_{C^{0,\alpha}_\eps(\Omega)}
	\end{multline*}
	Define
	\[ \cF_3(v, \zeta) \triangleq \eps^2 (\Delta_g ((\chi_4 v) \circ D_\zeta) - \chi_4 \Delta_g (v \circ D_\zeta)) \circ D_{\zeta}^{-1}, \]
	which is, once again, viewed as a map $\cF_3 : C^{2,\alpha}_\eps(\Omega) \times C^{2,\alpha}_\eps(\Sigma) \to C^{0,\alpha}_\eps(\Omega)$, is a smooth nonlinear Banach space functional. We can then write
	\begin{align*}
		& \eps^2 ((\Delta_g ((\chi_4 v_2^\sharp) \circ D_{\zeta_2}) - \chi_4 \Delta_g(v_2^\sharp \circ D_{\zeta_2})) \circ D_{\zeta_2}^{-1})  - (\Delta_g ((\chi_4 v_1^\sharp) \circ D_{\zeta_1}) - \chi_4 \Delta_g(v_1^\sharp \circ D_{\zeta_1})) \circ D_{\zeta_1}^{-1}) \\
		& \qquad = \cF_3(v_2^\sharp, \zeta_2) - \cF_3(v_1^\sharp, \zeta_1) \\
		& \qquad = \int_0^1 \langle D_v \cF_3(v_1^\sharp + t(v_2^\sharp - v_1^\sharp), \zeta_1 + t(\zeta_2 - \zeta_1)), v_2^\sharp - v_1^\sharp \rangle \\
		& \qquad \qquad \qquad + \langle D_\zeta \cF_3(v_1^\sharp + t(v_2^\sharp - v_1^\sharp), \zeta_1 + t(\zeta_2 - \zeta_1)), \zeta_2 - \zeta_1 \rangle \, dt.
	\end{align*}
	The second term can be estimated by using the linearity in $v$ of $\cF_3$ (and thus of $D_\zeta \cF_3$), and Remark \ref{rema:dirichlet.data.regularity.product.vs.omega} to give:
	\begin{align*}
		& \Vert \langle D_\zeta \cF_3(v_1^\sharp + t(v_2^\sharp - v_1^\sharp), \zeta_1 + t(\zeta_2 - \zeta_1)), \zeta_2 - \zeta_1 \rangle \Vert_{C^{0,\alpha}_\eps(\Omega)} \\
		& \qquad \leq C (\Vert v_1^\sharp \Vert_{C^{2,\alpha}_\eps(\Sigma \times \RR)} + \Vert v_2^\sharp \Vert_{C^{2,\alpha}_\eps(\Sigma \times \RR)}) \Vert \zeta_2 - \zeta_1 \Vert_{C^{2,\alpha}_\eps(\Sigma)}  \leq C \eps^2 \Vert \zeta_2 - \zeta_1 \Vert_{C^{2,\alpha}(\Sigma)},
	\end{align*}
	which is of the desired form with $\delta = 2$. 
	
	The first term instead requires that we use the product rule on $\cF_3$ to recast it as
	\begin{equation*}
		\cF_3(v, \zeta) = \eps^2 (2 \langle \nabla_g (\chi_4 \circ D_\zeta), \nabla_g (v \circ D_\zeta) \rangle 
		+ (\Delta_g(\chi_4 \circ D_\zeta)) (v \circ D_\zeta)) \circ D_{\zeta}^{-1},
	\end{equation*}
	which can, in turn, be differentiated in $v$ to give
	\begin{equation*}
		\langle D_v \cF_3(v, \zeta), w \rangle = \eps^2 (2 \langle \nabla_g (\chi_4 \circ D_\zeta), \nabla_g (w \circ D_\zeta) \rangle 
			 + (\Delta_g(\chi_4 \circ D_\zeta)) (w \circ D_\zeta)) \circ D_{\zeta}^{-1}.
	\end{equation*}
	At this point, we note that there are no zero-order $\chi_4$'s remaining, so we use Remark \ref{rema:dirichlet.data.regularity.product.vs.omega},  \eqref{eq:dirichlet.data.cutoff}, $\delta_* \in (0,1)$ to get
	\begin{align*}
		& \Vert \langle D_v \cF_3(v_1^\sharp + t(v_2^\sharp - v_1^\sharp), \zeta_1 + t(\zeta_2 - \zeta_1)), v_2^\sharp - v_1^\sharp \rangle \Vert_{C^{0,\alpha}_\eps(\Omega)}\\
		& \qquad \leq C \eps^{1-\delta_*} \Vert v_2^\sharp - v_1^\sharp \Vert_{C^{1,\alpha}_\eps(\Sigma \times \RR)}  \leq C \eps^{1-\delta_*} \Vert v_2^\sharp - v_1^\sharp \Vert_{C^{2,\alpha}_\eps(\Sigma \times \RR)}.
	\end{align*}
	Summarizing, we have shown that
	\begin{multline} \label{eq:dirichlet.data.lemm.3.9.laplace.chi.commutator}
		\Vert \eps^2 ((\Delta_g ((\chi_4 v_2^\sharp) \circ D_{\zeta_2}) - \chi_4 \Delta_g(v_2^\sharp \circ D_{\zeta_2})) \circ D_{\zeta_2}^{-1}) 
		- (\Delta_g ((\chi_4 v_1^\sharp) \circ D_{\zeta_1}) - \chi_4 \Delta_g(v_1^\sharp \circ D_{\zeta_1})) \circ D_{\zeta_1}^{-1}) \Vert_{C^{0,\alpha}_\eps(\Omega)} \\
		\leq C \eps^{1-\delta_*} ( \Vert v_2^\sharp - v_1^\sharp \Vert_{C^{2,\alpha}_\eps(\Sigma \times \RR)} + \Vert \zeta_2 - \zeta_1 \Vert_{C^{2,\alpha}(\Sigma)}).
	\end{multline}
	
	The contraction estimate on $N_\eps$, \eqref{eq:dirichlet.data.lemm.3.9.n}, now follows from  \eqref{eq:dirichlet.data.lemm.3.9.laplace.zeta.commutator}, \eqref{eq:dirichlet.data.lemm.3.9.H.decay}, \eqref{eq:dirichlet.data.lemm.3.9.Eeps.n}, \eqref{eq:dirichlet.data.lemm.3.9.Qeps}, and \eqref{eq:dirichlet.data.lemm.3.9.laplace.chi.commutator}.
	
	We move on to the contraction estimates on $M_\eps$, \eqref{eq:dirichlet.data.lemm.3.9.m.perp} and \eqref{eq:dirichlet.data.lemm.3.9.m}. Before we derive those two precise estimates, we investigate several of the easier terms in $M_{\varepsilon}(v_{2}^{\flat},v_{2}^{\sharp},\zeta_{2})-M_{\varepsilon}(v_{1}^{\flat},v_{1}^{\sharp},\zeta_{1})$. 
	
	We note, right away, that we've already shown in \eqref{eq:dirichlet.data.lemm.3.9.laplace.zeta.commutator}:
	\begin{multline*}
		\Vert \eps^2 (\Delta_g (v_2^\flat \circ D_{\zeta_2}) \circ D_{\zeta_2}^{-1} - \Delta_g v_2^\flat) 
		- (\Delta_g (v_1^\flat \circ D_{\zeta_1}) \circ D_{\zeta_1}^{-1} - \Delta_g v_1^\flat) \Vert_{C^{0,\alpha}_\eps(\Omega_3)} \\
		\leq C \eps^{2-2\alpha} \Vert v_2^\flat - v_1^\flat \Vert_{C^{2,\alpha}_\eps(\Omega)} + C \eps^2 \Vert \zeta_2 - \zeta_1 \Vert_{C^{2,\alpha}(\Sigma)}.
	\end{multline*}
	In particular, Remark \ref{rema:dirichlet.data.regularity.product.vs.omega}, Lemma \ref{lemm:dirichlet.data.proj.holder.norms}, and $\Vert \cdot \Vert_{C^{0,\alpha}(\Sigma)} \leq \eps^{-\alpha} \Vert \cdot \Vert_{C^{0,\alpha}_\eps(\Sigma)}$  imply
	\begin{align} \label{eq:dirichlet.data.lemm.3.9.zeta.commutator.m}
		&  \eps^{\alpha} \Big\Vert \Pi_\eps \Big[ \eps^2 (\Delta_g (v_2^\flat \circ D_{\zeta_2}) \circ D_{\zeta_2}^{-1} - \Delta_g v_2^\flat)  - (\Delta_g (v_1^\flat \circ D_{\zeta_1}) \circ D_{\zeta_1}^{-1} - \Delta_g v_1^\flat) \Big] \Big\Vert_{C^{0,\alpha}(\Sigma)} \nonumber \\
		& + \Big\Vert \Pi_\eps^\perp \Big[ \eps^2 (\Delta_g (v_2^\flat \circ D_{\zeta_2}) \circ D_{\zeta_2}^{-1} - \Delta_g v_2^\flat)  - (\Delta_g (v_1^\flat \circ D_{\zeta_1}) \circ D_{\zeta_1}^{-1} - \Delta_g v_1^\flat) \Big] \Big\Vert_{C^{0,\alpha}_\eps(\Sigma \times \RR)} \nonumber \\
		& \qquad \leq C \eps^{2-2\alpha} \Vert v_2^\flat - v_1^\flat \Vert_{C^{2,\alpha}_\eps(\Omega)} + C \eps^2 \Vert \zeta_2 - \zeta_1 \Vert_{C^{2,\alpha}(\Sigma)}.
	\end{align}
	
	 Next, from Remark \ref{rema:dirichlet.data.regularity.product.vs.omega},  \eqref{eq:dirichlet.data.lemm.3.9.Qeps}, we conclude
	\begin{align} \label{eq:dirichlet.data.lemm.3.9.Qeps.m}
		&   \eps^{\alpha} \Big\Vert \Pi_\eps \big[ Q_\eps(\chi_4 v_2^\sharp + v_2^\flat) - Q_\eps(\chi_4 v_1^\sharp + v_1^\flat) \big] \Big\Vert_{C^{0,\alpha}(\Sigma)}  + \Big\Vert \Pi_\eps^\perp \big[ Q_\eps(\chi_4 v_2^\sharp + v_2^\flat) - Q_\eps(\chi_4 v_1^\sharp + v_1^\flat) \big] \Big\Vert_{C^{0,\alpha}_\eps(\Sigma \times \RR)} \nonumber \\
		& \qquad \qquad \leq C \eps^2 (\Vert v_2^\sharp - v_1^\sharp \Vert_{C^{0,\alpha}_\eps(\Sigma \times \RR)} + \Vert v_2^\flat - v_1^\flat \Vert_{C^{0,\alpha}_\eps(\Omega)}).
	\end{align}
	
		Next, we estimate
	\[ \Vert (W''(\mathbb{H}_\eps) - W''(\pm 1))(v_2^\flat - v_1^\flat) \Vert_{C^{0,\alpha}_\eps(\Omega_3)}. \]
	This is the only time we will use $\Vert \cdot \Vert_{\widetilde{C}^{2,\alpha}_\eps(\Omega)}$ for the purposes of \eqref{eq:dirichlet.data.lemm.3.9.m.perp}. We have
	\begin{align*}
		& \Vert (W''(\mathbb{H}_\eps) - W''(\pm 1))(v_2^\flat - v_1^\flat) \Vert_{C^{0,\alpha}_\eps(\Omega_3)} \\
		& \qquad \leq \Vert (W''(\mathbb{H}_\eps) - W''(\pm 1)) \chi_5(v_2^\flat - v_1^\flat) \Vert_{C^{0,\alpha}_\eps(\Omega)} + \Vert (W''(\mathbb{H}_\eps) - W''(\pm 1)) (1-\chi_5) (v_2^\flat - v_1^\flat) \Vert_{C^{0,\alpha}_\eps(\Omega)} \\
		& \qquad \leq \eps^2 \Vert v_2^\flat - v_1^\flat \Vert_{\widetilde{C}^{2,\alpha}_\eps(\Omega)} + \Vert (W''(\mathbb{H}_\eps) - W''(\pm 1)) (1-\chi_5) (v_2^\flat - v_1^\flat) \Vert_{C^{0,\alpha}_\eps(\Omega \setminus \Omega_5)}.
	\end{align*}
	Recalling $\Vert W''(\mathbb{H}_\eps) - W''(\pm 1) \Vert_{C^{0,\alpha}_\eps(\Omega \setminus \Omega_5)} \leq C_m \eps^m$ for all $m \in \NN$, e.g., as in \eqref{eq:dirichlet.data.lemm.3.9.H.decay}, we deduce
	\[ \Vert (W''(\mathbb{H}_\eps) - W''(\pm 1))(v_2^\flat - v_1^\flat) \Vert_{C^{0,\alpha}_\eps(\Omega_3)} \leq C \eps^2 \Vert v_2^\flat - v_1^\flat \Vert_{\widetilde{C}^{2,\alpha}_\eps(\Omega)}, \]
	so, combined with Remark \ref{rema:dirichlet.data.regularity.product.vs.omega}, Lemma \ref{lemm:dirichlet.data.proj.holder.norms}, \eqref{eq:dirichlet.data.cutoff}, $\delta_* \in (0, 1)$, this gives:
	\begin{multline} \label{eq:dirichlet.data.lemm.3.9.WH}
		 \eps^{\alpha} \Vert \Pi_\eps \big[ \chi_3 (W''(\mathbb{H}_\eps) - W''(\pm 1))(v_2^\flat - v_1^\flat) \big] \Vert_{C^{0,\alpha}(\Sigma)} 
		+ \Vert \Pi_\eps^\perp \big[ \chi_3 (W''(\mathbb{H}_\eps) - W''(\pm 1))(v_2^\flat - v_1^\flat) \big] \Vert_{C^{0,\alpha}_\eps(\Sigma \times \RR)} \\
		\leq C \eps^2 \Vert v_2^\flat - v_1^\flat \Vert_{\widetilde{C}^{2,\alpha}_\eps(\Omega)}. 
	\end{multline}
	
	We now proceed to the more involved contraction estimates pertaining to $M_\eps$. We will estimate:
	\begin{multline} \label{eq:dirichlet.data.lemm.3.9.Leps}
		\Vert (L_\eps v_2^\sharp - \eps^2 \Delta_g (v_2^\sharp \circ D_{\zeta_2}) \circ D_{\zeta_2}^{-1} + W''(\mathbb{H}_\eps) v_2^\sharp) \\
		- (L_\eps v_1^\sharp - \eps^2 \Delta_g (v_1^\sharp \circ D_{\zeta_1}) \circ D_{\zeta_1}^{-1} + W''(\mathbb{H}_\eps) v_1^\sharp) \Vert_{C^{0,\alpha}_\eps(\Omega_3)}.
	\end{multline}
	Note that, by repeating the argument carried out to obtain \eqref{eq:dirichlet.data.lemm.3.9.laplace.zeta.commutator}, except with $v_j^\sharp$ in place of $v_j^\flat$, and also using Remark \ref{rema:dirichlet.data.regularity.product.vs.omega}, we get
	\begin{align}
		& \Vert \eps^2 ((\Delta_g (v_2^\sharp \circ D_{\zeta_2}) \circ D_{\zeta_2}^{-1} - \Delta_g v_2^\sharp)  - (\Delta_g (v_1^\sharp \circ D_{\zeta_1}) \circ D_{\zeta_1}^{-1} - \Delta_g v_1^\sharp)) \Vert_{C^{0,\alpha}_\eps(\Omega_3)} \nonumber \\
		& \qquad \leq C \eps^{2-2\alpha} \Vert v_2^\sharp - v_1^\sharp \Vert_{C^{2,\alpha}_\eps(\Sigma \times \RR)} + C \eps^2 \Vert \zeta_2 - \zeta_1 \Vert_{C^{2,\alpha}(\Sigma)}. \label{eq:dirichlet.data.lemm.3.9.laplace.zeta.commutator.m}
	\end{align}
	In view of Remark \ref{rema:dirichlet.data.regularity.product.vs.omega} and Lemma \ref{lemm:dirichlet.data.proj.holder.norms}, this allows us to estimate
	\begin{align*}
		& (L_\eps v_2^\sharp - \eps^2 \Delta_g v_2^\sharp + W''(\mathbb{H}_\eps) v_2^\sharp) - (L_\eps v_1^\sharp - \eps^2 \Delta_g v_1^\sharp + W''(\mathbb{H}_\eps) v_1^\sharp) \\
		& \qquad = L_\eps (v_2^\sharp - v_1^\sharp) - \eps^2 \Delta_g (v_2^\sharp - v_1^\sharp) + W''(\mathbb{H}_\eps)(v_2^\sharp - v_1^\sharp) \\
		& \qquad = \eps^2 (\Delta_{g_0} + \partial_z^2 - \Delta_g) (v_2^\sharp - v_1^\sharp) 
	\end{align*}
	instead of \eqref{eq:dirichlet.data.lemm.3.9.Leps} in both \eqref{eq:dirichlet.data.lemm.3.9.m.perp} and \eqref{eq:dirichlet.data.lemm.3.9.m}. Let's denote
	\[ \cF_4(v) \triangleq \eps^2 (\Delta_g - \Delta_{g_0} - \partial_z^2) v, \]
	which is evidently a linear functional $\cF_4 : C^{2,\alpha}_\eps(\Omega_3) \to C^{0,\alpha}_\eps(\Omega_3)$. Because $\Delta_g = \Delta_{g_z} + \partial_z^2 + H_z \partial_z$ in Fermi coordinates, we can rewrite
	\[ \cF_4(v) = \eps^2 (\Delta_{g_z} - \Delta_{g_0}) v + \eps^2 H_z \partial_z v. \]
	We now make use of \eqref{eq:mean.curv.ddt.laplace} to write:
	\[ \cF_4(v) = \Big[ - \eps^2 \int_0^z (2 \langle \sff_t, \nabla^2_{g_t} v \rangle_{g_t} + \langle \nabla_{g_t} H_t, \nabla_{g_t} v \rangle_{g_t}) \, dt \Big] + \eps^2 H_z \partial_z v. \]
	First, let's derive $C^0$ bounds. Let $(y, z) \in \Omega_3$. It follows from \eqref{eq:dirichlet.data.sigma.c2alpha}, \eqref{eq:mean.curv.ddt.metric}, \eqref{eq:mean.curv.ddt.sff}, and \eqref{eq:mean.curv.ddt.hess} that
	\begin{equation} \label{eq:dirichlet.data.lemm.3.9.F4.c0.i}
		\left| 2 \eps^2 \int_0^z \langle \sff_t, \nabla^2_{g_t} v \rangle_{g_t} \, dt \right| \leq C |z| \Vert v \Vert_{C^2_\eps(\Omega_3)}.
	\end{equation}
	It follows from \eqref{eq:dirichlet.data.sigma.c3alpha}, \eqref{eq:mean.curv.ddt.metric}, \eqref{eq:mean.curv.ddt.sff}, \eqref{eq:mean.curv.ddt.grad}, and \eqref{eq:mean.curv.ddt.gradsff} that
	\begin{equation} \label{eq:dirichlet.data.lemm.3.9.F4.c0.ii}
		\left| \eps^2 \int_0^z \langle \nabla_{g_t} H_t, \nabla_{g_t} v \rangle_{g_t} \, dt \right| \leq C |z| \Vert v \Vert_{C^1_\eps(\Omega_3)}.	
	\end{equation}
	It follows from \eqref{eq:dirichlet.data.sigma.minimal}, \eqref{eq:dirichlet.data.sigma.c2alpha}, \eqref{eq:mean.curv.ddt.sff} that
	\begin{equation} \label{eq:dirichlet.data.lemm.3.9.F4.c0.iii}
		|\eps^2 H_z \partial_z v| \leq C \eps |z| \Vert v \Vert_{C^1_\eps(\Omega_3)}.
	\end{equation}
	Altogether, \eqref{eq:dirichlet.data.lemm.3.9.F4.c0.i}-\eqref{eq:dirichlet.data.lemm.3.9.F4.c0.iii}, 
	show:
	\begin{equation} \label{eq:dirichlet.data.lemm.3.9.F4.c0}
		 |\cF_4(v)| \leq C |z| \Vert v \Vert_{C^{2,\alpha}_\eps(\Omega_3)} \text{ on } \Omega_3. 
	\end{equation}
	
	Next, let's derive H\"older bounds. For fixed $z \in \Omega_3$, an analogous argument gives 
	\begin{equation} \label{eq:dirichlet.data.lemm.3.9.F4.calpha.y}
		\eps^\alpha [ y \mapsto \cF_4(v)(y,z) ]_\alpha \leq C |z| \Vert v \Vert_{C^{2,\alpha}_\eps(\Omega_3)}.  
	\end{equation}
	
	Now fix $y$. By \eqref{eq:dirichlet.data.sigma.c2alpha}, \eqref{eq:mean.curv.ddt.metric}, \eqref{eq:mean.curv.ddt.sff}, and \eqref{eq:mean.curv.ddt.hess}, we have the \emph{Lipschitz} bound
	\[ \left| \frac{\partial}{\partial z} \left( 2 \eps^2 \int_0^z \langle \sff_t, \nabla^2_{g_t} v \rangle_{g_t} \, dt \right) \right| \leq C \Vert v \Vert_{C^2_\eps(\Omega_3)} \]
	In view of the a priori height bound $|z| \leq \eps^{\delta_*}$, this trivially implies the H\"older bound
	\begin{equation} \label{eq:dirichlet.data.lemm.3.9.F4.calpha.z.i}
		\eps^\alpha \left[ z \mapsto \eps^2 \int_0^z \langle \sff_t, \nabla^2_{g_t} v \rangle_{g_t} \, dt \right]_\alpha \leq C \eps^\alpha \eps^{\delta_*(1-\alpha)} \Vert v \Vert_{C^2_\eps(\Omega_3)}.
	\end{equation}
	By \eqref{eq:dirichlet.data.sigma.c3alpha}, \eqref{eq:mean.curv.ddt.metric}, \eqref{eq:mean.curv.ddt.sff}, \eqref{eq:mean.curv.ddt.grad}, and \eqref{eq:mean.curv.ddt.gradsff}, we have another Lipschitz bound:
	\begin{equation} 
		\left| \frac{\partial}{\partial z} \left( \eps^2 \int_0^z \langle \nabla_{g_t} H_t, \nabla_{g_t} v \rangle_{g_t} \, dt \right) \right| \leq C \Vert v \Vert_{C^1_\eps(\Omega_3)},
	\end{equation}
	which, again by $|z| \leq \eps^{\delta_*}$, implies
	\begin{equation} \label{eq:dirichlet.data.lemm.3.9.F4.calpha.z.ii}
		\eps^\alpha \left[ z \mapsto \eps^2 \int_0^z \langle \nabla_{g_t} H_t, \nabla_{g_t} v \rangle_{g_t} \, dt \right]_\alpha \leq C \eps^\alpha \eps^{\delta_*(1-\alpha)} \Vert v \Vert_{C^1_\eps(\Omega_3)}.
	\end{equation}
	Finally, from \eqref{eq:mean.curv.ddt.h} we have the Lipschitz bound
	\[ \left| \frac{\partial}{\partial z} (\eps^2 H_z \partial_z v) \right| \leq C \Vert v \Vert_{C^2_\eps(\Omega_3)}, \]
	which, again by $|z| \leq \eps^{\delta_*}$, improves to
	\begin{equation} \label{eq:dirichlet.data.lemm.3.9.F4.calpha.z.iii}
		\eps^{\alpha} [ z \mapsto \eps^2 H_z \partial_z v ]_\alpha \leq C \eps^\alpha \eps^{\delta_*(1-\alpha)} \Vert v \Vert_{C^2_\eps(\Omega_3)}.
	\end{equation}
	Altogether, \eqref{eq:dirichlet.data.lemm.3.9.F4.calpha.z.i}-\eqref{eq:dirichlet.data.lemm.3.9.F4.calpha.z.iii} imply
	\begin{equation} \label{eq:dirichlet.data.lemm.3.9.F4.calpha.z}
		\eps^\alpha [ z \mapsto \cF_4(v)(y,z) ]_\alpha \leq C \eps^{\delta_* + \alpha(1-\delta_*)} \Vert v \Vert_{C^2_\eps(\Omega_3)}.
	\end{equation}
	Together, {\eqref{eq:dirichlet.data.lemm.3.9.F4.c0},} \eqref{eq:dirichlet.data.lemm.3.9.F4.calpha.y},  \eqref{eq:dirichlet.data.lemm.3.9.F4.calpha.z} imply
	\begin{equation} \label{eq:dirichlet.data.lemm.3.9.F4}
		\Vert \cF_4(v) \Vert_{C^{0,\alpha}_\eps(\Omega_3)} \leq C \eps^{\delta_*} \Vert v \Vert_{C^{2,\alpha}_\eps(\Omega_3)}.
	\end{equation}
	 Together with Remark \ref{rema:dirichlet.data.regularity.product.vs.omega}, Lemma \ref{lemm:dirichlet.data.proj.holder.norms}, this gives:
	\begin{equation} \label{eq:dirichlet.data.lemm.3.9.F4.proj.perp}
		\Vert \Pi_\eps^\perp \cF_4(v) \Vert_{C^{0,\alpha}_\eps(\Omega_3)} \leq C \eps^{\delta_*} \Vert v \Vert_{C^{2,\alpha}_\eps(\Omega_3)}.
	\end{equation}
	It remains to estimate $\Pi_\eps \cF_4 (v)$. Note the obvious inequality (which follows from \eqref{eq:heteroclinic.expansion.ii}, \eqref{eq:heteroclinic.eps})
	\[ \int_{-\infty}^\infty |z| \partial_z \mathbb{H}_\eps(z) \, dz = \eps \int_{-\infty}^\infty |t| \mathbb{H}'(t) \, dt \leq C \eps \]
	combined with \eqref{eq:dirichlet.data.lemm.3.9.F4.c0} and \eqref{eq:dirichlet.data.lemm.3.9.F4.calpha.y}, readily implies:
	\begin{equation} \label{eq:dirichlet.data.lemm.3.9.F4.proj}
		\Vert \Pi_\eps \cF_4(v) \Vert_{C^{0,\alpha}_\eps(\Sigma)} \leq C \eps \Vert v \Vert_{C^{2,\alpha}_\eps(\Omega_3)} 
		\implies \Vert \Pi_\eps \cF_4(v) \Vert_{C^{0,\alpha}(\Sigma)} \leq C \eps^{1-\alpha} \Vert v \Vert_{C^{2,\alpha}_\eps(\Omega_3)}. 
	\end{equation}
	This completes our study of $\cF_4$, as we have the desired estimates in view of Remark \ref{rema:dirichlet.data.regularity.product.vs.omega}. 
	
	 We proceed to the final contraction estimate pertaining to $M_\eps$, which involves $\Pi_\eps$, $\Pi_\eps^\perp$ of
	\[ \chi_3(\sE_\eps(\zeta_2) - \sE_\eps(\zeta_1) - \eps^2 J_\Sigma (\zeta_2 - \zeta_2) \partial_z \mathbb{H}_\eps). \]
	By \eqref{eq:dirichlet.data.cutoff}, $\delta_* \in (0,1)$, and Lemma \ref{lemm:dirichlet.data.proj.holder.norms}, we may just estimate $\sE_\eps(\zeta_2) - \sE_\eps(\zeta_1) - \eps^2 J_\Sigma (\zeta_2 - \zeta_2) \partial_z \mathbb{H}_\eps$ on $\Omega_3$. 
	
	Fix $(y, z) \in \Omega_3$. Recall the definition of $D_\zeta$ in \eqref{eq:dirichlet.data.offset.map}, and the estimate
	\begin{equation} \label{eq:dirichlet.data.lemm.3.9.M.eps.E.eps.buffer}
		\dist_g( \Omega_3, \{ \chi_2 \neq 1 \} ) = O(\eps^{\delta_*}) \gg \Vert \zeta_2 \Vert_{C^0(\Sigma)} + \Vert \zeta_1 \Vert_{C^0(\Sigma)}
	\end{equation}
	that follows from the a priori bound on $\zeta_1$, $\zeta_2$. Also recall that $\mathbb{H}_\eps \equiv \widetilde{\mathbb{H}}_\eps$ on $\Omega_3$. Then, in Fermi coordinates $(y,z)$, we have:
	\begin{align} \label{eq:dirichlet.data.lemm.3.9.M.eps.E.eps.expr}
		& \sE_\eps(\zeta_2)(y,z) - \sE_\eps(\zeta_1)(y,z) - \eps^2 J_\Sigma (\zeta_2 - \zeta_1)(y) \cdot \partial_z \mathbb{H}_\eps(z) \\
		& \qquad = \eps^2 \Delta_g (\mathbb{H}_\eps \circ D_{\zeta_2}) \circ D_{\zeta_2}^{-1}(y,z) - \eps^2 \Delta_g (\mathbb{H}_\eps \circ D_{\zeta_1}) \circ D_{\zeta_1}^{-1}(y,z) \nonumber \\
		& \qquad \qquad - \eps^2 J_\Sigma(\zeta_2 - \zeta_1)(y) \cdot \partial_z \mathbb{H}_\eps(z) \nonumber \\
		& \qquad = \eps^2 \Big[ \partial_z^2 \mathbb{H}_\eps(z) \big( |\nabla_{g_{z+\zeta_2(y)}} \zeta_2(y)|^2 - |\nabla_{g_{z+\zeta_1(y)}} \zeta_1(y)|^2 \big) \nonumber \\
		& \qquad \qquad - \partial_z \mathbb{H}_\eps(z) \big( (\Delta_{g_{z+\zeta_2(y)}} \zeta_2(y) - H_{z+\zeta_2(y)}(y)) - (\Delta_{g_{z+\zeta_1(y)}} \zeta_1(y) - H_{z+\zeta_1(y)}(y)) + J_\Sigma (\zeta_2 - \zeta_1)(y) \big) \Big]. \nonumber
	\end{align}
	Denote, for $\zeta \in C^{1,\alpha}(\Sigma)$,
	\[ \cF_5(\zeta)(y,z) \triangleq |\nabla_{g_{z + \zeta(y)}} \zeta(y)|^2 = g_{z + \zeta(y)}^{ij} \zeta_i(y) \zeta_j(y) \]
	to be the smooth nonlinear functional, $\cF_5 : C^{1,\alpha}(\Sigma) \to C^{0,\alpha}(\Omega_3)$. By virtue of \eqref{eq:mean.curv.ddt.metric}, we know that:
	\begin{equation} \label{eq:dirichlet.data.lemm.3.9.F5.derivative}
		\langle D_\zeta \cF_5(\zeta), w \rangle(y,z) = - 2 \sff_{z+\zeta(y)}^{ij} \zeta_i(y) \zeta_j(y) w(y) + 2 g^{ij}_{z + \zeta(y)} w_i(y) \zeta_j(y).
	\end{equation}
	By the fundamental theorem of calculus, 
	\[ \cF_5(\zeta_2) - \cF_5(\zeta_1) = \int_0^1 \langle D_\zeta \cF_5( \zeta_1 + t (\zeta_2 - \zeta_1)), \zeta_2 - \zeta_1 \rangle \, dt, \]
	so together with  \eqref{eq:dirichlet.data.sigma.c2alpha}, the a priori estimates on $\zeta_1$, $\zeta_2$,  \eqref{eq:dirichlet.data.lemm.3.9.F5.derivative}, \eqref{eq:mean.curv.ddt.metric}, and  \eqref{eq:mean.curv.ddt.sff}:
	\[ \Vert \cF_5(\zeta_2) - \cF_5(\zeta_1) \Vert_{C^{0,\alpha}(\Omega_3)} \leq C \eps^{2-2\alpha} \Vert \zeta_2 - \zeta_1 \Vert_{C^{1,\alpha}(\Sigma)}. \]
	Alongside \eqref{eq:heteroclinic.expansion.iii}, Remark \ref{rema:dirichlet.data.regularity.product.vs.omega}, Lemma \ref{lemm:dirichlet.data.proj.holder.norms}, \eqref{eq:dirichlet.data.cutoff}, $\delta_* \in (0,1)$, this implies:
	\begin{multline} \label{eq:dirichlet.data.lemm.3.9.M.eps.E.eps.i}
		\eps^{\alpha} \left\Vert \Pi_\eps \big( \chi_3 \eps^2 (\partial_z^2 \mathbb{H}_\eps) (\cF_5(\zeta_2) - \cF_5(\zeta_1)) \big) \right\Vert_{C^{0,\alpha}(\Sigma)} + \left\Vert \Pi_\eps^\perp \big( \chi_3 \eps^2 (\partial_z^2 \mathbb{H}_\eps) (\cF_5(\zeta_2) - \cF_5(\zeta_1)) \big) \right\Vert_{C^{0,\alpha}_\eps(\Sigma \times \RR)} \\
		\leq C \eps^{2-2\alpha} \Vert \zeta_2 - \zeta_1 \Vert_{C^{1,\alpha}(\Sigma)}.
	\end{multline}
	
	Finally, let's denote
	\[ \cF_6(\zeta)(y,z) \triangleq \eps \Big( \Delta_{z + \zeta(y)} \zeta(y) - H_{z + \zeta(y)}(y) + J_\Sigma \zeta(y) \Big) \]
	to be the smooth nonlinear Banach space functional $\cF_6 : C^{2,\alpha}(\Sigma) \to C^{0,\alpha}_\eps(\Omega_3)$. By \eqref{eq:mean.curv.ddt.h} and \eqref{eq:mean.curv.ddt.laplace},
	\begin{align*}
		\langle D_\zeta \cF_6(\zeta), w \rangle & = \eps \Big( \Delta_{z + \zeta} w + \big( -2 \langle \sff_{z+\zeta}, \nabla^2_{g_{z+\zeta}} \zeta \rangle_{g_{z+\zeta}} - \langle \nabla_{g_{z+\zeta}} H_{z+\zeta}, \nabla_{g_{z+\zeta}} \zeta \rangle_{g_{z+\zeta}} \big) w \\
		& \qquad + (|\sff_{z+\zeta}|^2 + \ricc_g(\partial_z, \partial_z)|_{D \times \{z+\zeta\}}) w + J_\Sigma w \Big) \\
		& = \eps \Big( \big( -2 \langle \sff_{z+\zeta}, \nabla^2_{g_{z+\zeta}} \zeta \rangle_{g_{z+\zeta}} - \langle \nabla_{g_{z+\zeta}} H_{z+\zeta}, \nabla_{g_{z+\zeta}} \zeta \rangle_{g_{z+\zeta}} \big) w \\
		& \qquad - \int_{0}^{z+\zeta} ( 2 \langle \sff_t, \nabla^2_{g_t} w \rangle_{g_t} + \langle \nabla_{g_t} H_t, \nabla_{g_t} w \rangle_{g_t} ) \, dt \\
		& \qquad + \Big( \int_{0}^{z+\zeta} \tfrac{\partial}{\partial t} (|\sff_t|^2 + \ricc_g(\partial_z, \partial_z)|_{D \times \{t\}}) \, dt \Big) w\Big).
	\end{align*}
	By the fundamental theorem of calculus,
	\[ \cF_6(\zeta_2) - \cF_6(\zeta_1) = \int_0^1 \langle D_\zeta \cF_6(\zeta_1 + t(\zeta_2 - \zeta_1)), \zeta_2 - \zeta_1 \rangle \, dt. \]
	We now estimate $\langle D_\zeta \cF_6(\zeta), w \rangle$ for $\zeta = \zeta_1 + t(\zeta_2 - \zeta_1)$ and $w = \zeta_2 - \zeta_1$. We will make repeated use of \eqref{eq:dirichlet.data.sigma.c2alpha}, \eqref{eq:dirichlet.data.sigma.c3alpha}, \eqref{eq:mean.curv.ddt.metric}, \eqref{eq:mean.curv.ddt.sff}, \eqref{eq:mean.curv.ddt.grad}, \eqref{eq:mean.curv.ddt.christoffel}, \eqref{eq:mean.curv.ddt.hess}, $\Vert \zeta \Vert_{C^{2,\alpha}(\Sigma)} \leq C' \eps^{2-2\alpha}$, and $\Vert \cdot \Vert_{C^{0,\alpha}_\eps(\Sigma)} \leq \Vert \cdot \Vert_{C^{2,\alpha}(\Sigma)}$. First, 
	\begin{equation} \label{eq:dirichlet.data.lemm.3.9.F6.i}
		\left\Vert \eps \big( 2 \langle \sff_{z+\zeta}, \nabla^2_{g_{z+\zeta}} \zeta \rangle_{g_{z+\zeta}} + \langle \nabla_{g_{z+\zeta}} H_{z+\zeta}, \nabla_{g_{z+\zeta}} \zeta \rangle_{g_{z+\zeta}} \big) w \right\Vert_{C^{0,\alpha}_\eps(\Omega_3)} \leq C \eps^{2-2\alpha} \Vert \zeta_2 - \zeta_1 \Vert_{C^{2,\alpha}(\Sigma)}.
	\end{equation}
	Additionally using the $O(\eps^{\delta_*})$ height bound on $\Omega_3$, we also have:
	\begin{equation} \label{eq:dirichlet.data.lemm.3.9.F6.ii}
		\left\Vert \eps \int_0^{z+\zeta} \langle \sff_{z+\zeta}, \nabla^2_{g_{z+\zeta}} w \rangle_{g_{z+\zeta}} \right\Vert_{C^{0,\alpha}_\eps(\Omega_3)} \leq C \eps^{1+\delta_*} \Vert \zeta_2 - \zeta_1 \Vert_{C^{2,\alpha}(\Sigma)}.
	\end{equation}
	Likewise: 
	\begin{equation} \label{eq:dirichlet.data.lemm.3.9.F6.iii}
		\left\Vert \eps \Big( \int_{0}^{z+\zeta} \tfrac{\partial}{\partial t} (|\sff_t|^2 + \ricc_g(\partial_z, \partial_z)|_{D \times \{t\}}) \, dt \Big) w \right\Vert_{C^{0,\alpha}_\eps(\Omega_3)} \leq C \eps^{1+\delta_*} \Vert \zeta_2 - \zeta_1 \Vert_{C^{0,\alpha}(\Sigma)}.
	\end{equation}
	It remains to estimate:
	\[ \left\Vert \eps \int_0^{z+\zeta} \langle \nabla_{g_t} H_t, \nabla_{g_t} w \rangle_{g_t} \, dt \right\Vert_{C^{0,\alpha}_\eps(\Omega_3)}. \]
	Now is the only place in the proof where we need to distinguish the H\"older exponents $\alpha \leq \theta$, taking the prior to be small and the latter to be large. From \eqref{eq:dirichlet.data.sigma.c2alpha}, \eqref{eq:dirichlet.data.sigma.c3alpha}, \eqref{eq:mean.curv.ddt.metric}, \eqref{eq:mean.curv.ddt.sff}, \eqref{eq:mean.curv.ddt.grad} and the interpolation of (unweighted) H\"older spaces $C^{1,\theta} \hookrightarrow C^{1,\alpha} \hookrightarrow C^{0,\theta}$ (Lemma \ref{lemm:holder.space.interpolation}), we have
	\[ \Vert \nabla_{g_z} H_z \Vert_{C^{0,\alpha}(\Omega_3)} \leq C \Vert H_z \Vert_{C^{0,\theta}(\Omega)}^{\theta-\alpha} \Vert H_z \Vert_{C^{1,\theta}(\Omega)}^{1+\alpha-\theta} \leq C \eps^{-2 (1+\alpha-\theta)} \leq C \eps^{-\frac12 \delta_*}, \]
	as long as $\alpha_0$, $\theta_0$ are chosen sufficiently close to $0$ and to $1$, respectively, depending on $\delta_*$. It is now easy to see, as before, that
	\begin{equation} \label{eq:dirichlet.data.lemm.3.9.F6.iv}
		\left \Vert \eps \int_0^{z+\zeta} \langle \nabla_{g_t} H_t, \nabla_{g_t} w \rangle_{g_t} \, dt \right\Vert_{C^{0,\alpha}_\eps(\Omega_3)} \leq C \eps^{1+\frac12 \delta_*} \Vert \zeta_2 - \zeta_1 \Vert_{C^{1,\alpha}(\Sigma)}.
	\end{equation}	
	Altogether, \eqref{eq:dirichlet.data.lemm.3.9.F6.i}, \eqref{eq:dirichlet.data.lemm.3.9.F6.ii}, \eqref{eq:dirichlet.data.lemm.3.9.F6.iii}, \eqref{eq:dirichlet.data.lemm.3.9.F6.iv} imply:
	\[ \Vert \cF_6(\zeta_2) - \cF_6(\zeta_1) \Vert_{C^{0,\alpha}_\eps(\Omega_3)} \leq C \eps^{1+\frac12 \delta_*} \Vert \zeta_2 - \zeta_1 \Vert_{C^{2,\alpha}(\Sigma)}.  \]
	Alongside \eqref{eq:heteroclinic.expansion.ii}, Remark \ref{rema:dirichlet.data.regularity.product.vs.omega}, Lemma \ref{lemm:dirichlet.data.proj.holder.norms}, \eqref{eq:dirichlet.data.cutoff}, $\delta_* \in (0,1)$, this implies:
	\begin{multline} \label{eq:dirichlet.data.lemm.3.9.M.eps.E.eps.ii}
	\eps^{\alpha} \left\Vert \Pi_\eps \big( \chi_3 \eps (\partial_z \mathbb{H}_\eps) (\cF_6(\zeta_2) - \cF_6(\zeta_1)) \big) \right\Vert_{C^{0,\alpha}(\Sigma)} + \left\Vert \Pi_\eps^\perp \big( \chi_3 (\eps \partial_z \mathbb{H}_\eps) (\cF_6(\zeta_2) - \cF_6(\zeta_1)) \big) \right\Vert_{C^{0,\alpha}_\eps(\Sigma \times \RR)} \\
	\leq C (\eps^{2-2\alpha} + \eps^{1+\frac12 \delta_*}) \Vert \zeta_2 - \zeta_1 \Vert_{C^{2,\alpha}(\Sigma)}.
	\end{multline}

	Together,  \eqref{eq:dirichlet.data.lemm.3.9.zeta.commutator.m},  \eqref{eq:dirichlet.data.lemm.3.9.Qeps.m}, \eqref{eq:dirichlet.data.lemm.3.9.WH}, \eqref{eq:dirichlet.data.lemm.3.9.laplace.zeta.commutator.m}, \eqref{eq:dirichlet.data.lemm.3.9.F4.proj.perp}, \eqref{eq:dirichlet.data.lemm.3.9.M.eps.E.eps.expr}, \eqref{eq:dirichlet.data.lemm.3.9.M.eps.E.eps.i}, and \eqref{eq:dirichlet.data.lemm.3.9.M.eps.E.eps.ii} imply  \eqref{eq:dirichlet.data.lemm.3.9.m.perp} for $\alpha_0$, $\theta_0$ depending on $\delta_*$.

	Likewise, \eqref{eq:dirichlet.data.lemm.3.9.zeta.commutator.m},  \eqref{eq:dirichlet.data.lemm.3.9.Qeps.m},  \eqref{eq:dirichlet.data.lemm.3.9.WH}, \eqref{eq:dirichlet.data.lemm.3.9.laplace.zeta.commutator.m},  \eqref{eq:dirichlet.data.lemm.3.9.F4.proj}, \eqref{eq:dirichlet.data.lemm.3.9.M.eps.E.eps.expr}, \eqref{eq:dirichlet.data.lemm.3.9.M.eps.E.eps.i}, \eqref{eq:dirichlet.data.lemm.3.9.M.eps.E.eps.ii} imply  \eqref{eq:dirichlet.data.lemm.3.9.m} for $\alpha_0$, $\theta_0$ depending on $\delta_*$.
\end{proof}

\begin{proof}[Proof of Theorem \ref{theo:dirichlet.data.construction}]
	As was already pointed out, we can rewrite \eqref{eq:dirichlet.data.pde} as the nonlinear fixed point problem \eqref{eq:dirichlet.data.pde.vsharp}-\eqref{eq:dirichlet.data.pde.zeta}. We'll take $\alpha$, $\theta$, $\delta$ as in Lemma \ref{lemm:dirichlet.data.lemm.3.9}, and $M \geq 1$.
	
	Consider $g$ as in Section \ref{sec:dirichlet.data}, and also define
	\begin{multline} \label{eq:dirichlet.data.contraction.interior.ball}
		\cU(\eps; M) \triangleq \Big\{ (v^\flat, v^\sharp, \zeta) \in \widetilde{C}^{2,\alpha}_\eps(\Omega) \times C^{2,\alpha}_\eps(\Sigma \times \RR) \times C^{2,\alpha}(\Sigma) : \\
		\Vert v^\flat \Vert_{\widetilde{C}^{2,\alpha}_\eps(\Omega)} + \Vert v^\sharp \Vert_{C^{2,\alpha}_\eps(\Sigma \times \RR)} + \eps^{2\alpha} \Vert \zeta \Vert_{C^{2,\alpha}(\Sigma)} \leq M \eps^2 \Big\}. 
	\end{multline}
	\begin{multline} \label{eq:dirichlet.data.contraction.boundary.ball}
		\cB(\eps; \mu) \triangleq \Big\{ (\widehat{v}^\flat, \widehat{v}^\sharp, \widehat{\zeta}) \in  C^{2,\alpha}_\eps(\partial \Omega) \times C^{2,\alpha}_\eps(\partial \Sigma \times \RR) \times C^{2,\alpha}(\partial \Sigma) : \\
		\widehat{v}^\flat \equiv 0 \text{ on } \{ \chi_4 = 1 \}, \; \Pi_\eps(\widehat{v}^\sharp) \equiv 0 \text{ on } \partial \Sigma, \\
		\Vert \widehat{v}^\flat \Vert_{C^{2,\alpha}_\eps(\partial \Omega)} + \Vert \widehat{v}^\sharp \Vert_{C^{2,\alpha}_\eps(\partial \Sigma \times \RR)} + \Vert \widehat{\zeta} \Vert_{C^{2,\alpha}(\partial \Sigma)} \leq \mu \eps^2 \Big\}.
	\end{multline}

	Lemmas \ref{lemm:dirichlet.data.lemm.3.8}, \ref{lemm:dirichlet.data.lemm.3.9}, guarantee that for every $(v^\flat, v^\sharp, \zeta) \in \cU(\eps; M)$, 
	\begin{align}
		\Vert N_\eps(v^\flat, v^\sharp, \zeta) \Vert_{C^{0,\alpha}_\eps(\Omega)} 
			& \leq  c_1' \eps^{2+\delta-2\alpha} + c_0 \eps^2, \label{eq:dirichlet.data.thm.est.n} \\
		\Vert \Pi_\eps^\perp M_\eps (v^\flat, v^\sharp, \zeta) \Vert_{C^{0,\alpha}_\eps(\Sigma \times \RR)} 
			& \leq  c_1'  \eps^{2+\delta-2\alpha} + c_0 \eps^2, \label{eq:dirichlet.data.thm.est.m.perp} \\
		\Vert \eps^{-1} \Pi_\eps M_\eps(v^\flat, v^\sharp, \zeta) \Vert_{C^{0,\alpha}(\Sigma)} 
			& \leq   c_1'  \eps^{2+\delta-2\alpha} + c_1'  \eps^{2-\alpha} + c_0 \eps^2, \label{eq:dirichlet.data.thm.est.m}
	\end{align}
	with $c_0$ as in Lemma \ref{lemm:dirichlet.data.lemm.3.8}, and with $c_1' = M \cdot c_1$, $\eps \leq \eps_0$ as in Lemma  \ref{lemm:dirichlet.data.lemm.3.9}.
	
	Let
	\[ \Phi : \cU(\eps; M) \times \cB(\eps; \mu) \times \operatorname{Met}_{\eps,\eta}(\Omega) \to \widetilde{C}^{2,\alpha}_\eps(\Omega) \times C^{2,\alpha}_\eps(\Sigma \times \RR) \times C^{2,\alpha}(\Sigma), \]
	be the solution map $\Phi : (v^\flat, v^\sharp, \zeta, \widehat{v}^\flat, \widehat{v}^\sharp, \widehat{\zeta}, g) \mapsto (V^\flat, V^\sharp, Z)$ for the \emph{linear} system
	\begin{equation} \label{eq:dirichlet.data.thm.pde.vflat}
		\cL_\eps V^\flat = N_\eps(v^\flat, v^\sharp, \zeta) \text{ on } \Omega, \; V^\flat|_{\partial \Omega} = \widehat{v}^\flat,
	\end{equation}
	\begin{equation} \label{eq:dirichlet.data.thm.pde.vsharp}
		L_\eps V^\sharp = \Pi_\eps^\perp M_\eps(v^\flat, v^\sharp, \zeta) \text{ on } \Sigma \times \RR, \; V^\sharp|_{\partial \Sigma \times \RR} = \widehat{v}^\sharp,
	\end{equation}
	\begin{equation} \label{eq:dirichlet.data.thm.pde.zeta}
		J_\Sigma Z = \eps^{-1} \Pi_\eps M_\eps(v^\flat, v^\sharp, \zeta) \text{ on } \Sigma, \; Z|_{\partial \Sigma} = \widehat{\zeta}.
	\end{equation}

	The existence of $V^\flat$ follows from Fredholm theory. In fact, together with Lemma \ref{lemm:dirichlet.data.eq.3.26},  \eqref{eq:dirichlet.data.thm.est.n}, we have
	\begin{align}
		\Vert V^\flat \Vert_{\widetilde{C}^{2,\alpha}_\eps(\Omega)} 
			& \leq C ( \Vert N_\eps (v^\flat, v^\sharp, \zeta) \Vert_{C^{0,\alpha}_\eps(\Omega)} + \Vert \widehat{v}^\flat \Vert_{C^{2,\alpha}_\eps(\Omega)})  \leq C c_1'  \eps^{2+\delta-2\alpha} + C (c_0 + \mu) \eps^2. \label{eq:dirichlet.data.thm.est.vflat}
	\end{align}
	
	The existence of $V^\sharp$ follows from Lemma \ref{lemm:dirichlet.data.prop.3.1}. In fact, together with Lemma \ref{lemm:dirichlet.data.prop.3.2},  \eqref{eq:dirichlet.data.thm.est.m.perp}, we have
	\begin{equation} \label{eq:dirichlet.data.thm.est.vsharp}
		\Vert V^\sharp \Vert_{C^{2,\alpha}_\eps(\Sigma \times \RR)} 
			\leq C ( \Vert \Pi_\eps^\perp M_\eps(v^\flat, v^\sharp, \zeta) \Vert_{C^{0,\alpha}_\eps(\Sigma \times \RR)} + \Vert \widehat{v}^\sharp \Vert_{C^{2,\alpha}_\eps(\partial \Sigma \times \RR)} 			\leq  C c_1'  \eps^{2+\delta-2\alpha} + C (c_0 + \mu) \eps^2. 
	\end{equation}
	
	Finally, the existence of $Z$ follows from Fredholm theory and \eqref{eq:dirichlet.data.sigma.nondegenerate}. In fact, by Schauder theory on the elliptic operator $J_\Sigma$ on $\Sigma$, and \eqref{eq:dirichlet.data.thm.est.m}, we find:
	\begin{align}
		\Vert Z \Vert_{C^{2,\alpha}(\Sigma)}
			& \leq C ( \Vert \eps^{-1} \Pi_\eps M_\eps (v^\flat, v^\sharp, \zeta) \Vert_{C^{0,\alpha}(\Sigma)} + \Vert \widehat{\zeta} \Vert_{C^{2,\alpha}(\partial \Sigma)}) \nonumber \\
			& \leq C c_1'  \eps^{2+\delta-2\alpha} + C c_1' \eps^{2-\alpha} + C c_0 \eps^2 + C \mu  \eps^{2-2\alpha}, \nonumber \\
		\implies \eps^{2\alpha} \Vert Z \Vert_{C^{2,\alpha}(\Sigma)} & \leq C c_1' \eps^{2+\delta} + Cc_1' \eps^{2+\alpha} + C c_0 \eps^{2+2\alpha} + C \mu \eps^2. \label{eq:dirichlet.data.thm.est.zeta}
	\end{align}
	
	We emphasize that the constant $C$ in \eqref{eq:dirichlet.data.thm.est.vflat}, \eqref{eq:dirichlet.data.thm.est.vsharp}, and \eqref{eq:dirichlet.data.thm.est.zeta} depends only on $n$, $\eta > 0$, and $W$.

	The expressions in \eqref{eq:dirichlet.data.thm.est.vflat}, \eqref{eq:dirichlet.data.thm.est.vsharp}, and \eqref{eq:dirichlet.data.thm.est.zeta} can all be made to be $\leq \tfrac{1}{3} M \eps^2$ as follows:

	\begin{enumerate}
		\item Choose $M$ large, depending on $c_0$, $C$, $\mu$, so that $C(c_0 + \mu) \leq \tfrac{1}{6} M$.
		\item Then, choose $\eps \leq \eps_0$ small depending on $C$, $c_1'$, $M$, so that
		\begin{equation} \label{eq:dirichlet.data.thm.contraction.constant}
			C c_1' \eps^{\alpha} \ll 1;
		\end{equation}
		note that, since $M \geq 1$, the left hand side is also $\leq \tfrac{1}{12} M$.
		\item Using $\alpha \in (0, \tfrac{\delta}{3})$ we find that $\eps^{\delta - 2\alpha} \leq \eps^{\alpha}$, so $C c_1' \eps^{\delta} \leq C c_1' \eps^{\delta-2\alpha} \leq \tfrac{1}{12} M$.
	\end{enumerate}
	Thus, for such a choice of $M = M(n, \eta, W, \delta_*, \mu)$, $\eps \leq \eps_0' = \eps_0'(n, \eta, W, \delta_*, \mu, \alpha)$, we have
	\[ \Phi \big( \cU(\eps; M) \times \cB(\eps; \mu) \times \operatorname{Met}_{\eps,\eta}(\Omega) \big) \subset \cU(\eps; M). \]
	
	We show that $\Phi(\cdot, \cdot, \cdot, \widehat{v}^\flat, \widehat{v}^\sharp, \widehat{\zeta}, g)$ is a \emph{contraction} with respect to the norm 
	\begin{equation} \label{eq:dirichlet.data.interior.product.norm}
		\Vert (v^\flat, v^\sharp, \zeta) \Vert_{\cU} \triangleq \Vert v^\flat \Vert_{\widetilde{C}^{2,\alpha}_\eps(\Omega)} + \Vert v^\sharp \Vert_{C^{2,\alpha}_\eps(\Sigma \times \RR)} + \eps^{2\alpha} \Vert \zeta \Vert_{C^{2,\alpha}(\Sigma)},
	\end{equation}
	\emph{uniformly} with respect to $\widehat{v}^\flat$, $\widehat{v}^\sharp, \widehat{\zeta}, g$. Let's also define
	\begin{equation} \label{eq:dirichlet.data.boundary.product.norm}
		\Vert (\widehat{v}^\flat, \widehat{v}^\sharp, \widehat{\zeta}) \Vert_{\cB} \triangleq \Vert \widehat{v}^\flat \Vert_{\widetilde{C}^{2,\alpha}_\eps(\partial \Omega)} + \Vert \widehat{v}^\sharp \Vert_{C^{2,\alpha}_\eps(\partial \Sigma \times \RR)} + \Vert \widehat{\zeta} \Vert_{C^{2,\alpha}(\partial \Sigma)}.
	\end{equation}
	
	Let's set
	\[ (V_1^\flat, V_1^\sharp, Z_1) \triangleq \Phi(v_1^\flat, v_1^\sharp, \zeta_1, \widehat{v}^\flat, \widehat{v}^\sharp, \widehat{\zeta}, g), \]
	\[ (V_2^\flat, V_2^\sharp, Z_2) \triangleq \Phi(v_2^\flat, v_2^\sharp, \zeta_2, \widehat{v}^\flat, \widehat{v}^\sharp, \widehat{\zeta}, g). \]
	
	By Lemma \ref{lemm:dirichlet.data.eq.3.26}, Lemma \ref{lemm:dirichlet.data.lemm.3.9}:
	\begin{align} \label{eq:dirichlet.data.thm.contraction.vflat}
			 \Vert V_2^\flat - V_1^\flat \Vert_{\widetilde{C}^{2,\alpha}_\eps(\Omega)} & \leq C \Vert \cL_\eps V_2^\flat - \cL_\eps V_1^\flat \Vert_{C^{0,\alpha}_\eps(\Omega)} \nonumber \\
			&  = C \Vert N_\eps(v_2^\flat, v_2^\sharp, \zeta_2) - N_\eps(v_1^\flat, v_1^\sharp, \zeta_1) \Vert_{C^{0,\alpha}_\eps(\Omega)} \nonumber \\
			&  \leq C c_1' \eps^{\delta} \Big( \Vert v_2^\flat - v_1^\flat \Vert_{\widetilde{C}^{2,\alpha}(\Omega)} + \Vert v_2^\sharp - v_1^\sharp \Vert_{C^{2,\alpha}_\eps(\Sigma \times \RR)}  + \Vert \zeta_2 - \zeta_1 \Vert_{C^{2,\alpha}(\Sigma)} \Big).
	\end{align}
	By Lemma \ref{lemm:dirichlet.data.prop.3.2}, Lemma \ref{lemm:dirichlet.data.lemm.3.9},
	\begin{align} \label{eq:dirichlet.data.thm.contraction.vsharp}
			 \Vert V_2^\sharp - V_1^\sharp \Vert_{C^{2,\alpha}_\eps(\Sigma \times \RR)}  & \leq C \Vert L_\eps V_2^\sharp - L_\eps V_1^\sharp \Vert_{C^{0,\alpha}_{\eps}(\Sigma \times \RR)} \nonumber \\
			&  = C \Vert \Pi_\eps^\perp M_\eps(v_2^\flat, v_2^\sharp, \zeta_2) - \Pi_\eps^\perp M_\eps(v_1^\flat, v_1^\sharp, \zeta_1) \Vert_{C^{0,\alpha}_\eps(\Sigma \times \RR)} \nonumber \\
			&  \leq  C c_1' \eps^\delta \Big( \Vert v_2^\flat - v_1^\flat \Vert_{\widetilde{C}^{2,\alpha}_\eps(\Omega)} + \Vert v_2^\sharp - v_1^\sharp \Vert_{C^{2,\alpha}_\eps(\Sigma \times \RR)}  + \Vert \zeta_2 - \zeta_1 \Vert_{C^{2,\alpha}(\Sigma)} \Big).
	\end{align}
	Finally, by Lemma \ref{lemm:dirichlet.data.lemm.3.9}, \eqref{eq:dirichlet.data.sigma.nondegenerate}, and Schauder theory,
	\begin{align} \label{eq:dirichlet.data.thm.contraction.zeta}
		\Vert Z_2 - Z_1 \Vert_{C^{2,\alpha}(\Sigma)} \nonumber & \leq C \Vert J_\Sigma Z_2 - J_\Sigma Z_1 \Vert_{C^{0,\alpha}(\Sigma)} \nonumber \\
		&  = C \Vert \eps^{-1} \Pi_\eps M_\eps(v_2^\flat, v_2^\sharp, \zeta_2) - \eps^{-1} \Pi_\eps M_\eps(v_1^\flat, v_1^\sharp, \zeta_1) \Vert_{C^{0,\alpha}(\Sigma)} \nonumber \\
		&  \leq  C c_1' \Big[ \eps^\delta \big( \Vert v_2^\flat - v_1^\flat \Vert_{\widetilde{C}^{2,\alpha}_\eps(\Omega)} + \Vert \zeta_2 - \zeta_1 \Vert_{C^{2,\alpha}(\Sigma)} \big) + \eps^{-\alpha} \Vert v_2^\sharp - v_1^\sharp \Vert_{C^{2,\alpha}_\eps(\Sigma \times \RR)} \Big].
	\end{align}
	Adding \eqref{eq:dirichlet.data.thm.contraction.vflat}, \eqref{eq:dirichlet.data.thm.contraction.vsharp}, and $\eps^{2\alpha}$ times \eqref{eq:dirichlet.data.thm.contraction.zeta}, using $\alpha < \tfrac{1}{3} \delta$, and the $\Vert \cdot \Vert_{\cU}$ norm on $\cU(\eps; M)$:
	\begin{equation} \label{eq:dirichlet.data.thm.contraction}
		\Vert (V_2^\flat, V_2^\sharp, Z_2) - (V_1^\flat, V_1^\sharp, Z_1) \Vert_{\cU} 
		\leq  C c_1' \eps^\alpha \Vert (v_2^\flat, v_2^\sharp, \zeta_2) - (v_1^\flat, v_1^\sharp, \zeta_1) \Vert_{\cU}.
	\end{equation}
	This implies that $\Phi(\cdot, \cdot, \cdot, \widehat{v}^\flat, \widehat{v}^\sharp, \widehat{\zeta}, g)$ is uniformly Lipschitz, with Lipschitz constant $\leq C c_1' \eps^\alpha$, and by  \eqref{eq:dirichlet.data.thm.contraction.constant} we conclude that it's, in fact, a contraction mapping. This readily implies the existence of a fixed point $(v^\flat, v^\sharp, \zeta)$, which therefore satisfies \eqref{eq:dirichlet.data.pde}.
	
	We finally move to prove the continuity of the solution map
	\[ \cS : \cB(\eps; \mu) \times \operatorname{Met}_{\eps,\eta}(\Omega) \to \cU(\eps; M). \]
 	For $(\widehat{v}_1^\flat, \widehat{v}_1^\sharp, \widehat{\zeta}_1, g_1)$, $(\widehat{v}_2^\flat, \widehat{v}_2^\sharp, \widehat{\zeta}_2, g_2) \in \cB(\eps; \mu) \times  \operatorname{Met}_{\eps,\eta}(\Omega)$, we have, by the fixed point property:
 	\begin{align*}
 		& \cS(\widehat{v}_2^\flat, \widehat{v}_2^\sharp, \widehat{\zeta}_2, g_2) - \cS(\widehat{v}_1^\flat, \widehat{v}_1^\sharp, \widehat{\zeta}_1, g_1) \\
 		& \qquad = \Big( \Phi(\cS(\widehat{v}_2^\flat, \widehat{v}_2^\sharp, \widehat{\zeta}_2, g_2), \widehat{v}_2^\flat, \widehat{v}_2^\sharp, \widehat{\zeta}_2, g_2)  - \Phi(\cS(\widehat{v}_2^\flat, \widehat{v}_2^\sharp, \widehat{\zeta}_2, g_2), \widehat{v}_1^\flat, \widehat{v}_1^\sharp, \widehat{\zeta}_1, g_1) \Big) \\
 		& \qquad \qquad - \Big( \Phi(\cS(\widehat{v}_1^\flat, \widehat{v}_1^\sharp, \widehat{\zeta}_1, g_1), \widehat{v}_1^\flat, \widehat{v}_1^\sharp, \widehat{\zeta}_1, g_1)  - \Phi(\cS(\widehat{v}_2^\flat, \widehat{v}_2^\sharp, \widehat{\zeta}_2, g_2), \widehat{v}_1^\flat, \widehat{v}_1^\sharp, \widehat{\zeta}_1, g_1) \Big).
 	\end{align*}
	
	The last parenthesis will be bounded using the contraction mapping property \eqref{eq:dirichlet.data.thm.contraction} on $(\widehat{v}_1^\flat, \widehat{v}_1^\sharp, \widehat{\zeta}_1, g_1)$. The second-to-last parenthesis will be bounded by varying the four slots of $\Phi(\cS(\widehat{v}_2^\flat, \widehat{v}_2^\sharp, \widehat{\zeta}_2, g_2), \cdot, \cdot, \cdot, \cdot)$ using the fundamental theorem of calculus. The $\widehat{v}^\flat$, $\widehat{v}^\sharp$, $\widehat{\zeta}$ derivatives of $\Phi(\cS(\widehat{v}_2^\flat, \widehat{v}_2^\sharp, \widehat{\zeta}_2, g_2), \cdot, \cdot, \cdot, \cdot)$ can be controlled using Lemma \ref{lemm:dirichlet.data.eq.3.26}, Lemma \ref{lemm:dirichlet.data.prop.3.1}, and Schauder theory on $J_\Sigma$, respectively. Likewise, it is not hard to see that for $g \in \operatorname{Met}_{\eps,\eta}(\Omega)$, the map 
	\begin{equation*}
		g\mapsto \Phi(v^{\flat},v^{\sharp},\zeta,\widehat{v}^{\flat}, \widehat{v}^{\sharp}, \widehat{\zeta},g)
	\end{equation*} 
	is uniformly Lipschitz with respect to $(v^{\flat},v^{\sharp},\zeta,\widehat{v}^{\flat}, \widehat{v}^{\sharp}, \widehat{\zeta}) \in  \cU(\eps; M) \times \cB(\eps; \mu)$. Altogether, we have
 	\begin{align*}
 		& \Vert \cS(\widehat{v}_2^\flat, \widehat{v}_2^\sharp, \widehat{\zeta}_2, g_2) - \cS(\widehat{v}_1^\flat, \widehat{v}_1^\sharp, \widehat{\zeta}_1, g_1)) \Vert_{\cU} \\
 		& \qquad \leq c \Big( \Vert (\widehat{v}_2^\flat, \widehat{v}_2^\sharp, \widehat{\zeta}_2) - (\widehat{v}_1^\flat, \widehat{v}_1^\sharp, \widehat{\zeta}_1) \Vert_{\cB} + d(g_2, g_1) \Big)  + C c_1'  \eps^\alpha \Vert \cS(\widehat{v}_2^\flat, \widehat{v}_2^\sharp, \widehat{\zeta}_2, g_2) - \cS(\widehat{v}_1^\flat, \widehat{v}_1^\sharp, \widehat{\zeta}_1, g_1)) \Vert_{\cU},
 	\end{align*}
 	and the result follows by rearranging.
\end{proof}

\appendix

\section{Mean curvature of normal graphs} 


\label{app:mean.curvature.graphs}

The purpose of this appendix is to record a form of the second variation of mean curvature that is convenient for our paper, since this computation is not easily found in the literature.

We consider Fermi coordinates $(y, z)$ near a hypersurface $\Sigma \subset M$ satisfying the conditions of Section \ref{subsec:jacobi.toda.setup}, where the normal graph of a function $f : \Sigma \to \RR$ will eventually look like
\[ G[f] \triangleq \{ (y, f(y)) : y \in \Sigma \}. \]
Before discussing the geometry of the graph over $\Sigma$, let's first discuss the geometry of the distance level sets $\{ z = \text{const} \}$ relative $\Sigma$. 

We'll denote the restriction of the metric to the parallel hypersurface $\{ (y, z) : y \in \Sigma \}$ by $g_z$, i.e., $g_z = Z_\Sigma(\cdot, z){}^* g$, and the corresponding upward pointing unit normal, area form, second fundamental form, mean curvature, divergence, gradient, Hessian, and Laplacian by $\partial_z$, $d\mu_{g_z}$, $\sff_z$, $H_z$, $\divg_{g_z}$, $\nabla_{g_z}$, $\nabla^2_{g_z}$, $\Delta_{g_z}$. We recall that the $\partial_z$ (Lie) derivative of $g_z$ is known to be
\begin{equation} \label{eq:mean.curv.ddt.metric}
	\sL_{\partial_z} g_z = 2 \sff_z,
\end{equation}
and also the corresponding derivative of the second fundamental form $\sff_z$ is
\begin{equation} \label{eq:mean.curv.ddt.sff}
	\sL_{\partial_z} \sff_z = \sff_z^2 - \riem_g(\cdot, \partial_z, \partial_z, \cdot),
\end{equation}
where $\sff_z^2$ denotes a single trace of $\sff_z \otimes \sff_z$, and our Riemann curvature convention is such that $(\riem_g)_{ijji}$ (suitably normalized) denotes a sectional curvature. From \eqref{eq:mean.curv.ddt.sff} we recover the well-known Jacobi equation
\begin{equation} \label{eq:mean.curv.ddt.h}
	\partial_z H_z = \partial_z (g_z^{ij} \sff^z_{ij}) = - g_z^{ik} g_z^{j\ell} (\sL_{\partial_z} g_z)_{k\ell} \sff^z_{ij} + g^{ij}_z \sL_{\partial_z} \sff^z_{ij} 
	= - |\sff_z|^2 - \ricc_g(\partial_z, \partial_z).
\end{equation}
From \eqref{eq:mean.curv.ddt.metric} we also find the evolution of the gradient operator:
\begin{equation} \label{eq:mean.curv.ddt.grad}
	\sL_{\partial_z} \nabla_{g_z} f = - 2\sff_z(\nabla_{g_z} f, \cdot), \; f \in C^\infty(\Sigma).
\end{equation}
Next, we seek the evolution of the divergence operator on 1-forms. To find it, we first need to find the evolution of the Christoffel symbols. 
Recall that the Christoffel symbols don't transform like tensors but that their difference does. In particular, $\partial_z \Gamma$ is a vector-valued 2-tensor given by (using the Codazzi equation to get the second form):
\begin{multline} \label{eq:mean.curv.ddt.christoffel}
	(\partial_z \Gamma)(\mathbf{X}, \mathbf{Y}) = [\nabla^{g_z}_{\mathbf{X}} \sff(\cdot, \mathbf{Y}) + \nabla^{g_z}_{\mathbf{Y}} \sff(\mathbf{X}, \cdot) - \nabla^{g_z}_{\cdot} \sff(\mathbf{X}, \mathbf{Y})]^\sharp \\
	= [\nabla^{g_z}_{\cdot} \sff(\mathbf{X}, \mathbf{Y}) + \riem_g(\partial_z, \mathbf{X}, \mathbf{Y}, \cdot) +  \riem_g(\partial_z, \mathbf{Y}, \mathbf{X}, \cdot)]^\sharp,
\end{multline}
where the indices are raised with the $\sharp$ operator using $g_z$. From this we find the evolution of the Hessians of scalar fields:
\begin{equation} \label{eq:mean.curv.ddt.hess}
	\sL_{\partial_z} \nabla^2_{g_z} f = - \nabla^{g_z}_{\nabla_{g_z} f} \sff_z,
\end{equation}
and their Laplacians:
\begin{equation} \label{eq:mean.curv.ddt.laplace}
	\sL_z \Delta_{g_z} f = - 2 \langle \sff_z, \nabla^2_{g_z} f \rangle_{g_z} - \langle \nabla_{g_z} H_z, \nabla_{g_z} f \rangle_{g_z}.
\end{equation}
Likewise, the evolution of the divergence of 1-forms is:
\begin{equation} \label{eq:mean.curv.ddt.divg}
	\sL_z \divg_{g_z} \omega = - 2 \langle \sff_z, \nabla_{g_z} \omega \rangle_{g_z} - \langle \nabla_{g_z} H + \ricc_g(\partial_z, \cdot), \omega \rangle_{g_z}, \qquad 
	\omega \in \Omega^1(\Sigma).
\end{equation}
Next, we seek to calculate the evolution of $\nabla_{g_z} \sff_z$. To do so, we pick coordinates so that the vectors $\partial_{y_i}$ are parallel (with respect to $\nabla_{g_z}$) at the base point where we're computing the derivative. Then
\begin{align*}
	\partial_z (\partial_{y_i} \sff^z_{jk}) = \partial_{y_i} (\partial_z \sff^z_{jk})& = \partial_{y_i} (\sL_{\partial_z} \sff^z_{jk}) \\
		& = \partial_{y_i} ((\sff_z^2)_{jk} - \riem_g(\partial_{y_j}, \nu_z, \nu_z, \partial_{y_k})) \\
		& = \partial_{y_i} (g_z^{\ell m} \sff^z_{j\ell} \sff^z_{km} - \riem_g(\partial_{y_j}, \partial_z, \partial_z, \partial_{y_k})) \\
		& = - g^{\ell m}_z \partial_{y_i} \sff^z_{j\ell} \sff^z_{km} - g^{\ell m}_z \sff^z_{j\ell} \partial_{y_i} \sff^z_{km}   - \partial_{y_i}(\riem_g(\partial_{y_j}, \partial_z, \partial_z, \partial_{y_k})) \\
		& = - g_z^{\ell m} (\nabla^{g_z}_i \sff^z_{j\ell}) \sff^z_{km} - g_z^{\ell m} (\nabla_i^{g_z} \sff^z_{km}) \sff^z_{j\ell}  - \nabla^g_{\partial_{y_i}} \riem_g(\partial_{y_j}, \partial_z, \partial_z, \partial_{y_k}) \\
		& \qquad + 2 \riem_g(\partial_{y_j}, g_z^{\ell m}\sff^z_{i\ell} \partial_{y_m}, \partial_z, \partial_{y_k})
\end{align*}
i.e.,
\begin{multline} \label{eq:mean.curv.ddt.gradsff.i}
	\partial_z(\partial_{y_i} \sff^z_{jk}) = - g_z^{\ell m} (\nabla^{g_z}_i \sff^z_{j\ell}) \sff^z_{km} - g_z^{\ell m} (\nabla_i^{g_z} \sff^z_{km}) \sff^z_{j\ell} \\
	- \nabla^g_{\partial_{y_i}} \riem_g(\partial_{y_j}, \partial_z, \partial_z, \partial_{y_k}) 
	+ 2 g_z^{\ell m} \sff^z_{i\ell} \riem_g(\partial_{y_j}, \partial_{y_m}, \partial_z, \partial_{y_k}).
\end{multline}
Moreover,
\begin{align*}
	\sL_{\partial_z} (\nabla^{g_z}_{\partial_{y_i}} \partial_{y_j})  = \sL_{\partial_z} (\nabla^g_{\partial_{y_i}} \partial_{y_j} + \sff^z_{ij} \partial_z) 
		& = \sL_{\partial_z} \nabla^g_{\partial_{y_i}} \partial_{y_j} + (\sL_{\partial_z} \sff_z)_{ij} \partial_z \\
		& = \nabla^g_{\partial_z} \nabla^g_{\partial_{y_i}} \partial_{y_j} + (\sff^2_z)_{ij} \partial_z - \riem_g(\partial_{y_i}, \partial_z, \partial_z, \partial_{y_j}) \partial_z \\
		& = \nabla^g_{\partial_z} \nabla^g_{\partial_{y_i}} \partial_{y_j} + (\sff^2_z)_{ij} \partial_z - \riem_g(\partial_{z}, \partial_{y_i}, \partial_{y_j}, \partial_{z}) \partial_z \\
		& = \nabla^g_{\partial_z} \nabla^g_{\partial_{y_i}} \partial_{y_j} + (\sff^2_z)_{ij} \partial_z  -(\nabla^g_{\partial_z} \nabla^g_{\partial_{y_i}} \partial_{y_j} - \nabla^g_{\partial_{y_i}} \nabla^g_{\partial_z} \partial_{y_j}) \\
		& = \nabla^g_{\partial_{y_i}} \nabla^g_{\partial_z} \partial_{y_j} + (\sff^2_z)_{ij} \partial_z 
		 = \nabla^g_{\partial_{y_i}} \nabla^g_{\partial_{y_j}} \partial_z + (\sff^2_z)_{ij} \partial_z.
\end{align*}
Recall that $\sL_{\partial_z} (\nabla^{g_z}_{\partial_{y_i}} \partial_{y_j})$ is tangential to $\{ z = \text{const}\}$, and so is $\nabla^g_{\partial_{y_j}} \partial_z = g_z^{k\ell} \sff^z_{jk} \partial_{y_\ell}$. By projecting onto $\{ z = \text{const} \}$, the expression above reduces to
\begin{equation} \label{eq:mean.curv.ddt.gradsff.ii}
	\sL_{\partial_z} (\nabla^{g_z}_{\partial_{y_i}} \partial_{y_j}) = \nabla^{g_z}_{\partial_{y_i}} (g^{k\ell}_z \sff^z_{jk} \partial_{y_\ell}) = g^{k\ell}_z (\nabla^{g_z}_i \sff^z_{jk}) \partial_{y_\ell}
\end{equation}
Combining with \eqref{eq:mean.curv.ddt.gradsff.i}, we deduce that
\begin{align*}
	\partial_z (\nabla^{g_z}_i \sff^z_{jk}) & = \partial_z(\partial_{y_i} \sff^z_{jk}) - \sff_z(\nabla^{g_z}_{\partial_{y_i}} \partial_{y_j}, \partial_{y_k}) - \sff_z(\partial_{y_j}, \nabla^{g_z}_{\partial_{y_i}} \partial_{y_k})) \\
		& = \partial_z (\partial_{y_i} \sff^z_{jk}) - \sff_z(\sL_{\partial_z} (\nabla^{g_z}_{\partial_{y_i}} \partial_{y_j}), \partial_{y_k})  - \sff_z(\partial_{y_j}, \sL_{\partial_z}(\nabla^{g_z}_{\partial_{y_i}} \partial_{y_k})) \\
		& = - g_z^{\ell m} (\nabla^{g_z}_i \sff^z_{j\ell}) \sff^z_{km} - g_z^{\ell m} (\nabla_i^{g_z} \sff^z_{km}) \sff^z_{j\ell}   - \nabla^g_{\partial_{y_i}} \riem_g(\partial_{y_j}, \partial_z, \partial_z, \partial_{y_k}) \\
		& \qquad  + 2 g_z^{\ell m} \sff^z_{i\ell} \riem_g(\partial_{y_j}, \partial_{y_m}, \partial_z, \partial_{y_k})   - \sff_z(g^{m\ell}_z (\nabla^{g_z}_i \sff^z_{jm}) \partial_{y_\ell}, \partial_{y_k}) \\
		& \qquad - \sff_z(\partial_{y_j}, g^{m\ell}_z (\nabla_i^{g_z} \sff^z_{km}) \partial_{y_\ell}) \\
		& = - g_z^{\ell m} (\nabla^{g_z}_i \sff^z_{j\ell}) \sff^z_{km} - g_z^{\ell m} (\nabla_i^{g_z} \sff^z_{km}) \sff^z_{j\ell}  - \nabla^g_{\partial_{y_i}} \riem_g(\partial_{y_j}, \partial_z, \partial_z, \partial_{y_k}) \\
		& \qquad  + 2 g_z^{\ell m} \sff^z_{i\ell} \riem_g(\partial_{y_j}, \partial_{y_m}, \partial_z, \partial_{y_k})  - g_z^{m\ell} \sff^z_{\ell k} \nabla^{g_z}_i \sff^z_{jm} - g_z^{m\ell} \sff^z_{j\ell} \nabla^{g_z}_i \sff^z_{km} \\
		& = - 2 g_z^{\ell m} (\nabla^{g_z}_i \sff^z_{j\ell}) \sff^z_{km} - 2 g_z^{\ell m} (\nabla_i^{g_z} \sff^z_{km}) \sff^z_{j\ell} \\
		& \qquad  - \nabla^g_{\partial_{y_i}} \riem_g(\partial_{y_j}, \partial_z, \partial_z, \partial_{y_k})  + 2 g_z^{\ell m} \sff^z_{i\ell} \riem_g(\partial_{y_j}, \partial_{y_m}, \partial_z, \partial_{y_k}).
\end{align*}
In particular,
\begin{equation} \label{eq:mean.curv.ddt.gradsff}
	\sL_{\partial_z} \nabla_{g_z} \sff_z = \nabla_{g_z} \sff_z \ast \sff_z + \nabla_g \riem_g + \sff_z \ast \riem_g
\end{equation}
as a symmetric 2-tensor on the $\{ z = \text{const} \}$ level sets.

We now proceed to use these evolution equations to understand the second variation of the mean curvature of a graph in Fermi coordinates. These computations are motivated by the ones in \cite{PacardSun}.

Continuing to work in Fermi coordinates $(y,z)$ relative to $\Sigma$, we write
\[ G[f] \triangleq \{ (y,f(y)) : y \in \Sigma \}. \]
Note that the induced metric on $G[f]$ is:
\[ g|_{G[f]} = g_{f(y)} + df(y)^2, \; y \in \Sigma. \]
The induced area form on $G[f]$ is, therefore:
\[ d\mu_{G[f]}(y) = (1 + g_{f(y)}^{ij} f_i(y) f_j(y))^{1/2} \, d\mu_{g_{f(y)}}(y), \; y \in \Sigma. \]
Thus,
\begin{equation} \label{eq:mean.curv.area}
	\area(G[f]) = \int_\Sigma (1 + g_f^{ij} f_i f_j)^{1/2} \, d\mu_{g_{f(\cdot)}}.
\end{equation}
We now consider the variation $f + t\varphi$, $\varphi \in C^2_c(\Sigma \setminus \partial \Sigma)$. We have (we're using integration by parts in the second step):
\begin{align*}
	\left[ \frac{d}{dt} \area(G[f+t\varphi]) \right]_{t=0} 
	&  = \int_\Sigma \frac{g_f^{ij} f_i \varphi_j}{(1 + g_f^{ij} f_i f_j)^{1/2}} \, d\mu_{g_f} - \int_\Sigma \frac{\sff_f^{ij} f_i f_j \varphi}{(1+g^{ij}_f f_j f_j)^{1/2}} \, d\mu_{g_f} \\
	& \qquad \qquad + \int_\Sigma (1 + g_f^{ij} f_i f_j)^{1/2} H_f \varphi \, d\mu_{g_f} \\
	& = - \int_\Sigma \divg_{g_f} \left( \frac{\nabla_{g_f} f}{(1+g_f^{ij} f_i f_j)^{1/2}} \right) \varphi \, d\mu_{g_f} - \int_\Sigma \frac{\sff_f^{ij} f_i f_j \varphi}{(1+g^{ij}_f f_j f_j)^{1/2}} \, d\mu_{g_f} \\
	& \qquad \qquad + \int_\Sigma (1 + g_f^{ij} f_i f_j)^{1/2} H_f \varphi \, d\mu_{g_f}.
\end{align*}
Note that, if $\nu$ denotes the normal to $G[f]$, then
\[ g(\nu, \partial_z) = (1 + g_f^{ij} f_i f_j)^{-1/2} \implies d\mu_{g_f} = g(\nu, \partial_z) d\mu_{G[f]}, \]
and, therefore,
\begin{align*}
	& \left[ \frac{d}{dt} \area(G[f+t\varphi]) \right]_{t=0} \\
	& = \int_\Sigma \Big[ - \divg_{g_f} \left( \frac{\nabla_{g_f} f}{(1+g_f^{ij}f_if_j)^{1/2}} \right) - \frac{\sff_f^{ij} f_i f_j}{(1+g^{ij}_f f_i f_j)^{1/2}}  + (1 + g_f^{ij} f_i f_j)^{1/2} H_f \Big] g(\nu, \partial_z) \varphi \, d\mu_{G[f]}.
\end{align*}
On the other hand, by definition,
\[ \left[ \frac{d}{dt} \area(G[f+t\varphi]) \right]_{t=0} = \int_\Sigma H_{G[f]} g(\nu, \partial_z) \varphi \, d\mu_{G[f]}, \]
so we conclude
\begin{equation} \label{eq:mean.curv.graphical}
	H[f] = - \divg_{g_f} \left( \frac{\nabla_{g_f} f}{(1+g_f^{ij} f_i f_j)^{1/2}} \right) - \frac{\sff_f^{ij} f_i f_j}{(1+g_f^{ij} f_i f_i)^{1/2}}
	+ (1+g_f^{ij} f_i f_j)^{1/2} H_f.
\end{equation}
We now claim that the quantity
\begin{equation} \label{eq:mean.curv.graphical.quad.error.new}
	\widetilde{\cQ} f \triangleq H[f] - H_0 + \tfrac{\sqrt{g_0}}{\sqrt{g_f}} \divg_{g_0} ( \tfrac{\sqrt{g_f}}{\sqrt{g_0}} (1 + g^{ij}_f f_i f_j)^{-1/2} \nabla_{g_f} f) 
	+ (|\sff_0|^2 + \ricc_g(\partial_z, \partial_z)) f
\end{equation}
is a quadratic error term in the Taylor expansion of $H[f]$ with respect to $\{ z = 0 \}$:
\begin{lemm} \label{lemm:mean.curv.graphical.quad.error.new}
	We have the pointwise estimate
	\[ |\widetilde{\cQ} f| \leq c (|f|^2 + |\partial f|^2) \]
	where $c = c(n, \Lambda) > 0$ and $\Lambda = \Lambda(f,y) > 0$ is such that
	\[ \sup_{|z| \leq |f(y)|} |\sff_z(y)| \leq \Lambda. \]
\end{lemm}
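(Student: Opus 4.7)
The plan is to observe a simple divergence identity that causes the apparently complicated term in \eqref{eq:mean.curv.graphical.quad.error.new} to cancel the divergence term in \eqref{eq:mean.curv.graphical}, and then directly Taylor-expand what remains around $z = 0$ using the ODEs from \eqref{eq:mean.curv.ddt.metric}--\eqref{eq:mean.curv.ddt.h}.

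First I would record the identity
\[
\tfrac{\sqrt{g_0}}{\sqrt{g_f}} \divg_{g_0}\!\Big(\tfrac{\sqrt{g_f}}{\sqrt{g_0}} X\Big)
= \tfrac{1}{\sqrt{g_f}} \partial_i\!\big(\sqrt{g_f}\, X^i\big)
= \divg_{g_f}(X),
\]
valid for any vector field $X = X^i \partial_{y_i}$ on $\Sigma$. Applying this with $X = (1 + g^{ij}_f f_i f_j)^{-1/2} \nabla_{g_f} f$ and substituting \eqref{eq:mean.curv.graphical} into \eqref{eq:mean.curv.graphical.quad.error.new}, the two divergence terms annihilate each other and we are left with the expression
\[
\widetilde{\cQ} f
= - \frac{\sff_f^{ij} f_i f_j}{(1 + g_f^{ij} f_i f_j)^{1/2}}
+ \big((1 + g_f^{ij} f_i f_j)^{1/2} - 1\big) H_f
+ (H_f - H_0) + \big(|\sff_0|^2 + \ricc_g(\partial_z,\partial_z)|_\Sigma\big) f.
\]

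Next, I would bound the first two terms pointwise. Since $|\sff_f^{ij} f_i f_j| \leq |\sff_f|\, |\nabla_{g_f} f|^2 \leq \Lambda |\partial f|^2$ and $(1 + |\nabla_{g_f} f|^2)^{1/2} \geq 1$, the first term is $O(\Lambda |\partial f|^2)$. For the second, $(1 + |\nabla_{g_f}f|^2)^{1/2} - 1 = O(|\partial f|^2)$, and $|H_f| \leq (n-1)\Lambda$, so the second term is also $O(\Lambda |\partial f|^2)$.

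The only remaining work is to show that the \emph{linear-in-$f$} combination
\[
(H_f - H_0) + \big(|\sff_0|^2 + \ricc_g(\partial_z,\partial_z)|_\Sigma\big) f
\]
is $O(|f|^2)$. Writing $H_f - H_0 = \int_0^{f(y)} \partial_z H_z(y) \, dz$ and invoking \eqref{eq:mean.curv.ddt.h}, we have $\partial_z H_z = -|\sff_z|^2 - \ricc_g(\partial_z,\partial_z)$. A first-order Taylor expansion in $z$ at $z=0$ gives
\[
\partial_z H_z(y) = -\big(|\sff_0|^2 + \ricc_g(\partial_z, \partial_z)|_\Sigma\big)(y) + O(|z|),
\]
whose integration from $0$ to $f(y)$ cancels the added linear term and contributes an $O(|f|^2)$ remainder. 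The constant of proportionality in the remainder is controlled by $\sup_{|z|\leq |f|} \big|\partial_z(|\sff_z|^2 + \ricc_g(\partial_z,\partial_z))\big|$, which is uniformly bounded in terms of $\Lambda$ (via \eqref{eq:mean.curv.ddt.sff}) and the Riemannian curvature of $g$, yielding a constant $c = c(n, \Lambda)$ as claimed. There is no substantive obstacle here: once the divergence cancellation is noticed, the lemma reduces to elementary Taylor expansion plus the standard Jacobi formula \eqref{eq:mean.curv.ddt.h}.
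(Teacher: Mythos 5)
Your proof is correct and follows essentially the same route as the paper: the same divergence identity collapses $\widetilde{\cQ}f$ to the algebraic expression, the gradient terms are bounded by $|\partial f|^2$ after adding and subtracting $H_f$, and the linear-in-$f$ part is handled by integrating the Jacobi formula \eqref{eq:mean.curv.ddt.h} in $z$. The only difference is that you spell out the Taylor-remainder step (noting it needs \eqref{eq:mean.curv.ddt.sff} and ambient curvature control) more explicitly than the paper does, which is fine.
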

\begin{proof}
	First, note that
	\[ \divg_{g_f} \left( \frac{\nabla_{g_f} f}{(1+g^{ij}_f f_i f_j)^{1/2}} \right) = \frac{\sqrt{g_0}}{\sqrt{g_f}} \divg_{g_0} \left( \frac{\sqrt{g_f}}{\sqrt{g_0}} \frac{\nabla_{g_f} f}{(1+g^{ij}_f f_i f_j)^{1/2}} \right), \]
	which means that
	\begin{equation*}
		\widetilde{\cQ} f = - \frac{\sff_f^{ij} f_i f_j}{(1+g^{ij}_f f_i f_j)^{1/2}} + (1 + g^{ij}_f f_i f_j)^{1/2} H_f 
		- H_0 + (|\sff_0|^2 + \ricc_g(\partial_z, \partial_z)) f.
	\end{equation*}
	The result follows by adding and subtracting $H_f$,
	\[ \frac{|\sff_f^{ij} f_i f_j|}{(1+g^{ij}_f f_i f_j)^{1/2}} + |(1+g^{ij}_f f_i f_j)^{1/2} H_f - H_f| \leq c |\partial f|^2, \]
	and
	\[ |H_f - H_0 + (|\sff_0|^2 + \ricc_g(\partial_z, \partial_z)) f| \leq c |f|^2. \]
	We've used \eqref{eq:mean.curv.ddt.h} in the last estimate.
\end{proof}

\section{Some results of Wang--Wei}


\label{app:WW-results}

For completeness, we recall several results proven by Wang--Wei in \cite{WangWei}. We will assume \eqref{eq:sheets.eps.bound}-\eqref{eq:sheets.constants} and will use the notation  $\overline{\mathbb{H}} \triangleq \overline{\mathbb{H}}^{3|\log \eps|}$ and $\overline{\mathbb{H}}_{\eps,\ell}$ from Section \ref{subsec:jacobi.toda.setup} throughout this appendix. We emphasize (see Remark \ref{rema:scale-vs-WW}) that we are working at the original scale, rather than the $\eps$-scale as in \cite{WangWei}, so these expressions have changed relative to \cite{WangWei} by appropriate factors of $\eps$.

\begin{lemm}[{\cite[Lemma 8.3]{WangWei}}]\label{lemm:WW.tilting.comparison.d}
For $m\not = \ell \in \{1,\dots,Q\}$, consider $X \in Z_{\ell}(\Gamma_{\ell}(\frac 32)\times [-1,1])$ with $|d_{m}(X)|,|d_{\ell}(X)| \leq K \eps |\log \eps|$. Then,
\begin{align*}
d_{\Gamma_{m}}(\Pi_{m}\circ\Pi_{\ell}(X),\Pi_{m}(X)) & \leq C(K) \eps^{\frac 32} |\log \eps|^{\frac 32},\\
|d_{m}(\Pi_{\ell}(X)) + d_{\ell}(\Pi_{m}(X))| & \leq C(K)\eps^{\frac 32} |\log \eps|^{\frac 32}\\
|d_{m}(X) - d_{\ell}(X)+d_{m}(\Pi_{\ell}(X))| & \leq C(K) \eps^{\frac 32} |\log\eps|^{\frac 32}\\
|d_{\ell}(X) - d_{m}(X) - d_{\ell}(\Pi_{m}(X))| & \leq C(K) \eps^{\frac 32}|\log\eps|^{\frac32}\\
1 - \nabla d_{\ell}(X) \cdot \nabla d_{m}(X) & \leq C(K) \eps^{\frac 12} |\log \eps|^{\frac 32}. 
\end{align*}
\end{lemm}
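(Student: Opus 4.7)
The plan is to exploit the fact that both sheets $\Gamma_\ell$ and $\Gamma_m$ are $C^2$ normal graphs over $\Sigma$ with uniform $C^2$ bounds from \eqref{eq:sheets.graph.apriori.C1.bounds}-\eqref{eq:sheets.graph.apriori.C2.bounds}. Write $\Gamma_\ell = \graph_\Sigma f_\ell$ and $\Gamma_m = \graph_\Sigma f_m$, and assume without loss of generality that $\ell < m$, so that $F \triangleq f_m - f_\ell > 0$ on $\Sigma$. Let $y_0 \in \Sigma$ be the projection of $X$ onto $\Sigma$ in Fermi coordinates. A direct expansion of the signed distances $d_\ell, d_m$ in Fermi coordinates over $\Sigma$, combined with the smallness of $|\nabla f_\ell|, |\nabla f_m|$, yields the vertical separation bound $F(y_0) \leq c(K)\eps|\log\eps|$.

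The crucial step is an elementary observation: if $F \geq 0$ on $\Sigma$ with $\|F\|_{C^2(\Sigma)} \leq 2c_0$, and $F(y_0) \leq D$, then $|\nabla F(y_0)| \leq 2\sqrt{c_0 D}$. Indeed, moving from $y_0$ along $-\nabla F(y_0)/|\nabla F(y_0)|$ decreases $F$ linearly at rate $|\nabla F(y_0)|$ while the quadratic remainder is bounded by $c_0 t^2$; nonnegativity forces $|\nabla F(y_0)|^2 \leq 4c_0 F(y_0)$. Applied with $D = c(K)\eps|\log\eps|$, this yields the \emph{tilt estimate} $|\nabla F(y_0)| \leq c'(K)\eps^{1/2}|\log\eps|^{1/2}$, which translates into a discrepancy between the two unit normals at $y_0$ of order $\eps^{1/2}|\log\eps|^{1/2}$; propagation along the $C^2$-controlled sheets out to the base points $\Pi_\ell(X), \Pi_m(X)$ (which lie at horizontal distance $O(\eps|\log\eps|)$ from $y_0$) gives inequality (5), with the extra logarithmic factor absorbing the Lipschitz propagation.

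For inequalities (1)-(4), I combine the tilt estimate with Taylor expansions of one sheet's signed distance in the Fermi chart of the other. The representation $X = \Pi_\ell(X) + d_\ell(X)\nu_\ell = \Pi_m(X) + d_m(X)\nu_m$ gives
\[ \Pi_\ell(X) - \Pi_m(X) = d_m(X)\nu_m - d_\ell(X)\nu_\ell. \]
Taylor-expanding $d_m$ around $\Pi_m(X)$ yields $d_m(\Pi_\ell(X)) = \langle \Pi_\ell(X) - \Pi_m(X), \nu_m\rangle + O(|\sff_{\Gamma_m}| \cdot |\Pi_\ell(X) - \Pi_m(X)|^2)$, and symmetrically for $d_\ell(\Pi_m(X))$. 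Summing, the first-order $O(\eps|\log\eps|)$ terms cancel against each other and leave a remainder controlled by $\langle d_m\nu_m - d_\ell\nu_\ell, \nu_m - \nu_\ell\rangle$, which is bounded by (separation)$\cdot$(tilt) $= O(\eps|\log\eps|) \cdot O(\eps^{1/2}|\log\eps|^{1/2}) = O(\eps^{3/2}|\log\eps|^{3/2})$. The second-order curvature terms are of the same order. This establishes inequality (2), and (1), (3), (4) follow from analogous expansions (for (1), one combines with the Lipschitz bound for $\Pi_m$ on $\Gamma_\ell$; for (3), (4), one expands $d_m(X) - d_m(\Pi_\ell(X))$ using the definition of $\Pi_\ell$ and the tilt estimate).

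The principal obstacle is the bookkeeping that extracts the improvement from $\eps|\log\eps|$ (the naive separation size) to $\eps^{3/2}|\log\eps|^{3/2}$. This improvement is not visible to any first-order estimate and requires the cancellation above, which is apparent only when one writes $X$ simultaneously in the two Fermi charts and symmetrizes in $\ell$ and $m$. Once the tilt estimate of Step~2 is in place, all five inequalities reduce to careful but straightforward second-order Taylor expansions.
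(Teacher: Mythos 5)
There is nothing to compare against inside the paper: Lemma \ref{lemm:WW.tilting.comparison.d} is recalled verbatim from \cite[Lemma 8.3]{WangWei} in Appendix \ref{app:WW-results} and no proof is given here, so your argument stands on its own. On its merits, your strategy is the right one and is essentially the standard way such tilting lemmas are proved: since the sheets are disjoint, ordered graphs over $\Sigma$ with uniform $C^2$ bounds \eqref{eq:sheets.graph.apriori.C1.bounds}--\eqref{eq:sheets.graph.apriori.C2.bounds}, the nonnegative separation $F=f_m-f_\ell$ satisfies $|\nabla F(y_0)|\leq 2\sqrt{c_0 F(y_0)}$, so a vertical separation $O(\eps|\log\eps|)$ forces a tilt $O(\eps^{1/2}|\log\eps|^{1/2})$; after that, every one of the five estimates is a product (separation)$\times$(tilt) $=O(\eps^{3/2}|\log\eps|^{3/2})$ plus quadratic Taylor/curvature remainders $O(\eps^2|\log\eps|^2)$, which are lower order. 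Your one-variable proof of the tilt inequality is correct (it only needs $F\geq 0$ on a ball of radius $\sim\sqrt{\eps|\log\eps|}$ around $y_0$, which is available), and your symmetrized expansion for the second estimate, giving the remainder $\langle d_m\nu_m-d_\ell\nu_\ell,\nu_m-\nu_\ell\rangle$, is exactly the cancellation needed. For the last estimate you can in fact do better than you claim: $1-\nabla d_\ell\cdot\nabla d_m=\tfrac12|\nabla d_\ell-\nabla d_m|^2$, so the tilt bound already yields $O(\eps|\log\eps|)$, comfortably inside $\eps^{1/2}|\log\eps|^{3/2}$, with no delicate ``absorption'' of logarithms required.

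Two caveats, neither fatal. First, you argue in the Euclidean picture ($X=\Pi_\ell(X)+d_\ell(X)\nu_\ell$, Taylor expansion of $d_m$ with bounded Hessian); in the Riemannian setting these identities acquire corrections from the Christoffel symbols, the exponential map, and the Riccati evolution of the parallel hypersurfaces, but under \eqref{eq:sheets.sff.bound}--\eqref{eq:sheets.metric.bound} and \eqref{eq:sheets.graph.apriori.C2.bounds} all of these enter at order $O(\eps^2|\log\eps|^2)$ over the length scale $K\eps|\log\eps|$, so your bookkeeping survives; a complete write-up should say this explicitly. Second, be careful with the third displayed estimate as printed: with this paper's convention that every $d_m$ is positive above $\Gamma_m$ and negative below it, the natural expansion you propose produces $|d_m(X)-d_\ell(X)-d_m(\Pi_\ell(X))|\leq C\eps^{3/2}|\log\eps|^{3/2}$, and indeed the version with the ``$+$'' fails already for two parallel planes (it equals twice the separation, i.e.\ $O(\eps|\log\eps|)$), while the fourth estimate, its $\ell\leftrightarrow m$ mirror, is stated with the minus sign. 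So this is a sign-convention/transcription artifact of the quoted statement rather than a gap in your method; just state clearly which signed version your expansion establishes.
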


Recall the definition of $\phi$ in \eqref{eq:discrepancy.function}. Wang--Wei compute \cite[(9.4)]{WangWei} in Fermi coordinates with respect to $\Gamma_\ell$ that
\begin{align}\label{eq:phi-eqn-from-WW}
& \eps^{2}( \Delta_{\Gamma_{\ell,z}} \phi + H_{\Gamma_{\ell,z}}\partial_{z} \phi + \partial^{2}_{z}\phi) \\
& = W''(U[\mathbf{h}]) \phi +\cR(\phi) + \left(W'(U[\mathbf{h}]) - \sum_{m=1}^{Q} W'(\overline{\mathbb{H}}_{\eps,m}) \right)  \nonumber \\
& \qquad + \eps^2 (\Delta_{\Gamma_{\ell,z}} h_{\ell} - H_{\Gamma_{\ell,z}}) \partial_z \overline{\mathbb{H}}_{\eps,\ell} - \eps^2 |\nabla_{\Gamma_{\ell,z}} h_{\ell}|^{2} \partial_z^2 \overline{\mathbb{H}}_{\eps,\ell}   \nonumber\\
& \qquad + \sum_{m\not = \ell} \big( \eps \cR_{m,1} ((Z_{\Gamma_m})_* \partial_z)  \overline{\mathbb{H}}_{\eps,m} - \eps^2 \cR_{m,2} ((Z_{\Gamma_m})_* \partial_z)^2 \overline{\mathbb{H}}_{\eps,m} \big) - \sum_{m=1}^{Q} \overline{\xi}((-1)^{m-1}\eps^{-1}(d_{m} - h_{m} \circ \Pi_{m})).\nonumber
\end{align}
Note the slight difference in signs relative to \cite[(9.4)]{WangWei}, which arise from our different sign convention on the mean curvature and our choice to avoid introducing extraneous notation for $(Z_{\Gamma_m})_* \partial_z$ derivatives of $\overline{\mathbb{H}}_{\eps,m}$ that introduce factors of $(-1)^m$ for $m = \ell$ or $m \neq \ell$ (cf. $g'_\alpha$, $g''_\alpha$ in \cite[Section 9]{WangWei}). Above, we have written
\begin{align*}
\cR(\phi) & \triangleq W'(U[\mathbf{h}]+\phi)-W'(U[\mathbf{h}])-W''(U[\mathbf{h}]) \phi = O(\phi^{2})\\
((Z_{\Gamma_m})^* \cR_{m,1})(y',z') & \triangleq \eps (\Delta_{\Gamma_{m,z'}} h_{m}(y') - H_{\Gamma_{m,z'}}(y')) \\
((Z_{\Gamma_m})^* \cR_{m,2})(y',z') & \triangleq |\nabla_{\Gamma_{m,z'}}h_{m}(y')|^{2}
\end{align*}
as well as (cf.\ \eqref{eq:approximate.heteroclinic.behavior})
\begin{equation}\label{eq:app.xi.defn.error.barH}
\overline{\xi}(t) \triangleq \overline{\mathbb{H}}''(t)-W'(\overline{\mathbb{H}}(t)).
\end{equation}

It is useful to remember that the terms involving $\cR_{m,1}$, $\cR_{m,2}$ in \eqref{eq:phi-eqn-from-WW} vanish when $d_{m} > 6 \eps |\log \eps|$. 

\begin{lemm}[{cf.\ \cite[Lemma A.1]{WangWei}}]\label{lemm:app.sheets.interaction.1d}
	For $\kappa>0$, we have
	\[ 
	\int_{-\infty}^{\infty} (W''(\mathbb{H}(t)) - 2) \mathbb{H}'(t-T) \mathbb{H}'(t) \, dt = -4\sqrt{2}(\expansioncoeff)^{2}e^{-\sqrt{2}T} + O(e^{-2(1-\kappa)\sqrt{2}T})
	\]
	as $T\to\infty$. 
\end{lemm}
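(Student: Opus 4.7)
The plan is to isolate the two regions where the integrand $(W''(\mathbb{H}(t))-2)\mathbb{H}'(t-T)\mathbb{H}'(t)$ is non-negligible: the region $t$ is moderate (where $\mathbb{H}'(t)$ is bounded away from zero but $\mathbb{H}'(t-T)$ is exponentially small in $T$), and the region $t$ is close to $T$ (where the roles are reversed). Recall $\mathbb{H}$ is odd and hence $\mathbb{H}'$ is even, so $\mathbb{H}'(t-T) = \mathbb{H}'(T-t)$.

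Pick a small $\kappa' > 0$ (to be chosen in terms of $\kappa$) and split the integral at $t = (1-\kappa')T$. In the region $t \leq (1-\kappa')T$ we have $T-t \geq \kappa' T$, so the expansion \eqref{eq:heteroclinic.expansion.ii} gives
\[
\mathbb{H}'(t-T) = \sqrt{2}\expansioncoeff e^{-\sqrt{2}(T-t)} + O(e^{-2\sqrt{2}(T-t)}).
\]
Substituting and extending the $t$-integral to all of $\mathbb{R}$ (the tail $t > (1-\kappa')T$ contributes $O(e^{-\sqrt{2}(2-\kappa')T})$ after multiplication by $e^{-\sqrt{2}T}$), this region contributes
\[
\sqrt{2}\expansioncoeff e^{-\sqrt{2}T} \cdot C + O(e^{-2(1-\kappa)\sqrt{2}T}), \quad C \triangleq \int_{-\infty}^\infty (W''(\mathbb{H}(t))-2)\mathbb{H}'(t)\, e^{\sqrt{2}t}\, dt.
\]
In the region $t > (1-\kappa')T$, the factor $(W''(\mathbb{H}(t))-2)\mathbb{H}'(t) = O(e^{-2\sqrt{2}t})$ (by \eqref{eq:heteroclinic.expansion.i}--\eqref{eq:heteroclinic.expansion.ii}) and $\mathbb{H}'(t-T)$ is bounded, so this region contributes $O(e^{-2(1-\kappa')\sqrt{2}T})$.

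It remains to evaluate $C$. The key observation is that differentiating the ODE $\mathbb{H}'' = W'(\mathbb{H})$ yields $\mathbb{H}''' = W''(\mathbb{H})\mathbb{H}'$, so $(W''(\mathbb{H})-2)\mathbb{H}' = \mathbb{H}''' - 2\mathbb{H}'$ and
\[
C = \int_{-\infty}^\infty (\mathbb{H}'''(t) - 2\mathbb{H}'(t))\, e^{\sqrt{2}t}\, dt.
\]
I will compute this by integrating $\int \mathbb{H}''' e^{\sqrt{2}t}$ by parts twice. Using the asymptotics \eqref{eq:heteroclinic.expansion.ii}--\eqref{eq:heteroclinic.expansion.iii} (along with their $t\to-\infty$ counterparts following from the odd parity of $\mathbb{H}$), the boundary contributions at $+\infty$ are the finite limits $[\mathbb{H}''(t)e^{\sqrt{2}t}]_{+\infty} = -2\expansioncoeff$ and $[\mathbb{H}'(t)e^{\sqrt{2}t}]_{+\infty} = \sqrt{2}\expansioncoeff$, while at $-\infty$ both limits vanish (the $e^{\sqrt{2}t}$ factor defeats the $e^{\sqrt{2}t}$ growth of $\mathbb{H}',\mathbb{H}''$ there). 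A straightforward calculation then gives
\[
\int \mathbb{H}''' e^{\sqrt{2}t}\, dt = -2\expansioncoeff - \sqrt{2}\bigl(\sqrt{2}\expansioncoeff - \sqrt{2}\int \mathbb{H}' e^{\sqrt{2}t}\, dt\bigr) = -4\expansioncoeff + 2\int \mathbb{H}' e^{\sqrt{2}t}\, dt,
\]
so that $C = -4\expansioncoeff$ and the leading coefficient becomes $\sqrt{2}\expansioncoeff \cdot C = -4\sqrt{2}(\expansioncoeff)^2$, as required.

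The only mildly subtle step is bookkeeping the error: one must verify that the boundary contributions at $-\infty$ really vanish (this uses $\mathbb{H}'', \mathbb{H}' = O(e^{\sqrt{2}t})$ as $t\to-\infty$, giving $O(e^{2\sqrt{2}t})\to 0$) and that the replacement of $\mathbb{H}'(t-T)$ by its asymptotic introduces only $O(e^{-2(1-\kappa)\sqrt{2}T})$ error after integration. Both are routine once the split point $(1-\kappa')T$ is chosen with $\kappa'$ small relative to $\kappa$.
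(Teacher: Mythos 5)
Your argument is essentially the paper's own: add and subtract the leading exponential asymptotic of $\mathbb{H}'(t-T)$, reduce the coefficient to $\int_{-\infty}^{\infty}(W''(\mathbb{H}(t))-2)e^{\sqrt{2}t}\mathbb{H}'(t)\,dt=-4\expansioncoeff$ via $\mathbb{H}'''=W''(\mathbb{H})\mathbb{H}'$ and two integrations by parts, and control the error by splitting at a point proportional to $T$; your estimates in both regions are correct. One presentational caveat: the separate integrals $\int\mathbb{H}'''e^{\sqrt{2}t}\,dt$ and $\int\mathbb{H}'e^{\sqrt{2}t}\,dt$ are each divergent at $+\infty$ (their integrands tend to the nonzero constants $2\sqrt{2}\expansioncoeff$ and $\sqrt{2}\expansioncoeff$), so the integration by parts should be performed on $[-L,L]$ with the two pieces combined \emph{before} letting $L\to\infty$ — exactly as in the paper — at which stage the divergent contributions cancel and the boundary terms give $-2\expansioncoeff-2\expansioncoeff=-4\expansioncoeff$ as you computed.
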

\begin{proof}
	 Let's denote the left hand side as $I(T)$. Recall that $W(\pm1) = 2$. We can rewrite:
	\begin{align*}
		I(T) 
			& = \sqrt{2} \expansioncoeff e^{-\sqrt{2}T} \int_{-\infty}^{\infty} (W''(\mathbb{H}(t)) - 2) e^{\sqrt{2}t} \mathbb{H}'(t) \, dt \\
			& \qquad + \int_{-\infty}^{\infty} (W''(\mathbb{H}(t)) - 2) (\mathbb{H}'(t-T) - \sqrt{2}\expansioncoeff e^{\sqrt{2}(t-T)}) \mathbb{H}'(t) \, dt\\
			& = \sqrt{2}\expansioncoeff e^{-\sqrt{2}T} \int_{-\infty}^{\infty} (W''(\mathbb{H}(t)) - 2) e^{\sqrt{2}t} \mathbb{H}'(t) \, dt + O(1) \int_{-\infty}^\infty \Big| \mathbb{H}'(t-T) - \sqrt{2} \expansioncoeff e^{\sqrt{2}(t-T)} \Big| \mathbb{H}'(t)^2 \, dt,
	\end{align*}
	where we've used \eqref{eq:heteroclinic.expansion.i}-\eqref{eq:heteroclinic.expansion.ii} in the last step.
	
	We can directly evaluate the first integral by writing $W''(\mathbb{H}(t))\mathbb{H}'(t)=\mathbb{H}'''(t)$ and integrating by parts:
	\begin{align*}
		& \int_{-L}^{L} W''(\mathbb{H}(t)) e^{\sqrt{2}t} \mathbb{H}'(t) dt = \int_{-L}^{L}  \mathbb{H}'''(t) e^{\sqrt{2}t} dt\\
		& = \mathbb{H}''(L) e^{\sqrt{2}L} - \mathbb{H}''(-L) e^{-\sqrt{2}L} - \sqrt{2} \int_{-L}^{L}  \mathbb{H}''(t) e^{\sqrt{2}t} dt\\
		& = \mathbb{H}''(L) e^{\sqrt{2}L} - \mathbb{H}''(-L) e^{-\sqrt{2}L} - \sqrt{2} \mathbb{H}'(L)e^{\sqrt{2}L} + \sqrt{2}\mathbb{H}'(-L) e^{-\sqrt{2}L}+ 2 \int_{-L}^{L}  \mathbb{H}'(t) e^{\sqrt{2}t} dt.
	\end{align*}
	Recalling  \eqref{eq:heteroclinic.expansion.ii}-\eqref{eq:heteroclinic.expansion.iii}, sending $L \to \infty$ gives
	\[
	\int_{-\infty}^{\infty} \left(W''(\mathbb{H}(t)) - 2  \right) e^{\sqrt{2}t} \mathbb{H}'(t) \, dt = -4 \expansioncoeff.
	\]
	Plugging this into the expression for $I(T)$, we have:
	\begin{equation} \label{eq:ww.lemm.a.1.IofT}
		I(T) = -4 \sqrt{2} (\expansioncoeff)^2 e^{-\sqrt{2} T} + O(1) \int_{-\infty}^{\infty} (\mathbb{H}'(t-T) - \sqrt{2}\expansioncoeff e^{\sqrt{2}(t-T)}) \mathbb{H}'(t)^2 \, dt.
	\end{equation}
	It remains to show that the last integral is $O(e^{-2(1-\kappa)\sqrt{2}T})$. Let $\alpha \in (0, 1)$ be fixed. When $t \in (-\infty, \alpha T)$,
	\[ \mathbb{H}'(t-T) - \sqrt{2} \expansioncoeff e^{\sqrt{2}(t-T)} = O(e^{2\sqrt{2}(t-T)}), \]
	by \eqref{eq:heteroclinic.expansion.ii}, so
	\begin{equation} \label{eq:ww.lemm.a.1.IofT.i}
		\int_{-\infty}^{\alpha T} (\mathbb{H}'(t-T) - \sqrt{2} \expansioncoeff e^{\sqrt{2}(t-T)}) \mathbb{H}'(t)^2 \, dt = O(T e^{-2\sqrt{2} T}).
	\end{equation}
	To bound the integral over $[\alpha T, \infty)$, it suffices to observe the following bound on its dominant term:
	\begin{multline} \label{eq:ww.lemm.a.1.IofT.ii}
		\int_{\alpha T}^\infty e^{\sqrt{2}(t-T)} \mathbb{H}'(t)^2 \, dt = O(1) \int_{\alpha T}^\infty e^{-\sqrt{2}(t+T)} \, dt = O(e^{-\sqrt{2}(1+\alpha)T}) \\
		\implies \int_{\alpha T}^\infty (\mathbb{H}'(t-T) - \sqrt{2} \expansioncoeff e^{\sqrt{2}(t-T)}) \mathbb{H}'(t)^2 \, dt = O(e^{-\sqrt{2}(1+\alpha)T}).
	\end{multline}
	The result follows by plugging \eqref{eq:ww.lemm.a.1.IofT.i}, \eqref{eq:ww.lemm.a.1.IofT.ii} into \eqref{eq:ww.lemm.a.1.IofT}.
\end{proof}

\section{Proof of Lemma \ref{lemm:h.phi.comparison.improved}}


\label{app:proof.lem.comp.improved}

We follow the proof of \cite[Lemma 9.6]{WangWei}, using Lemma \ref{lemm:stationary.estimates} to gain improved estimates on the error terms. We continue to use the notation of Appendix \ref{app:WW-results}.

Fix $\ell\in\{1,\dots, Q\}$, $y \in \Gamma_\ell(\tfrac{8}{10})$. In what follows we work in Fermi coordinates with respect to $\Gamma_\ell$. Because $u(y) = 0$, we have
\begin{equation} \label{eq:lemm.2.8.phi}
	\phi(y,0) = -\overline{\mathbb{H}}((-1)^{\ell-1}\eps^{-1} h_{\ell}(y)) - \sum_{m\not = \ell} \left(  \overline{\mathbb{H}}_{\eps,m}(y, 0) \pm (-1)^{m-1} \right),
\end{equation}
where the ``$\pm$'' is a ``$-$'' for $m<\ell$ and ``$+$'' for $m>\ell$. This implies the first inequality immediately, using the fact that $|\mathbb{H}(\eps^{-1}h_{\ell}(y))| \simeq \eps^{-1}|h_{\ell}(y)|$ by a Taylor expansion.

Differentiating \eqref{eq:lemm.2.8.phi} once with respect to $y$, we find that (recalling \eqref{eq:approximate.critical.point.model})
\begin{align*}
\eps \nabla_{\Gamma_{\ell}} (\phi|_{\Gamma_\ell})(y, 0) 
	& = - (-1)^{\ell-1} \overline{\mathbb{H}}'((-1)^{\ell-1}\eps^{-1}h_{\ell}(y)) \nabla_{\Gamma_{\ell}}h_{\ell}(y) \\
	& \qquad - \eps \sum_{m\not = \ell} \partial_z ((Z_{\Gamma_m})^* \overline{\mathbb{H}}_{\eps,m}) (y,0) (\nabla_{\Gamma_{\ell}} d_{m}(y,0) - \nabla_{\Gamma_{\ell}} (h_{m} \circ \Pi_{m})(y,0)).
\end{align*}
Define
\[
\cI_{\ell} \triangleq \{ m \in \{1,\dots, Q\} \; : \; m\not =  \ell, \; d_{m}(y,0) \leq K \eps |\log \eps|\}
\]
for $K > 6$ fixed. Then, the exponential decay of $\overline{\mathbb{H}}'$ (and definition of $\overline{\mathbb{H}}$) gives
\begin{align*}
|\nabla_{\Gamma_{\ell}} h_{\ell}(y)| & \leq c \left( \eps |\nabla_{\Gamma_{\ell}} (\phi|_{\Gamma_{\ell}})(y)| + \sup_{m \in \cI_{\ell}(y)} \left( |\nabla_{\Gamma_{\ell}} d_{m}(y,0)| + |\nabla_{\Gamma_{\ell}}(h_{m} \circ \Pi_{m})(y,0)|  \right) \exp(-\sqrt{2} \eps^{-1} D_{\ell}(y))  \right)\\
& \leq c \left( \eps |\nabla_{\Gamma_{\ell}} (\phi|_{\Gamma_{\ell}})(y)| + \eps^{\kappa} \exp(-\sqrt{2} \eps^{-1} D_{\ell}(y))  \right)
\end{align*}
We have used Lemma \ref{lemm:WW.tilting.comparison.d} to bound the first term in the supremum and the bounds from Lemmas \ref{lemm:h.phi.comparison} and \ref{lemm:stationary.estimates} to bound the second term (note that in the proof of Lemma \ref{lemm:h.phi.comparison}, the second term was simply bounded by $o(1)$ since at that point Lemma \ref{lemm:stationary.estimates} was not available). 

Differentiating \eqref{eq:lemm.2.8.phi} again, we find
\begin{align*}
& \eps^{2} \nabla_{\Gamma_{\ell}}^{2} (\phi|_{\Gamma_\ell})(y,0)\\
& = - \eps (-1)^{\ell-1}  \overline{\mathbb{H}}'((-1)^{\ell-1}\eps^{-1}h_{\ell}(y)) \nabla_{\Gamma_{\ell}}^{2}h_{\ell}(y) - \overline{\mathbb{H}}''((-1)^{\ell-1}\eps^{-1}h_{\ell}(y)) \nabla_{\Gamma_{\ell}}h_{\ell}(y) \otimes \nabla_{\Gamma_{\ell}}h_{\ell}(y) \\
& - {\eps^2} \sum_{m\not = \ell}  \partial_z^2 ((Z_{\Gamma_m})^* \overline{\mathbb{H}}_{\eps,m}) (y,0) \\
& \qquad \cdot (\nabla_{\Gamma_{\ell}} d_{m}(y,0) - \nabla_{\Gamma_{\ell}} (h_{m} \circ \Pi_{m})(y,0))) \otimes (\nabla_{\Gamma_{\ell}} d_{m}(y,0) - \nabla_{\Gamma_{\ell}} (h_{m} \circ \Pi_{m})(y,0))) \\
& - {\eps^2} \sum_{m\not = \ell}  \partial_z ((Z_{\Gamma_m})^*  \overline{\mathbb{H}}_{\eps,m}) (y,0) (\nabla_{\Gamma_{\ell}}^{2} d_{m}(y,0) - \nabla_{\Gamma_{\ell}}^{2} (h_{m} \circ \Pi_{m})(y,0))).
\end{align*}
Because $\Gamma_{\ell}$, $\Gamma_m$ have bounded second fundamental form by \eqref{eq:sheets.enhanced.sff.bound}, \eqref{eq:mean.curv.ddt.sff} shows that 
\[
|\nabla_{\Gamma_{\ell}}^{2} d_{m}(y,0)| \leq c, \; m \in \cI_\ell.
\]
Thus, we find that, as claimed:
\begin{align*}
\eps |\nabla^{2}_{\Gamma_{\ell}}h(y)| & \leq c \left( \eps^{2}|\nabla^{2}_{\Gamma_{\ell}}(\phi|_{\Gamma_{\ell}})(y)| + \eps^{2}|\nabla_{\Gamma_{\ell}}(\phi|_{\Gamma_{\ell}})(y)|^{2} + \eps^{\kappa} \exp(-\sqrt{2}\eps^{-1}D_{\ell}(y)) \right).
\end{align*}

The H\"older estimate follows similarly, with one important change: we do not know (at this point) that $[\sff_{\Gamma_{\ell}}]_{\theta}$ is uniformly bounded, and thus cannot conclude that $[\nabla_{\Gamma_{\ell}}^{2} d_{m}(y,0)]_{\theta} \leq c$. Instead we use \eqref{eq:sheets.enhanced.sff.bound} and \eqref{eq:sheets.enhanced.sff.grad.bound} in conjunction with \eqref{eq:mean.curv.ddt.sff} and \eqref{eq:mean.curv.ddt.gradsff} to conclude that 
\[
\eps^{\theta}[\nabla^{2} d_{m}(y,0)]_{\theta} \leq c, \; m \in \cI_\ell.
\]
This, combined with the factor of $\eps$ in front of the last line suffices to complete the H\"older estimate.

\section{Proof of \eqref{eq:toda.stability.estimate.sharper}}


\label{app:proof.stab.inproved}

We follow \cite[Section 19]{WangWei}, except we keep track of how the error terms improve upon strengthened sheet separation estimates, as well as keeping track of the constant in front of the main term on the right hand side of the stability inequality. We assume that $\ell \in \{2,\dots, Q-1\}$, i.e., there are sheets above and below $\Gamma_{\ell}$ (when $\ell = 1$ or $Q$, the argument is similar). Similarly, we can assume that
\begin{equation} \label{eq:app.improved.stability.parity.assumption}
	(-1)^{\ell-1} = 1.
\end{equation}

Here, and throughout this appendix, we will write  $\cE_\zeta$ for any term that is bounded as follows:
\begin{equation} \label{eq:app.improved.stability.Eeps}
		|\cE_\zeta| \leq c' \varepsilon^2 +  c' \sum_{m=1}^Q \sup\left\{\exp(-\sqrt{2} ( 1+\kappa) \eps^{-1}D_m(y')) : y' \in \Gamma_m \cap \Pi_\ell^{-1}(B^{n-1}_{2K\eps |\log \eps|}(\support \zeta)) \right\},
\end{equation}
for some $\kappa > 0$ fixed throughout sufficiently small. We emphasize that the constant $c'$ is uniform in $\eps$ sufficiently small. Here, $\zeta$ is just the test function from the statement of \eqref{eq:toda.stability.estimate.sharper}.

We emphasize that Lemma \ref{lemm:stationary.estimates} holds, so by \eqref{eq:phi.c2a.estimate.full} and Lemma \ref{lemm:h.phi.comparison}, we have
\begin{multline}\label{eq:phi.est.stab.proof}
	\sum_{m=1}^Q \Vert \phi\Vert_{C^{2,\theta}_{\eps}(\mathcal{M}_{m}(r))} + \eps \Vert \Delta_{\Gamma_m} h_{m} - H_{\Gamma_{m}} \Vert_{C^{0,\theta}_\eps(\Gamma_m(r))}  + \eps^{-1} \Vert h_{m}\Vert_{C^{2,\theta}_{\eps}(\Gamma_{m}(r))} \\
	\leq c' \varepsilon^2 + c' \sum_{m=1}^Q A_m(r+K\varepsilon |\log \varepsilon|)  \leq c' \eps,
\end{multline}
and the improved estimate on the tangential derivatives of $\phi$ from \eqref{eq:phi.improved.c2a.estimate.full}, which we will write as
\begin{multline}\label{eq:phi.improved.c2a.estimate.full.app.ver}
\varepsilon \Vert (Z_{\Gamma_\ell})_* \partial_{y_i} \phi \Vert_{C^{1,\theta}_\eps(\cM_\ell(r))} \\
	\leq c' \varepsilon^2 + c' \sum_{m=1}^Q A_m(r + 2K \varepsilon |\log \varepsilon|)^{1+\kappa}  + c' \varepsilon^\kappa \sum_{m=1}^Q A_m(r+2K \varepsilon |\log \varepsilon|) \leq c'\eps^{1+\kappa}.
\end{multline}
In fact, we will often use the localized version of \eqref{eq:phi.est.stab.proof}, \eqref{eq:phi.improved.c2a.estimate.full.app.ver} on $\cM_\ell(1) \cap \Pi_\ell^{-1}(\support \zeta)$:
\begin{equation} \label{eq:app.improved.stability.phi.bounds}
	\Vert \phi \Vert_{C^{2,\theta}_\epsilon} + \eps \Vert \Delta_{\Gamma_\ell} h_\ell - H_{\Gamma_\ell} \Vert_{C^{0,\theta}_\eps} + \eps^{-1} \Vert h_\ell \Vert_{C^{2,\theta}_\eps} \leq c' \eps^2 + c' \sum_{m=1}^{Q} \sup \left\{ \exp(-\sqrt{2} \eps^{-1} D_m(\cdot)) \right\},
\end{equation}
\begin{equation} \label{eq:app.improved.stability.phi.bounds.2}
	\eps \Vert (Z_{\Gamma_\ell})_* \partial_{y_i} \phi \Vert_{C^{1,\theta}_\eps} \leq O(|\cE_{\zeta}|);
\end{equation}
the H\"older norms are taken over $\cM_\ell(1) \cap \Pi_\ell^{-1}(\support \zeta)$, $\Gamma_\ell(1) \cap \Pi_\ell^{-1}(\support \zeta)$, $\Gamma_\ell(1) \cap \Pi_\ell^{-1}(\support \zeta)$ for \eqref{eq:app.improved.stability.phi.bounds} and over $\cM_\ell(1) \cap \Pi_\ell^{-1}(\support \zeta)$ for \eqref{eq:app.improved.stability.phi.bounds.2}, and the $\sup$ is over $\cM_m(1) \cap \Pi_\ell^{-1}(B_{2K\eps |\log \eps|}^{n-1}(\support \zeta))$. Note how \eqref{eq:app.improved.stability.phi.bounds}, \eqref{eq:app.improved.stability.phi.bounds.2} imply \eqref{eq:phi.est.stab.proof}, \eqref{eq:phi.improved.c2a.estimate.full.app.ver} by Lemma \ref{lemm:stationary.estimates}.

We will write $\overline{\mathbb{H}}$ for $\overline{\mathbb{H}}^{3|\log\eps|}$ throughout this appendix, where $\overline{\mathbb{H}}^{3|\log \eps|}$ is as in \eqref{eq:HLambda-cutoff-def} with $\Lambda = 3 |\log \eps|$. We also recall the definition of $\overline{\xi}$ from \eqref{eq:app.xi.defn.error.barH}. We then define the following functions by their expression in $\Gamma_{m}$ Fermi coordinates:
\begin{align*}
	((Z_{\Gamma_m})^* \overline{\mathbb{H}}_{\eps,m})(y,z) & \triangleq \overline{\mathbb{H}}((-1)^{m-1}\eps^{-1}(z-h_{m}(y))), \\
	((Z_{\Gamma_{m}})^* \overline{\xi}_{\eps,m})(y,z) & \triangleq \overline{\xi}((-1)^{m-1}\eps^{-1}(z-h_{m}(y))).
\end{align*}
Recall that $Z_{\Gamma_{m}}(y,z)$ is the point $(y,z)$ in Fermi coordinates over $\Gamma_{m}$ (see the definition after \eqref{eq:sheets.constants}), and that $g = dz^2 + g_z$ in Fermi coordinates. Recall also \eqref{eq:app.xi.defn.error.barH}, \eqref{eq:approximate.heteroclinic.behavior}.

Choose functions $\rho^{\pm}_{\ell}(y) = \frac 12 f_{\ell,\ell\pm1}(y)$, where we recall that $\Gamma_{\ell\pm1}$ is the normal graph of $f_{\ell,\ell\pm1}$ over $\Gamma_{\ell}$. Note that $\rho^{\pm}_{\ell}$ is thus uniformly bounded in $C^1(\Gamma_{\ell}(\tfrac{8}{10}))$ by \eqref{eq:sheets.enhanced.sff.bound}-\eqref{eq:sheets.enhanced.sff.grad.bound}. We consider a vertical cutoff function $\chi(y,z)$ defined by
\[ \chi(y,z) \triangleq \tilde \chi \left( \eps^{-1}L^{-1}(z-\rho_{\ell}^{+}(y)) \right)\tilde \chi \left( \eps^{-1}L^{-1}(\rho_{\ell}^{-}(y)-z) \right) \]
where $\tilde\chi$ is a smooth function with $\tilde\chi(t) = 1$ for  $t \in (-\infty,-1)$ and $\supp\tilde\chi \subset (-\infty,0)$. We will fix $L>0$ sufficiently large (independent of $\eps>0$ small) below. Note that 
\begin{equation} \label{eq:app.stab.chi.bds}
	\eps L |\nabla \chi| \leq c
\end{equation}
and for fixed $y$,  
\begin{equation}\label{eq:chi-z-der-length}
\supp |\partial_{z}\chi(y,\cdot)| \subset 
[\rho^{-}_{\ell}(y),\rho^{-}_{\ell}(y) + \eps L] \cup [\rho^{+}_{\ell}(y)-\eps L,\rho^{+}_{\ell}(y)].
\end{equation}

We will frequently use the observation that on $\supp |\nabla \chi| \cap \{\pm z > 0\}$, 
\begin{equation} \label{eq:stab.calc.spt.chi.H.der.est}
\eps^k |\partial_z^k \overline{\mathbb{H}}_{\eps,\ell}(y,z)| \leq c' \exp(-\sqrt{2}\eps^{-1} \rho^{\pm}_{\ell}(y)) \leq c' \eps^2 + c' \sup \left\{ \exp(-\tfrac12 \sqrt{2} \eps^{-1} D_\ell(y')) : y' \in B^{n-1}_{\eps |\log \eps|}(y) \right\},
\end{equation}
for integers $k\geq1$ (we used Lemma \ref{lemm:WW.tilting.comparison.d} in the last step), as well as the fact that on $\support \chi$ we have
\begin{equation} \label{eq:app.improved.stability.ddt.vol}
	|\partial_z d\mu_{g_z}| + |\partial_z^2 d\mu_{g_z}| = O(1) d\mu_{g_z},
\end{equation}
which follows from \eqref{eq:sheets.enhanced.sff.bound}, \eqref{eq:mean.curv.ddt.metric}, \eqref{eq:mean.curv.ddt.sff}, \eqref{eq:mean.curv.ddt.h}.

Moreover, we note for future reference that the following expression holds on $\supp \chi$:
\begin{equation*}
	u = \overline{\mathbb{H}}_{\eps,\ell} + \phi + \sum_{m<\ell} \left(\overline{\mathbb{H}}_{\eps,m} - (-1)^{m-1}\right) + \sum_{m>\ell} \left(\overline{\mathbb{H}}_{\eps,m} + (-1)^{m-1}\right) = \overline{\mathbb{H}}_{\eps,\ell} + \phi + \sum_{m \neq \ell} O(\eps (\partial_z \overline{\mathbb{H}}_{\eps,m})),
\end{equation*}
so 
\begin{equation}\label{eq:app.stab.Wpp.u}
W''(u) = W''(\overline{\mathbb{H}}_{\eps,\ell}) + O(\phi) + \sum_{m\not = \ell}  O(\eps (\partial_z \overline{\mathbb{H}}_{\eps,m})).
\end{equation}

Let us set 
$\varphi(y,z) \triangleq \zeta(y) \chi(y,z) (\partial_z \overline{\mathbb{H}}_{\eps,\ell}(y,z)).$ Because $u$ is stable, 
\[ \int_{C_{8/10}(0)} (\eps |\nabla \varphi|^{2} + \eps^{-1} W''(u) \varphi^{2}) \, d\mu_g \geq 0. \]
We will write this integral in Fermi coordinates over $\Gamma_{\ell}$ and expand using the choice of $\varphi$. Note that 
\[ |\nabla \varphi|^{2} = (\partial_{z}\varphi)^{2} + |\nabla_{\Gamma_{\ell,z}} \varphi|^{2}. \]
We begin with the contribution of the vertical derivative, $\partial_{z}\varphi = \zeta (\partial_{z}\chi) (\partial_z \overline{\mathbb{H}}_{\eps,\ell}) + \zeta \chi (\partial_z^2 \overline{\mathbb{H}}_{\eps,\ell})$, to stability:
\begin{align*}
	& \int_{-\eta}^{\eta} \int_{\Gamma_{\ell,z}} \eps (\partial_{z} \varphi)^{2} \, d\mu_{g_{z}} \, dz \\
	& = \eps \int_{-\eta}^{\eta} \int_{\Gamma_{\ell,z}} \zeta^{2} \chi^{2} (\partial_z^2 \overline{\mathbb{H}}_{\eps,\ell})^{2} \, d\mu_{g_{z}} \, dz  + \eps \int_{-\eta}^{\eta} \int_{\Gamma_{\ell,z}} \zeta^{2} (\partial_{z}\chi)^{2} (\partial_z \overline{\mathbb{H}}_{\eps,\ell})^{2} \, d\mu_{g_{z}} \, dz \\
	& \qquad + 2 \eps \int_{-\eta}^{\eta} \int_{\Gamma_{\ell,z}} \zeta^{2} (\partial_{z}\chi)\chi (\partial_z \overline{\mathbb{H}}_{\eps,\ell}) (\partial_z^2 \overline{\mathbb{H}}_{\eps,\ell}) \, d\mu_{g_{z}} \, dz \\
	& = - \eps^{-1} \int_{-\eta}^{\eta} \int_{\Gamma_{\ell,z}} {\zeta}^2 \chi^2 W''(\overline{\mathbb{H}}_{\eps,\ell}) (\partial_z \overline{\mathbb{H}}_{\eps,\ell})^2 \, d\mu_{g_z} \, dz + \eps \int_{-\eta}^{\eta} \int_{\Gamma_{\ell,z}} {\zeta}^{2} (\partial_{z}\chi)^{2} (\partial_z \overline{\mathbb{H}}_{\eps,\ell})^{2} \, d\mu_{g_{z}} \, dz \\
	& \qquad - \eps^{-1} \int_{-\eta}^{\eta} \int_{\Gamma_{\ell,z}} {\zeta}^2 \chi^2 {(\partial_{z} \overline{\xi}_{\eps,\ell})} (\partial_z \overline{\mathbb{H}}_{\eps,\ell}) \, d\mu_{g_z} \, dz  -  \eps \int_{-\eta}^{\eta} \int_{\Gamma_{\ell,z}} {\zeta}^2 \chi^2 (\partial_z \overline{\mathbb{H}}_{\eps,\ell})(\partial_z^2 \overline{\mathbb{H}}_{\eps,\ell}) \, (\partial_z d\mu_{g_z}) \, dz \\
	& = - \eps^{-1} \int_{-\eta}^{\eta} \int_{\Gamma_{\ell,z}} {\zeta}^2 \chi^2 W''(\overline{\mathbb{H}}_{\eps,\ell}) (\partial_z \overline{\mathbb{H}}_{\eps,\ell})^2 \, d\mu_{g_z} \, dz  + \eps \int_{-\eta}^{\eta} \int_{\Gamma_{\ell,z}} {\zeta}^{2} (\partial_{z}\chi)^{2} (\partial_z \overline{\mathbb{H}}_{\eps,\ell})^{2} \, d\mu_{g_{z}} \, dz \\
	& \qquad - \eps^{-1} \int_{-\eta}^{\eta} \int_{\Gamma_{\ell,z}} {\zeta}^2 \chi^2 {(\partial_{z} \overline{\xi}_{\eps,\ell})} (\partial_z \overline{\mathbb{H}}_{\eps,\ell}) \, d\mu_{g_z} \, dz + \eps \int_{-\eta}^{\eta} \int_{\Gamma_{\ell,z}} {\zeta}^2 \chi (\partial_z \chi) (\partial_z \overline{\mathbb{H}}_{\eps,\ell})^2 \, (\partial_z d\mu_{g_z}) \, dz \\
	& \qquad + \tfrac12 \eps \int_{-\eta}^{\eta} \int_{\Gamma_{\ell,z}} {\zeta}^2 \chi^{2} (\partial_z \overline{\mathbb{H}}_{\eps,\ell})^2 \, (\partial_z^2 d\mu_{g_z}) \, dz,
\end{align*}
where we integrated by parts on the final term after the first equality and the second equality. Using \eqref{eq:approximate.heteroclinic.behavior}, \eqref{eq:app.stab.chi.bds}, \eqref{eq:chi-z-der-length}, \eqref{eq:stab.calc.spt.chi.H.der.est}, \eqref{eq:app.improved.stability.ddt.vol}, we find that:
\begin{align*}
	& \int_{-\eta}^{\eta} \int_{\Gamma_{\ell,z}} \eps (\partial_{z} \varphi)^{2} \, d\mu_{g_{z}} \, dz \\
	& =  - \eps^{-1} \int_{-\eta}^{\eta} \int_{\Gamma_{\ell,z}} {\zeta}^{2} \chi^{2} W''(\overline{\mathbb{H}}_{\eps,\ell}) (\partial_z \overline{\mathbb{H}}_{\eps,\ell})^{2}  \, d\mu_{g_{z}} \, dz \\
	& \qquad + O(\eps^{-2} L^{-1} + \eps^{-1}) \int_{\Gamma_{\ell}} {\zeta}^{2} \left( \exp(-2\sqrt{2}\eps^{-1}\rho_{\ell}^{+}) + \exp(-2\sqrt{2}\eps^{-1}\rho_{\ell}^{-})\right) \, d\mu_{\Gamma_{\ell}}\\
	&   \qquad + O(1)  \int_{\Gamma_{\ell}} {\zeta}^{2} \, d\mu_{\Gamma_\ell}.
\end{align*}

We now turn to the second term. We use Cauchy--Schwartz to estimate the mixed terms with a factor of $L^{-1/2}$ and $L^{1/2}$ respectively, in the first inequality below. 
\begin{align*}
	& \int_{-\eta}^{\eta} \int_{\Gamma_{\ell,z}} \eps |\nabla_{\Gamma_{\ell,z}} \varphi|^{2} \, d\mu_{g_{z}} \, dz \\
	& \leq (1+O(L^{-\tfrac12})) \cdot \eps \int_{-\eta}^{\eta} \int_{\Gamma_{\ell,z}}\Big(  |\nabla_{\Gamma_{\ell,z}} {\zeta}|^{2} \chi^{2} (\partial_z \overline{\mathbb{H}}_{\eps,\ell})^{2} \Big) \, d\mu_{g_{z}} \, dz \\
	& \qquad + O(L^{\tfrac12}) \cdot \eps \int_{-\eta}^{\eta} \int_{\Gamma_{\ell,z}}\Big( {\zeta}^{2} |\nabla \chi|^{2} (\partial_z \overline{\mathbb{H}}_{\eps,\ell})^{2} + {\zeta}^{2} \chi^{2} (\partial_z^2 \overline{\mathbb{H}}_{\eps,\ell})^{2}|\nabla_{\Gamma_{\ell,z}}h_{\ell}|^{2}  \Big) \, d\mu_{g_{z}} \,  dz\\
	& = (1+O(L^{-\tfrac12})) \cdot \energyunit \int_{\Gamma_{\ell}}  |\nabla_{\Gamma_{\ell}} {\zeta}|^{2} \,  d\mu_{\Gamma_{\ell}} \\
	& \qquad + O(\eps^{-2} L^{-\tfrac12})  \int_{\Gamma_{\ell}} {\zeta}^{2} \left( \exp(-2\sqrt{2}\eps^{-1}\rho_{\ell}^{+}) + \exp(-2\sqrt{2}\eps^{-1}\rho_{\ell}^{-}) \right) \, d\mu_{\Gamma_{\ell}} \\
	& \qquad +  O(L^{\frac12}) \int_{\Gamma_{\ell}} {\zeta}^{2} \,  d\mu_{\Gamma_{\ell}}.
\end{align*}
We have used \eqref{eq:mean.curv.ddt.metric}, \eqref{eq:mean.curv.ddt.sff}, \eqref{eq:mean.curv.ddt.h}, \eqref{eq:app.stab.chi.bds}, \eqref{eq:chi-z-der-length},  \eqref{eq:stab.calc.spt.chi.H.der.est}, \eqref{eq:app.improved.stability.ddt.vol}.

Putting these two computations together and multiplying by $\eps^2$, the stability condition becomes 
\begin{align} \label{eq:stab.reduct.prelim}
	& (1+O(L^{-\tfrac12})) \cdot \eps^2 \energyunit \int_{\Gamma_{\ell}}  |\nabla_{\Gamma_{\ell}} \zeta|^{2} \, d\mu_{\Gamma} \\
	& \geq \eps \cdot \int_{-\eta}^{\eta} \int_{\Gamma_{\ell,z}} {\zeta}^{2} \chi^{2} ( W''(\overline{\mathbb{H}}_{\eps,\ell}) - W''(u)) (\partial_z \overline{\mathbb{H}}_{\eps,\ell})^{2} \,  d\mu_{g_{z}} \, dz \nonumber\\
	& \qquad + O(L^{-\tfrac12} + \eps)  \int_{\Gamma_{\ell}} {\zeta}^{2} \left( \exp(-2\sqrt{2}\eps^{-1}\rho_{\ell}^{+}) + \exp(-2\sqrt{2}\eps^{-1}\rho_{\ell}^{-}) \right) \, d\mu_{\Gamma_{\ell}} \nonumber\\
	& \qquad + O(L^{\frac 12}\eps^2) \int_{\Gamma_{\ell}} {\zeta}^2 \, d\mu_{\Gamma_\ell} \nonumber
\end{align}
The first term of the right hand side represents the interaction between the sheets, and requires further consideration. To this end, we rewrite \eqref{eq:phi-eqn-from-WW} slightly, using the definition of $\cR(\phi)$
\begin{align*}
	& \eps^{2}( \Delta_{\Gamma_{\ell,z}} \phi + H_{\Gamma_{\ell,z}}\partial_{z} \phi + \partial^{2}_{z}\phi) \\
	& = W'(u) - \sum_{m=1}^{Q} W'(\overline{\mathbb{H}}_{\eps,m}) + \eps^2  (\Delta_{\Gamma_{\ell,z}} h_{\ell} - H_{\Gamma_{\ell,z}}) (\partial_z \overline{\mathbb{H}}_{\eps,\ell}) - \eps^2 |\nabla_{\Gamma_{\ell,z}} h_{\ell}|^{2} (\partial_z^2 \overline{\mathbb{H}}_{\eps, \ell}) \\
	& \qquad + \sum_{m\not = \ell} \big( \eps \cR_{m,1} ((Z_{\Gamma_m})_* \partial_z) \overline{\mathbb{H}}_{\eps,m}) - \eps^2 \cR_{m,2} ((Z_{\Gamma_m})_* \partial_z)^2 \overline{\mathbb{H}}_{\eps,m}) \big) - \sum_{m=1}^{Q} \overline{\xi}_{\eps,m}
\end{align*}
and then differentiate this with respect to $z$ to obtain
\begin{align} \label{eq:app.diff.phi.eqn.z}
	& \eps^{2}( \partial_{z}\Delta_{\Gamma_{\ell,z}} \phi + \partial_{z}(H_{\Gamma_{\ell,z}}\partial_{z} \phi)+ \partial^{3}_{z}\phi) \\
	& = W''(u) (\partial_{z}\phi) + \left(W''(u)-W''(\overline{\mathbb{H}}_{\eps,\ell})\right) (\partial_z \overline{\mathbb{H}}_{\eps,\ell})  \nonumber \\
	& \qquad + \sum_{m\not = \ell} 
	\left(W''(u) - W''(\overline{\mathbb{H}}_{\eps,m})\right) \partial_{z}\overline{\mathbb{H}}_{\eps,m} \nonumber\\
	& \qquad + \eps^2  \partial_{z}\left((\Delta_{\Gamma_{\ell,z}} h_{\ell} - H_{\Gamma_{\ell,z}}) (\partial_z \overline{\mathbb{H}}_{\eps,\ell}) \right) - \eps^2 \partial_{z}\left(|\nabla_{\Gamma_{\ell,z}}h_{\ell}|^{2} (\partial_z^2 \overline{\mathbb{H}}_{\eps,\ell}) \right) \nonumber \\
	& \qquad + \sum_{m\not = \ell} \left( \eps \partial_{z} \big( \cR_{m,1} ((Z_{\Gamma_m})_* \partial_z) \overline{\mathbb{H}}_{\eps,m} \big) - \eps^2 \partial_{z} \big( \cR_{m,2} ((Z_{\Gamma_m})_* \partial_z)^2 \overline{\mathbb{H}}_{\eps,m} \big) \right) - \sum_{m=1}^{Q} \partial_{z} \overline{\xi}_{\eps,m}. \nonumber
\end{align}
We multiply this by ${\zeta}(y)^{2}\chi(y,z)^{2}(\partial_z \overline{\mathbb{H}}_{\eps,\ell}(y,z))$, integrate in $(y,z)$, and estimate each term. The first term on the left hand side of \eqref{eq:app.diff.phi.eqn.z} yields:
\begin{align} \label{eq:app.improved.stability.diff.phi.1}
& \int_{-\eta}^{\eta} \int_{\Gamma_{\ell,z}} \eps^{2} ( \partial_{z}\Delta_{\Gamma_{\ell,z}} \phi ) {\zeta}^{2}\chi^{2} (\partial_z \overline{\mathbb{H}}_{\eps,\ell}) \, d\mu_{g_{z}} \, dz \\
& = -\int_{-\eta}^{\eta} \int_{\Gamma_{\ell,z}} \eps^{2} (\Delta_{\Gamma_{\ell,z}}\phi) \partial_{z}({\zeta}^{2}\chi^{2} (\partial_z\overline{\mathbb{H}}_{\eps,\ell}) \, d\mu_{g_{z}}) \, dz \nonumber \\
& =  O(\eps^{-1} |\cE_{\zeta}|) \int _{\Gamma_{\ell}} \zeta^{2} \, d\mu_{\Gamma_{\ell}}. \nonumber
\end{align}
Here, we have bounded $\eps^{2}\Delta_{\Gamma_{\ell,z}}\phi$ by \eqref{eq:app.improved.stability.phi.bounds.2} and the remaining terms using \eqref{eq:sheets.enhanced.sff.bound}, \eqref{eq:mean.curv.ddt.metric}, \eqref{eq:mean.curv.ddt.h},  \eqref{eq:app.stab.chi.bds}, \eqref{eq:chi-z-der-length}, \eqref{eq:stab.calc.spt.chi.H.der.est}, \eqref{eq:app.improved.stability.ddt.vol}. Continuing on, the second term on the left hand side of \eqref{eq:app.diff.phi.eqn.z} can be estimated as 
\begin{align} \label{eq:app.improved.stability.diff.phi.2}
& \int_{-\eta}^{\eta} \int_{\Gamma_{\ell,z}} \eps^{2} \partial_{z}(H_{\Gamma_{\ell,z}} \partial_{z}\phi) \zeta^{2} \chi^{2} \partial_z \overline{\mathbb{H}}_{\eps,\ell} \, d\mu_{g_{z}} \, dz \\
& = - \int_{-\eta}^{\eta} \int_{\Gamma_{\ell,z}}  \eps^{2} H_{\Gamma_{\ell,z}} \partial_{z}\phi \partial_{z}( \zeta^{2} \chi^{2} \partial_z \overline{\mathbb{H}}_{\eps,\ell} \,  d\mu_{g_{z}}) \, dz \nonumber \\
& = O(\eps^{-1} |\cE_{\zeta}|) \int_{\Gamma_{\ell}} \zeta^{2} d\mu_{\Gamma_{\ell}}  \nonumber
\end{align}
similarly. We now consider the third term on the left hand side of \eqref{eq:app.diff.phi.eqn.z}. It is \emph{not} an error term, but instead will cancel (up to error terms) with the first term on the right hand side:
\begin{align} \label{eq:app.improved.stability.diff.phi.3}
	& \int_{-\eta}^{\eta} \int_{\Gamma_{\ell,z}} \eps^{2} (\partial^{3}_{z}\phi) {\zeta}^{2} \chi^{2} (\partial_z \overline{\mathbb{H}}_{\eps,\ell}) \, d\mu_{g_{z}} \, dz \\
	& = - \int_{-\eta}^{\eta} \int_{\Gamma_{\ell,z}} \eps^2 (\partial^{2}_{z}\phi) {\zeta}^{2} \chi^{2} (\partial_z^2 \overline{\mathbb{H}}_{\eps,\ell}) \, d\mu_{g_{z}} \, dz  - \int_{-\eta}^{\eta} \int_{\Gamma_{\ell,z}} \eps^{2} (\partial^{2}_{z}\phi) {\zeta}^{2} (\partial_z \overline{\mathbb{H}}_{\eps,\ell}) \partial_{z}(\chi^{2} \, d\mu_{g_{z}}) \, dz \nonumber \\
	& = \int_{-\eta}^{\eta} \int_{\Gamma_{\ell,z}} \eps^2 (\partial_{z}\phi) {\zeta}^{2} \chi^{2} (\partial_z^3 \overline{\mathbb{H}}_{\eps,\ell}) \, d\mu_{g_{z}} \, dz  + \int_{-\eta}^{\eta} \int_{\Gamma_{\ell,z}} \eps^2 (\partial_{z}\phi) {\zeta}^{2}  (\partial_z^2 \overline{\mathbb{H}}_{\eps,\ell}) \partial_{z}(\chi^{2} \, d\mu_{g_{z}}) \, dz \nonumber \\
	& \qquad - \int_{-\eta}^{\eta} \int_{\Gamma_{\ell,z}} \eps^{2} (\partial^{2}_{z}\phi) {\zeta}^{2} (\partial_z \overline{\mathbb{H}}_{\eps,\ell}) \partial_{z}(\chi^{2} \, d\mu_{g_{z}}) \, dz \nonumber \\
	& = \int_{-\eta}^{\eta} \int_{\Gamma_{\ell,z}} (\partial_{z}\phi) {\zeta}^{2} \chi^{2} W''(\overline{\mathbb{H}}_{\eps,\ell}) (\partial_z \overline{\mathbb{H}}_{\eps,\ell}) \, d\mu_{g_{z}} \, dz  + O(\eps^{-1} |\cE_{\zeta}|) \int_{\Gamma_{\ell}} {\zeta}^{2} \, d\mu_{\Gamma_{\ell}} \nonumber \\
	& = \int_{-\eta}^{\eta} \int_{\Gamma_{\ell,z}}  (\partial_{z}\phi) {\zeta}^{2} \chi^{2} W''(u) (\partial_z \overline{\mathbb{H}}_{\eps,\ell}) \,  d\mu_{g_{z}} \, dz  + \int_{-\eta}^{\eta} \int_{\Gamma_{\ell}}  (\partial_{z}\phi) {\zeta}^{2} \chi^{2} (W''(\overline{\mathbb{H}}_{\eps,\ell}) - W''(u) ) (\partial_z \overline{\mathbb{H}}_{\eps,\ell}) \, d\mu_{g_{z}} \, dz \nonumber \\
	& \qquad + O(\eps^{-1} |\cE_{\zeta}|) \int_{\Gamma_{\ell}} {\zeta}^{2} \, d\mu_{\Gamma_{\ell}}; \nonumber 
\end{align}
we have used \eqref{eq:app.improved.stability.phi.bounds},  \eqref{eq:app.stab.chi.bds}, \eqref{eq:chi-z-der-length}, \eqref{eq:stab.calc.spt.chi.H.der.est}, \eqref{eq:app.improved.stability.ddt.vol}, \eqref{eq:mean.curv.ddt.metric}, \eqref{eq:mean.curv.ddt.h}. The first term of the final expression above cancels with the first term on the right hand side of \eqref{eq:app.diff.phi.eqn.z}. We now study the second term of \eqref{eq:app.improved.stability.diff.phi.3}. Using \eqref{eq:app.stab.Wpp.u}, the second term of the right hand side of \eqref{eq:app.improved.stability.diff.phi.3} can be rewritten as
\begin{align} \label{eq:app.improved.stability.mixed.estimate}
& \int_{-\eta}^{\eta} \int_{\Gamma_{\ell_z}}  (\partial_{z}\phi) {\zeta}^{2} \chi^{2} (W''(\overline{\mathbb{H}}_{\eps,\ell}) - W''(u)) (\partial_z \overline{\mathbb{H}}_{\eps,\ell}) \,  d\mu_{g_{z}} \, dz \\
& = O(\eps) \int_{\Gamma_{\ell}} {\zeta}^{2} \, d\mu_{\Gamma_{\ell}}  + O(1) \sum_{m \neq \ell}  \int_{-\eta}^{\eta}  \int_{\Gamma_{\ell,z}} \chi^{2} {\zeta}^{2} \cdot \eps |\partial_z \overline{\mathbb{H}}_{\eps,m}| (\partial_z \overline{\mathbb{H}}_{\eps,\ell}) \, d\mu_{g_{z}} \, dz \nonumber \\
& = O(\eps) \int_{\Gamma_{\ell}} {\zeta}^{2} \, d\mu_{\Gamma_{\ell}}  + O\left( \eps^{-1} \exp(-\sqrt{2} \eps^{-1} D_\ell(\cdot) \right)  \sum_{m \neq \ell}  \int_{-\eta}^{\eta}  \int_{\Gamma_{\ell,z}} \chi^{2} {\zeta}^{2} \, d\mu_{g_{z}} \, dz \nonumber \\
& = O\left(\eps + \sup_{\support {\zeta}} \Big[ \eps^{-1} D_{\ell}(\cdot)  \exp(-\sqrt{2}\eps^{-1}D_{\ell}(\cdot))\Big] \right) \int_{\Gamma_{\ell}} {\zeta}^{2} d\mu_{\Gamma_{\ell}} \nonumber \\
& = O(\eps^{-1} |\cE_{\zeta}|) \int_{\Gamma_{\ell}} {\zeta}^{2} \nonumber d\mu_{\Gamma_{\ell}}.
\end{align}
Note that we estimated $|(\partial_z \overline{\mathbb{H}}_{\eps,m}) (\partial_z \overline{\mathbb{H}}_{\eps,\ell})|$ using \eqref{eq:heteroclinic.expansion.ii} and Lemma \ref{lemm:WW.tilting.comparison.d}:
\begin{equation} \label{eq:app.improved.stability.interference.product}
\eps^2 |(\partial_z \overline{\mathbb{H}}_{\eps,m}) (\partial_z \overline{\mathbb{H}}_{\eps,\ell})|(y,z) \leq c' \eps^2 + c' \exp\left( -\sqrt{2} \eps^{-1}D_{\ell}(y) \right), \; m \neq \ell. 
\end{equation}

We continue estimating terms on the right hand side in \eqref{eq:app.diff.phi.eqn.z}. 

We have just seen that the first term on the right hand side will cancel with a term of \eqref{eq:app.improved.stability.diff.phi.3}. The second term of \eqref{eq:app.diff.phi.eqn.z} is the term we are interested in estimating. We now consider the third term of \eqref{eq:app.diff.phi.eqn.z}. For $m\not = \ell$, we note that on $\supp \chi$,
\[
W''(\overline{\mathbb{H}}_{\eps,m}) = W''(\pm 1) + O(\eps (\partial_z \overline{\mathbb{H}}_{\eps,m}))
\]
Thus, combined with \eqref{eq:app.stab.Wpp.u}, we find that on $\supp\chi$,
\[ W''(u) - W''(\overline{\mathbb{H}}_{\eps,m}) = W''(\overline{\mathbb{H}}_{\eps,\ell}) - W''(\pm 1) + O(\eps) + \sum_{m'\neq \ell} O(\eps (\partial_z \overline{\mathbb{H}}_{\eps,m'})). \]
Hence, using Lemma \ref{lemm:WW.tilting.comparison.d}, \eqref{eq:phi.est.stab.proof}, and bounding $|(\partial_z \overline{\mathbb{H}}_{\eps,\ell}) (\partial_z \overline{\mathbb{H}}_{\eps,m}) (\partial_z \overline{\mathbb{H}}_{\eps,m'})|$ like in \eqref{eq:app.improved.stability.interference.product},
\begin{align} \label{eq:app.improved.stability.diff.phi.4}
& \int_{-\eta}^{\eta} \int_{\Gamma_{\ell,z}} \left(W''(u)-W''(\overline{\mathbb{H}}_{\eps,m}) \right) 
(\partial_z\overline{\mathbb{H}}_{\eps,m}) (\partial_{z}d_{m} + O(|\nabla_{\Gamma_{m}} h_{m}|)) 
{\zeta}^{2}\chi^{2} (\partial_z \overline{\mathbb{H}}_{\eps,\ell}) \, d\mu_{g_{z}} \, dz \\
& = \int_{-\eta}^{\eta} \int_{\Gamma_{\ell,z}} {\zeta}^{2} \chi^{2} \left(W''(\overline{\mathbb{H}}_{\eps,\ell}) - W''(\pm1)  \right) (\partial_z\overline{\mathbb{H}}_{\eps,m}) (\partial_z\overline{\mathbb{H}}_{\eps,\ell}) \, d\mu_{g_z} \, dz \nonumber \\
& \qquad + O\left( \sup_{\support {\zeta}} \Big[ \eps^{-1} D_{\ell}(\cdot) \exp( -\sqrt{2}\eps^{-1}D_{\ell}(\cdot)) + \eps^{-2} D_{\ell}(\cdot) \exp(- \tfrac 32 \sqrt{2}\eps^{-1}D_{\ell}(\cdot))] \right) \int_{\Gamma_{\ell}} {\zeta}^{2} \, d\mu_{\Gamma_{\ell}} \nonumber \\
& = \int_{-\eta}^{\eta} \int_{\Gamma_{\ell,z}} {\zeta}^{2} \chi^{2} \left( W''(\overline{\mathbb{H}}_{\eps,\ell}) - W''(\pm1) ) \right) (\partial_z\overline{\mathbb{H}}_{\eps,m}) (\partial_z \overline{\mathbb{H}}_{\eps,\ell}) \, d\mu_{g_z} \, dz \nonumber \\
& \qquad + O(\eps^{-1} |\cE_{\zeta}|) \int_{\Gamma_{\ell}} {\zeta}^{2} \, d\mu_{\Gamma_{\ell}}. \nonumber
\end{align}

We now turn to the next term of  \eqref{eq:app.diff.phi.eqn.z}. Note that, on $\supp\chi$,
\[
|\Delta_{\Gamma_{\ell,z}} h_{\ell}| + |\partial_{z} \Delta_{\Gamma_{\ell,z}} h_{\ell}| + |H_{\Gamma_{\ell,z}}| + |\partial_{z}H_{\Gamma_{\ell,z}}| \leq c'
\]
by \eqref{eq:phi.est.stab.proof}, \eqref{eq:sheets.enhanced.sff.bound},  \eqref{eq:mean.curv.ddt.h}, \eqref{eq:mean.curv.ddt.grad},  \eqref{eq:mean.curv.ddt.laplace}. Thus, 
\begin{equation} \label{eq:app.improved.stability.diff.phi.5}
	\int_{-\eta}^{\eta}\int_{\Gamma_{\ell,z}} \eps^2 \partial_{z}\left( (\Delta_{\Gamma_{\ell,z}} h_{\ell} - H_{\Gamma_{\ell,z}}) (\partial_z\overline{\mathbb{H}}_{\eps,\ell}) \right) {\zeta}^{2}\chi^{2} (\partial_z\overline{\mathbb{H}}_{\eps,\ell}) \, d\mu_{g_{z}} \, dz = O(\eps) \int_{\Gamma_{\ell}} {\zeta}(y)^{2} d\mu_{\Gamma}. 
\end{equation}
The next term of \eqref{eq:app.diff.phi.eqn.z} is estimated similarly. 

The term of \eqref{eq:app.diff.phi.eqn.z} involving $\cR_{m,1}$ is estimated by an integration by parts as follows. First, recall the definition of $\cR_{m,1}$ from Appendix \ref{app:WW-results} and note that \eqref{eq:app.improved.stability.phi.bounds} implies that 
\[
|\cR_{m,1}| \leq c'\eps^{2} + c' \sum_{m=1}^{Q} \sup\left\{\exp(-\sqrt{2}\eps^{-1}D_{m}(\cdot))\right\}.
\]
with the $\sup$ taken as in \eqref{eq:app.improved.stability.phi.bounds}. This bound thus implies
\begin{align} \label{eq:app.improved.stability.diff.phi.6}
& \int_{-\eta}^{\eta} \int_{\Gamma_{\ell,z}} \eps  \partial_{z} \left( \cR_{m,1} ((Z_{\Gamma_m})_* \partial_z) \overline{\mathbb{H}}_{\eps,m} \right) {\zeta}^{2} \chi^{2} (\partial_z \overline{\mathbb{H}}_{\eps,\ell}) \, d\mu_{g_{z}} \, dz \\
& = - \int_{-\eta}^{\eta} \int_{\Gamma_{\ell,z}} \eps \cR_{m,1} \big( ((Z_{\Gamma_m})_* \partial_z) \overline{\mathbb{H}}_{\eps,m} \big) {\zeta}^{2} \partial_{z}\left( \chi^{2} (\partial_z \overline{\mathbb{H}}_{\eps,\ell}) \, d\mu_{g_{z}}\right) dz \nonumber \\
& = O(\eps^{-1} |\cE_{\zeta}|) \int_{\Gamma_{\ell}} {\zeta}^{2} \, d\mu_{\Gamma} \nonumber
\end{align}
where we additionally used Lemma \ref{lemm:WW.tilting.comparison.d}, \eqref{eq:app.stab.chi.bds}, \eqref{eq:chi-z-der-length}, \eqref{eq:app.improved.stability.ddt.vol}, \eqref{eq:app.improved.stability.interference.product}. The terms in \eqref{eq:app.diff.phi.eqn.z} involving $\cR_{m,2}$, $\overline{\xi}_{\eps,m}$, are estimated similarly.

Plugging \eqref{eq:app.improved.stability.diff.phi.1}, \eqref{eq:app.improved.stability.diff.phi.2}, \eqref{eq:app.improved.stability.diff.phi.3}, \eqref{eq:app.improved.stability.diff.phi.4}, \eqref{eq:app.improved.stability.diff.phi.5}, \eqref{eq:app.improved.stability.diff.phi.6} into \eqref{eq:app.diff.phi.eqn.z}, we find that 
\begin{align*}
& \int_{-\eta}^{\eta} \int_{\Gamma_{\ell,z}} {\zeta}^2 \chi^{2} (W''(u) - W''(\overline{\mathbb{H}}_{\eps,\ell}))) (\partial_z\overline{\mathbb{H}}_{\eps,\ell})^{2} \, d\mu_{g_{z}} \, dz \\
& = \sum_{m\not = \ell} \int_{-\eta}^{\eta} \int_{\Gamma_{\ell,z}} {\zeta}^{2} \chi^{2} \left(W''(\pm1) - W''(\overline{\mathbb{H}}_{\eps,\ell}) \right) (\partial_z\overline{\mathbb{H}}_{\eps,m}) (\partial_z\overline{\mathbb{H}}_{\eps,\ell}) \, d\mu_{g_{z}} \, dz + O(\eps^{-1} |\cE_{\zeta}|) \int_{\Gamma_{\ell}} {\zeta}^{2} \, d\mu_{\Gamma_{\ell}}.
\end{align*}
Observe that for $m\not \in \{\ell-1,\ell+1\}$, 
\[ \eps^2 |(\partial_z \overline{\mathbb{H}}_{\eps,m}) (\partial_z \overline{\mathbb{H}}_{\eps,\ell})|(y,z) = O(|\cE_{\zeta}|^{\frac{2}{1+\kappa}}) \]
on $\supp\chi$. Thus, we can write (because $(-1)^{\ell\pm1} = -1$ by \eqref{eq:app.improved.stability.parity.assumption}):
\begin{align*}
& \int_{-\eta}^{\eta} \int_{\Gamma_{\ell,z}}  {\zeta}^2 \chi^{2} (W''(u) - W''(\overline{\mathbb{H}}_{\eps,\ell}))) (\partial_z\overline{\mathbb{H}}_{\eps,\ell})^{2} \, d\mu_{g_{z}} \, dz \\
& = \sum_{m \in \{\ell\pm1\}} \int_{-\eta}^{\eta} \int_{\Gamma_{\ell,z}} {\zeta}^{2} \chi^{2} \left( W''(\overline{\mathbb{H}}_{\eps,\ell}) - W''(\pm1) ) \right) (\partial_z\overline{\mathbb{H}}_{\eps,m}) (\partial_z\overline{\mathbb{H}}_{\eps,\ell}) \, d\mu_{g_z} \, dz \\
& \qquad + O(\eps^{-1} |\cE_{\zeta}|) \int_{\Gamma_{\ell}} {\zeta}^{2} \, d\mu_{\Gamma_{\ell}} \\
& = \eps^{-2} \int_{\Gamma_{\ell}} {\zeta}^2 \left( \int_{-\eta}^{\eta} \left(W''(\overline{\mathbb{H}}(\eps^{-1}t)) - W''(\pm1)  \right) \overline{\mathbb{H}}'(-\eps^{-1}(d_{\ell+1}(y)-t)) \overline{\mathbb{H}}'(\eps^{-1}t) \, dt \right) d\mu_{\Gamma_{\ell}} \\
& \qquad + \eps^{-2} \int_{\Gamma_{\ell}} {\zeta}^2 \left( \int_{-\eta}^{\eta} \left(W''(\overline{\mathbb{H}}(\eps^{-1}t)) - W''(\pm1)  \right) \overline{\mathbb{H}}'(\eps^{-1}(t+d_{\ell-1}(y))) \overline{\mathbb{H}}'(\eps^{-1}t) \, dt \right) d\mu_{\Gamma_{\ell}} \\
& \qquad + O(\eps^{-1} |\cE_{\zeta}|) \int_{\Gamma_{\ell}} {\zeta}^{2} \, d\mu_{\Gamma_{\ell}} \\
& = \eps^{-1} \int_{\Gamma_{\ell}} {\zeta}^2 \left( \int_{-\infty}^{\infty} \left(W''(\mathbb{H}(t)) - 2 \right) \mathbb{H}'(t-\eps^{-1}|d_{\ell+1}(y)|) \mathbb{H}'(t) dt \right) d\mu_{\Gamma_{\ell}} \\
& \qquad + \eps^{-1} \int_{\Gamma_{\ell}} {\zeta}^2 \left( \int_{-\infty}^{\infty} \left(W''(\mathbb{H}(t)) - 2 \right) \mathbb{H}'(t-\eps^{-1}|d_{\ell-1}(y))|) \mathbb{H}'(t) dt \right) d\mu_{\Gamma_{\ell}} \\
& \qquad + O(\eps^{-1} |\cE_{\zeta}|) \int_{\Gamma_{\ell}} {\zeta}^{2} \, d\mu_{\Gamma_{\ell}} \\
& = - 4\sqrt{2}(\expansioncoeff)^{2} \cdot \eps^{-1} \int_{\Gamma_{\ell}} {\zeta}^2 \left( \exp(-\sqrt{2}\eps^{-1}|d_{\ell+1}|) + \exp(-\sqrt{2}\eps^{-1}|d_{\ell-1}|) \right) d\mu_{\Gamma_{\ell}} \\
& \qquad + O(\eps^{-1}) \int_{\Gamma_{\ell}} {\zeta}^2 \left( \exp(-2(1-\kappa)\sqrt{2}\eps^{-1}|d_{\ell+1}|) + \exp(-2(1-\kappa)\sqrt{2}\eps^{-1}|d_{\ell-1}|) \right) d\mu_{\Gamma_{\ell}} \\
& \qquad + O(\eps^{-1} |\cE_{\zeta}|) \int_{\Gamma_{\ell}} {\zeta}^2 \, d\mu_{\Gamma_{\ell}} \\
& = -(4\sqrt{2}(\expansioncoeff)^{2} + o(1)) \cdot \eps^{-1} \int_{\Gamma_{\ell}} \left( \exp(-\sqrt{2}\eps^{-1}|d_{\ell+1}|) + \exp(-\sqrt{2}\eps^{-1}|d_{\ell-1}|)  \right) {\zeta}^{2} \, d\mu_{\Gamma_{\ell}} \\
& \qquad + O(\eps^{-1} |\cE_{\zeta}|) \int_{\Gamma_{\ell}} {\zeta}^2 \, d\mu_{\Gamma_{\ell}}.
\end{align*}
In the last two equalities we used Lemma  \ref{lemm:app.sheets.interaction.1d}. Together with \eqref{eq:stab.reduct.prelim},  Lemma \ref{lemm:WW.tilting.comparison.d}, we 
get \eqref{eq:toda.stability.estimate.sharper}.

\section{An interpolation lemma}


\label{app:interpolation.lemma}

We record a proof of the following interpolation inequality:

\begin{lemm} \label{lemm:holder.space.interpolation}
	For $0 < \alpha < \theta < 1$ and $f : \RR^n \to \RR$, we have
	\[ \Vert \nabla f \Vert_{C^{0,\alpha}(\RR^n)} \leq C \Vert f \Vert_{C^{0,\theta}(\RR^n)}^{\theta-\alpha} \Vert \nabla f \Vert_{C^{0,\theta}(\RR^n)}^{1+\alpha-\theta}, \]
	with $C = C(n)$.
\end{lemm}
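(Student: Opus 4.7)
The proof rests on two standard one-parameter optimizations, glued together by a case split on the relative sizes of $A \triangleq \Vert f\Vert_{C^{0,\theta}(\RR^n)}$ and $B \triangleq \Vert \nabla f\Vert_{C^{0,\theta}(\RR^n)}$. First, I would establish the $L^\infty$ gradient bound $\Vert \nabla f\Vert_{L^\infty} \leq C [f]_\theta^\theta [\nabla f]_\theta^{1-\theta}$. For $x\in\RR^n$, a unit vector $v$, and $r > 0$, Taylor's theorem with integral remainder gives
$$ f(x + rv) - f(x) = \nabla f(x)\cdot(rv) + \int_0^1 (\nabla f(x+trv)-\nabla f(x))\cdot(rv)\,dt; $$
combined with $|f(x+rv)-f(x)| \leq [f]_\theta r^\theta$, this yields $|\nabla f(x)\cdot v| \leq [f]_\theta r^{\theta-1} + \tfrac{1}{1+\theta}[\nabla f]_\theta r^\theta$, and minimizing in $r>0$ produces the claimed bound.

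Second, I would show the Hölder-exponent interpolation $[\nabla f]_\alpha \leq C \Vert \nabla f\Vert_{L^\infty}^{1-\alpha/\theta}[\nabla f]_\theta^{\alpha/\theta}$ by splitting $|x-y|$ at a threshold $r_0$: for $|x-y|\leq r_0$ the Hölder-$\alpha$ quotient is bounded by $[\nabla f]_\theta r_0^{\theta-\alpha}$, while for $|x-y| \geq r_0$ it is bounded by $2\Vert \nabla f\Vert_{L^\infty} r_0^{-\alpha}$; optimizing $r_0$ (balancing the two) gives the claim.

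These combine as follows. If $A \geq B$, then a direct argument (splitting $|x-y|$ at $r_0 = 1$ in computing $[\nabla f]_\alpha$) gives $\Vert \nabla f\Vert_{C^{0,\alpha}} \leq 3B$, and since $\theta - \alpha > 0$ we have $A^{\theta-\alpha} B^{1+\alpha-\theta} \geq B^{\theta-\alpha}B^{1+\alpha-\theta} = B$, so the bound follows. If instead $A < B$, the first step gives $\Vert \nabla f\Vert_{L^\infty} \leq C A^\theta B^{1-\theta} \leq C A^{\theta-\alpha}B^{1+\alpha-\theta}$ (using $A \leq B$ to trade factors), and inserting $\Vert \nabla f\Vert_{L^\infty} \leq C A^\theta B^{1-\theta}$ into the second step yields
$$ [\nabla f]_\alpha \leq C\bigl(A^\theta B^{1-\theta}\bigr)^{1-\alpha/\theta} B^{\alpha/\theta} = C A^{\theta-\alpha} B^{(1-\theta)(1-\alpha/\theta)+\alpha/\theta} = C A^{\theta-\alpha}B^{1+\alpha-\theta}, $$
since the exponent of $B$ simplifies to $1+\alpha-\theta$. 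Adding the two estimates concludes the proof. There is no genuine obstacle here; the only subtlety is that the target inequality is \emph{not} invariant under the natural rescaling $g(x) = f(\lambda x)$, because $\Vert f\Vert_{L^\infty}$ and the seminorm $[f]_\theta$ transform with different powers of $\lambda$, which rules out a one-step scaling proof and forces the dichotomy on the sizes of $A$ and $B$.
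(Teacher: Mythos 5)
Your proof is correct and follows essentially the same route as the paper: the same FTC-plus-optimization estimate $\Vert \nabla f \Vert_{C^0} \lesssim [f]_\theta^{\theta}[\nabla f]_\theta^{1-\theta}$, followed by the elementary exponent interpolation $[\nabla f]_\alpha \lesssim \Vert \nabla f \Vert_{C^0}^{1-\alpha/\theta}[\nabla f]_\theta^{\alpha/\theta}$. Your dichotomy between $\Vert f \Vert_{C^{0,\theta}}$ and $\Vert \nabla f \Vert_{C^{0,\theta}}$ simply makes explicit the passage from seminorms to full norms that the paper compresses into its final sentence.
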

\begin{proof}
	We assume $\nabla f \not \equiv 0$. Fix $\mathbf{x} \in \RR^n$ with $\nabla f(\mathbf{x}) \neq 0$, and set $\mathbf{e}:= \nabla f(\mathbf{x})/|\nabla f(\mathbf{x})|$. For $t > 0$:
	\begin{align*}
		f(\mathbf{x} + t \mathbf{e}) - f(\mathbf{x}) 
			& = \int_0^1 \nabla f(\mathbf{x} + st\mathbf{e}) \cdot t\mathbf{e} \, ds \\
			& = \int_0^1 (\nabla f(\mathbf{x} + st\mathbf{e}) - \nabla f(\mathbf{x})) \cdot t \mathbf{e} \, ds + \nabla f(\mathbf{x}) \cdot t\mathbf{e} \\
			& = \int_0^1 (\nabla f(\mathbf{x} + st\mathbf{e}) - \nabla f(\mathbf{x})) \cdot t \mathbf{e} \, ds + t |\nabla f(\mathbf{x})|.
	\end{align*} 
	Rearranging, and using the H\"older estimate on $f(\mathbf{x}+t\mathbf{e})-f(\mathbf{x})$ and $\nabla f(\mathbf{x} + st \mathbf{e}) - \nabla f(\mathbf{x})$ we deduce
	\[ t |\nabla f(\mathbf{x})| \leq [f]_{\theta} t^\theta + [\nabla f]_{\theta} t^{1+\theta}. \]
	Dividing through by $t$ and optimizing in $t$ (using calculus) and using the fact that $x$ was arbitrary:
	\begin{equation} \label{eq:holder.space.interpolation.i}
		\Vert \nabla f \Vert_{C^0(\RR^n)} \leq 2 [f]_\theta^\theta [\nabla f]_\theta^{1-\theta}.
	\end{equation}
	By the trivial $C^{0,\theta} \hookrightarrow C^{0,\alpha} \hookrightarrow C^0$ interpolation on $\nabla f$ and the previous estimate we conclude:
	\begin{equation} \label{eq:holder.space.interpolation.ii}
		[\nabla f]_\alpha \leq C \Vert \nabla f \Vert_{C^0(\RR^n)}^{\frac{\theta-\alpha}{\theta}} [\nabla f]_\theta^{\frac{\alpha}{\theta}} \leq 2 C [f]_\theta^{\theta-\alpha} [\nabla f]_\theta^{\frac{(1-\theta)(\theta-\alpha)}{\theta} + \frac{\alpha}{\theta}} = 2 C [f]_\theta^{\theta-\alpha} [\nabla f]_\theta^{1+\alpha-\theta}.
	\end{equation}
	Together, \eqref{eq:holder.space.interpolation.i}, \eqref{eq:holder.space.interpolation.ii} give the required estimate when we replace the seminorms by norms.
\end{proof}

\bibliographystyle{alpha}
\bibliography{main}

\end{document}